\documentclass{article}

\usepackage{bbm}

\usepackage[all]{xy}\usepackage[latin1]{inputenc}  
\usepackage[dvips]{graphics,graphicx}
\usepackage{amsfonts,amssymb,amsmath,color,mathrsfs, amstext,bbm}
\usepackage{amsbsy, amsopn, amscd, amsxtra, amsthm,authblk, enumerate,mathbbol}
\usepackage{upref}
\usepackage{geometry}
\usepackage{bm}
\usepackage{dsfont}
\usepackage{todonotes}
\usepackage{mathtools}
\usepackage{appendix}
\usepackage{float}

\usepackage[all]{xy}\usepackage[latin1]{inputenc}        
\usepackage[dvips]{graphics,graphicx}
\usepackage{amsfonts,amssymb,amsmath,xcolor,mathrsfs, amstext}
\usepackage{amsbsy, amsopn, amscd, amsxtra, amsthm,authblk, enumerate}
\usepackage{upref}
\usepackage{geometry}
\usepackage{float}

\usepackage[colorlinks,
            linkcolor=blue,
            anchorcolor=green,
            citecolor=blue
            ]{hyperref}

\numberwithin{equation}{section}

\def\e{\varepsilon}
\def\epsilon{\varepsilon}

\newcommand{\ol}{\overline}
\newcommand{\wt}{\widetilde}
\newcommand{\wh}{\widehat}
\def\alb#1\ale{\begin{align*}#1\end{align*}}
\newcommand{\eqb}{\begin{equation}}
\newcommand{\eqe}{\end{equation}}

\DeclareMathOperator{\supp}{supp}
\DeclareMathOperator{\dist}{dist}

\newcommand{\bbC}{\mathbb{C}}
\newcommand{\bbD}{\mathbb{D}}
\newcommand{\bbE}{\mathbb{E}}
\newcommand{\bbH}{\mathbb{H}}

\newcommand{\bbN}{\mathbb{N}}

\newcommand{\bbR}{\mathbb{R}}
\newcommand{\bbP}{\mathbb{P}}
\newcommand{\bbQ}{\mathbb{Q}}
\newcommand{\bbZ}{\mathbb{Z}}

\newcommand{\cD}{\mathcal{D}}
\newcommand{\cE}{\mathcal{E}}

\newcommand{\cP}{\mathcal{P}}
\newcommand{\cG}{\mathcal{G}}

\newcommand{\cT}{\mathcal{T}}

\newcommand{\cM}{\mathcal{M}}

\newcommand{\cH}{\mathcal{H}}

\newcommand{\cV}{\mathcal{V}}

\newcommand{\fc}{\mathfrak{c}}
\newcommand{\fR}{\mathfrak{R}}
\newcommand{\fD}{\mathfrak{D}}

\newcommand{\rmL}{\mathrm{L}}

\newcommand{\rmE}{\mathrm{E}}
\newcommand{\rmQ}{\mathrm{Q}}

\newcommand{\frt}{\mathfrak{t}}
\newcommand{\fre}{\mathfrak{e}}

\newcommand{\diam}{\mathrm{diam}}
\newcommand{\area}{\mathrm{area}}
\newcommand{\disk}{\mathrm{disk}}
\newcommand{\sph}{\mathrm{sp}}

\newcommand{\SLE}{\mathrm{SLE}}
\newcommand{\carrier}{\mathrm{carrier}}

\newcommand{\IG}{\mathrm{IG}}

\newcommand{\nina}[1]{{\color{cyan}{#1}}}

\newtheorem{theorem}{Theorem}[section]
\newtheorem{lemma}[theorem]{Lemma}
\newtheorem{proposition}[theorem]{Proposition}
\newtheorem*{proposition*}{Proposition}

\newtheorem*{corollary*}{Corollary}
\newtheorem{remark}[theorem]{Remark}

\theoremstyle{definition}
\newtheorem{definition}[theorem]{Definition}
\newtheorem*{definitions*}{Definitions}

\newtheorem*{example*}{\bf Example}

\numberwithin{equation}{section}
\title{The circle packing and Riemann uniformization embedding of the mated-CRT map converges to Liouville quantum gravity}
\author{Nina Holden\thanks{Courant Institute, New York University, nina.holden@nyu.edu} \qquad   Pu Yu\thanks{Courant Institute, New York University, py628@nyu.edu}}
\date{\today}

\begin{document}

\maketitle

\begin{abstract} 
We prove that the mated-CRT map converges to Liouville quantum gravity under circle packing and the Riemann uniformization embedding. The proof is based on our recent work~\cite{HY26} on circle packing and the Riemann uniformization embedding for random planar maps in ergodic scale-free environments, comparison of circle packings in different domains, and several existing and new estimates for the mated-CRT map. 
As an intermediate step, we prove extensions of He and Schramm's result (1996) on convergence of circle packing maps to conformal maps in global uniform topology. The latter results are of independent interest and will be applied in our subsequent work~\cite{HY26a} on the convergence of spanning tree weighted planar maps to $\sqrt{2}$-LQG under circle packing and the Riemann uniformization embedding.
\end{abstract}

\section{Introduction}

\emph{Liouville quantum gravity} (LQG) was invented in Polyakov's seminal work~\cite{polyakov1981quantum}. LQG has a single parameter $\gamma\in(0,2]$, and has been conjectured to describe the scaling limit of random planar maps. The \emph{Schramm-Loewner evolution} (SLE)~\cite{Sch00} is a family of random fractal curves $\kappa>0$, and are the scaling limit of interfaces from many  natural  statistical physics models.

There are several natural ways to phrase the convergence of (decorated) random planar maps to (SLE decorated) LQG surfaces. In each of those, there are certain random planar maps that have been proved to converge to LQG surfaces:
\begin{enumerate}
 \item Convergence as mated pair of trees. As constructed in~\cite{DMS14}, given a pair of Brownian motions, one can construct a pair of continuum random trees and further a space-filling $\SLE$-decorated LQG surface. On the other hand, many planar maps are encoded by a pair of random walks, and this convergence of random walk to Brownian motion is also referred to as  \emph{peanosphere convergence}. Examples include~\cite{She16b,LSW17,BHS18,KMSW19}.
   \item Convergence in the sense of metric space (i.e., in Gromov-Hausdorff topology), where we view random planar maps with metric spaces endowed with graph distance. In~\cite{LeGall13,Mie13}, it is shown that uniform  random quadrangulations converge to the Brownian map, which is identified with $\sqrt{8/3}$-LQG by~\cite{MSBrownian1,MSBrownian2,MSBrownian3}. It is further shown in~\cite{GM16avoid} the self-avoiding walk on random quadrangulations converges to $\SLE_{8/3}$.
    \item Convergence under \emph{discrete conformal embeddings}. That is, one draws the planar map on the plane using conformal embeddings (e.g., circle packing, Riemann uniformization, etc.), and counts the number of vertices in each domain $D\subseteq\bbC$. For the mated-CRT map, this is done in~\cite{GMS21tutte} for the Tutte embedding  and~\cite{bertacco2025scaling} for the Smith embedding.  In~\cite{HS19}, it is shown that under the Cardy embedding, the uniform triangulations converges to $\sqrt{8/3}$-LQG.    
\end{enumerate}

In this paper, we prove the convergence of mated-CRT map to LQG under two natural conformal embeddings, the \textit{circle packing} and the \textit{Riemann uniformization embedding}. The mated-CRT map can be defined in terms of a pair of correlated Brownian motions, and naturally embedded onto the SLE decorated LQG surfaces generated by the Brownian motions; see Section~\ref{subsubsec:intro-mated-CRT} for details. Our main result is, for the mated-CRT map, its  circle packing and Riemann uniformization embedding  are close to the a priori embedding described in terms of SLE and LQG. The proof is based on our recent work~\cite{HY26} on convergence of   circle packing and Riemann uniformization embedding for triangulations from ergodic scale-free environments defined in~\cite{GMSinvariance}, along with comparison over different circle packings discussed in Section~\ref{subsubsec:intro-conf}.  {The convergence of circle packing maps to conformal maps was first conjectured by Thurston and solved by He and Schramm~\cite{HeSchramm}. The original convergence in~\cite{HeSchramm} was in local uniform topology. We further prove that under certain conditions, the above convergence can be extended to the global uniform topology in various settings; see Theorem~\ref{thm:He-Schramm-extension-1}, Theorem~\ref{thm:He-Schramm-sphere} and Theorem~\ref{thm:He-Schramm-disk}. These results are of independent interest, and will be applied in our subsequent work~\cite{HY26a} on convergence of tree-weighted planar maps under circle packing.}

The rest of this section as follows. In Section~\ref{subsec:intro-background}, we provide some backgrounds on circle packing, the Riemann uniformization embedding, and the mated-CRT map. In Section~\ref{subsec:intro-mainresult}, we provide precise statements of our main results. In Section~\ref{subsec:intro-conf}, we recap our results from~\cite{HY26} and discuss transforms between different circle packings, and explain how these results are applied towards the proof of our main results. 

\subsection{Background and context}\label{subsec:intro-background}

\subsubsection{Triangulations and discrete conformal embeddings}\label{subsubsec:conf-embed}

A planar map $\cM$ is an embedding of a planar graph $\cG$ modulo orientation-preserving homeomorphisms of $\wh\bbC:=\bbC\cup\{\infty\}$. 
A face is a connected component of the complement of the planar map. A triangular face is a face with three edges on its boundary.  
A (planar) triangulation is the union of the vertices and edges of a planar map $\cM$, as well as (a subset of) the triangular faces of $\cM$. We will work with \emph{infinite plane triangulations}, where the underlying planar map is one-ended and every face has degree three  {and belongs to the triangulation},  \emph{disk triangulations}, where the triangulation is finite and topologically homeomorphic to the unit disk, and \emph{sphere triangulations},  where the triangulation is finite and every face is a triangular face.  {For triangulations $T_1$ and $T_2$, we say $T_1$ is a submap of $T_2$ if all the set of vertices, edges and faces of $T_1$ are corresponding subsets of $T_2$.}

A planar map $\cM$ can be embedded in $\wh{\bbC}$ (as described in the previous paragraph) in a number of different ways. In this paper, we focus on two natural  embeddings: the \emph{circle packing} and the \emph{Riemann uniformization embedding}. These two embeddings are canonical examples of \emph{discrete conformal embeddings}.  {This means in particular that} embeddings of the same planar map in different domains induce (or is believed to induce) a map that approximates the conformal map between the two domains. For circle packing, this was conjectured by Thurston and solved in~\cite{RodinSullivan87} in the case of the hexagonal lattice, 
in~\cite{kS90,kS96} for circle packings with uniformly bounded radii ratio for neighboring disks via probabilistic methods,
and by~\cite{HeSchramm} in full generality. For Riemann uniformization embedding, this follows from the definition.

\begin{figure}[t]
    \centering
    \includegraphics[scale=0.78]{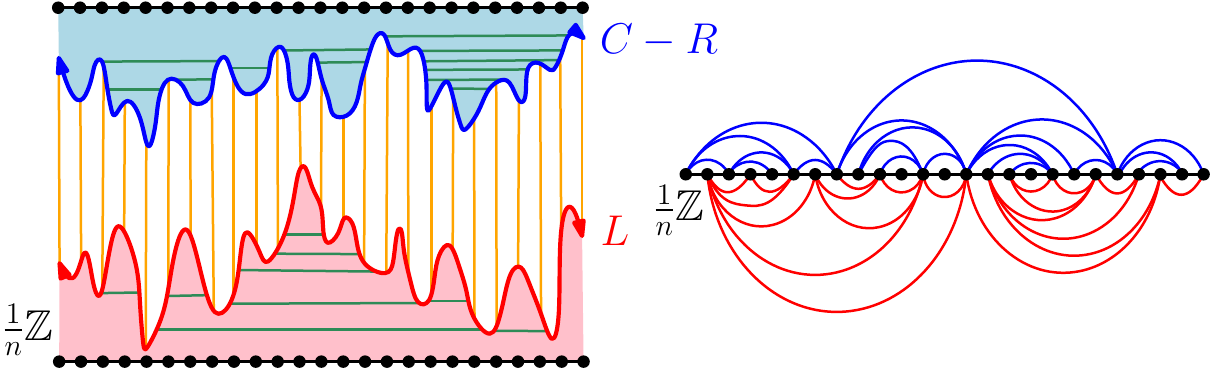}
    \caption{\textbf{Left:} To construct the mated-CRT map $\cG^{1/n}$ geometrically, one can draw the graph of $L$ (red) and the graph of $C-R$ (blue) for some large constant $C > 0$ chosen so that the parts of the graphs over some time interval of interest do not intersect. One then divides the region between the graphs into vertical strips (boundaries shown in orange) and identifies each strip with the horizontal coordinate $x\in \frac{1}{n}\bbZ$ of its rightmost point. Vertices $x_1,x_2\in \frac{1}{n}\bbZ$ are connected by an edge if and only if the corresponding strips either share a boundary segment (i.e., $|x_1-x_2| = \frac{1}{n}$) or are connected by a horizontal line segment which lies under the graph of $L$ or above the graph of $C-R$. One such segment is shown in green in the figure for each pair of vertices for which this latter condition holds.
\textbf{Right:} One can draw the graph $\cG^{1/n} $ in the plane by connecting two vertices $x_1,x_2 \in  \frac{1}{n}\bbZ$ by an edge by an arc above (resp.\ below) the blue line if the corresponding strips are connected by a horizontal segment above (resp.\ below) the graph of $L$ (resp.\ $C-R$); and connecting each pair of consecutive vertices of $ \frac{1}{n}\bbZ$ by an edge. This gives $\cG $ a planar map structure under which it is a triangulation. Note that the two pictures do not correspond to the same mated-CRT map realization.
}
    \label{fig:mated-crt-map}
\end{figure}

Let $\cM$ be a simple planar map with vertex set $\cV\cM$. We say that a collection $\cP = (D_v:v\in\cV\cM)$ of closed disks with disjoint interiors is a \textit{circle packing} for $\cM$, if two disks $D_v$ and $D_{v'}$ are tangent if and only if the vertices $v$ and $v'$ are connected by an edge.  This gives an embedding of the planar map, with the vertices embedded at the centers of the disks and the edges are straight line segments connecting the centers of tangent disks.  {We also require that the embedding preserves the orientation of the planar map.} By the celebrated Koebe-Andreev-Thurston circle packing theorem, if $\cM$ is a disk (resp.\ sphere) triangulation and is simple, then up to M\"{o}bius transforms, there exists a unique circle packing of $\cM$ in the unit disk $\bbD$ (resp.\ Riemann sphere $\wh\bbC$). In the case where $\cM$ is a disk triangulation, we say $\cP$ is a \emph{circle packing of $\cM$ in $\bbD$}, if all the disks in $\cP$ are in $\ol\bbD$, and the disks for boundary vertices are internally tangent to $\partial\bbD$. We further say $\cP$ is \emph{centered at a vertex} $v$, if the disk $D_v$ for the vertex $v$ is centered at origin.  For infinite plane triangulations, the existence of a circle packing is a straightforward consequence of the tightness from the Ring Lemma~\cite{RodinSullivan87}, and  the uniqueness up to M\"{o}bius transforms was proved by Schramm~\cite{Schramm91}. In this case, there is a dichotomy: an infinite plane triangulation $T$ is \emph{circle packing parabolic} (resp.\ \emph{circle packing hyperbolic}), if there exists a circle packing $\cP$ for $T$, such that the union of all the faces and edges  is equal to the whole plane (resp.\ unit disk). In~\cite{HeSchramm-hy-pa}, it was further shown that, if the simple random walk on $T$ is recurrent, then $T$ is circle packing parabolic, while if $T$ has uniformly bounded degree and the simple random walk on $T$ is transient, then $T$ is circle packing hyperbolic. See also~\cite{circlepackingbook,Nachmiaslec} for a comprehensive introduction on circle packing and related topics.

A second natural way to embed random planar maps is through the Riemann uniformization theorem. We follow the presentation in~\cite[Section 2.2]{GRRiemann}. Roughly speaking, for a triangulation $T$ without loops, one associate each face with an equilateral triangle and glue them isometrically along their edges to get a Riemann surface $M(T)$. The metric space $M(T)$ can be equipped with a compatible Riemann surface structure by defining coordinate charts as follows. If $x$ is an interior point of a triangle or interior to an edge on the boundary of the support, then we use this triangle, viewed as a subset of the plane,  as the chart, and use the identity map as the projection. Similarly, if $x$ is interior to an edge and not on the boundary of the support, then we place two adjacent equilateral triangles in the plane and use the identity as the projection. Now let $x$ be an interior vertex of degree $n$, and let $\Delta_1,...,\Delta_n$ be the equilaterals which meet at $x$  in cyclic order. We place them in the plane such that for $j=1,...,n$, $\Delta_j$ is the  triangle with vertices $0,e^{\pi(j-1)i/3}, e^{\pi ji/3}$, and define the coordinate map for $z=re^{i\theta}\in\Delta_j$ by $\phi(z) = r^{6/n}e^{6i\theta/n}$, using $\pi(j-1)/3<\theta<\pi j/3$ on $\Delta_j$. By further setting $\phi(0)=0$, this defines a homeomorphism between a neighborhood of $x$ in $M(T)$ and a neighborhood of 0 in $\bbC$. Moreover, for the atlas chosen as above, the transition map between two overlapping charts are analytic. 
The case where $x$ be an boundary vertex of degree $n$ can be treated similarly using the map $z\mapsto z^{3/(n-1)}$. This defines the Riemann surface structure for $M(T)$. Following the Riemann uniformization theorem, if $T$ is a disk (resp.\ sphere) triangulation, then $M(T)$ is conformally equivalent to the unit disk (resp. the Riemann sphere).  If $T$ is an infinite plane triangulation, there are again two possible cases, where the Riemann surface $M(T)$ is \emph{parabolic} (resp.\ \emph{hyperbolic}) if it is conformally equivalent to the whole plane (resp.\ the unit disk). As commented in~\cite{GRRiemann}, similar to the circle packing case, this is equivalent to the recurrence/transience of the Brownian motion on the surface $M(T)$. See e.g.~\cite{GRRiemann,Curienglimpse} for some works in this direction.

There exist a number of discrete conformal embeddings beyond the two discussed above, including the Tutte embedding, the Smith embedding \cite{bsst-smith}, the Cardy-Smirnov embedding \cite{HS19}, and the t-embedding/Coulomb gauge/s-embedding \cite{chelkak-icm,clr-t-emb,klrr-circle-patterns}.

\subsubsection{The mated-CRT map and its a priori embedding}\label{subsubsec:intro-mated-CRT}

We start with the definition of the mated-CRT map. Fix $\gamma\in(0,2)$ and  consider a two dimensional Brownian motion $Z=(L,R)$ with covariance matrix chosen so that
\begin{equation}\label{eq:variance}
    \mathrm{Var}(L_t)=\mathrm{Var}(R_t) = \mathbbm{a}^2|t|,  \ \ \ \mathrm{Cov}(L_t,R_t) = -\cos(\frac{\pi\gamma^2}{4})\mathbbm{a}^2|t|, \ \  \text{where } \mathbbm{a}^2 = \frac{2}{\sin(\frac{\pi\gamma^2}{4})}.
\end{equation}
For $n\in\bbN$, the \emph{mated-CRT map $\cG^{1/n}$} is defined as follows. The vertex set $\cV\cG^{1/n}$ is equal to $\frac{1}{n}\bbZ$, with two vertices $x_1,x_2\in \frac{1}{n}\bbZ$ connected by an edge if and only if either
\begin{equation}\label{eq:def-mated-CRT}
    \max\Big\{\inf_{t\in[x_1-\frac{1}{n},x_1]}L_t,  \inf_{t\in[x_2-\frac{1}{n},x_2]}L_t\Big\}\leq \inf_{t\in[x_1,x_2-\frac{1}{n}]}L_t \ \ \text{or} \ \ \max\Big\{\inf_{t\in[x_1-\frac{1}{n},x_1]}R_t,  \inf_{t\in[x_2-\frac{1}{n},x_2]}R_t\Big\}\leq \inf_{t\in[x_1,x_2-\frac{1}{n}]}R_t.
\end{equation}
If $x_2-x_1\neq\frac{1}{n}$ and both conditions in~\eqref{eq:def-mated-CRT} holds, then there are two edges between $x_1$ and $x_2$. It turns out that the map $\cG^{1/n}$ is an infinite plane triangulation. See Figure~\ref{fig:mated-crt-map} for an illustration.

One can also define mated-CRT map with disk and sphere topology by working on variants of the Brownian motion $Z$. Let $(L^\disk,R^\disk):[0,1]\to\bbR_+^2$ be the Brownian motion in~\eqref{eq:variance} starting from $(0,1)$ and conditioned to stay in the  positive quadrant and hit $(0,0)$ at time 1. The construction of this process was discussed in~\cite[Section 4]{AG19} based on results in~\cite{Shi85}. The \emph{mated-CRT map $\cG^{\disk,1/n}$ with disk topology} is defined is defined in the same way as $\cG^{1/n}$ using $(L^\disk,R^\disk)$ in place of $(L,R)$, where the vertex set is $\frac{1}{n}\bbZ\cap (0,1]$, and two vertices $x_1,x_2\in \frac{1}{n}\bbZ\cap (0,1]$ is connected by an edge if and only if~\eqref{eq:def-mated-CRT} holds for $(L^\disk,R^\disk)$ in place of $(L,R)$, while $x_1,x_2$ are connected by two edges if $x_2-x_1\neq\frac{1}{n}$ and both conditions in~\eqref{eq:def-mated-CRT} for $(L^\disk,R^\disk)$ hold. Similarly, let $(L^\sph,R^\sph)$ be the Brownian motion $(L,R)$ in~\eqref{eq:variance} starting from $(0,0)$, and conditioned  to stay in the  positive quadrant and hit $(0,0)$ at time 1. The process  $(L^\sph,R^\sph)$ has been constructed in~\cite[Section 3]{MS-mot-finite}. The  \emph{mated-CRT map $\cG^{\sph,1/n}$ with sphere topology} and vertices $\frac1n\bbZ\cap(0,1]$ is defined in an identical way as $\cG^{\disk,1/n}$, where one uses $(L^\sph,R^\sph)$ instead of $(L^\disk,R^\disk)$ and we declare that the vertices 1 and $\frac1n$ are connected only by a single edge. 

\begin{figure}[t]
    \centering
    \includegraphics[scale=0.7]{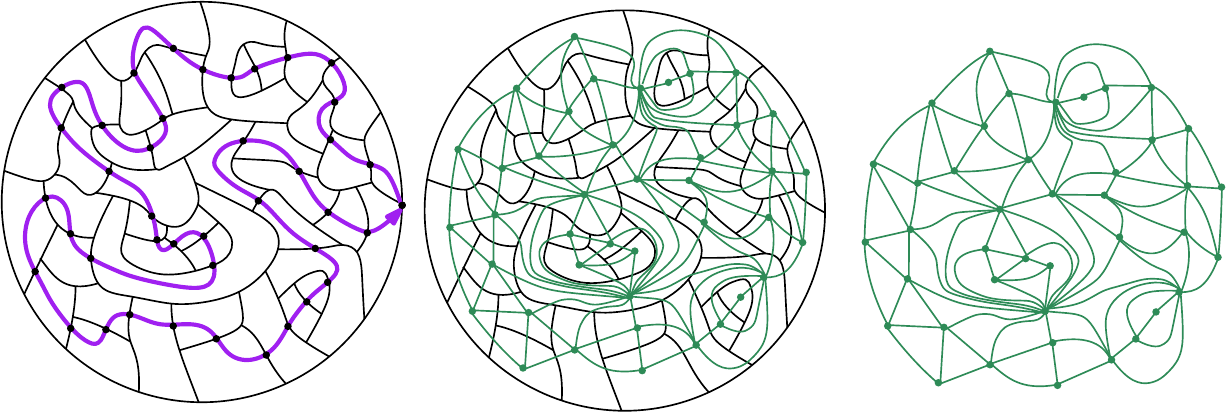}
    \caption{\textbf{Left:} A segment of a space-filling curve $\eta^\disk:[0,1]\to\bbD$, divided into cells $\eta^\disk([x-\frac{1}{n},x])$ for $x\in\frac{1}{n}\bbZ\cap (0,1]$. The order in which the cells are hit is shown by the purple path. This figure looks like what we would expect to see for $\gamma\leq\sqrt{2}$  since the cells are simply connected. \textbf{Middle:} We draw a green point in each cell and connect the points whose cells share a corresponding boundary arc. The green graph is then an a priori embedding of the mated-CRT map $\cG^{\disk,1/n}$ into $\ol\bbD$. \textbf{Right:} Same as the middle picture but without the original cells, so that only the a priori embedding of the mated-CRT map $\cG^{\disk,1/n}$ is visible.}
    \label{fig:mated-CRT1}
\end{figure}


Thanks to the main results in~\cite{DMS14,MS-mot-finite,AG19}, the mated-CRT maps comes a priori with a certain kind of embedding into $\bbC$ described by SLE-decorated LQG surfaces. Following~\cite[Theorem 1.9 and Theorem 1.11]{DMS14},   the Brownian motion $(L,R)$ in~\eqref{eq:variance}  (where the value of $\mathbbm{a}$ was computed in~\cite{ARS21}) a.s.\ determines a pair $(\eta,h)$, where $(\bbC,h,0,\infty)$ is an embedding of a $\gamma$-LQG surface called $\gamma$-quantum cone, and $\eta$ is a space-filling $\SLE_{16/\gamma^2}$ loop in $\wh\bbC$ from $\infty$ to $\infty$ independent from $h$ and parameterized by the $\gamma$-LQG mass induced by $h$, such that $\eta(0)=0$. See Section~\ref{sec:pre} for a brief overview of these objects. This defines the a priori embedding of $\cG^{1/n}$ as follows. For each $x\in\frac{1}{n}\bbZ$, we choose a point $v_x$ in the interior of $\eta([x-\frac{1}{n},x])$. We call the clockwise (resp.\ counterclockwise) boundary arc of $\partial\eta([x-\frac{1}{n},x])$ from $\eta(x-\frac{1}{n})$ to $\eta(x)$ the left (resp.\ right) boundary of $\eta([x-\frac{1}{n},x])$. We connect each $v_x$ and $v_{x+\frac{1}{n}}$ by a curve. For $x_1,x_2\in \frac{1}{n}\bbZ$ such that $|x_1-x_2|\neq\frac{1}{n}$, using the description of $(L_t,R_t)$ in terms of the boundary length process for $\eta((-\infty,t])$ as in~\cite[Section 8.2]{DMS14} (see also Figure~\ref{fig:quantumcell3}), it follows that almost surely,  the equation~\eqref{eq:def-mated-CRT} for $L$ (resp.\ $R$) holds if and only if  the left (resp.\ right) boundary of $\eta([x_1-\frac1n,x_1])$ shares a connected boundary arc with the (resp.\ right) boundary  of $\eta([x_2-\frac1n,x_2])$, and we can draw the edges connecting $x_1$ and $x_2$ accordingly. The a priori embedding for the mated-CRT map $\cG^{\disk,1/n}$ and $\cG^{\sph,1/n}$ with disk topology and sphere topology can also be defined analogously. In the disk setting, by~\cite{AG19},  the Brownian excursion $(L^\disk,R^\disk)$ defined above generates a pair $(\eta^\disk,h^\disk)$, where $(\bbD,h^\disk,0,1)$ is an embedding of a $\gamma$-LQG surface called quantum disk, and $\eta$ is a counterclockwise space-filling $\SLE_{16/\gamma^2}$ loop in $\ol\bbD$ independent from $h^\disk$ from 1 to 1 and parameterized by the $\gamma$-LQG mass induced by $h^\disk$, and the a priori embedding of $\cG^{\disk,1/n}$ is defined in the same way using the cells $\{\eta^\disk([x-\frac{1}{n},x]):x\in \frac{1}{n}\bbZ\cap(0,1]\}$. In the sphere case, by~\cite{MS-mot-finite}, the Brownian excursion $(L^\sph,R^\sph)$ defined above generates a pair $(\eta^\sph,h^\sph)$, where $(\wh\bbC,h^\disk,0,\infty)$ is an embedding of a $\gamma$-LQG surface called quantum sphere, and $\eta$ is a  space-filling $\SLE_{16/\gamma^2}$ loop in $\wh\bbC$ independent from $h^\sph$ from $\infty$ to $\infty$ and parameterized by the $\gamma$-LQG mass induced by $h^\sph$, and the a priori embedding for $\cG^{\sph,1/n}$  is defined in the same way using the cells $\{\eta^\sph([x-\frac{1}{n},x]):x\in \frac{1}{n}\bbZ\cap(0,1]\}$. See also Figure~\ref{fig:mated-CRT1} for an illustration of the a priori embedding in the disk topology case.  Note that the cells $\eta([x-\frac{1}{n},x])$ are simply connected when $\gamma\in(0,\sqrt2]$ and not simply connected when $\gamma\in(\sqrt{2},2)$; see Figure~\ref{fig:mated-CRT3} for an illustration. In~\cite[Proposition 2.2]{GMS21tutte}, it is proved that the mated-CRT map defined above via LQG and SLE is the same as the same as the one defined using~\eqref{eq:def-mated-CRT}.

\begin{figure}[t]
    \centering
    \includegraphics[scale=0.7]{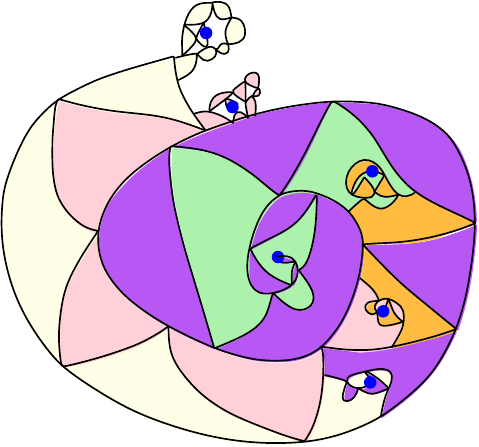}
    \caption{An illustration of five cells $\eta([x-\frac{1}{n},x])$ consecutively hit by $\eta$ in the regime $\gamma\in(\sqrt2,2)$, where the blue points represents the first and last points on the cell visited by $\eta$. This figure is slightly misleading since the set of pinch points where the left boundary and right boundary of each cell meet is in fact uncountable. Two cells are adjacent if and only if they share a connected boundary arc. In particular, the green cell and the pink cell are not considered to be adjacent. }
    \label{fig:mated-CRT3}
\end{figure}

\subsection{Main results}\label{subsec:intro-mainresult}

The main result of this paper, roughly speaking, is that if one considers the circle packing and Riemann uniformization embedding of the maps $\cG^{1/n},\cG^{\disk,1/n}$ and $\cG^{\sph,1/n}$, then for large $n$, the circle packing and Riemann uniformization embedding are close to the a priori embedding. Note that circle packing is defined for simple planar maps whereas the mated-CRT map has double edges. To resolve this, one has to turn   a triangulation $T$ with multiple edges into a triangulation that is simple. There are multiple  ways to do so, and   we are going to use the following natural operator $\Phi_{\rm simple}$ that collapses multiple edges of $T$ as below.   Let $T$ be a disk triangulation or an infinite plane triangulation. For each pair of vertices $(v_1,v_2)$ of $T$ such that $v_1,v_2$ are connected by edges $e_1,...,e_k$, we remove the vertices and edges enclosed by $e_1,...,e_k$, and further remove the edges $e_2,...,e_k$ from $T$. We let $\Phi_{\rm simple}(T)$ be the remaining map. Note that if $T$ is a disk triangulation  {with at least three vertices on the boundary}, then $\Phi_{\rm simple}(T)$  remains to be a disk triangulation, while if $T$ is an infinite plane triangulation, then $\Phi_{\rm simple}(T)$ is either an infinite plane triangulation or empty. If $T$ is a sphere triangulation with $n$ vertices, suppose for each pair of vertices $(v_1,v_2)$ of $T$ such that $v_1,v_2$ are connected by edges $e_1,...,e_k$, one of the connected components of $T\backslash\{e_1,...,e_k\}$ has at least $3n/4$ vertices; in this case, we further remove all the vertices and edges in the other connected components as well as $e_2,...,e_k$ and let $\Phi_{\rm simple}$ be the remaining map. If this is not the case, then we set $\Phi_{\rm simple}(T)=\emptyset$. If $T$ is a sphere triangulation, then  $\Phi_{\rm simple}(T)$ is either a sphere triangulation or empty.  Observe that for a disk, sphere or infinite plane triangulation $T$ such that $\Phi_{\rm simple}(T)\neq\emptyset$, if  $v\notin \cV\Phi_{\rm simple}(T)$, then there exists a unique pair of vertices $(v_1,v_2)\in \cV\Phi_{\rm simple}(T) $ such that the vertex $v$ is removed when we collapse multiple edges between $v_1$ and $v_2$ as above. See also Figure~\ref{fig:mated-CRT2} for an example for the map $\cG^{\disk,1/n}$ in Figure~\ref{fig:mated-CRT1}.  {Further note that the operation $\Phi_{\rm simple}$ does not create new edges, and two vertices of $\Phi_{\rm simple}(T)$ are connected by an edge in $\Phi_{\rm simple}(T)$ if and only if they are connected by an edge in $T$.}

\begin{figure}[t]
    \centering
    \includegraphics[scale=0.7]{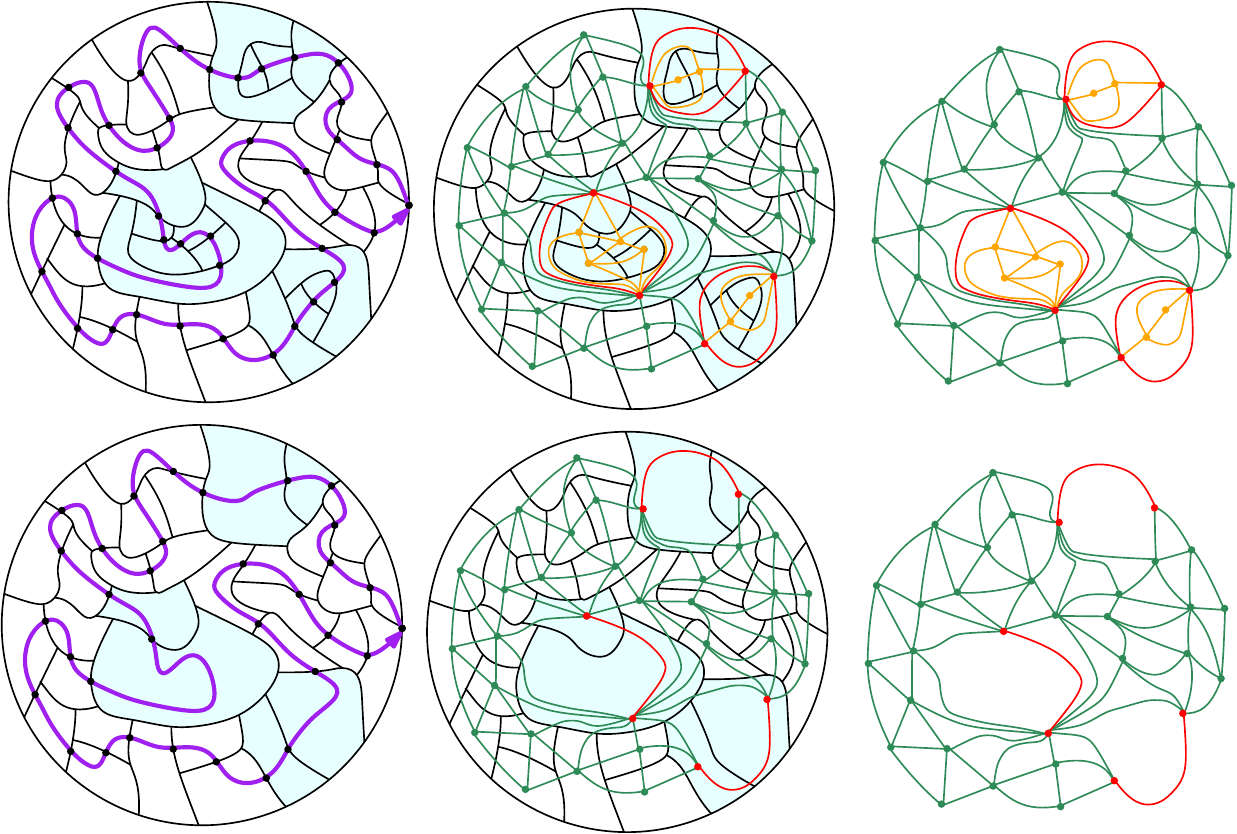}
    \caption{An illustration of the operator $\Phi_{\rm simple}$ for the map in Figure~\ref{fig:mated-CRT1}. \textbf{Top:} The cells that form double edges or are contained in double edges are colored in light blue in the left panel. In the middle and right panel, the outermost double edges and their endpoints are marked in red, and the vertices and edges enclosed be the outermost double edges are marked in orange. \textbf{Bottom:} In the left panel, for two cells such that there are two edges connecting them,  we merge the content between them with the cell that is visited later by the space-filling curve. In the middle and right panel, for each pair of the red double edge, we remove all the orange edges and vertices between them and also one of the red edges. This turns the map into a simple disk triangulation.}
    \label{fig:mated-CRT2}
\end{figure}

\begin{definition}\label{def:pack-non-simple}
   Let $T$ is a disk, sphere or infinite plane triangulation with  multiple edges. If $\Phi_{\rm simple}(T)$ is empty, then we set the circle packing for $T$ to be empty. If $\Phi_{\rm simple}(T)$ is not empty, we define circle packings for $T$ as below. Let $\wt\cP = \{\wt D_v:v\in \cV\Phi_{\rm simple}(T) \}$ be a circle packing for $\Phi_{\rm simple}(T)$. For a vertex $v\in\cV T$, if $v\in \cV\Phi_{\rm simple}(T) $, then we let $D_v = \wt D_v$. If $v\notin \cV\Phi_{\rm simple}(T)$, let $(v_1,v_2)\in \cV\Phi_{\rm simple}(T) $  be the unique pair of vertices such that the vertex $v$ is removed when we collapse multiple edges between $v_1$ and $v_2$. Then we let $D_v$ be the tangent point between the disks $D_{v_1}$ and $D_{v_2}$, and we view $D_v$ as a degenerate disk. We call $\cP:=\{D_v:v\in\cV T\}$ a circle packing for $T$. If $T$ is an infinite triangulation, we say $T$ is circle packing parabolic (resp.\ hyperbolic) if $\Phi_{\rm simple}(T)$ is circle packing parabolic (resp.\ hyperbolic). 
\end{definition}

We are ready to state our main results. In the rest of the paper, we will always assume $\gamma\in (0,2)$ and $\kappa=16/\gamma^2$.

\begin{theorem}\label{thm:pack-cone}
  Let   $Z=(L,R)$ be a Brownian motion with variance in~\eqref{eq:variance}. Let $(\bbC,h,0,\infty)$ be the $\gamma$-quantum cone under circle average embedding (see Definition~\ref{def:alpha-cone}) and $\eta$ be the space-filling $\SLE_\kappa$ in $\wh\bbC$ from $\infty$ to $\infty$ independent from $h$ and parameterized by the $\gamma$-LQG mass $\mu_h$, such that $(h,\eta)$ is  determined by $Z$ in the sense of~\cite[Theorem 1.9 and Theorem 1.11]{DMS14}. Let $\cG^{1/n}$ be the mated-CRT map defined by $Z$. Then $\cG^{1/n}$ is a.s.\ circle packing parabolic, and there exists a circle packing $\cP^{n}$ for $\cG^{1/n}$ such that the following is true:
  \begin{enumerate}
      \item For each $r>0$, the maximal diameter of disks in $\cP^{n}$ intersecting $B(0;r)$ tends to 0 in probability;
      \item For each $x\in\frac{1}{n}\bbZ$, we write $o_n(x)$ for the center of the disk for $x$ in $\cP^{n}$. Then 
      \begin{equation}
          \sup\big\{|\eta(x)-o_n(x)|: x\in\frac{1}{n}\bbZ, \eta(x)\in B(0;r)  \big\} \to 0 \text{ in probability as } n\to\infty;
      \end{equation}
      \item Let $\mu^n$ be the measure on $\bbC$ that assigns mass $\frac{1}{n}$ to each of the point $o_n(x)$ for  $x\in\frac{1}{n}\bbZ$, where duplication is counted (i.e., if   $\#\{x\in \frac{1}{n}\bbZ:o_n(x)=z\}=k$, then $\mu^n(\{z\})=k/n$). Then for each $r>0$, the Prokhorov distance between $\mu^n|_{B(0;r)}$ and $\mu_h|_{B(0;r)}$ tends to 0 in probability;
      \item For every $a>0$, the space-filling path on the embedded mated-CRT map which comes from the left-right ordering of the vertices restricted to time $[-a,a]$ converges uniformly to $\eta|_{[-a,a]}$ in probability.
  \end{enumerate}
\end{theorem}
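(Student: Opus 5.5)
The plan is to deduce the statement from the general convergence theorem of~\cite{HY26} for circle packings of triangulations coming from ergodic scale-free environments in the sense of~\cite{GMSinvariance}. I will check that the a priori embedded mated-CRT map satisfies the hypotheses of that theorem, after handling the multiple edges through $\Phi_{\rm simple}$.

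\textbf{Step 1: ergodic scale-free structure.} The embedded cell configuration $\{\eta([x-\frac1n,x])\}$ for the circle-average embedded $\gamma$-quantum cone is, as shown in~\cite{GMSinvariance}, a random planar map embedded in $\bbC$ satisfying two properties: translation invariance modulo scaling (after re-rooting at a uniform Lebesgue point) and ergodicity under scaling. These follow from the scale invariance of the quantum cone, namely that $h(\cdot/C)+Q\log C^{-1}$ agrees in law with $h$ up to rescaling, together with the independence of the whole-plane space-filling $\SLE_\kappa$. Moment bounds on cell diameters and degrees are also available from~\cite{GMS21tutte,GMSinvariance}. I will quote these and check that the remaining inputs required in~\cite{HY26} hold, namely finite expected degree of the root cell and a bound on the size of cells relative to their neighbors.

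\textbf{Step 2: simplification.} The conclusions of~\cite{HY26} are stated for simple triangulations, so I pass to $\Phi_{\rm simple}(\cG^{1/n})$. I need to show that the regions collapsed by $\Phi_{\rm simple}$ have Euclidean diameter tending to $0$ uniformly on $B(0;r)$ in the a priori embedding. Geometrically, a double edge between $x_1,x_2$ corresponds to two cells sharing boundary arcs on both the left and right sides. The cells enclosed by such a pair form a region bounded by two cells, so its diameter is bounded by the sum of two cell diameters plus the diameter of a "bubble" cut off by them. I will bound this using the SLE estimate that a region separated from $\infty$ by two small cells is small: its complement contains a large part of the curve, and I use quantum mass/diameter estimates for $\eta$ segments. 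This also gives that $\Phi_{\rm simple}(\cG^{1/n})$ is nonempty, that its a priori embedding still fits the framework of Step 1, and that the collapsed vertices are mapped to tangency points within $o(1)$ of their a priori positions.

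\textbf{Step 3: apply~\cite{HY26}.} From~\cite{HY26} I get parabolicity, via recurrence of the random walk on the mated-CRT map established in~\cite{GMS21tutte}. I also get the existence of a normalized packing $\cP^n$ whose centers are uniformly close to the a priori positions on compacts after fixing the Möbius (affine) normalization using two vertices. Items 1 and 2 follow from this. Item 3 follows from item 2 together with the fact that the a priori point measure, $1/n$ per cell, converges to $\mu_h$: each cell has $\mu_h$-mass $1/n$ and cells have vanishing diameter. Item 4 follows from item 2 and the continuity of $\eta$, since consecutive points $o_n(x)$ track $\eta(x)$ and $\eta$ is uniformly continuous on $[-a,a]$.

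\textbf{Main obstacle.} I expect the hardest part to be the normalization. The result of~\cite{HY26} is likely stated up to an affine map, or along a subsequence of scales. Showing that the affine normalization can be chosen so the error tends to $0$ in probability at the fixed scale $n$ requires matching the scale-$1/n$ map against the scale-invariant environment, via the coupling $\cG^{1/n}\leftrightarrow$ rescaled $\cG^{1}$ from the scaling of $(L,R)$. I would first try to handle this by fixing the packing so that two a priori vertices near $0$ and near $1$ are sent to their $\eta$-images, and then using the uniform estimate to control everything else.
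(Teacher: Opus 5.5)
There is a genuine gap at your Step~1, and it is the crux of the problem. The cell configuration $\{\eta([x-\frac1n,x]):x\in\frac1n\bbZ\}$ of the circle-average embedded $\gamma$-quantum cone does \emph{not} satisfy the hypotheses of Theorem~\ref{thm:CellSystemPack}. The paper says explicitly that it fails the first two conditions of Definition~\ref{def:CellConfigConstraint}.

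The reason is that the origin is a distinguished point, for two reasons:
\begin{enumerate}
\item $h$ has a $\gamma$-log singularity at $0$, so $\mu_h(B(0;\rho))$ decays roughly like $\rho^{2-\gamma^2/2}$, rather than like $\rho^{2+\gamma^2/2}$ near a Lebesgue-typical point.
\item Since there is no random offset, $0=\eta(0)$ lies on the common boundary of $\eta([-\frac1n,0])$ and $\eta([0,\frac1n])$.
\end{enumerate}
Re-rooting at a Lebesgue-typical $z_j$ and rescaling therefore produces a $0$-quantum-cone-type picture, not $\cH$. The scale invariance in Lemma~\ref{lem:Rb} concerns dilations about $0$ only and gives no translation invariance. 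This is exactly why \cite{GMS21tutte} and this paper work under $\bbP_\rmE$ (the $0$-quantum cone with a uniform offset $\theta$). There Theorem~\ref{thm:CellSystemPack} applies to $\wh\cH^\rmE$ and yields Theorem~\ref{thm:circle-matedCRT-0-cone}.

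Your proposal has no step taking you from that $0$-cone result to the $\gamma$-cone, and this is the real difficulty. Local absolute continuity ($\mu_h$-weighted re-rooting of the $0$-cone looks like the $\gamma$-cone near the root) only controls the law of a finite piece $\eta|_{[a,b]}$. A circle packing of the infinite map, however, depends on the entire map, and $\Phi_{\rm simple}$ is itself non-local.

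The paper closes this gap in four steps:
\begin{enumerate}
\item It builds local disk triangulations $\Phi^{\theta,n}_{[a,b],t}$, measurable with respect to $(\eta|_{[a,b]},\theta)$. With high probability their collapsed versions are submaps of the collapsed whole-plane map.
\item It compares their packing in $\bbD$ with the restriction of the whole-plane packing. This uses the new global-uniform He--Schramm result, Theorem~\ref{thm:He-Schramm-extension-1}, which requires convergence of the boundary curves.
\item It transfers to the $\gamma$-cone via the weighted measure $\bbP_*$, obtaining Proposition~\ref{thm:circle-quantum-cell-0}.
\item For the statement itself, it zooms in near $0$ using Lemma~\ref{lem:Rb}. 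It then matches the packing of $\eta([-1,1])$ at scale $jn$ with the whole-plane packing of $\cG^{1/n}$ via Theorem~\ref{thm:He-Schramm-sphere}.
\end{enumerate}
Compared with this, the normalization issue you single out as the main obstacle is routine.

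There is also a second, smaller gap. The hypothesis of \cite{HY26} is $\bbE[(\diam(H_0)^2/\area(H_0))\deg(H_0)^4]<\infty$, not finite expected degree. In the $0$-cone setting, \cite{GMS21tutte} only gives $\bbE[\deg(H_0)^p]<\infty$ for $p<4/\gamma^2$, which is insufficient when $\gamma\ge1$. The paper obtains all degree moments (Proposition~\ref{prop:0-cone-deg-1}) from exponential tails under $\bbP_\rmQ$, via the comparison in Proposition~\ref{prop:0-cone-gamma-compare}. It also proves the $\diam^2/\area$ moments for the collapsed cells separately. Your Step~2 handling of $\Phi_{\rm simple}$ is in the right spirit.
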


\begin{theorem}\label{thm:pack-disk}
     Let $Z^\disk=(L^\disk,R^\disk):[0,1]\to\bbR_+^2$ be the Brownian motion in~\eqref{eq:variance} starting from $(0,1)$ and conditioned to stay in the  positive quadrant and hit $(0,0)$ at time 1. Let $(\bbD,h^\disk,\eta^\disk)$ be the counterclockwise space-filling $\SLE_\kappa$ decorated quantum disk determined by $Z^\disk$ in the sense of~\cite[Theorem 1.1]{AG19}, where the curve $\eta^\disk$ is parameterized by the $\gamma$-LQG measure $\mu_{h^\disk}$ induced by $h^\disk$ and embedded such that $\eta^\disk(0)=\eta^\disk(1)=1$ and $\eta^\disk(\frac12)=0$.  Let $\cG^{\disk,1/n}$ be the mated-CRT map defined by $Z^\disk$. Then there exists a circle packing $\cP^{\disk,n}$ for $\cG^{\disk,1/n}$  such that the following is true:
     \begin{enumerate}
         \item The maximal diameter of disks in $\cP^{\disk,n}$ tends to 0 in probability;
         \item For each $x\in\frac{1}{n}\bbZ\cap (0,1]$, we write $o_n(x)$ for the center of the disk for $x$ in $\cP^{\disk,n}$. Then 
      \begin{equation}\label{eq:thm:pack-disk-0}
          \sup\big\{|\eta^\disk(x)-o_n(x)|: x\in\frac{1}{n}\bbZ\cap (0,1]   \big\} \to 0 \text{ in probability as } n\to\infty;
      \end{equation}
      \item Let $\mu^{\disk,n}$ be the measure on $\ol\bbD$ that assigns mass $\frac{1}{n}$ to each of the point $o_n(x)$ for  $x\in\frac{1}{n}\bbZ\cap(0,1]$, where duplication is counted. Then   the Prokhorov distance between $\mu^{\disk,n} $ and $\mu_{h^\disk}$ tends to 0 in probability;
      \item The space-filling path on the embedded mated-CRT map which comes from the left-right ordering of the vertices converges uniformly to $\eta^\disk$ in probability.
     \end{enumerate}
\end{theorem}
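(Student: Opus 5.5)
We will deduce Theorem~\ref{thm:pack-disk} from the whole-plane result Theorem~\ref{thm:pack-cone}, together with the global-uniform extensions of He--Schramm (Theorem~\ref{thm:He-Schramm-disk}) and a local absolute continuity between the quantum disk and the quantum cone. The idea is that the bulk of the disk map looks locally like a piece of the cone map, where the circle packing is already known to be close to the a priori embedding. Comparing circle packings in different domains then transfers this to the disk packing, with the conformal map to $\bbD$ as the limiting correction. Boundary effects are handled separately by a uniform smallness estimate.

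\textbf{Step 1 (local comparison with the cone).} Fix $\delta>0$. Using the Markov property of the Brownian excursion $Z^\disk$ and its local absolute continuity with respect to $Z$ away from the endpoints, we couple as follows. With probability at least $1-\delta$, the cells $\eta^\disk([x-\frac1n,x])$ for $x\in[\delta,1-\delta]$ meeting a compact set $K_\delta\subset\bbD$ coincide, after a conformal map, with cells of a whole-plane configuration. Here $K_\delta$ is chosen so that $\mu_{h^\disk}(\bbD\setminus K_\delta)<\delta$. The sub-triangulation $T_\delta$ of $\cG^{\disk,1/n}$ spanned by these cells then has the law, up to bounded Radon--Nikodym derivative, of a submap of $\cG^{1/n}$. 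By Theorem~\ref{thm:pack-cone}, the packing of the ambient cone map restricted to $T_\delta$ is uniformly close to the a priori embedding and has small disks. We also need $\Phi_{\rm simple}$ to be local in this comparison. This holds because outermost double edges are macroscopically small away from the boundary, which follows from the mated-CRT estimates.

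\textbf{Step 2 (transfer to the disk packing).} Let $\cP^{\disk,n}$ be the packing of $\Phi_{\rm simple}(\cG^{\disk,1/n})$ in $\bbD$, normalized by a M\"obius map so that the vertex containing $\eta^\disk(1/2)$ sits near $0$ and the boundary vertex near $1$ sits near $1$. The union $U_n$ of the cone-packing carriers of the boundary of $T_\delta$ bounds a Jordan domain close to $\Omega_\delta:=\phi(K_\delta)$, where $\phi$ is the conformal map from Step 1. We then apply the domain-comparison principle for circle packings (Theorem~\ref{thm:He-Schramm-extension-1}). The packing of $\cG^{\disk,1/n}$ restricted to $T_\delta$ and the cone packing restricted to $T_\delta$ differ by a map that converges uniformly on compacts to the conformal map $\Omega_\delta\to$ (image region), with normalization fixed by two points. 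By conformal invariance of the a priori embedding, this limit is $\phi^{-1}$ up to an error vanishing as $\delta\to0$. This yields items 1 to 3 restricted to $x\in[\delta,1-\delta]$ with $\eta^\disk(x)\in K_\delta$.

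\textbf{Step 3 (boundary, global uniformity, and items 3--4).} This step is the main obstacle. To upgrade to the supremum over all $x$ in \eqref{eq:thm:pack-disk-0}, we must show two things. First, disks of vertices near $\partial\bbD$, or with times near $0$ or $1$, are uniformly small. Second, such vertices are packed near the corresponding points $\eta^\disk(x)$. We will try Theorem~\ref{thm:He-Schramm-disk}, which gives global uniform convergence provided the discrete boundary is well behaved. Its hypotheses will be verified via a Ring-Lemma-free argument: extremal length and resistance bounds for the mated-CRT map near the boundary, together with the fact that the boundary length process gives no macroscopic boundary arc to a single cell. These combine with an annulus-crossing estimate showing that a macroscopically large disk would force a small-area set of cells to disconnect a macroscopic annulus, which has vanishing probability. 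Once the maximal disk diameter tends to $0$ and uniform closeness holds, item 3 follows because $\mu^{\disk,n}$ is the pushforward of counting measure via $o_n$ while $\mu_{h^\disk}$ is the pushforward of Lebesgue measure via $\eta^\disk$. Item 4 follows from item 2 and continuity of $\eta^\disk$, since consecutive vertices are adjacent and the disks are small. Finally, we let $\delta\to0$.
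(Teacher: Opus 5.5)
There is a genuine gap, and it sits where you yourself place the main obstacle.

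\textbf{The problem with spatial localization.} You localize in space: $T_\delta$ is spanned by cells meeting a compact $K_\delta\subset\bbD$. So $T_\delta$ contains no boundary arc of $\cG^{\disk,1/n}$, and its interface with the rest of the map is macroscopic. With that choice Step 2 cannot be carried out.
\begin{itemize}
\item Theorems~\ref{thm:He-Schramm-extension-0} and~\ref{thm:He-Schramm-extension-1} require the boundary disks of \emph{both} circle embeddings to accumulate on the boundaries of \emph{given} Jordan domains (conditions (ii) and (iv)). For the restriction of the disk packing to $T_\delta$ you have no a priori information on where its boundary disks lie; this is essentially items 1--2 of the theorem. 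So these hypotheses cannot be checked, and the identification of the limit with $\phi^{-1}$ has no basis. In effect Step 2 silently depends on Step 3.
\item Theorem~\ref{thm:He-Schramm-disk} does not apply to your $T_\delta$ either. It needs the submap to contain a boundary arc $\ell^n$ of the big triangulation, and the separating path $\gamma^n$ to have small image in the packing of the submap in $\bbD$.
\item The extremal-length/annulus-crossing program in Step 3 is not yet an argument.
\end{itemize}
Step 1 has a separate problem. The conclusion of Theorem~\ref{thm:pack-cone} concerns the packing of the whole cone map, which is not a function of the piece, since circle packing is non-local. So it cannot be transported by absolute continuity. Moreover, absolute continuity does not give a coupling in which the pieces coincide with probability $1-\delta$.

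\textbf{The missing idea: localize in time, with a piece-measurable statement.}
\begin{itemize}
\item By Lemma~\ref{lem:QD-AC}, the curve-decorated surface $\eta^\disk([\e,1-\e])$ is absolutely continuous with respect to $\eta^\rmQ([\e,1-\e])$. This holds even though it covers all of $\ol\bbD$ except a small neighborhood of $1$ and touches most of $\partial\bbD$, because as a quantum surface it only sees $Z^\disk|_{[\e,1-\e]}$.
\item The right cone-side input is Proposition~\ref{thm:circle-quantum-cell-0}: the packing \emph{in $\bbD$} of the localized map $\Phi^{0,n}_{[a,b],t}$ is close to $\psi_{[a,b],t}\circ\eta$. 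This statement is measurable with respect to the piece. It is proved by comparison with the restriction of the ambient packing via Theorem~\ref{thm:He-Schramm-extension-1}. That comparison is legitimate there, because the restricted ambient packing is already known to have boundary near $\partial D_{[a,b],t}$.
\item Transported to the disk, it gives a packing of $\wh\cG^{\disk,1/n}_\e$ in $\bbD$ with small disks, close to $\psi^\disk_\e\circ\eta^\disk$. Here $\psi^\disk_\e\to\mathrm{id}$ by Proposition~\ref{prop:boundary-perturb}. This gives hypothesis (i) of Theorem~\ref{thm:He-Schramm-disk}.
\item The interface arc $\gamma^{n,\e}$ consists of cells meeting $\bbD\setminus D^\disk_\e$, which is a small neighborhood of $1$. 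Hence its disks have small union, which is hypothesis (ii).
\item Theorem~\ref{thm:He-Schramm-disk} then yields small disks and global uniform closeness for the packing of the full map.
\end{itemize}
No estimates on the mated-CRT map near $\partial\bbD$ are needed.
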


For $z,w\in\wh\bbC = \bbC\cup\{\infty\}$, we write $d_\sph(z,w)$ for the spherical distance between $z$ and $w$.
\begin{theorem}\label{thm:pack-sph}
    Let $Z^\sph=(L^\sph,R^\sph):[0,1]\to\bbR_+^2$ be the Brownian motion in~\eqref{eq:variance} starting from $(0,0)$ and conditioned to stay in the  positive quadrant and hit $(0,0)$ at time 1. Let $(\wh\bbC,h^\sph,\eta^\sph)$ be the counterclockwise space-filling $\SLE_\kappa$ loop decorated quantum sphere determined by $Z^\sph$ in the sense of~\cite[Theorem 1.1]{MS-mot-finite}, where the curve $\eta^\sph$ is parameterized by the $\gamma$-LQG measure $\mu_{h^\sph}$ induced by $h^\sph$ and embedded such that $\eta^\sph(0)=\infty$, $\eta^\sph(\frac12)=0$ and $\eta^\sph(\frac14)=1$.  Let $\cG^{\sph,1/n}$ be the mated-CRT map defined by $Z^\sph$. Then $\Phi_{\rm simple}(\cG^{\sph,1/n})$ is non-empty with probability tending to 1 as $n\to\infty$, and there exists a circle packing $\cP^{\sph,n}$ for $\cG^{\sph,1/n}$  such that the following is true:
     \begin{enumerate}
         \item The maximal spherical diameter of disks in $\cP^{\sph,n}$ tends to 0 in probability;
         \item For each $x\in\frac{1}{n}\bbZ\cap (0,1]$, let $o_n(x)$ be the Euclidean center of the disk for $x$ in $\cP^{\sph,n}$ if it does not contain $\infty$ and set $o_n(x)=\infty$ otherwise. Then 
      \begin{equation}
          \sup\big\{d_{\sph}(\eta^\sph(x),o_n(x)): x\in\frac{1}{n}\bbZ\cap (0,1]   \big\} \to 0 \text{ in probability as } n\to\infty;
      \end{equation}
      \item Let $\mu^{\sph,n}$ be the measure on $\wh\bbC$ that assigns mass $\frac{1}{n}$ to each of the point $o_n(x)$ for  $x\in\frac{1}{n}\bbZ\cap(0,1]$, where duplication is counted. Then   the Prokhorov distance between $\mu^{\sph,n} $ and $\mu_{h^\sph}$ tends to 0 in probability;
      \item The space-filling path on the embedded mated-CRT map which comes from the left-right ordering of the vertices converges uniformly to $\eta^\sph$ in probability under the spherical metric.
     \end{enumerate}
\end{theorem}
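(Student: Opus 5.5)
We deduce Theorem~\ref{thm:pack-sph} from the whole-plane result, Theorem~\ref{thm:pack-cone}, by local absolute continuity, and then globalize with a He--Schramm type extension in the uniform topology on $\wh\bbC$ (Theorem~\ref{thm:He-Schramm-sphere}). The plan has four steps.

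\textbf{Step 1: make the map simple.} We show $\Phi_{\rm simple}(\cG^{\sph,1/n})\neq\emptyset$ with high probability. Any multiple edge $\{e_1,\dots,e_k\}$ between $x_1,x_2$ corresponds to a pair of cells whose left and right boundaries both share arcs. Such a pair separates $\wh\bbC$ into the region enclosed between the two cells and its complement. By the Brownian description, the enclosed region is an interval of times on which $L$ and $R$ stay above their values at the endpoints, so it is encoded by a quadrant sub-excursion of $Z^\sph$. We need that, with high probability, every such pair has one side of mass at least $3/4$. The only way this fails is if some pair of $\frac1n$-cells pinches off two macroscopic pieces. For the space-filling SLE on a quantum sphere this happens with probability $o(1)$: it would require a cut of quantum boundary length $O(n^{-1/2+o(1)})$ separating two macroscopic regions, and this is controlled by standard cone-excursion estimates for $Z^\sph$. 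Along the way we also record that the total mass removed by $\Phi_{\rm simple}$ tends to $0$, and that removed vertices map into a set of small diameter.

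\textbf{Step 2: local comparison with the cone.} Fix a small $\delta>0$. With high probability there is a Jordan domain $U\ni 0$ containing $\eta^\sph([\delta,1-\delta])$ but bounded away from $\infty=\eta^\sph(0)$. The field $h^\sph$ restricted to a neighborhood of $\ol U$, re-embedded by a random affine map so that it looks like a circle-average embedded cone, is mutually absolutely continuous with the corresponding restriction of the quantum cone, with Radon--Nikodym derivative bounded in $L^p$. Consequently, the submap of $\cG^{\sph,1/n}$ made of cells in $U$ is absolutely continuous with respect to the corresponding submap of $\cG^{1/n}$. Theorem~\ref{thm:pack-cone} transfers to this disk triangulation, but only after comparing the packing of the submap with the whole-plane packing. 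For this we use the comparison of circle packings in different domains (Section~\ref{subsubsec:intro-conf}), or equivalently the disk results of \cite{HY26} for submaps of ergodic scale-free environments. The outcome is a packing of the submap of $\cG^{\sph,1/n}$ in $\ol U$ with small disks, centers close to $\eta^\sph$, and the right measure.

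\textbf{Step 3: globalize.} Take the packing $\cP^{\sph,n}$ of $\Phi_{\rm simple}(\cG^{\sph,1/n})$ in $\wh\bbC$, normalized by a M\"obius map so that the vertices at times near $0$, $\frac14$, $\frac12$ sit near $\infty$, $1$, $0$. Here we apply Theorem~\ref{thm:He-Schramm-sphere}: packings that approximate a conformal embedding on a large subdomain extend to a global uniform approximation on $\wh\bbC$, provided the complementary piece has small extremal length or diameter control. The complement here is the cells near $\eta^\sph(0)$, which have small LQG mass and shrink in spherical diameter as $\delta\to0$. This gives item (1) and item (2), and item (2) yields item (4) directly. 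Item (3) follows from (2) together with the fact that $\mu_{h^\sph}$ has mass $\frac1n$ per cell. The mass lost in Step 1 is negligible.

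\textbf{Main obstacle.} I expect the hardest step to be uniform control near $\infty$: showing that no disk of $\cP^{\sph,n}$ near the marked point $\eta^\sph(0)$ has macroscopic spherical diameter. The plan is to bound the number of disks separating the neighborhood of $\infty$ from the bulk, via a vertex extremal length lower bound from the mated-CRT estimates, and then feed that bound into the hypotheses of Theorem~\ref{thm:He-Schramm-sphere}.
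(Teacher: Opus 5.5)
Your overall architecture is the paper's: local absolute continuity with the $\gamma$-cone, comparison of a local packing with a global one via the He--Schramm extensions, and globalization by Theorem~\ref{thm:He-Schramm-sphere}. Starting from Theorem~\ref{thm:pack-cone} rather than Proposition~\ref{thm:circle-quantum-cell-0} is also legitimate, since that theorem does not depend on the sphere case.

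\textbf{The gap is in Step~2.} You never deal with the non-locality of $\Phi_{\rm simple}$, which is the central technical issue. The local object has to meet two competing requirements.

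(a) It must be a function of the local curve-decorated quantum surface, equivalently of $Z^\sph|_{[\e,1-\e]}$. The transferred event must also be conformally invariant. A suitable event is ``the packing in $\bbD$ of the local simple triangulation is close to $\psi\circ\eta^\sph$ up to rotation''. ``A packing in $\ol U$ with centers close to $\eta^\sph$'' is not such an event.

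(b) Its double-edge collapse must be a submap of $\Phi_{\rm simple}(\cG^{\sph,1/n})$ with induced edges. Otherwise the restriction of the global packing is not a circle embedding of the same triangulation. Then neither Theorem~\ref{thm:He-Schramm-extension-1} (in the cone) nor Theorem~\ref{thm:He-Schramm-sphere} (on the sphere) applies.

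Your ``cells in $U$'', with random $U\supseteq\eta^\sph([\delta,1-\delta])$, violates (a), because $U$ picks up cells from times near $0$ and $1$. More seriously, naive local collapsing of whatever piece you cut out violates (b). A double edge with one cell outside the piece and one inside removes vertices of the piece globally but not locally. Such straddling double edges occur at the edge of the piece with probability bounded below in $n$, since a positive fraction of vertices near any time is swallowed by $\Phi_{\rm simple}$. There is a second failure mode: a double edge with both cells inside that separates the bulk from the neighborhood of $\infty$ gets collapsed on the bulk side by the disk-triangulation rule. That case is ruled out with high probability by the smallness of collapsed cells, which your Step~1 essentially provides. The straddling case is not.

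\textbf{How the paper handles it.} It uses $\Phi^{0,n}_{[\e,1-\e],1/2}$, keeping only times in $(t^-,t^+]$. Here $t^-$ (resp.\ $t^+$) is the last (resp.\ first) cell before (resp.\ after) time $1/2$ that meets both the left and right boundary of the window along nontrivial arcs. These two cells act as barriers: any double edge involving an interior cell has both of its cells inside. This gives three things.
\begin{itemize}
\item The collapsed local map is a submap of the collapsed global map once no collapsed cell is macroscopic (Lemma~\ref{lem:circle-pack-cell}(i), Lemmas~\ref{lem:QS-AC-1} and~\ref{lem:QS-AC-2}).
\item The local map is a function of $Z|_{[\e,1-\e]}$.
\item Its boundary loop converges to $\partial D_\e$ in the metric~\eqref{eq:curve-metric-0}, which is condition (iv) of Theorem~\ref{thm:He-Schramm-extension-1}.
\end{itemize}
With this choice, the absolute continuity you need is just that of $Z^\sph|_{[\e,1-\e]}$ with respect to $Z|_{[\e,1-\e]}$ (Lemma~\ref{lem:QS-AC}). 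Your field-level statement with an $L^p$ density is unjustified, especially since the normalization $\eta^\sph(1/4)=1$ depends on the curve.

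\textbf{Your ``main obstacle'' is not one.} The hypotheses of Theorem~\ref{thm:He-Schramm-sphere} concern only the source packing. Take it to be the local packing in $\bbD$, rescaled by $r_\e$ so that the vertex nearest time $1/4$ is centered at $1$. Then $|r_\e|\to\infty$, its carrier exhausts $\bbC$, and $r_\e\psi_\e\to\mathrm{id}$ spherically by Proposition~\ref{prop:unif-sph}. The theorem then gives that the restriction of $\cP^{\sph,n}$ has spherically small disks and a carrier containing any fixed ball. So every remaining disk lies in a small spherical neighborhood of $\infty$, and no extremal-length estimate near $\infty$ is needed.
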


Next we state our main result for the Riemann uniformization embedding. For a disk, sphere or infinite plane triangulation $T$, we write $M(T)$ for the Riemann surface generated by $T$ as in Section~\ref{subsubsec:conf-embed}. With slight abuse of notation, we will also write $x\in\frac{1}{n}\bbZ$ for the vertex $x$ on the surface $M(T)$ when $T\in\{\cG^{1/n},\cG^{\disk,1/n},\cG^{\sph,1/n}\}.$

\begin{theorem}\label{thm:unif-cone}
    Let $(Z,h,\eta)$ and $\cG^{1/n}$ be as in Theorem~\ref{thm:pack-cone}. Then $M(\cG^{1/n})$ is a.s.\ parabolic, and there exists a conformal map $\varphi^n:M(\cG^{1/n})\to\bbC$ such that the following is true:
     \begin{enumerate}
      \item For each $R>0$, the maximal diameter of edges  $\varphi^n(\fre)$  intersecting $B(0;R)$ tends to 0 in probability;
      \item We have 
      \begin{equation}
          \sup\big\{|\eta(x)-\varphi^n(x)|: x\in\frac{1}{n}\bbZ, \eta(x)\in B(0;R)  \big\} \to 0 \text{ in probability as } n\to\infty;
      \end{equation}
      \item Let $\mu^n$ be the measure on $\bbC$ that assigns mass $\frac{1}{n}$ to each of the point $\varphi^n(x)$ for  $x\in\frac{1}{n}\bbZ$. Then for each $R>0$, the Prokhorov distance between $\mu^n|_{B(0;R)}$ and $\mu_h|_{B(0;R)}$ tends to 0 in probability;
      \item For every $a>0$, the space-filling path on the embedded mated-CRT map which comes from the left-right ordering of the vertices restricted to time $[-a,a]$ converges uniformly to $\eta|_{[-a,a]}$ in probability.
  \end{enumerate}
\end{theorem}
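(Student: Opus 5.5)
We would prove Theorem~\ref{thm:unif-cone} by reducing it to the general convergence result of~\cite{HY26} for the Riemann uniformization embedding of triangulations in ergodic scale-free environments. The mated-CRT map $\cG^{1/n}$ with its a priori embedding into $(\bbC,h,\eta)$ is the canonical example of such an environment in the sense of~\cite{GMSinvariance}. The cell configuration $\{\eta([x-\frac1n,x])\}$ is translation invariant modulo scaling, by the scale invariance of the $\gamma$-quantum cone and translation invariance of $\eta$ viewed from a uniformly chosen point. The required ergodicity and moment conditions on the cells (diameter, degree, and the Dirichlet energy bounds) were verified in~\cite{GMS21tutte,GMSinvariance}.

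\textbf{Step 1: parabolicity and existence of $\varphi^n$.} The first step is to check that $M(\cG^{1/n})$ is parabolic. We would use recurrence of simple random walk on $\cG^{1/n}$ (known from~\cite{GMS21tutte}) together with bounded-geometry-type comparisons. Alternatively, the criterion in~\cite{HY26} gives parabolicity directly in ergodic scale-free environments. We then take $\varphi^n: M(\cG^{1/n})\to\bbC$ conformal, normalized so that $\varphi^n$ fixes the vertex $0$ at $0$ and with a scaling fixed by a second vertex, or by matching the normalization of the a priori embedding.

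\textbf{Step 2: comparison with the a priori embedding.} The core step is to apply the main theorem of~\cite{HY26}. It states that for such environments the uniformization embedding is close, up to a global affine map, to the a priori embedding $x\mapsto\eta(x)$ uniformly on compacts, with maximal edge diameters tending to $0$. Here one subtlety arises: the theorem in~\cite{HY26} is formulated for the environment seen at a fixed scale, whereas $\cG^{1/n}$ corresponds to zooming in by $n$. We would handle this via the scaling property of the quantum cone. Rescaling $h$ by the circle-average embedding at the scale where the mass is $n^{-1}$ gives an equivalent environment, so the $n\to\infty$ limit is the large-scale limit in~\cite{HY26}.

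\textbf{Step 3: the affine normalization.} Next we must show that the affine map is close to the identity, i.e.\ can be absorbed into the choice of $\varphi^n$. We would compose $\varphi^n$ with the affine map itself, which is permitted since the theorem only asserts existence of some $\varphi^n$. This yields items 1 and 2. Multiple edges of $\cG^{1/n}$ cause no issue for $M(T)$ beyond the construction, since Riemann uniformization only needs loopless triangulations.

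\textbf{Step 4: measure and path convergence.} Items 3 and 4 then follow from item 2 by soft arguments. The measure $\mu^n$ is the pushforward of counting measure, and $\mu_h$ is the pushforward under $\eta$ of Lebesgue measure. Uniform closeness of $\varphi^n(x)$ and $\eta(x)$ for $x$ with $\eta(x)\in B(0;R)$, together with the fact that $\eta$ is continuous and $\eta^{-1}(B(0;R))$ is a.s.\ bounded, gives the Prokhorov bound. Item 4 follows similarly: the piecewise path through $\varphi^n(x)$ is uniformly close to $\eta$ on $[-a,a]$, using edge diameters tending to $0$ and uniform continuity of $\eta$ on $[-a,a]$.

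\textbf{Main obstacle.} We expect the main obstacle to be verifying that the hypotheses of~\cite{HY26} hold uniformly across scales for the mated-CRT map, in particular the tail bounds for cell degrees and diameters needed to control the Riemann surface distortion near high-degree vertices. We would try first to import these estimates directly from~\cite{GMS21tutte} rather than reprove them.
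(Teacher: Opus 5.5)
\textbf{The main step fails.} You cannot apply Theorem~\ref{thm:invariance-uniformization} from \cite{HY26} to the $\gamma$-quantum cone configuration $\{\eta([x-\frac1n,x]):x\in\frac1n\bbZ\}$, because it violates Conditions 1 and 2 of Definition~\ref{def:CellConfigConstraint}. Those conditions concern re-centering at \emph{Lebesgue}-typical points. Your justification uses re-rooting at a $\mu_h$-typical point (stationarity of $Z$), and that does preserve the $\gamma$-cone. But a Lebesgue-typical point is not $\gamma$-thick, so re-centering there and rescaling yields the $0$-quantum cone configuration in the limit, not the original one. The deterministic vertex $0$ with $\eta(0)=0$ is a further obstruction.

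This is why the paper applies \cite{HY26} only to $\cH^\rmE$, which is built from the $0$-quantum cone with a uniform phase $\theta$. Even there, importing the moment bound \eqref{eq:thm-inv-circle} from \cite{GMS21tutte} fails for $\gamma\ge 1$. That paper gives $\bbE[\deg(H_0)^p]<\infty$ only for $p<4/\gamma^2$, while \eqref{eq:thm-inv-circle} needs slightly more than a fourth moment of the degree. The paper supplies all degree moments (Proposition~\ref{prop:0-cone-deg-1}) by proving an exponential degree tail under $\bbP_\rmQ$ and transferring it to $\bbP_\rmE$ via Proposition~\ref{prop:0-cone-gamma-compare}.

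\textbf{The transfer to the $\gamma$-cone is missing.} This step is not soft here, because the uniformization is non-local: local absolute continuity of the fields does not by itself control $\varphi^n$. The paper localizes in three stages.
\begin{enumerate}
\item It uniformizes onto $\bbD$ the submap carried by a cell $\eta([a,b])$. It compares this with the whole-plane $0$-cone uniformization by boundary perturbation (Proposition~\ref{prop:boundary-perturb}; Lemma~\ref{lem:unif-cell-1}, under the $\mu_h$-weighted law $\bbP_*$).
\item It transfers this to a $\gamma$-cone cell via \cite[Lemma A.10]{DMS14} (Proposition~\ref{thm:circle-quantum-cell-0-unif}).
\item It zooms in using Lemma~\ref{lem:Rb} and compares with the whole-plane map via Proposition~\ref{prop:unif-sph}.
\end{enumerate}

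\textbf{Step 1 also has a gap.} The degrees of $\cG^{1/n}$ are unbounded, so no bounded-geometry comparison turns recurrence of simple random walk into parabolicity of $M(\cG^{1/n})$. The parabolicity statement in \cite{HY26} applies only to the $0$-cone. The paper instead derives parabolicity from the cell result. If a conformal $\varphi:M(\cG^1)\to\bbD$ with $\varphi(0)=0$ existed, then $\varphi\circ(\varphi_j^1)^{-1}$ would be a conformal map $\bbD\to\bbD$ fixing $0$ whose derivative at $0$ tends to infinity, which is impossible.

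Your Step 4, deducing items 3 and 4 from items 1 and 2, is fine and matches the paper.
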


\begin{theorem}\label{thm:unif-disk}
     Let $(Z^\disk,h^\disk,\eta^\disk)$ and $\cG^{\disk,1/n}$ be as in Theorem~\ref{thm:pack-disk}. Then   there exists a conformal map $\varphi^{\disk,n}:M(\cG^{\disk,1/n})\to\ol\bbD$ such that the following is true:
     \begin{enumerate}
      \item The maximal diameter of edges  $\varphi^{\disk,n}(\fre)$   tends to 0 in probability;
      \item We have
      \begin{equation}
          \sup\big\{|\eta^\disk(x)-\varphi^{\disk,n}(x)|: x\in\frac{1}{n}\bbZ\cap(0,1]  \big\} \to 0 \text{ in probability as } n\to\infty;
      \end{equation}
      \item Let $\mu^{\disk,n}$ be the measure on $\ol\bbD$ that assigns mass $\frac{1}{n}$ to each of the point $\varphi^{\disk,n}(x)$ for  $x\in\frac{1}{n}\bbZ\cap (0,1]$. Then  the Prokhorov distance between $\mu^{\disk,n}$ and $\mu_{h^\disk}$ tends to 0 in probability;
      \item The space-filling path on the embedded mated-CRT map which comes from the left-right ordering of the vertices   converges uniformly to $\eta^\disk$ in probability.
  \end{enumerate}
\end{theorem}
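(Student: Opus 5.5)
We will deduce Theorem~\ref{thm:unif-disk} from the whole-plane statement, Theorem~\ref{thm:unif-cone}, in the same way that Theorem~\ref{thm:pack-disk} is obtained from Theorem~\ref{thm:pack-cone}. The link between the two settings is a local absolute continuity comparison combined with an extension of He--Schramm type convergence to the global uniform topology.

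\textbf{Step 1 (coupling).} First we couple the quantum disk $(\bbD,h^\disk,\eta^\disk)$ with the quantum cone picture. By the Markov property of space-filling $\SLE_\kappa$ on a $\gamma$-quantum cone (\cite{DMS14}), for a fixed time interval the region $\eta([0,t])$, conditioned on its boundary lengths, is a quantum disk-type surface. Hence $Z^\disk$ restricted to $[\epsilon,1-\epsilon]$ is mutually absolutely continuous with respect to an increment of $Z$, with Radon--Nikodym derivative bounded in probability. Under this coupling, the submap of $\cG^{\disk,1/n}$ consisting of cells in $\eta^\disk([\epsilon,1-\epsilon])$ coincides with a submap $T^n_\epsilon$ of $\cG^{1/n}$. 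We then pull back the Riemann surface structure. The surface $M(\cG^{\disk,1/n})$ contains $M(T^n_\epsilon)$ as a subsurface, and $M(\cG^{1/n})$ does as well.

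\textbf{Step 2 (comparing the uniformizations).} Let $\psi^n$ be the uniformization $M(\cG^{\disk,1/n})\to\ol\bbD$, normalized by sending vertex $\frac1n\lceil n/2\rceil$ to $0$ and the root boundary vertex to $1$. By Theorem~\ref{thm:unif-cone}, the map $\varphi^n$ carries $M(T^n_\epsilon)$ onto a domain $U^n_\epsilon$ close to $U_\epsilon:=\text{the cone-image of }\eta^\disk((\epsilon,1-\epsilon))$. The map $\psi^n\circ(\varphi^n)^{-1}$ is then a conformal map from $U^n_\epsilon$ into $\bbD$. Carathéodory kernel convergence shows it converges, locally uniformly, to the conformal map $U_\epsilon\to\bbD$ that is the identity in the disk coordinates. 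For this we need $U^n_\epsilon\to U_\epsilon$ in the Carathéodory sense, which follows from item 1 of Theorem~\ref{thm:unif-cone} and the uniform convergence of the space-filling path. Transporting items 1--4 of Theorem~\ref{thm:unif-cone} through this map gives the conclusions on $\eta^\disk([\epsilon,1-\epsilon])$ away from $\partial\bbD$.

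\textbf{Step 3 (near the boundary: the main obstacle).} Local uniform convergence does not control cells near $\partial\bbD$, or those at times in $[0,\epsilon]\cup[1-\epsilon,1]$, while the theorem asks for a supremum over all $x$. To handle this we use the extension of He--Schramm convergence to the global uniform topology (the Riemann-embedding analogue of Theorem~\ref{thm:He-Schramm-disk}). We show that with high probability every edge image $\psi^n(\fre)$ lying in the $\delta$-neighborhood of $\partial\bbD$ has diameter $o_\delta(1)$. The idea is an extremal length argument on $M(\cG^{\disk,1/n})$. Discrete annuli around a boundary point, which are built from mated-CRT cells, have extremal length comparable to that of the continuum annuli. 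This follows from existing estimates on the number of cells crossing annuli and on degrees, together with the fact that $\eta^\disk$ is continuous up to $\partial\bbD$. Given the small-diameter bound, items 2 and 4 near the boundary follow from the continuity of $\eta^\disk$ on $[0,1]$. Item 3 is immediate, since the measures of $\eta^\disk([0,\epsilon]\cup[1-\epsilon,1])$ and of its discrete counterpart are both $O(\epsilon)$.

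\textbf{Step 4 (conclusion).} Finally we let $n\to\infty$ and then $\epsilon,\delta\to0$. We also remove the absolute-continuity change of measure, which is allowed because convergence in probability is preserved under Radon--Nikodym derivatives that are tight. The boundary extremal-length estimate in Step 3 is the step we expect to be the real difficulty.
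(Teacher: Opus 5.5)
The gap is in Step~3, and in the way Step~2 sets it up. You reduce the boundary behaviour to an unproven comparison between extremal lengths of discrete annuli of mated-CRT cells near $\partial\bbD$ and continuum annuli. That comparison is not a consequence of the cell-count and degree estimates you cite, and it is not needed.

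The missing idea is a length--area (Wolff) comparison of two uniformizations. Let $\cG^{\disk,1/n}_\e$ be the disk triangulation formed by the cells of the component $D^{\disk}_\e$ of the interior of $\eta^{\disk}([\e,1-\e])$ containing $0$. You need this restriction anyway, since that interior can be disconnected (e.g.\ for $\gamma>\sqrt2$). Let $\varphi^{\disk,n}_\e$ uniformize $M(\cG^{\disk,1/n}_\e)$ onto $\bbD$. Its boundary splits into an arc $\ell^{n,\e}$ of boundary vertices of $\cG^{\disk,1/n}$ and an arc $\gamma^{n,\e}$ whose cells meet $\bbD\setminus D^{\disk}_\e$, a set of small diameter near $1$. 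Suppose $\varphi^{\disk,n}_\e$ is uniformly close to $\psi^{\disk}_\e\circ\eta^{\disk}$ (see below). Since $\psi^{\disk}_\e$ is uniformly close to the identity by Proposition~\ref{prop:boundary-perturb}, $\varphi^{\disk,n}_\e(\gamma^{n,\e})$ is then a short arc of $\partial\bbD$. Hence $\varphi^{\disk,n}\circ(\varphi^{\disk,n}_\e)^{-1}$ maps $\bbD$ into $\bbD$ and all of $\partial\bbD$ except this short arc into $\partial\bbD$. Lemma~\ref{lem:He-Schramm-unif} gives a short crosscut separating the image of the short arc from $0$, and Proposition~\ref{prop:unif-disk} yields uniform convergence to a rotation on all of $\ol\bbD$. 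This handles the edges near $\partial\bbD$ and the vertices with times in $[0,\e]\cup[1-\e,1]$ together, using only continuity of $\eta^{\disk}$ at its endpoints.

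Step~2 also fails as written, because of non-locality. The map $\varphi^n$ uniformizes the whole infinite map $\cG^{1/n}$, so its restriction to the piece is not a function of $Z|_{[\e,1-\e]}$, while Lemma~\ref{lem:QD-AC} only compares the laws of the piece. Your composite $\psi^n\circ(\varphi^n)^{-1}$ depends on both the disk and the cone outside $[\e,1-\e]$. Absolute continuity gives no coupling under which it is defined with probability tending to one. Likewise, your Step~4 argument via a tight Radon--Nikodym derivative transfers only events measurable with respect to the piece. This is why the paper does not derive either disk theorem from the corresponding cone theorem.

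You must first prove a piece-intrinsic statement: the uniformization of the sub-surface itself onto $\bbD$ is close to $\psi\circ\eta$ uniformly up to the boundary, where $\psi$ maps the relevant component onto $\bbD$. This is Proposition~\ref{thm:circle-quantum-cell-0-unif}. It follows from a whole-plane result via Proposition~\ref{prop:boundary-perturb}, using that the discrete boundary curve of the piece converges to the boundary of the component in the metric~\eqref{eq:curve-metric-0}.

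Carathéodory kernel convergence cannot replace this step. It gives only local uniform convergence. It also identifies the limit of $\psi^n\circ(\varphi^n)^{-1}$ as the disk-coordinate map only if you already know that the images $\psi^n(M(T^n_\e))$ converge to the disk picture, which is essentially what you are trying to prove.
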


\begin{theorem}\label{thm:unif-sph}
     Let $(Z^\sph,h^\sph,\eta^\sph)$ and $\cG^{\sph,1/n}$ be as in Theorem~\ref{thm:pack-sph}. Then   there exists a conformal map $\varphi^{\sph,n}:M(\cG^{\sph,1/n})\to\wh\bbC$ such that the following is true:
     \begin{enumerate}
      \item The maximal spherical diameter of edges  $\varphi^{\sph,n}(\fre)$   tends to 0 in probability;
      \item We have
      \begin{equation}
          \sup\big\{d_{\sph}(\eta^\sph(x),\varphi^{\sph,n}(x)): x\in\frac{1}{n}\bbZ\cap(0,1]  \big\} \to 0 \text{ in probability as } n\to\infty;
      \end{equation}
      \item Let $\mu^{\sph,n}$ be the measure on $\wh\bbC$ that assigns mass $\frac{1}{n}$ to each of the point $\varphi^{\sph,n}(x)$ for  $x\in\frac{1}{n}\bbZ\cap (0,1]$. Then  the Prokhorov distance between $\mu^{\sph,n}$ and $\mu_{h^\sph}$ tends to 0 in probability;
      \item The space-filling path on the embedded mated-CRT map which comes from the left-right ordering of the vertices   converges uniformly to  {$\eta^\sph$} in probability under the spherical metric.
  \end{enumerate}
\end{theorem}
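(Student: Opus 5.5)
The plan is to deduce Theorem~\ref{thm:unif-sph} from the disk case, Theorem~\ref{thm:unif-disk}, and the plane case, Theorem~\ref{thm:unif-cone}, by localization. The link is a global-uniform He--Schramm type comparison (the Riemann-uniformization analogue of Theorem~\ref{thm:He-Schramm-sphere}). For the Riemann uniformization embedding this analogue is essentially the conformal invariance built into the definition, together with Carathéodory-type control.

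\textbf{Step 1: split the sphere.} Fix a small $\delta>0$. Use the Markov/resampling structure of the quantum sphere to split $(\wh\bbC,h^\sph,\eta^\sph)$ into two pieces:
\[
U_1:=\eta^\sph([\delta,1-\delta]), \qquad U_2:=\eta^\sph([0,\delta]\cup[1-\delta,1]).
\]
Each piece, conditioned on its boundary length, is a (possibly pinched) space-filling SLE-decorated quantum disk. The corresponding submaps $\cG_1^{n},\cG_2^{n}$ of $\cG^{\sph,1/n}$ are then disk mated-CRT maps. To avoid pinch points when $\gamma>\sqrt2$, choose $\delta$ at a cut time where $U_1$ is a genuine Jordan domain. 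Alternatively, use the alternative decomposition through the cone: locally, near any fixed typical point, the sphere is absolutely continuous with respect to the quantum cone. With this, Theorem~\ref{thm:unif-cone} gives local closeness near $\eta^\sph(\frac12)=0$ and near $\eta^\sph(\frac14)=1$.

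\textbf{Step 2: compare embeddings on the pieces.} Let $\psi_i$ be the conformal map from $\bbD$ onto the interior of $U_i$, normalized by the disk embedding convention. Applying Theorem~\ref{thm:unif-disk} to each piece (after absolute continuity, since the boundary length is random), the uniformization $\varphi_i^n$ of $M(\cG_i^n)$ onto $\bbD$ satisfies
\[
\psi_i\circ\varphi_i^n \approx \eta^\sph \quad \text{uniformly on the vertices of } \cG_i^n.
\]
Now let $\varphi^{\sph,n}$ be the uniformization of the whole surface $M(\cG^{\sph,1/n})$, Möbius-normalized so that the vertices at $x=\frac12,\frac14$ and $x=\frac1n$ go near $0$, $1$ and $\infty$. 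The restriction of $\varphi^{\sph,n}$ to $M(\cG_1^n)$ is a conformal map onto a domain $\Omega_1^n$. We need $\Omega_1^n\to U_1$ in Carathéodory sense, so that
\[
\varphi^{\sph,n}\circ(\varphi_1^n)^{-1} \to \psi_1 \quad \text{uniformly on compacts.}
\]
I expect to derive this from a compactness argument. Any subsequential limit of $\varphi^{\sph,n}\circ(\eta^\sph)^{-1}$ is, by the local statements, conformal off a set that has zero capacity in the limit. That set is the SLE interface $\partial U_1$, which is conformally removable for $\kappa\neq4$ ($\kappa>4$ here; for $\kappa\ge8$ use that the relevant boundaries are SLE$_{\underline\kappa}$-type curves, which are removable). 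Hence any limit is a Möbius map. The three-point normalization then forces it to be the identity.

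\textbf{Step 3: upgrade to global statements.} To get global uniformity and small edge diameters near $\infty$, swap the roles of the pieces. Apply the same argument with the piece $U_2$ containing $\eta^\sph(0)=\infty$, transported by an inversion. Items 2 and 4 then follow from uniform convergence on each piece plus continuity of $\eta^\sph$. Item 3 follows from item 2, since each vertex carries mass $1/n=\mu_{h^\sph}(\text{cell})$ and the cells have diameters tending to zero.

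\textbf{Main obstacle.} I expect the main difficulty to be item 1 near the seam $\partial U_1$ and near the point $\infty$. We must rule out macroscopic edges there, i.e. show the Riemann surface has no "thin neck" that the uniformization stretches. My first attempt would be an extremal length / Beurling-type estimate. Using the a priori cell geometry, I would show that every annulus of macroscopic modulus around a seam point is crossed by many disjoint cell paths. Via the equilateral structure of $M(T)$, this gives a lower bound on the conformal modulus that forbids large images. This mirrors the He--Schramm global extension invoked in Theorem~\ref{thm:He-Schramm-sphere}.
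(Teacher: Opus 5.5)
There is a genuine gap, and it is the step you flag as the ``main obstacle'', on which the rest of your Step~2 depends. With $\delta$ fixed you must glue across the macroscopic seam $\partial U_1$. Removability only lets you conclude that a subsequential limit is M\"obius once you already know that limit is a homeomorphism of $\wh\bbC$, i.e.\ once you have equicontinuity and injectivity of the discrete maps \emph{across} $\partial U_1$ and near $\infty$. The local statements on each piece give no such control, and you defer it to an extremal-length estimate that is not carried out, so the argument is circular as written. (``Zero capacity'' is also the wrong notion: a curve has positive capacity, and what you need is removability for homeomorphisms.)

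Your inputs for the pieces are also not the right ones. $\eta^\sph([\delta,1-\delta])$ is not a quantum disk given its boundary length. Its boundary length process $Z^\sph|_{[\delta,1-\delta]}$ starts and ends in the interior of the quadrant, so it is not absolutely continuous with respect to $Z^\disk$, and Theorem~\ref{thm:unif-disk} does not apply. Nor can Theorem~\ref{thm:unif-cone} be transferred by local absolute continuity, since the uniformization of the whole infinite map is not measurable with respect to a local piece. What does transfer is a statement about the uniformization of the finite submap $M(\cG^{\sph,1/n}_\e)$ onto $\bbD$, compared with $\psi^\sph_\e$. This is Proposition~\ref{thm:circle-quantum-cell-0-unif} combined with Lemma~\ref{lem:QS-AC}. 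Finally, $U_2$ contains the root and so lies outside the time range $[\e,1-\e]$ on which that absolute continuity holds.

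The missing idea is that no macroscopic seam is needed. Let the cut $\e=\e(n)\to0$ slowly, so that $\wh\bbC\setminus D^\sph_\e$ collapses to the single point $\eta^\sph(0)=\infty$.
\begin{itemize}
\item Uniformize $M(\cG^{\sph,1/n}_\e)$ onto $\bbD$ with the vertex nearest $1/2$ sent to $0$.
\item Multiply by $r_\e$ so that the vertex nearest $1/4$ goes to $1$. Choose $\e(n)$ slowly enough that $|r_\e|\to\infty$ while the errors, multiplied by $|r_\e|$, still vanish. Then $r_\e\psi^\sph_\e\to\mathrm{id}$ by Proposition~\ref{prop:unif-sph}.
\item The map $\varphi^{\sph,n}\circ(\varphi^{\sph,n}_\e)^{-1}(r_\e^{-1}\cdot)$ is univalent on $r_\e\bbD$, which exhausts $\bbC$. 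It omits $\infty$, because the vertex sent to $\infty$ lies outside the submap, and it fixes $0$ and $1$.
\item Koebe distortion and normal families (Proposition~\ref{prop:unif-sph}) therefore make this map spherically uniformly close to the identity, with image containing any fixed $B(0;R)$.
\end{itemize}
Hence the discarded part of the surface is mapped into a small spherical neighborhood of $\infty$. This yields items 1 and 2 globally, including near $\infty$, with no removability and no extremal-length estimate; items 3 and 4 then follow as you describe.
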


\subsection{Circle packing and Riemann uniformization for random planar maps}\label{subsec:intro-conf}
 The proof of Theorems~\ref{thm:pack-cone}-\ref{thm:unif-sph} is consisted of two steps. The first step is to prove a variant of Theorem~\ref{thm:pack-cone} and Theorem~\ref{thm:unif-cone} in the setting of a $\gamma$-LQG surface called the 0-quantum cone. This is based on our recent work~\cite{HY26}, which we briefly recap in Section~\ref{subsubsec:intro-cell}, applied to the cell configuration considered in~\cite{GMS21tutte} in the Tutte embedding case.  The second step is to transfer from the setting of 0-quantum cone to the various surfaces considered in Theorems~\ref{thm:pack-cone}-\ref{thm:unif-sph}. In the Tutte embedding setting, this mainly follows from local absolute continuity between different LQG surfaces. For circle packing and Riemann uniformization, one major challenge is that these embeddings  are non-local and extra input is needed here. In Section~\ref{subsubsec:intro-conf}, we introduce several extensions of He and Schramm's work~\cite{HeSchramm} on convergence of circle packing maps to conformal maps. The proofs of these extensions will be carried out in Section~\ref{sec:CP-conformal}.  In Section~\ref{subsubsec:intro-overview}, we briefly explain how these results  are applied to the proof of our main results in Section~\ref{sec:proof}.  

\subsubsection{Circle packing and Riemann uniformization for random planar maps in an ergodic scale-free environment}\label{subsubsec:intro-cell}


In our recent work~\cite{HY26}, we proved that for embedded random triangulations in an ergodic scale-free environment, under certain moment and degree bounds, they are close to their circle packing and Riemann uniformization embedding. The  ergodic scale-free environment was first considered by Gwynne, Miller and Sheffield in their work~\cite{GMSinvariance}, where they proved that random walks on such random environments converge to Brownian motion in quenched sense, and they later applied this result to prove the convergence of the Tutte embedding for the mated-CRT map and the Poisson-Voronoi tessellation of the Brownian disk~\cite{GMS21tutte,GMSBrowniandisk}.

We start with the following definition of cell configurations.

\begin{definition}\label{def:cell-config}
    A \textit{cell configuration} on $\bbC$ consists of the following objects.
    \begin{enumerate}
        \item A locally finite collection $\cH$ of  compact connected subsets of $\bbC$ (``cells") with non-empty interiors whose union is all of $\bbC$ and such that the intersection of any two elements of $\cH$ has zero Lebesgue measure.
        \item A planar map $\cM$, which we call the \emph{associated map} of $\cH$, with vertices $\{v_H:H\in\cH\}$. We require that if $v_H$ and $v_{H'}$ are connected by an edge of $\cM$ then $H\neq H'$ and $H\cap H'\neq\emptyset$. 
    \item A function $\fc = \fc_\cH$ (``conductance") on the edges  $\cM$ with values in $(0,\infty)$. 
    \end{enumerate}
\end{definition}
We say $H\sim H'$ if $H$ and $H'$ are connected by  {at least one} edge in $\cM$. Note that we use a different definition of a cell configuration than in~\cite[Definition 1.1]{GMSinvariance}, where a cell configuration is defined to be a tuple $(\cH,\sim,\fc)$. We need to also include $\cM$ in the definition since our goal is to prove closeness between $\cH$ and conformal embeddings of the map $\cM$ and, furthermore, will work with settings where $\cM$ has multiple edges.  
We will typically slightly abuse notation by making  the associated map $\cM$ and the function $\fc$ implicit, so we write $\cH$ instead of $(\cH,\cM,\fc )$. 

For $A\subseteq\bbC$, let $\cH(A) = \{H\in\cH:H\cap A\neq\emptyset\}$, and we view $\cH(A)$ as an edge-weighted planar map with edge set $\cE\cH(A)$ consisting of all of the edges in $\cE\cH$ joining elements in $\cH(A)$ and conductances given by the restriction of $\fc$. We also define a metric on the space of cell configurations by
 \begin{equation}\label{eq:metric-cc}
     \begin{split}
         \mathbb{d}^{\rm CC}(\cH,\cH') := \int_0^\infty e^{-r}\wedge \inf\Big\{\sup_{z\in\bbC} |z-f_r(z)|+\sup_{e\in \cE\cH(B(0;r))}\big|\fc(e)-\fc'(f_r(e)) \big|   \Big\}\,dr,
     \end{split}
 \end{equation}
 where the infimum is over all homeomorphisms $f_r:\bbC\to\bbC$ such that $f_r$ takes each cell in $\cH(B(0;r))$ to $\cH'(B(0;r))$   and  preserves the associated map when restricted to cells intersecting $B(0;r)$, and $f_r^{-1}$ does the same with $\cH$ and $\cH'$ reversed. Here $f_r(\fre)$ is understood as the image of the edge $e$ under map isomorphisms. We will typically be working with a random cell configuration, i.e., a random
variable taking values in the space of cell configurations equipped with the Borel $\sigma$-algebra generated
by the above metric.


\begin{definition}\label{def:CellConfigConstraint}
    We define  the following conditions  on the cell configurations.
    \begin{enumerate}
        \item \textbf{Translation invariance modulo scaling.} There is a (possibly random and $\cH$-dependent) increasing sequence of open sets $U_j\subseteq\bbC$,  each of which is either a square or a disk, whose union is all of $\bbC$ such that the following is true. Conditional on $\cH$ and $U_j$, let $z_j$ for $j\in\bbN$ be sampled uniformly from Lebesgue measure on $U_j$. Then there exist random numbers $C_j>0$ (possibly depending on $\cH$ and $z_j$) such that $C_j(\cH-z_j)\to\cH$ in law with respect to the metric~\eqref{eq:metric-cc}.
        \item  \textbf{Ergodicity modulo scaling.} Every real-valued measurable functions $F=F(\cH)$ which is invariant under translation and scaling, i.e., $F(C(\cH-z)) = F(\cH)$ for each $z\in\bbC$ and $C>0$, is a.s.\ equal to a deterministic constant.
        \item  \textbf{Connectedness along  {macroscopic} lines.} Almost surely,  {for each given $\e>0$, there exists $R>0$ such that for $r>R$ and each horizontal or vertical line segment $\ell\subseteq B(0;r)$ with length at least $\e r$}, the subgraph of $\cH$ induced by the set of cells which intersect $\ell$ is connected. 
        \item \textbf{Rotation invariance in law.} For some $\theta_0\in(0,2\pi)\backslash\{\pi\}$, $e^{i\theta_0}\cH$ has the same law as $\cH$.

    \end{enumerate}
\end{definition}

Now we state the main results of~\cite{HY26}.

\begin{theorem}[Theorem 1.6 of~\cite{HY26}]\label{thm:CellSystemPack}
    Let $\cH$ be a random cell configuration such that its associated map is a simple infinite plane triangulation. Suppose $\cH$   satisfies all the conditions in Definition~\ref{def:CellConfigConstraint}. Further assume $\cH$ satisfies the moment bound
    \begin{equation}\label{eq:thm-inv-circle}
        \bbE\Big[\frac{\diam(H_0)^2}{\area(H_0)}\deg(H_0)^4\Big]<\infty.
    \end{equation}
    Then $\cH$ is a.s.\  circle packing parabolic, and there exists a circle packing $\cP$ for $\cH$ such that the following is true.  {Almost surely, for any $\e>0$, there exists $R_0>0$ such that for every $r>R_0$, the maximal diameter of disks in $\cP$ intersecting $B(0;r)$ is less than $\e r$.} For $H\in\cH$, let $c(H) = \frac{1}{\area(H)}\int_H z\,dz$ be the Euclidean center of $H$, and $o(H)$ be the center of the disk for $H$ in $\cP$. Then almost surely,  {up to a reflection of $\bbC$},
    \begin{equation}\label{eq:thm:CellsystemPack}
        \lim_{r\to\infty}\frac{1}{r}\max_{H\in \cH(B(0;r))}|c(H)-o(H)| = 0.
    \end{equation}
\end{theorem}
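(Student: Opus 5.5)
The plan follows the strategy for the Tutte embedding in~\cite{GMSinvariance}: an ergodic theorem for a discrete harmonic-type object, with circle packing replacing discrete harmonic functions. There are four steps.

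\textbf{Step 1: parabolicity.} The moment bound~\eqref{eq:thm-inv-circle} is stronger than the one used in~\cite{GMSinvariance}. Together with translation invariance modulo scaling and ergodicity, it gives quenched convergence of the random walk on $\cH$ to Brownian motion. In particular the walk is recurrent, so $\cH$ is circle packing parabolic by~\cite{HeSchramm-hy-pa}. The packing $\cP$ is then unique up to affine maps $z\mapsto az+b$.

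\textbf{Step 2: the comparison map.} Define $\phi:\cH\to\bbC$ by $\phi(H)=o(H)$, and normalize so that $o(H_0)=0$ for the cell $H_0$ containing the origin. The aim is to show that $\phi$ is asymptotically linear, i.e. $\phi(H)=a\,c(H)+o(|c(H)|)$ or the analogous statement with $\bar a\,\overline{c(H)}$. Multiplying by $a^{-1}$ (and reflecting if needed) then gives~\eqref{eq:thm:CellsystemPack}.

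\textbf{Step 3: sublinear corrector.} Consider the ``derivative'' field $H\mapsto (o(H')-o(H))_{H'\sim H}$, normalized by a local scale such as the radius of $D_{H_0}$. Since the packing is canonical up to affine maps, this field is covariant under translation and scaling of $\cH$. Consequently the scale-free statistic $\big(o(H')-o(H)\big)/(o(H_1)-o(H_0))$ is a stationary functional, and an ergodic average over the exhaustion $U_j$ should identify a deterministic linear limit.

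The analytic input is a Dirichlet-energy-type bound. Circle packing maps have controlled distortion via the Rodin--Sullivan/He--Schramm length-area arguments. The quantity $\sum_{H\subset B(0;r)}\mathrm{rad}(D_H)^2\,\deg(H)^{\dots}$ is at most the area of the image, and the moment bound lets one bound the energy of $\phi$ on $B(0;r)$ by $O(r^2)$ times the image scale. Combining this with connectedness along lines, Cauchy--Schwarz along horizontal and vertical segments yields equicontinuity of the rescaled maps $\phi_r(z)=\phi(\cH(rz))/\mathrm{scale}(r)$. It also shows the maximal disk diameter in $B(0;r)$ is $o(r)$: a large disk would force large extremal length, contradicting the energy bound.

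\textbf{Step 4: identifying the limit.} Any subsequential limit $\psi$ of $\phi_r$ is a homeomorphism of $\bbC$. By the He--Schramm quasi-conformality/ring lemma type arguments adapted to bounded energy, it is quasiconformal. To show it is conformal, I will use stationarity plus ergodicity: the Beltrami coefficient of $\psi$ must be a deterministic constant. Rotation invariance by $\theta_0\notin\{0,\pi\}$ then forces that constant to be $0$ (or reduces the map to a reflection). So $\psi$ is conformal, hence linear, because it is a homeomorphism of $\bbC$ fixing $0$ and normalized at scale $1$. This gives uniform convergence on compacts after rescaling, which is exactly~\eqref{eq:thm:CellsystemPack}.

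\textbf{Main obstacle.} The hardest step is Step 4, the upgrade from ``quasiconformal limit'' to ``conformal''. Circle packing is nonlinear, so the harmonic corrector theory of~\cite{GMSinvariance} does not apply directly. I would first try to compare $\phi$ with the Tutte embedding, which is known to be asymptotically linear, using the discrete extremal length equivalence between the two embeddings and the degree moment bound. If that fails, I would try a direct ergodic argument on the stationary Beltrami-type field.
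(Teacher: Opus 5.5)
The paper does not prove this statement; it is quoted from~\cite{HY26}. So I can only judge your outline on its own terms, and Steps~3--4 contain a genuine gap: together they assume a homogenization statement that is essentially the whole content of the theorem.

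\textbf{Step~3.} Ergodic averages of local increment statistics say nothing about the macroscopic displacements $o(H)$ unless there is a linear corrector structure, such as the stationary, mean-zero, curl-free gradient field of harmonic coordinates used in~\cite{GMSinvariance}. You set up no such structure for the circle packing, which is nonlinear.
\begin{itemize}
\item Your field normalized by $o(H_1)-o(H_0)$ is anchored at the origin, so it is not stationary.
\item The version normalized by the local radius is stationary. But then $o(H)$ is recovered by multiplying local radius ratios and summing increments along macroscopic paths, and a law of large numbers for local statistics does not control such accumulations without a sublinearity theorem.
\item The energy bound is also not right as written: only $\sum_H \mathrm{rad}(D_H)^2$, without degree weights, is controlled by the area of the image.
\end{itemize}

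\textbf{Step~4.} Three parts fail.
\begin{itemize}
\item For a subsequential limit to be a homeomorphism you also need equicontinuity of $\phi_r^{-1}$: no macroscopic set of cells may be squeezed into a small part of the packing. You never address this.
\item Quasiconformality is not available. Ring-lemma constants depend on the degree, which is unbounded here. The degree-free arguments of~\cite{HeSchramm} compare two circle packings of the same map, and the cell configuration is not a packing.
\item Ergodicity only makes deterministic a scalar functional of $\cH$ that is invariant under translation and scaling. The Beltrami coefficient of a limit along a realization-dependent subsequence is a field on $\bbC$, not a function of $\cH$ at all.
\end{itemize}
Your rotation step (a deterministic $\mu$ with $\mu=e^{2i\theta_0}\mu$ must vanish) is correct, but it rests on this unproved premise.

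Your fallback has the same defect. Comparing discrete extremal lengths or resistances of the two embeddings up to multiplicative constants gives at best bounded distortion of moduli, i.e.\ $K$-quasiconformality of the limit, never $K=1$. What is missing is an asymptotically exact, conformally invariant link between the packing and the cell geometry.

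One natural source of the linear structure you need is that circle-packing centers are themselves discrete harmonic. Take the incenters of the face triangles as dual vertices; this makes the packing an orthodiagonal map, with each dual edge crossing its primal edge perpendicularly at the tangency point. Hence $o(\cdot)$ is harmonic for the conductances $|e^*|/|e|\le 1$, where $|e^*|$ is the distance between the incenters of the two faces containing $e$. These conductances are determined by the map alone and satisfy $\sum_{H'\sim H}|e^*|/|e|\le \deg(H)$. This is the kind of input that can be combined with a random-walk invariance principle. Two-sided nondegeneracy estimates would still be needed, and that is where the moment hypothesis has to be used; its factor $\deg(H_0)^4$ never genuinely enters your argument.

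Finally, in Step~1, recurrence is not a formal consequence of an invariance principle on compact time windows. It follows more directly from the Dirichlet principle: use the test function $\log|z|$ on dyadic annuli, the moment bound, and Rayleigh monotonicity after shorting the annulus boundaries.
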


\begin{theorem}[Theorem 1.7 of~\cite{HY26}]\label{thm:invariance-uniformization}
     Let $\cH$ be a random cell configuration such that its associated map is an infinite plane triangulation. Suppose  $\cH$ satisfies the conditions in  Definition~\ref{def:CellConfigConstraint}.  Further assume $\cH$ satisfies the moment bound~\eqref{eq:thm-inv-circle}.
    Then the Riemann surface $M(\cH)$ formed by the associated map of $\cH$ is a.s.\   parabolic, and there exists a  conformal map $\varphi_0:M(\cH)\to\bbC$ for $\cH$ such that the following is true. For $H\in\cH$, let $c(H) = \frac{1}{\area(H)}\int_H z\,dz$ be the Euclidean center of $H$, and $v_H$ be the   vertex for $H$ in $M(\cH)$. Then {almost surely},  {up to a reflection of $\bbC$,}
    \begin{equation}\label{eq:thm:invariance-uniform}
        \lim_{r\to\infty}\frac{1}{r}\max_{H\in \cH(B(0;r))}| c(H)-\varphi_0(v_H)| = 0.
    \end{equation}
    Furthermore,  for each $e\in \cE\cH$, we write $\mathfrak{e}_{e}$ for the edge for $e$ on  $M(\cH)$,  
     \begin{equation}\label{eq:thm:invariance-uniform-1}
        \lim_{r\to\infty}\frac{1}{r}\max_{e\in \cE\cH(B(0;r))}\diam(\varphi_0(\mathfrak{e}_{e})) = 0.
    \end{equation}
\end{theorem}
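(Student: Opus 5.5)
The plan is to reduce this statement to a large-scale invariance principle for Brownian motion on the surface $M(\cH)$. We then identify $\varphi_0$ as an asymptotically linear map of the cell coordinates, and use rotation invariance to upgrade "linear" to "conformal up to reflection". Throughout we work with $\cH$ and its associated triangulation $\cM$, and we write $v_H$ for the vertex of $M(\cH)$ corresponding to $H$.

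\emph{Step 1 (parabolicity and a discrete shadow of $M(\cH)$).} Observe Brownian motion on $M(\cH)$ only at its successive visits to the vertex set, i.e.\ at its hitting times of the $1$-skeleton. This gives a Markov chain on the vertices $\{v_H\}$ which is reversible with respect to some conductances $\fc^*(e)$. These conductances are determined locally by the equilateral gluing around $e$, so the cell configuration $(\cH,\cM,\fc^*)$ still satisfies all the conditions of Definition~\ref{def:CellConfigConstraint}. The first thing to check is a moment bound on $\fc^*$ and $1/\fc^*$ in terms of $\deg(H_0)$. Each such conductance is comparable to an extremal length across a union of at most $O(\deg)$ equilateral triangles, so I expect polynomial bounds in the degrees. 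Combined with~\eqref{eq:thm-inv-circle}, this should verify the hypotheses of the Gwynne--Miller--Sheffield quenched invariance principle~\cite{GMSinvariance}. That principle gives: the walk, viewed through the embedding $H\mapsto c(H)$ and rescaled by $r$, converges to Brownian motion modulo time change, uniformly over starting points in $\cH(B(0;r))$. Recurrence of the limit, together with the fact that the chain is the trace of the surface Brownian motion, yields recurrence of Brownian motion on $M(\cH)$. Hence $M(\cH)$ is parabolic, and a conformal map $\varphi_0:M(\cH)\to\bbC$ exists, unique up to affine maps.

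\emph{Step 2 (the embedding is asymptotically affine).} The real and imaginary parts of $\varphi_0$ are harmonic on $M(\cH)$. Their restrictions to the vertices are therefore discrete harmonic for the chain in Step~1. Normalize $\varphi_0$ so that it is comparable to $c(\cdot)$ on two reference scales. Then the invariance principle, applied through exit-distribution (harmonic measure) estimates on dyadic scales, shows the following. On $\cH(B(0;r))$, the function $\varphi_0(v_H)$ is within $o(r)$ of $g(c(H))$, where $g$ is a harmonic map of $\bbC$ whose growth is linear after the normalization. Injectivity and the ergodicity modulo scaling then force $g$ to be a nondegenerate real-linear map $A$, which is deterministic by ergodicity.

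\emph{Step 3 (linear $\Rightarrow$ conformal, and edge diameters).} Rotation invariance by $\theta_0\notin\{0,\pi\}$ gives $A$ the same law as $e^{-i\theta_0}Ae^{i\theta_0}$ after renormalization. Since $A$ is deterministic, $A$ commutes with a nontrivial rotation, so $A$ is a complex scalar or anti-scalar. Hence, up to a reflection and rescaling of $\varphi_0$, we get~\eqref{eq:thm:invariance-uniform}.

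For~\eqref{eq:thm:invariance-uniform-1}, bound $\diam(\varphi_0(\fre_e))$ by an extremal-length/modulus argument. An edge is enclosed by an annulus made of the star of its endpoints, which consists of $O(\deg)$ triangles. The corresponding Euclidean cells have diameter $o(r)$ uniformly over $\cH(B(0;r))$; this follows from the moment bound and the ergodic theorem, as in~\cite{GMSinvariance}. Combining these with the closeness of vertex images from the previous paragraph controls the images of edges.

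\emph{Main obstacle.} I expect the hardest part to be Step~1, namely justifying the invariance principle for the trace chain when degrees are unbounded. This requires controlling the induced conductances, and the correlation between time spent in high-degree regions and the cell geometry, using only~\eqref{eq:thm-inv-circle}. I would first try to dominate $\fc^*$ and its inverse by explicit functions of the degrees of the two endpoint cells, so that the $\deg^4$ moment suffices.
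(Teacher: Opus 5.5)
The paper does not prove this statement; it quotes it from \cite{HY26}. The only hint given is that the proof there obtains regularity estimates for $M(\cH)$ through a discrete length/area argument of the type in Lemma~\ref{lem:He-Schramm}, whose continuum analogue is Lemma~\ref{lem:He-Schramm-unif}.

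\textbf{The gap in Steps 1--2.} Judged on its own, your reduction fails at its foundation: the chain in Step~1 does not exist.
\begin{itemize}
\item Vertices are polar for Brownian motion on $M(\cH)$: in the chart $z\mapsto z^{6/n}$ a vertex is a single point of a planar domain.
\item Once the $1$-skeleton is hit, the set of times spent on it has no isolated points. So there are no ``successive visits'' to record, and no reversible chain on $\{v_H\}$ with conductances $\fc^*$.
\item More seriously, no discretization can give what Step~2 uses. For $u=\mathrm{Re}\,\varphi_0$, harmonic on $M(\cH)$, the mean value property expresses $u(v_H)$ as an integral of $u$ against harmonic measure on the whole link of the star of $v_H$. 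That depends on $u$ along the link edges, not only on the finitely many values $u(v_{H'})$ with $H'\sim H$.
\end{itemize}
Hence $\varphi_0$ restricted to vertices is not discrete harmonic for any choice of conductances. You cannot feed it into the quenched invariance principle of \cite{GMSinvariance}, and both parabolicity via recurrence and the closeness claim of Step~2 fall with this.

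What is exactly true is that $\varphi_0$ maps surface Brownian motion on $M(\cH)$ to planar Brownian motion modulo time change. The real task is therefore one of two things: an invariance principle for the surface Brownian motion viewed in the a priori coordinates, or a quantitative link between $\varphi_0$ and the simple random walk on $\cM$. Note that all cotangent weights coincide for equilateral triangles, so the simple random walk is the natural embedding-independent walk, but $\varphi_0$ is only approximately harmonic for it. Neither link is provided by \cite{GMSinvariance} directly.

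\textbf{What else is missing.} You need an a priori regularity theory for $\varphi_0$ at all large scales, with constants controlled through $\deg(H)$ and $\diam(H)^2/\area(H)$. For example, small Euclidean regions should be separated from macroscopic sets by curves in $M(\cH)$ whose $\varphi_0$-images have length $o(r)$. This is exactly the kind of input the paper attributes to the length/area argument in \cite{HY26}.

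Your argument for \eqref{eq:thm:invariance-uniform-1} also lacks this input. Closeness of vertex images does not by itself bound the image of an edge: long, thin image triangles with nearby vertices are not excluded. The stars of the endpoints give an annulus whose modulus depends on the degrees. Turning that modulus into a diameter bound still requires a Wolff-type estimate as in Lemma~\ref{lem:He-Schramm-unif}, together with area control for the images.

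Finally, even granting a scale-by-scale comparison, Step~2 yields only scale-dependent approximants $\lambda_r z$ (or $\lambda_r\bar z$) with $\lambda_{2r}/\lambda_r\to 1$. Because the per-scale errors are merely $o(1)$, they can add up to a slow drift of $\lambda_r$. So \eqref{eq:thm:invariance-uniform} for one fixed $\varphi_0$ needs an additional sublinearity argument, which the proposal does not supply. By contrast, your rotation argument in Step~3 is fine once you have a deterministic real-linear map modulo complex scaling.
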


\subsubsection{Convergence of circle packing maps to conformal maps}\label{subsubsec:intro-conf}

To transfer the convergence of circle packing map in the whole-plane setting as in Theorem~\ref{thm:CellSystemPack} to the finite volume setting, one need to compare circle packing of the same triangulation in different domains. To complete the transfer, we introduce several extensions of  He and Schramm's~\cite{HeSchramm}  convergence of circle packing maps to conformal maps; see Theorems~\ref{thm:He-Schramm-extension-0},~\ref{thm:He-Schramm-extension-1},  ~\ref{thm:He-Schramm-sphere} and~\ref{thm:He-Schramm-disk} below. These results are of independent interest, and will also be applied in our subsequent work~\cite{HY26a} on the convergence of spanning tree weighted maps to $\sqrt{2}$-LQG under circle packing and Riemann uniformization embedding. 

We briefly recap He and Schramm's work~\cite{HeSchramm} on the convergence of circle packing maps to conformal maps.
An \emph{interstice} of a circle packing $\cP$ for a triangulation $T$ is a connected component of the complement of $\cup_{D\in \cP} D$  {where we assumes that the disks in $\cP$ are closed}. The \emph{support}, $\supp(\cP)$, of $\cP$ is defined to be the union of the disks in $\cP$ and all bounded interstices of $\cP$.
Let $\{T^n\}_{n\geq1}$ be simple  disk triangulations, and $\cP^n, \wt\cP^n$ be circle packings of $T^n$ in domains $D$ and $\wt D$, respectively. This induces a natural  {(non-unique)} homeomorphism $f^n$ between the supports of $\cP^n$ and $\wt\cP^n$, sending each disk in $\cP^n$ to its corresponding disk in $\wt\cP^n$. It was conjectured by Thurston that if the maximal diameter of disks in $\cP^n$ and $\wt\cP^n$ converge to 0, then the circle packing map $\{f^n\}$ approximates a conformal map $f$ from $D$  to $\wt D$ as $n\to\infty$. This was proved by Rodin and Sullivan~\cite{RodinSullivan87} for the hexagonal lattice case and by Stephenson~\cite{kS90,kS96} in bounded degree setting and finally  
 in full generality by He and Schramm~\cite{HeSchramm}. In~\cite[Theorem 1.2]{HeSchramm}, it has further been proven that if one set the map $f^n$ to be M\"{o}bius on each of the interstice, then $(f^n)'$ and $(f^n)''$ converges to $f'$ and $f''$ as well; we will not need this for this paper. 

 Let $T$ be a simple disk triangulation, and $\cP = \{D_v:v\in\cV T\}$ be a collection of closed disks with disjoint interiors. Suppose for each $v_1,v_2\in\cV\cT$ where $\{v_1,v_2\}\in\cE T$, $D_{v_1}$ and $D_{v_2}$ are tangent, while if $D_{v_1},D_{v_2}$ are tangent and $v_1,v_2$ are not both on the boundary of $T$, then $\{v_1,v_2\}\in\cE T$.  We further connect centers of $D_{v_1}$ and $D_{v_2}$ for every $\{v_1,v_2\}\in\cE T$. We call $\cP$ a \emph{circle embedding} of $T$, if this drawing defines an orientation-preserving embedding of $T$. We define the \emph{carrier} of $\cP$ to be the union of $\Delta_f$ over all faces $f$ of $T$ where $\Delta_f$ is  the triangle formed by connecting the Euclidean centers $D_v$ of the disks for the vertices on $f$.  In other words, a circle embedding of $T$ is a circle packing of $T$ except that we allow two disks for a pair of boundary vertices of $T$ not connected by an edge to be tangent.   The reason that we introduce circle embedding here is, in our later proof, we will compare circle packings for a map $T$ and its submap $T_1$, while the restriction of a circle packing of $T$ to disks for vertices within $T_1$ is only a circle embedding of $T_1$ but not necessarily a circle packing of $T_1$.   Different from~\cite{HeSchramm}, below we will work with homeomorphisms between carriers of circle embeddings.


For a circle embedding $\cP$ for $T$, we define $\partial\cP$ for the union of the disks for boundary vertices. The following is an extension of~\cite[Theorem 1.1]{HeSchramm}, where we allow some tiny portion of disks in $\cP^n$ and $\wt\cP^n$ to  lie outside $D$ and $\wt D${; see assertion (ii) below}.
\begin{theorem}\label{thm:He-Schramm-extension-0}
   Let $D,\wt D\subsetneq \bbC$ be    {bounded Jordan} domains. Let $\{T^n\}_{n\geq1}$ be  disk triangulations, $\cP^n, \wt\cP^n$ be   {circle embeddings} of $T^n$, and $f^n$ be homeomorphisms from the   {carrier} of $\cP^n$ to that of $\wt\cP^n$, sending  each $D_v\cap \mathrm{carrier}(\cP^n)$ where  $v\in\cV T^n$ to $\wt D_v\cap \mathrm{carrier}(\wt \cP^n)$. Assume the following hold:
   \begin{enumerate}[(i)]
       \item The maximal spherical diameter of the disks in $\cP^n$ converges to 0 as $n\to\infty$;
       \item The distance between  the union of boundary disks of $\cP^n$ and $D$  converges to 0 as $n\to\infty$, in the sense that
       \eqb\label{thm:He-Schramm-extension-0--} \lim_{n\to\infty}\max_{{D^n\in\partial\cP^n}}\inf_{z\in D^n}d_{\sph}(z,\partial D) = 0,\eqe
       and the same holds for $\wt\cP^n$ and $\wt D$ instead of $\cP^n$ and $D$. Moreover, the maximal spherical diameter of the disks in $\wt\cP^n$ intersecting $\bbC\backslash \ol{\wt D}$ converges to 0 as $n\to\infty$; 
       \item For some $p_0\in D$, for some $N_0>0$, the sequence $\{f^n(p_0):n\geq N_0\}$ is compact in $\wt D$.
   \end{enumerate}
   Then the conditions (i) and (iii) hold for $(\wt\cP^n,\wt D)$ in place of $(\cP^n,D)$, and there exists a subsequence of $\{f^n\}$ converging uniformly to a conformal map $f:D\to\wt D$ on compact subsets of $D$. 
\end{theorem}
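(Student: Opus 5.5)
We follow the scheme of He and Schramm's proof of~\cite[Theorem 1.1]{HeSchramm}: show equicontinuity of $\{f^n\}$ and of $\{(f^n)^{-1}\}$ on compact sets, extract a subsequential limit, and identify it as a conformal map $D\to\wt D$. The two new ingredients are that we work with carriers of circle embeddings rather than supports of packings, and that boundary disks may protrude slightly outside $D$ and $\wt D$ (assumption (ii)). Throughout we use the discrete extremal length comparison of~\cite{HeSchramm}: for a disk triangulation, the vertex extremal length of a family of vertex paths is comparable, up to universal constants, to the Euclidean extremal length of the corresponding family of curves in the carrier of any circle embedding. Only the geometry of the disks is used here, not tangency with the domain boundary, so it applies to circle embeddings equally.

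\textbf{Step 1: the image packings are small and $\mathrm{carrier}(\wt\cP^n)$ exhausts $\wt D$.} Let $v$ be a vertex whose disk $\wt D_v$ lies inside $\wt D$. Disks meeting $\bbC\backslash\ol{\wt D}$ are already small by (ii), so these are the only ones to control. Separate $\wt D_v$ from the boundary disks by a family of nested annuli around $v$ in $\cP^n$. Such annuli exist because the disks of $\cP^n$ are small by (i), and because by (ii) the boundary disks of $\cP^n$ are close to $\partial D$. Together with (iii), which pins down the location of $f^n(p_0)$, this gives large vertex extremal length separating $v$ from $\partial T^n$ in a controlled way. Transferring to $\wt\cP^n$ via the extremal length comparison shows that if $\wt D_v$ had diameter bounded below, then there would be a curve family in $\wt D$ of bounded extremal length joining a macroscopic set to the near-boundary region. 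That contradicts what is transferred from $\cP^n$. The argument is the ring-lemma-free argument of~\cite[Section 3]{HeSchramm}. We also need that the boundary disks of $\wt\cP^n$ are uniformly close to $\partial\wt D$; this is given by (ii). Hence $\mathrm{carrier}(\wt\cP^n)$ contains any fixed compact subset of $\wt D$ for $n$ large, and this yields condition (i) for $\wt\cP^n$. Condition (iii) for $(f^n)^{-1}$ at the point $f(p_0)$ then follows by symmetry once Step 2 is in place.

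\textbf{Step 2: equicontinuity and the limit.} For $z,w$ in a compact set $K\subset D$ with $|z-w|$ small, separate them from $\partial D$ by many disjoint round annuli. The corresponding vertex annuli in $T^n$ then have large extremal length. Transferring this to $\wt\cP^n$ and using the smallness of disks from Step 1, together with the fact that $f^n(p_0)$ stays in a compact set, shows that $|f^n(z)-f^n(w)|$ is small. The same argument applies to $(f^n)^{-1}$. By Arzel\`a--Ascoli we obtain a subsequence with $f^n\to f$ locally uniformly on $D$, where $f$ is a homeomorphism onto $\wt D$ and the inverses converge as well. We then show that $f$ is $1$-quasiconformal, hence conformal. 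Here we follow~\cite{HeSchramm}: small disks in the interior are mapped to sets that are asymptotically round. This uses the Rodin--Sullivan/He--Schramm argument via uniqueness of infinite packings of locally finite triangulations (Schramm~\cite{Schramm91}), applied to rescaled limits of neighborhoods of a vertex. Equivalently, one can compare extremal lengths of quadrilaterals in $D$ and in $\wt D$. This step is local and insensitive to what happens at the boundary, so the original proof carries over.

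\textbf{Main obstacle.} The hard part is Step 1 near the boundary. Because boundary disks may stick out of $D$ or $\wt D$, we cannot use that they are tangent to the boundary. We must instead show that a vertex annulus reaching toward $\partial\wt D$ really forces the extremal length estimate. The first thing to try is to enlarge $D$ and $\wt D$ slightly to neighborhoods $D_\delta$ and $\wt D_\delta$, which contain all boundary disks for large $n$ by (ii) and by the diameter assumptions. We run the extremal length arguments in these enlarged domains, and then let $\delta\to0$ using that $D$ and $\wt D$ are Jordan domains, so that the enlarged domains Carath\'eodory-converge to $D$ and $\wt D$.
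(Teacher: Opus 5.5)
There is a genuine gap, and it is the point you flag as the main obstacle. Nothing in the proposal shows that points of $\carrier(\cP^n)$ near $\partial D$ are sent by $f^n$ near $\partial\wt D$.

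Your Step~1 separates $D_v$ from the boundary disks of $\cP^n$ by nested annuli around $v$. When $D_v$ is within $o(1)$ of $\partial D$ (boundary vertices, their neighbours, \dots) no such annuli exist. These are exactly the vertices whose images could be large disks inside $\ol{\wt D}$. For vertices that are macroscopically interior in $\cP^n$, He--Schramm's closed-loop argument already works, even without (iii).

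Also, (ii) only says that each boundary disk of $\wt\cP^n$ has \emph{some} point near $\partial\wt D$. That allows a large boundary disk inside $\ol{\wt D}$. So ``the boundary disks of $\wt\cP^n$ are uniformly close to $\partial\wt D$'' is part of what must be proved, not an input. Without it, the exhaustion of $\wt D$ by $\carrier(\wt\cP^n)$ does not follow.

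The same gap affects Step~2. Even granting exhaustion, equicontinuity on compacts only gives limits $f:D\to\ol{\wt D}$ and $g:\wt D\to\ol D$. It does not give $f(D)\subseteq\wt D$, surjectivity of $f$, or $(f^n)^{-1}(\wt K)\subseteq K_1$ for compacts $\wt K\subseteq\wt D$ and $K_1\subseteq D$. This two-sided compact containment is also the input that He--Schramm's local conformality argument needs. Since (iii) is assumed only for $f^n$, you cannot get the reverse containment ``by symmetry''. Your derivation of (iii) for $(f^n)^{-1}$ presupposes that $f$ is already a homeomorphism onto $\wt D$. Passing to enlarged domains $D_\delta,\wt D_\delta$ controls where the disks sit, but does not by itself produce any of these statements.

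The missing step is Lemma~\ref{lem:pf-He-Schramm-1}: the crosscut form of the length--area estimate at boundary points, with (iii) deciding which side of the crosscut is far.
\begin{itemize}
\item Fix a connected compact $\wt K\subseteq\wt D$ containing all $f^n(p_0)$, and set $d=\dist(\wt K,\partial\wt D)$.
\item For $z_0\in\partial\carrier(\cP^n)$, Lemma~\ref{lem:He-Schramm} gives $\rho\in[r_0,r_1]$, where $r_1\ll\dist(p_0,\partial D)$ and $c_0\log(r_1/r_0)^{-1/2}<d/10$. For this $\rho$, the $\cP^n$-disks meeting $\partial B(z_0;\rho)$ have images of total diameter $<d/10$.
\item These disks form chains separating $p_0$ from everything in the carrier inside $B(z_0;\rho)$, including all points within $r_0/2$ of $z_0$.
\item Each chain is a crosscut, so it contains a boundary disk. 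Its image is a boundary disk of $\wt\cP^n$, small because it lies in the chain, and by (ii) it has a point near $\partial\wt D$.
\item Hence the image chains lie within $d/2$ of $\partial\wt D$, miss $\wt K$, and separate $f^n(z_0)$ from $f^n(p_0)$ in $\carrier(\wt\cP^n)$.
\item Letting $z_0$ vary and using connectedness of $\wt K$ gives $\wt K\subseteq\carrier(\wt\cP^n)$. It also gives that $(f^n)^{-1}(\wt K)$ stays at distance at least $r_0/2$ from $\partial\carrier(\cP^n)$, hence in a fixed compact subset of $D$. This is (iii) for the inverse.
\end{itemize}

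Smallness of the disks then follows as in Lemma~\ref{lem:pf-He-Schramm-2}. Take $\wt K\supseteq\{z\in\wt D:\dist(z,\partial\wt D)\ge\e/4\}$. A disk $\wt D_v\subseteq\ol{\wt D}$ of radius at least $\e/4$ has its center in $\carrier(\wt\cP^n)\cap\wt K$. So $D_v$ meets a fixed compact subset of $D$, where your interior argument applies. Now the hypotheses hold with $\cP^n$ and $\wt\cP^n$ exchanged. This gives $f^n(K)\subseteq\wt K_1$, and your Step~2 goes through.

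A smaller point: two-sided comparability with universal constants between vertex extremal length and Euclidean extremal length in a circle embedding fails without a degree bound. For example, a disk ringed by a necklace of $N$ tiny disks has vertex extremal length $\ge 1$ to the outside, but Euclidean extremal length $O(1/N)$. So the step ``large vertex extremal length in $T^n$ forces large extremal length in $\wt\cP^n$'' is not justified as stated. What is true and sufficient is the one-sided Lemma~\ref{lem:He-Schramm}, applied with the vertex metric $m(v)=\diam(\wt D_v)$. It directly produces separating chains of image disks with small total diameter.
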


The convergence of the circle embedding maps $f^n$ above is  in local uniform topology. We will further prove that, if the curves formed by the boundary disks of $\cP^n$ and $\wt\cP^n$ converges to $\partial D$ and $\partial \wt D$ modulo time parameterization, then the convergence above can be extended to global uniform topology. To work with uniform convergence of functions $\{f^n\}$ defined on different domains,   we   use the following definition. 
\begin{definition}\label{def:unif-convergence}
    Let  $D,D^n\subseteq\bbC$ be bounded Jordan domains such that the Hausdorff distance between $\ol {D^n}$ and $\ol D$ tends to 0 as $n\to\infty$.   Let $f^n:\ol {D^n} {\to\bbC}$ and $f:\ol D\to\bbC$.  We say $\{f^n\}$ is equicontinuous, if for any $\e>0$, there exists $N>0$ and $\delta>0$, such that for any $n>N$ and $x,y\in \ol {D^n}$ with $|x-y|<\delta$, $|f^n(x)-f^n(y)|<\e$. We say $f^n$ converges to $f$ uniformly, if for each $\e>0$, there exists $\delta>0$ and $N>0$, such that for each $n>N$, $z'\in \ol {D^n}$ and $z\in \ol D$ with $|z'-z|<\delta$, we have $|f^n(z')-f(z)|<\e$.
\end{definition}

We also recall the topology on curves viewed modulo time parameterization. 
Let $\beta_1:[0,T_{\beta_1}]\to\bbC$ and $\beta_2:[0,T_{\beta_2}]\to\bbC$ be continuous curves defined on possibly different time intervals. We define 
\begin{equation}\label{eq:curve-metric-0}
    \mathbb{d}^{\rm CMP}(\beta_1,\beta_2):=\inf_\phi \sup_{t\in[0,T_{\beta_1}]}|\beta_1(t)-\beta_2(\phi(t))|,
\end{equation}
where the infimum is over all increasing homeomorphisms $\phi:[0,T_{\beta_1}]\to [0,T_{\beta_2}]$ and ``CMP'' stands for ``curves modulo parameterization".

\begin{theorem}\label{thm:He-Schramm-extension-1}
     Let $D,\wt D,\{\cP^n\},\{\wt\cP^n\}$ be as in Theorem~\ref{thm:He-Schramm-extension-0}. Further assume: 
   \begin{enumerate}[(iv)]
       \item Let $\gamma_n$ (resp.\ $\wt \gamma_n$) be the loop formed by connecting the centers of adjacent boundary disks of $\cP^n$ (resp.\ $\wt\cP^n$) using straight line segments. Then the distance~\eqref{eq:curve-metric-0} between $\gamma_n$ and $\partial D$ (resp.\ $\wt\gamma_n$ and $\wt\partial D$) tends to 0 as $n\to\infty$.
   \end{enumerate}
   Then $\{f^n\}$ is equicontinuous and the  convergence of a subsequence of  $\{f^n\}$ in Theorem~\ref{thm:He-Schramm-extension-0} to $f$ is uniform  {in the sense of Definition~\ref{def:unif-convergence}.} 
\end{theorem}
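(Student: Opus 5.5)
The plan is to upgrade the local uniform convergence of Theorem~\ref{thm:He-Schramm-extension-0} to global convergence by proving a uniform modulus of continuity for $f^n$ up to the boundary. I would use a length--area (discrete extremal length) argument near the boundary, and then combine it with the Carath\'eodory extension of the limit $f$. Since $D,\wt D$ are Jordan domains, $f$ extends to a homeomorphism $\ol D\to\ol{\wt D}$. Once $\{f^n\}$ is shown to be equicontinuous in the sense of Definition~\ref{def:unif-convergence}, an Arzel\`a--Ascoli type argument gives uniform convergence along the subsequence. Concretely, for $z'\in\ol{D^n}$ with $D^n:=\mathrm{carrier}(\cP^n)$ near $z\in\ol D$, I would pick an interior point $w$ close to both. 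Local uniform convergence controls $|f^n(w)-f(w)|$, equicontinuity controls $|f^n(z')-f^n(w)|$, and continuity of $f$ on $\ol D$ controls $|f(w)-f(z)|$. Condition (iv) guarantees that the Hausdorff distance between $\ol{D^n}$ and $\ol D$ tends to $0$, so Definition~\ref{def:unif-convergence} applies. Equicontinuity on compact subsets of $D$ already follows from the local uniform convergence, so only the boundary needs work.

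\textbf{Boundary equicontinuity.} Fix $z_0\in\partial D$ and small $\delta>0$, and consider the annuli $A_k=\{\delta^{2^{-k}}... \}$. More simply, consider the annulus $A=A(z_0;\delta,\sqrt\delta)$. By (i), for large $n$ all disks of $\cP^n$ are much smaller than $\delta$. Every disk chain in $\cP^n$ separating $B(z_0;\delta)\cap D^n$ from $\partial B(z_0;\sqrt\delta)$ inside $D^n$ therefore has Euclidean size comparable to the geometry of $A$. Hence the vertex extremal length of the family $\Gamma_n$ of such separating chains is at most $C/\log(1/\delta)$; equivalently, the dual crossing family has extremal length $\ge c\log(1/\delta)$. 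For this I use the metric $m(v)=\diam(D_v)/(|c_v-z_0|\log(1/\delta))$ and the fact that the disks have disjoint interiors, so that $\sum m(v)^2$ is bounded by an area integral. Vertex extremal length is combinatorial, so the same bound holds for the chains of $\wt\cP^n$. Using the metric $\wt m(v)=\diam(\wt D_v)$ on the image, whose total mass $\sum\wt m(v)^2\lesssim\area$ of a neighborhood of $\wt D$ is bounded by (ii), Cauchy--Schwarz yields a separating chain whose image has total diameter at most $C'/\sqrt{\log(1/\delta)}=:\epsilon(\delta)$.

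\textbf{Using (iv) to localize.} The short chain is a crosscut of $\wt D^n$ whose endpoints lie in boundary disks of $\wt\cP^n$. Its two endpoints are therefore points of $\wt\gamma_n$ at distance $\le\epsilon(\delta)$ plus small disk diameters. Because $\wt\gamma_n\to\partial\wt D$ in $\mathbb d^{\rm CMP}$ and $\partial\wt D$ is a Jordan curve, which is uniformly locally connected, such endpoints bound an arc of $\wt\gamma_n$ of small diameter $\omega(\epsilon(\delta))$. The crosscut thus cuts $\wt D^n$ into two pieces, one of which has diameter $\lesssim\omega(\epsilon(\delta))$. To see that $f^n(B(z_0;\delta)\cap D^n)$ lies in the small piece, I use (iv) on the domain side: the chain in $\cP^n$ runs through $A$ and its endpoints sit on the two arcs of $\gamma_n$ near $z_0$, so it separates $B(z_0;\delta)\cap D^n$ from the fixed point $p_0$. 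By (iii), $f^n(p_0)$ stays in a compact subset of $\wt D$, so it lies in the large piece. This gives $\diam f^n(B(z_0;\delta)\cap D^n)\lesssim\omega(\epsilon(\delta))+o_n(1)$ uniformly in $z_0$, which is equicontinuity.

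\textbf{Main obstacle.} I expect the hard step to be the bookkeeping in the localization. The boundary disks may partly lie outside $D$ and $\wt D$, and $\cP^n,\wt\cP^n$ are only circle embeddings (boundary disks may touch without an edge). One must therefore check carefully that the extracted chain is a genuine crosscut of the carrier separating the correct pieces. I would handle this by working with the polygonal loops $\gamma_n,\wt\gamma_n$ as the Jordan boundaries of the carriers, rather than with $\partial D$ itself, and by using (ii) and (iv) only to transfer the Jordan-curve uniform local connectivity of $\partial D,\partial\wt D$ to $\gamma_n,\wt\gamma_n$ uniformly in large $n$.
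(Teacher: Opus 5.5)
Your architecture matches the paper's. You reduce to equicontinuity (this is Lemma~\ref{lem:unif-conv-def}), use a length--area estimate around $z_0\in\partial D$ to find a chain of disks whose image has small total diameter, and combine (iii) with the uniform local connectivity of $\wt\gamma_n$ inherited from (iv) to place $f^n(p_0)$ on the big side of the short image crosscut. Your elementary handling of that last point is a fine substitute for the paper's argument via conformal maps and Proposition~\ref{prop:boundary-perturb}: the crosscut plus the short arc of $\wt\gamma_n$ bounds a Jordan region of small diameter.

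\textbf{The gap is on the domain side.} You claim that the chain found in $A(z_0;\delta,\sqrt\delta)$ separates all of $B(z_0;\delta)\cap D^n$ from $p_0$. A single chain, i.e.\ one arc of $\partial B(z_0;\rho)\cap\Omega_n$ with $\Omega_n$ the interior of the carrier, cuts off only one component of $B(z_0;\rho)\cap\Omega_n$. The set $B(z_0;\delta)\cap\Omega_n$ need not lie in one such component. Suppose $\bbC\setminus D$ has an inlet of width less than $\delta$ next to $z_0$ whose tip is farther than $\sqrt\delta$ from $z_0$. Then points on the two sides of the inlet lie in different components of $B(z_0;\rho)\cap\Omega_n$ for every $\rho\le\sqrt\delta$. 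The length--area bound makes the image of each side small, but gives no reason for the two images to be close to each other, and that is exactly what equicontinuity demands. Taking $\delta$ small does not help. The modulus of local connectivity of a Jordan curve can be far worse than $\sqrt\delta$: inlets of width $2^{-k}$ and depth $1/k$ produce such a bad point at every scale.

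\textbf{How to fix it.} Use (iv) on the domain side first, as the paper does with its paths $K_n$. Uniformly for large $n$, any $x,y\in\ol{\Omega_n}$ with $|x-y|<\delta$ are joined by a connected set $K\subseteq\ol{\Omega_n}$ with $\diam K\le\omega(\delta)$, where $\omega(\delta)\to 0$. Then run the length--area estimate in $A(z_0;2\omega(\delta),r_1)$ with $r_1<|z_0-p_0|/3$ fixed. Its modulus $\log(r_1/\omega(\delta))$ still tends to infinity. Since $\Omega_n$ is a Jordan domain, a single arc of $\partial B(z_0;\rho)\cap\Omega_n$ separates the connected set $K$ from $p_0$, and the rest of your argument then goes through.

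\textbf{A smaller point.} The step ``equivalently, the dual crossing family has extremal length $\ge c\log(1/\delta)$'' is not an elementary equivalence. Only the full crossing family can control the separating family: a subfamily consisting of one long crossing path has huge vertex extremal length while the separating family is unchanged. So this step silently invokes a discrete duality theorem. You do not need it. Cauchy--Schwarz over the circles $\partial B(z_0;\rho)$ with target weights $\diam(\wt D_v)$ bounds the separating family directly; this is exactly Lemma~\ref{lem:He-Schramm}, and your metric $m(v)$ plays the role of the weights $(b_v-a_v)/a_v$ in its proof.
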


\begin{remark}
    
    
    As commented after~\cite[Theorem 1.1]{HeSchramm}, once we apply some appropriate normalization to $\{f^n\}$, the sequence $\{f^n\}$ (rather than a subsequence) would converge. 
\end{remark}

In Theorem~\ref{thm:He-Schramm-extension-0}, we required that $D,\wt D\neq\bbC$. The following is a version of Theorem~\ref{thm:He-Schramm-extension-0} when $D=\wt D=\bbC$.

\begin{theorem}\label{thm:He-Schramm-sphere}
     Let $\{T^n\}_{n\geq1}$ be  disk triangulations, $\cP^n, \wt\cP^n$ be circle embeddings of $T^n$ in $\bbC$, and $f^n$ be homeomorphisms from the  {carrier} of $\cP^n$ to that of $\wt\cP^n$, sending each  sending  each $D_v\cap \mathrm{carrier}(\cP^n)$ where  $v\in\cV T^n$ to $\wt D_v\cap \mathrm{carrier}(\wt \cP^n)$.  Assume the following hold:
     \begin{enumerate}[(i)]
         \item The maximal spherical diameter of disks in $\cP^n$ as well as the maximal spherical diameter of  the bounded interstices of $\cP^n$  tends to 0 as $n\to\infty$;
         \item For every $R>0$, for large enough $n$, $B(0;R)\subseteq \carrier(\cP^n)$; 
         \item For each $n$, $f^n(0)=0$ and $f^n(1)=1$.  
     \end{enumerate}
     Then (i) and (ii) above hold for $\wt\cP^n$ in place of $\cP^n$, and $f^n$ converges uniformly to the identity in the spherical metric, in the sense that for any $\e>0$, there exists some $N>0$, such that for every $n>N$ and $z\in\mathrm{carrier}(\cP^n)$, $d_{\sph}(f^n(z),z)<\e$.
\end{theorem}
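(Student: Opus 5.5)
The plan is to reduce to a rigidity statement for packings filling the plane, combined with Theorem~\ref{thm:He-Schramm-extension-0} applied on large disks.

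\textbf{Step 1: $\wt\cP^n$ also fills the plane.} Suppose (ii) failed for $\wt\cP^n$, so that along a subsequence $\carrier(\wt\cP^n)$ misses a point $w_n$ of bounded modulus. Since $T^n$ is a disk triangulation, the boundary loop of $\wt\cP^n$ separates $\wt f^n(0)=0$ and $1$ from $w_n$. For $\cP^n$, on the other hand, the boundary loop lies outside $B(0;R_n)$ with $R_n\to\infty$. Consider the annulus $A(0;1,R_n)$ inside $\carrier(\cP^n)$. Its discrete extremal length, or equivalently the vertex extremal length of the corresponding subgraph, tends to infinity, because the disks are small in the spherical metric. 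This is the standard He--Schramm estimate via the Ring Lemma-free length/area argument.

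Now transport the separating set through the combinatorics. The vertex extremal length between $\{0,1\}$-disks and the boundary of $T^n$ then tends to infinity. For $\wt\cP^n$, however, the boundary disks come within bounded distance of $\{0,1\}$, since the boundary loop separates $1$ from $w_n$. Using disks of $\wt\cP^n$ as a test metric, and the fact that $\wt\cP^n$ carries the disks of $0$ and $1$ at distance $1$, this gives a bounded extremal length, which is a contradiction.

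The same extremal-length comparison also shows that the maximal spherical diameter of disks and interstices in $\wt\cP^n$ tends to $0$. Otherwise a macroscopic disk would produce a combinatorial annulus of bounded extremal length in $T^n$ around a vertex, contradicting the fine structure of $\cP^n$. Symmetrically, one can use that both packings are normalized at $0$ and $1$.

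\textbf{Step 2: local convergence.} Fix $R$. Let $T^n_R$ be the subtriangulation whose disks in $\cP^n$ lie in $B(0;R)$, take its filled-in component containing $0$, and restrict both embeddings to it. Theorem~\ref{thm:He-Schramm-extension-0} then applies with $D=B(0;R)$, with the image domain obtained by taking subsequential limits of the Jordan regions $f^n(\carrier \cP^n_R)$ via a Carathéodory kernel argument, and with $p_0=0$. This yields subsequential local uniform limits $f$.

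By diagonalizing over $R\to\infty$, we obtain a conformal injective limit $f:\bbC\to\bbC$. Step 1 shows its image exhausts $\bbC$, again via the extremal length bound. Hence $f$ is affine, and the normalization $f(0)=0$, $f(1)=1$ forces $f=\mathrm{id}$. Uniqueness of the limit gives convergence of the full sequence on compacts.

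\textbf{Step 3: uniform spherical convergence near $\infty$.} This is the main obstacle. For $z$ with $|z|>R$ we must show that $f^n(z)$ is also large. By local convergence, $f^n$ maps $\partial B(0;R)\cap\carrier$ close to $\{|w|=R\}$. By orientation and the homeomorphism property, $f^n$ maps the unbounded part of the carrier outside the image of $B(0;R)$, which is roughly $\{|w|\ge R-\e\}$. Since spherical distances on $\{|w|\ge R-\e\}$ are at most $O(1/R)$, we get $d_\sph(f^n(z),z)=O(1/R)$ uniformly for large $n$.

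The delicate point is controlling the images of the pieces of $\partial B(0;R)$ crossing disks and interstices. The first thing I would try is to choose $R$ via the small-diameter hypothesis (i) and the symmetric property for $\wt\cP^n$ from Step 1, so that the loop traced by the disks meeting $\partial B(0;R)$ has small spherical diameter pieces and its image is a Jordan loop near $\{|w|=R\}$.
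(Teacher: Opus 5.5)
\textbf{Step 1 does not work as written.} You need an \emph{upper} bound on the vertex extremal length, in $\wt\cP^n$, of paths from the $\{0,1\}$-set to $\partial T^n$. A test metric can only ever give a \emph{lower} bound on extremal length. Knowing that the boundary disks of $\wt\cP^n$ come within bounded distance of $0$ and $1$ tells you that some paths are short, and that does not bound the extremal length above.

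To get an upper bound you would need a family of test curves, or a duality theorem together with a test metric for the separating family. In other words, you would need a discrete Loewner property for $\wt\cP^n$, at a stage where you know nothing about it: its disks may be macroscopic or Euclidean-huge, and its carrier has unknown shape. In addition, the inner set must be a chain joining the disk of $0$ to that of $1$ (e.g.\ the disks meeting $[0,1]$), whose image is macroscopic because $f^n(0)=0$ and $f^n(1)=1$. The two disks containing $0$ and $1$ alone may be tiny in both packings, and then there is no contradiction. Your ``same comparison'' for macroscopic disks of $\wt\cP^n$ has the same direction problem. It also needs the inner set to be all disks meeting a ball of radius $\gtrsim\max_u\diam_\sph(D_u)$ around $D_v$, since there is no Ring Lemma.

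The paper avoids all of this with Lemma~\ref{lem:He-Schramm} at $z_0=\infty$. It gives $\rho\in[r_0,r_1]$ such that the $\cP^n$-disks meeting $C_\sph(\infty,\rho)$, which separate $0,1$ from $\partial T^n$, have $\wt\cP^n$-images of total spherical diameter at most $c_0\log(r_1/r_0)^{-1/2}$. The only fact about $\wt\cP^n$ used is $\sum_v\diam_\sph(\wt D_v)^2\le 4\pi c_1$. A small Jordan curve separating $0,1$ from the boundary disks then pushes $\partial\wt\cP^n$ near $\infty$ (Lemma~\ref{lem:pf-He-Schramm-sphere-1}). The same lemma at finite $z_0$, plus a topological argument for facial interstices, gives Lemma~\ref{lem:pf-He-Schramm-sphere-2}.

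\textbf{Step 2 has an unverified hypothesis.} Theorem~\ref{thm:He-Schramm-extension-0} needs a \emph{fixed bounded Jordan} target $\wt D$, and it needs the boundary disks of the restricted image packing to approach $\partial\wt D$ uniformly (condition (ii)). A Carath\'eodory kernel limit of $f^n(\carrier(\cP^n_R))$ need not be Jordan. Kernel convergence also does not rule out thin parts whose boundary disks stay far from the kernel's boundary.

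What is missing is two-sided equicontinuity on compacts, which the paper again gets from the length--area lemma: $B(f^n(z_0);e^{-c_1r^{-2}})\subseteq f^n(B(z_0;r))\subseteq B(f^n(z_0);c_2(-\log r)^{-1/2})$. With this, Arzel\`a--Ascoli gives homeomorphic subsequential limits, and He--Schramm's local arguments show they are conformal, with no need for Theorem~\ref{thm:He-Schramm-extension-0}. (Surjectivity of the limit is unnecessary: an injective entire function is affine.)

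Your Step 3, by contrast, is fine and easier than you fear. Once $\sup_{|z|\le R}|f^n(z)-z|<\e$, the Jordan curve $f^n(\partial B(0;R))$ lies in $\{R-\e<|w|<R+\e\}$ and winds once around $0$. Hence $f^n(B(0;R))$ is its bounded complementary domain, and injectivity gives $|f^n(z)|\ge R-\e$ whenever $|z|>R$. No special choice of $R$ is needed. This is a valid substitute for the paper's use of Lemma~\ref{lem:pf-He-Schramm-sphere-1} near $\infty$, but only once Steps~1--2 are repaired, essentially by the paper's length--area estimates.
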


Since circle packing is non-local, i.e., circle packing of maps could change very differently if we add vertices and edges, it is not straightforward to compare circle packings for maps and their submaps. Our next result gives some condition where the circle packing of a submap $T^n$ of disk triangulation $\wt T^n$ is close to the circle packing for $\wt T^n$. Roughly speaking, let $v^n$ be an interior vertex of $T^n$. Suppose in the circle packing $\cP^n$ for $T^n$ in $\bbD$  {centered at $v^n$},  the maximal diameter of disks converges to 0, and the diameter of the union of the vertices on the interface between $T^n$ and $\wt T^n\backslash T^n$ is small, then in the circle packing $\wt\cP^n$ for $\wt T^n$ centered at $v^n$, the diameter of the union of  the disks for $\wt T^n\backslash T^n$ and the circle packing map between   $\cP^n$ and $\wt\cP^n$ is approximately identity up to rotations.  See Figure~\ref{fig:pack-disk-1} for an illustration.

\begin{figure}
    \centering
    \includegraphics[scale=0.78]{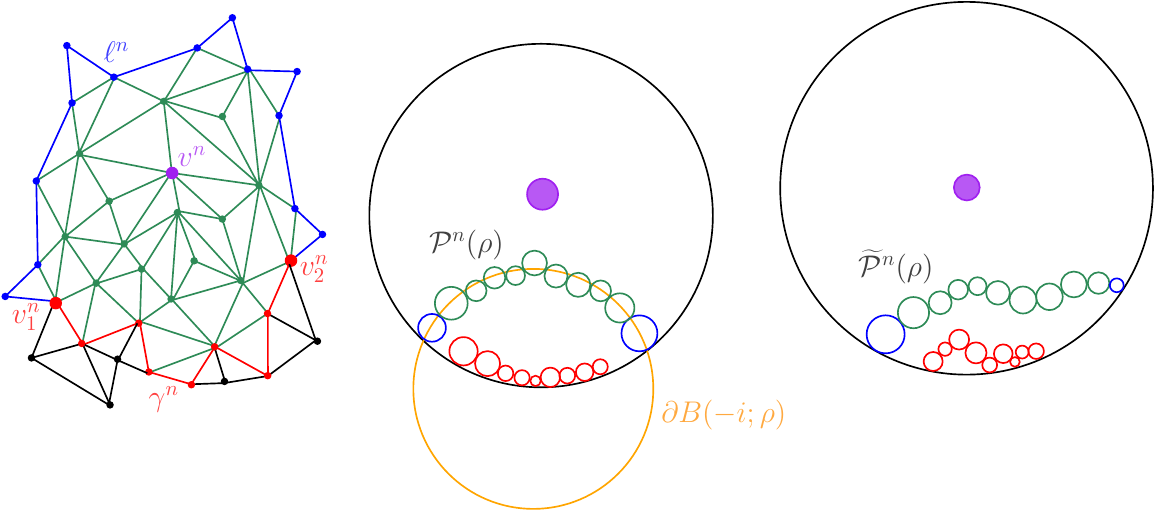}
    \caption{An illustration of Theorem~\ref{thm:He-Schramm-disk}. \textbf{Left:} The triangulation $\wt T^n$. The map $T^n$ is formed by the red path $\gamma^n$, the blue path $\ell^n$ and the green vertices and edges between $\gamma^n$ and $\ell^n$. \textbf{Middle:} (Part of) the circle packing $\cP^n$. The set $\cP^n(\rho)$ is formed by the disks intersecting $\partial B(-i;\rho)$. \textbf{Right:} By Lemma~\ref{lem:He-Schramm}, the disks for vertices on $\gamma^n$ in $\wt\cP^n$ are separated from 0 by $\wt\cP^n(\rho)$ which has small path, which implies that in $\wt\cP^n$, the  union of disks for the vertices on $\gamma^n\cup (T^n\backslash\wt T^n)$ has small diameter. Theorem~\ref{thm:He-Schramm-disk} further follows from Theorem~\ref{thm:He-Schramm-extension-0} and Theorem~\ref{thm:He-Schramm-extension-1}.}
    \label{fig:pack-disk-1}
\end{figure}

\begin{theorem}\label{thm:He-Schramm-disk}
    Let $\wt T^n$ be disk triangulations, $v_1^n$ and $v_2^n$ be boundary vertices of $\wt T^n$, and $\ell^n$ be the clockwise boundary arc of $\partial\wt T^n$ from $v_1^n$ to $v_2^n$. Let $\gamma^n$ be a simple path on $\wt T^n$ that intersects $\ell^n$ only at $v_1^n$ and $v_2^n$. Let $ T^n\subseteq \wt T^n$ be the disk triangulation formed by $\ell^n$, $\gamma^n$ and vertices between $\ell^n$ and $\gamma^n$.  Let $v^n$ be an interior vertex of $ T^n$, and $\wt\cP^n$ and $\cP^n$ be circle packing of $\wt T^n$ and $T^n$ in $\bbD$ centered at $v^n$. Further assume  the following:
    \begin{enumerate}[(i)]
        \item The maximal diameter of disks in $\cP^n$ tends to 0 as $n\to\infty$;
        \item The diameter of the union of the disks in $\cP^n$ for vertices in $\gamma^n$ tends to 0 as $n\to\infty$.
    \end{enumerate}
    Let $\cP_1^n$ be the restriction of $\wt\cP^n$ to the disks for  vertices in $ T^n$, and $f^n$ be a homeomorphism between the  {carriers} of $\cP^n$ and $\cP_1^n$   sending  each $D_v\cap \mathrm{carrier}(\cP^n)$ where  $v\in\cV T^n$ to $\wt D_v\cap \mathrm{carrier}( \cP_1^n)$.  Then the maximal diameter of disks in $\wt\cP^n$ as well as the diameter of the union of disks in $\wt\cP^n$ for vertices in $\gamma^n\cup(\wt T^n\backslash T^n)$ tends to 0 as $n\to\infty$, and there exists $\theta_n\in [0,2\pi)$ such that $e^{i\theta_n} f^n$ converges uniformly to the identity in the sense of Definition~\ref{def:unif-convergence}.
\end{theorem}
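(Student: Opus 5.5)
The proof has two steps. First, I will show that in $\wt\cP^n$ the vertices of $\gamma^n\cup(\wt T^n\backslash T^n)$ are packed into a set of small diameter. Then I will apply Theorems~\ref{thm:He-Schramm-extension-0} and~\ref{thm:He-Schramm-extension-1} to $\cP^n$ and $\cP^n_1$ in $D=\wt D=\bbD$.

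\textbf{Step 1 (small diameter of the complementary part).} By (ii), after a rotation we may assume the disks of $\gamma^n$ in $\cP^n$ lie in a ball $B(-i;\delta_n)$ with $\delta_n\to0$. Fix $\rho>0$ small and let $\cP^n(\rho)$ be the chain of disks of $\cP^n$ meeting $\partial B(-i;\rho)\cap\bbD$. Its combinatorial path $\Gamma^n_\rho$ in $T^n$ separates $\gamma^n$ (together with the part of $\ell^n$ near $-i$) from $v^n$ in $T^n$. Since $\wt T^n\backslash T^n$ is attached to $T^n$ only along $\gamma^n$, the same path separates $\gamma^n\cup(\wt T^n\backslash T^n)$ from $v^n$ in $\wt T^n$.

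By (i) and the standard length–area (extremal length / vertex extremal length) argument of He–Schramm, the annulus between $\partial B(-i;\rho)$ and $\partial B(-i;\sqrt\rho)$ contains many disjoint disk-chains separating the two boundary circles. Hence the vertex extremal length in $T^n$ of the family of paths from $\Gamma$-separating sets to $v^n$ is large, of order $\log(1/\rho)$. Since $T^n\subseteq\wt T^n$ and the separating chains lie in $T^n$, the same bound holds in $\wt T^n$. This is the content of Lemma~\ref{lem:He-Schramm}, referred to in the figure caption.

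In $\wt\cP^n$, which is centered at $v^n$ with $\wt D_{v^n}$ of radius bounded below, the Ring-lemma-free He–Schramm estimate then gives that some chain among these has Euclidean length $O(1/\sqrt{\log(1/\rho)})$. That chain cuts off from $0$ a region containing all disks of $\gamma^n\cup(\wt T^n\backslash T^n)$. The cut-off region is on the side away from $0$, so by the maximum principle it has diameter $O(1/\sqrt{\log(1/\rho)})$ as well. Letting $n\to\infty$ and then $\rho\to0$ gives the diameter claim.

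The radius of $\wt D_{v^n}$ must also not collapse. I expect this to follow from the same extremal length comparison, or from the Schwarz lemma applied to the inclusion of packings.

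\textbf{Step 2 (maximal disk size and the convergence).} Disks of $\wt\cP^n$ for vertices of $T^n$ away from $\gamma^n$ are small by the standard He–Schramm argument: a disk of diameter $\ge\e$ forces small extremal length, which contradicts (i) for $\cP^n$, since the vertex extremal length is essentially monotone. Disks near $\gamma^n$ are small by Step 1.

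Now view $\cP^n$ and $\cP^n_1$ as circle embeddings of $T^n$ with $D=\wt D=\bbD$. The boundary disks of $\cP^n_1$ are those of $\ell^n$, which are internally tangent to $\partial\bbD$, and those of $\gamma^n$, which lie in a small set touching $\partial\bbD$. So hypothesis (ii) of Theorem~\ref{thm:He-Schramm-extension-0} holds. Hypothesis (iii) holds with $p_0=0$ since $f^n(0)=0$.

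For (iv) of Theorem~\ref{thm:He-Schramm-extension-1}, the boundary loop consists of the chain along $\ell^n$ plus a tiny excursion along $\gamma^n$. The $\ell^n$-chain is tangent to the circle, and its disks are small. The loop therefore converges to $\partial\bbD$ modulo parameterization, once one checks that the $\ell^n$ chain is monotone around the circle, which it is since the disks are ordered cyclically along $\partial\bbD$.

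Theorems~\ref{thm:He-Schramm-extension-0} and~\ref{thm:He-Schramm-extension-1} then give subsequential uniform convergence of $f^n$ to a conformal automorphism $f$ of $\bbD$ fixing $0$, that is, a rotation. Since every subsequence has a further subsequence converging to a rotation, choosing $\theta_n$ appropriately yields $e^{i\theta_n}f^n\to\mathrm{id}$ uniformly along the full sequence.

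The main obstacle is Step 1: transferring the separation information from $\cP^n$ to $\wt\cP^n$ via extremal length comparisons that are insensitive to the unknown structure of $\wt T^n\backslash T^n$, and controlling $\wt D_{v^n}$.
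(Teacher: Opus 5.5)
Your argument follows the paper's proof. Lemma~\ref{lem:He-Schramm}, applied to the two circle embeddings $\cP^n$ and $\cP^n_1$ of $T^n$ on circles around $-i$, yields a chain of disks that is short in $\wt\cP^n$ and separates $\gamma^n\cup(\wt T^n\backslash T^n)$ from $v^n$. Theorems~\ref{thm:He-Schramm-extension-0} and~\ref{thm:He-Schramm-extension-1} then finish.

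\textbf{The false ingredient.} One ingredient you state must be dropped, because it is false: the radius of $\wt D_{v^n}$ is \emph{not} bounded below. $D_{v^n}$ is tiny by (i), and the theorem's own conclusion forces $\wt D_{v^n}$ to be tiny as well. So the ``obstacle'' of controlling $\wt D_{v^n}$ cannot be overcome, but it also does not need to be. A lower bound would be needed for a connecting-path formulation (``the VEL of paths to $v^n$ is large''). That statement carries no information, precisely because $v^n$ is microscopic.

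The separating-chain argument uses only two facts. First, the end disks of the chain belong to $\ell^n$. They are therefore boundary disks of $\wt T^n$, tangent to $\partial\bbD$ in $\wt\cP^n$, so the chain contains a crosscut of $\bbD$ of small diameter. Second, this crosscut avoids $0$, since $0$ is the center of $\wt D_{v^n}$, whose interior is disjoint from the chain's disks. Hence the side of the crosscut not containing $0$ lies in a small ball centered on $\partial\bbD$.

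\textbf{Two smaller points.} First, $f^n(0)=0$ is not a hypothesis. As the paper does, after Step~1 apply Theorem~\ref{thm:He-Schramm-extension-0} with an auxiliary disk-preserving homeomorphism $g^n$ satisfying $g^n(0)=0$. Its conclusion gives the maximal-diameter bound for $\cP^n_1$, which makes your separate extremal-length argument in Step~2 unnecessary. Since $f^n(0)\in\wt D_{v^n}$, it then follows that $f^n(0)\to0$. Second, your compactness choice of $\theta_n$ works. The paper instead rotates both packings so that the disk of $v_1^n$ touches $\partial\bbD$ at $-i$, which forces every subsequential limit to be the identity.
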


We will prove Theorems~\ref{thm:He-Schramm-extension-0},~\ref{thm:He-Schramm-extension-1},  ~\ref{thm:He-Schramm-sphere} and~\ref{thm:He-Schramm-disk} in Section~\ref{subsec:HSextension-pf}. Our proof is mostly based on a length/area argument for circle packing, which originated in~\cite[Equation (2.2)]{HeSchramm}.   The precise statement is in Lemma~\ref{lem:He-Schramm}. It can be viewed as a discrete version of  Wolff's lemma~\cite{wolff1935demonstration} (Lemma~\ref{lem:He-Schramm-unif}), which has numerous applications in complex analysis, e.g., proving that conformal maps between Jordan domains can be continuously extended to the boundary. This type of discrete length/area argument  {first appeared in~\cite{RodinSullivan87}, and} was also used in~\cite{GDN19VEL}, which establishes a relation between maximal diameter of disks in circle packing with Cannon's vertex extremal length~\cite{CannonVEL}, and also in the proof of Theorem~\ref{thm:invariance-uniformization} in our work~\cite{HY26} to establish regularity estimates for the Riemann surface $M(\cH)$.  {We comment that Theorems~\ref{thm:He-Schramm-extension-0},~\ref{thm:He-Schramm-extension-1},  ~\ref{thm:He-Schramm-sphere} and~\ref{thm:He-Schramm-disk} also apply for random triangulations; if the conditions in these theorems hold for  the convergence in probability, and we normalize on $f^n$ such that the convergence holds for the whole sequence $\{f^n\}$ rather than a subsequence, then the corresponding conclusions also holds for convergence in probability as well; one can easily see this by taking subsequences and applying these results in the setting of almost sure convergence.} 

\subsubsection{Overview of proof}\label{subsubsec:intro-overview}

In the setting of Theorem~\ref{thm:pack-cone}, the collection $\{\eta([x-\frac{1}{n},x]):x\in\frac{1}{n}\bbZ\}$ defines a cell configuration with the associated map $\cG^{1/n}$ (where we set the conductance for all edges to be equal to 1), which can be viewed as the a priori embedding of the mated-CRT map $\cG^{1/n}$. However, it does not fit into the first two conditions in Definition~\ref{def:CellConfigConstraint}. To this end, we will first work on a variant $\cH^\rmE$   in the setting of 0-quantum cone in Section~\ref{subsec:mated-CRT-0-cone}, and verify that it satisfies the conditions in Theorem~\ref{thm:CellSystemPack} and Theorem~\ref{thm:invariance-uniformization}. In~\cite[Theorem 4.1]{GMS21tutte}, it has been shown that in the cell configuration $\cH^\rmE$, all the positive moments for $\diam(H_0)^2/\area(H_0)$ are finite, and $\bbE[\deg(H_0)^p]<\infty$ for all $p<4/\gamma^2$.  In the setting of circle packing, due to the presence of multiple edges in the mated-CRT map, we will prove a version of the finiteness of positive moments of  $\diam(H_0)^2/\area(H_0)$ in Proposition~\ref{prop:cell-diam2/area} for the version of cell configurations where we collapse all the multiple edges.  {We will also prove that  $\bbE[\deg(H_0)^p]<\infty$ for all $p>0$ (Proposition~\ref{prop:0-cone-deg-1}) using a comparison between 0-quantum cone and $\gamma$-quantum cone in the Tutte draft.} This establishes the moment bound in~\eqref{eq:thm-inv-circle}. The other conditions in Definition~\ref{def:CellConfigConstraint} can be verified similarly as in~\cite{GMS21tutte}, which allows us to prove a version of Theorem~\ref{thm:pack-cone} and Theorem~\ref{thm:unif-cone} in the context of 0-quantum cone (Theorem~\ref{thm:circle-matedCRT-0-cone} and Theorem~\ref{thm:unif-matedCRT-0-cone}). 

To transfer from the setting of 0-quantum cone to the surfaces in Theorem~\ref{thm:pack-cone}-\ref{thm:unif-sph}, we cut a piece $\eta([a,b])$ from the 0-quantum cone. From the convergence result on 0-quantum cone, the restriction of the circle packing of the whole map to the times in $[a,b]$ is close to $\eta|_{[a,b]}$, and we can apply Theorem~\ref{thm:He-Schramm-extension-1} to transfer to the setting where $\eta|_{[a,b]}$ is embedded in $\bbD$. This gives a version of convergence of circle packing for the mated-CRT map encoded by $\eta|_{[a,b]}$ that is measurable with respect to $\eta|_{[a,b]}$, and we further use standard methods in~\cite[Proof of Proposition 2.10]{GMS19har} to transfer to the setting where $\eta|_{[a,b]}$ is a piece cut off from the $\gamma$-quantum cone. This is done in Proposition~\ref{thm:circle-quantum-cell-0}. We comment that the operator $\Phi_{\rm simple}$ is non-local, and one has to be careful when defining localized versions of convergence of circle packing for quantum cells; see Figure~\ref{fig:quantum-cell}. Now for the quantum disk and quantum sphere in Theorem~\ref{thm:pack-disk} and Theorem~\ref{thm:pack-sph}, following the local absolute continuity between the Brownian motions,  the law of the surfaces encoded by the space-filling curve restricted to time $[\e,1-\e]$ are mutually absolutely continuous. This allows us to combine Proposition~\ref{thm:circle-quantum-cell-0} along with the comparison between different circle packings as in Theorem~\ref{thm:He-Schramm-sphere} and Theorem~\ref{thm:He-Schramm-disk} to extend the interval $[\e,1-\e]$ to $[0,1]$ and complete the proof of Theorem~\ref{thm:pack-disk} and Theorem~\ref{thm:pack-sph}. Theorem~\ref{thm:pack-cone} is proven by starting with the convergence in Proposition~\ref{thm:circle-quantum-cell-0} for $\eta|_{[-1,1]}$ and zooming in near 0, where we further apply the scale invariance of the quantum cone (Lemma~\ref{lem:Rb}) and Theorem~\ref{thm:He-Schramm-sphere}. This finishes the proof of the convergence of the mated-CRT map under circle packing in various (simply connected) topologies. 

The proof of for the convergence in the Riemann uniformization embedding is mostly parallel to the circle packing case, and is actually easier since we no longer need to collapse the double edges as in the circle packing case. In Section~\ref{subsec:conformal-map}, we gather all the ingredients we need on conformal maps, which are the analogs of  Theorems~\ref{thm:He-Schramm-extension-1}, Theorem~\ref{thm:He-Schramm-sphere} and~\ref{thm:He-Schramm-disk}. In Section~\ref{subsec:unif-matedCRT-gamma-cone}, we follow the same outline as in the proof for circle packing setting to complete the proof of Theorems~\ref{thm:unif-cone}-\ref{thm:unif-sph}.

\textbf{Notation.} We write $\bbN_0=\{0,1,2,3,....\}$, and write $\bbR_+ = [0,\infty)$.   For nonnegative functions $f,g:\bbN_0\to \bbR_+$, we write $f(n)\lesssim g(n)$, if there are some constants $M,N>0$ such that $f(n)\leq Mg(n)$ for every $n>N$.

\medskip

\noindent\textbf{Acknowledgements.}   
N.H.\ was supported by grant DMS-2246820 and DMS-2553117 of the National Science Foundation, along with the Simons Collaboration Grant \emph{Probabilistic Paths to Quantum Field Theory}.

\medskip

\textbf{Statement:} We have worked on this paper for more than one year and are currently in the process of revising it. We were prompted to post the paper today since several closely related results were released by OpenAI on October 6. To our knowledge, none of the OpenAI results imply the results of our paper, or vice versa.

\medskip

\textbf{AI usage:} We used ChatGPT Astra for proofreading  and revising the first draft of the paper. AI tools were not used to develop mathematical ideas or in the writing of the manuscript. All suggested changes were reviewed by the authors, who take full responsibility for the content of the paper.

\section{Preliminaries on Liouville quantum gravity and mating of trees}\label{sec:pre}

In this section, we recap the definitions of Gaussian free field, Liouville quantum gravity and space-filling SLE that are needed for this paper. Since there have been numerous works on these topics, we will be relatively brief here; see the surveys and lecture notes~\cite{She07,BN23SLEnotes,BP24GFFnotes} for more details on GFF, SLE and LQG. 

\subsection{Gaussian free field}

Let $X \in\{\bbC,\bbH\}$. Let $m_\bbC$ (resp.\ $m_\bbH$) be the uniform measure on the unit circle {$\partial\bbD$} (resp.\ unit semicircle $\{|z|=1,\mathrm{Im}\, z>0\}$). Define the Dirichlet inner product $\langle f,g\rangle_\nabla = (2\pi)^{-1}\int_X \nabla f\cdot\nabla g $ on the space $\{f\in C^\infty(X):\int_X|\nabla f|^2<\infty; \  \int f(z)m_X(dz)=0\},$ and let $H(X)$ be the closure of this space {with respect to} the inner product $\langle f,g\rangle_\nabla$. Let $(f_n)_{n\ge1}$ be an orthonormal basis of $H(X)$, and $(\alpha_n)_{n\ge1}$ be a collection of independent standard Gaussian variables.  Then the summation
$$h_X = \sum_{n=1}^\infty \alpha_nf_n$$
a.s.\ converges in the space of distributions on $\bbC$, and $h_X$ is the \emph{Gaussian Free Field} on $X$ normalized such that $\int h(z)m_X(dz) = 0$. Let $P_X$ be the law of $h_X$. See~\cite[Section 4.1.4]{DMS14} for more details.  As argued in \cite[Section 4.1]{DMS14},  we have the decomposition $H({X}) = H_1({X})\oplus H_2({X})$, where $H_1(X)$ is the subspace of radially symmetric functions, and $H_2(X)$ is the subspace of functions having mean 0 about all  circles $\{|z|=r\}$ if $X=\bbC$ or all  semicircles $\{|z|=r, \ \mathrm{Im}\, z>0\}$ if $X=\bbH$. As a consequence, for the GFF $h_X$ sampled from $P_X$, we can decompose \eqb\label{eq:GFF-decompose}h_X=h_{X,1}+h_{X,2}\eqe where $h_{X,1}$ and $h_{X,2}$ are independent distributions given by the projection of $h$ onto $H_1(X)$ and $ H_2(X)$, respectively. 

\subsection{Liouville quantum gravity}

Let $\gamma\in(0,2)$ and $Q=\frac{2}{\gamma}+\frac{\gamma}{2}$. Consider the space of pairs $(D,h)$, where $D\subseteq \mathbb C$ is a planar domain and $h$ is a distribution on $D$ (often some variant of the GFF). For a conformal map $g:D\to\tilde D$ and a generalized function $h$ on $D$, define the generalized function $g\bullet_\gamma h$ on $\tilde{D}$ by setting 
\begin{equation}\label{eq:lqg-changecoord}
    g\bullet_\gamma h:=h\circ g^{-1}+Q\log|(g^{-1})'|.
\end{equation}
 Define the equivalence relation $\sim_\gamma$ {as follows. We say that} $(D, h)\sim_\gamma(\wt{D}, \wt{h})$ if there is a {conformal map $g:D\to\wt{D}$ such that $\tilde h = g\bullet_\gamma h$}.
A \textit{quantum surface} $S$ is an equivalence class of pairs $(D,h)$ under the {equivalence} relation $\sim_\gamma$, and we say {that} $(D,h)$ is an \emph{embedding} of $S$ if $S = (D,h)/\mathord\sim_\gamma$.   {Abusing notation, we will sometimes call $(D,h)$ as a quantum surface, and we are then referring to the equivalence class $(D,h)/\sim_\gamma$ that it defines.} Likewise, a \emph{quantum surface with} $k$ \emph{marked points} is an equivalence class of {tuples} 
of the form
	$(D, h, x_1,\dots,x_k)$, where $(D,h)$ is a quantum surface, the points  $x_i\in {\overline{D}}$, and with the further
	requirement that marked points (and their ordering) are preserved by the conformal map $\varphi$ in \eqref{eq:lqg-changecoord}, and a \emph{curve decorated quantum surface} can be defined analogously. 

     For a $\gamma$-quantum surface $(D, h, z_1, ..., z_m)$, its \textit{quantum area measure} $\mu_h$ is defined by taking the weak limit $\epsilon\to 0$ of $\mu_{h_\epsilon}:=\epsilon^{\frac{\gamma^2}{2}}e^{\gamma h_\epsilon(z)}d^2z$, where $d^2z$ is the Lebesgue area {measure} and $h_\epsilon(z)$ is the circle average of $h$ over $\partial B(z, \epsilon)$. When $D=\mathbb{H}$, we can also define the  \textit{quantum boundary length measure} $\nu_h:=\lim_{\epsilon\to 0}\epsilon^{\frac{\gamma^2}{4}}e^{\frac{\gamma}{2} h_\epsilon(x)}dx$ where $h_\epsilon (x)$ is the average of $h$ over the semicircle $\{x+\epsilon e^{i\theta}:\theta\in(0,\pi)\}$. It has been shown in \cite{DS11, SW16} that all these weak limits are well-defined  for the GFF and its variants we are considering in this paper, {and that} $\mu_{h}$ and $\nu_{h}$ {can} be conformally extended to other domains using the relation $\bullet_\gamma$.

Now we introduce the quantum surfaces we will  frequently consider in this paper. 
\begin{definition}[$\alpha$-quantum cone]\label{def:alpha-cone}
  Fix $\gamma\in(0,2)$, $\alpha<Q$ and let $(B_s)_{s\ge0}$ and $(\wt{B}_s)_{s\ge0}$ be independent standard one-dimensional Brownian motions. 
  Let 	
			\begin{equation*}
				Y_t=\left\{ \begin{array}{rcl} 
					B_{t}+\alpha t  & \mbox{for} & t\ge 0,\\
					\wt{B}_{-t} +\alpha t & \mbox{for} & t<0,
				\end{array} 
				\right.
			\end{equation*}
			conditioned on   $\wt{B}_{t} + (Q-\alpha)t>0$ for all $t>0$.  Recall the decomposition in~\eqref{eq:GFF-decompose}. The $\alpha$-quantum cone is the doubly marked quantum surface $(\bbC,h,0,\infty)/\sim_\gamma$, where $h=\psi_1+\psi_2$, and $(\psi_1,\psi_2)$ are independent random generalized functions in $H_1(\bbC)\times H_2(\bbC)$ whose law can be described as follows.  The common value of $\psi_1$ on $\{|z|=e^{-t}\}$ is equal to $Y_t$ for all $t\in\bbR$, and $\psi_2$ has the same law as $h_{\bbC,2}$ as in~\eqref{eq:GFF-decompose}.
\end{definition}
The embedding $(\bbC,h,0,\infty)$ above is also called the circle average embedding of the $\alpha$-quantum cone.

A \emph{quantum disk} is a  finite-mass quantum surface $(\bbD,h^{\mathrm{disk}})/\sim_\gamma$ defined in~\cite[Definition 4.21]{DMS14} (the precise definition is not needed here).  One can consider quantum disks with specified boundary length $\nu_{h^{\mathrm{disk}}}(\partial\bbD)$ and area $\mu_{h^{\mathrm{disk}}}(\bbD)$, as well as  quantum disks with any number of marked boundary points and interior points sampled uniformly from $\nu_h$ and $\mu_h$, respectively. We will typically work with quantum disks with unit area and boundary length and a single marked boundary point. A \emph{quantum sphere} is a  finite-mass quantum surface $(\wh\bbC,h_{\mathrm{sp}})/\sim_\gamma$ defined in~\cite[Definition 4.21]{DMS14}, where $\wh\bbC = \bbC\cup\{\infty\}$, and one can consider quantum spheres with specified area $\mu_{h_{\mathrm{sp}}}(\wh\bbC)$ as well. 

\subsection{Space-filling SLE and mating of trees}\label{subsubsec:MatedCRT}
The Schramm-Loewner evolution (SLE) is a family of fractal curves with a single parameter $\kappa>0$~\cite{Sch00}. $\SLE_\kappa$ is a simple curve for $\kappa\in(0,4)$, self-touching but not space-filling or self-crossing for $\kappa\in(4,8)$, and space-filling for $\kappa\geq8$. As studied in the theory of Imaginary Geometry~\cite{ig1,ig4}, $\SLE_\kappa$ curves can be viewed as flow lines of GFF, where in the whole plane case, we work with whole plane GFF modulo a global additive multiple of $2\pi(\sqrt{\kappa}/{2}-2/\sqrt{\kappa})$. For $\kappa>4$, space-filling $\SLE_\kappa$ is a variant of $\SLE_\kappa$ introduced in~\cite{ig4}, which is also generated via GFF flow lines and agrees with ordinary $\SLE_\kappa$ when $\kappa\geq8$. In this paper, we will work with two kinds of space-filling $\SLE_\kappa$ curves:  the space-filling $\SLE_\kappa$ loop $\eta$ in $\wh\bbC$ from $\infty$ to $\infty$, and the clockwise space-filling $\SLE_\kappa$ loop $\eta^\disk$ in $\ol\bbD$ from 1 to 1. In both setting, one can define an ordering on $\bbQ^2$ or $\ol\bbD\cap\bbQ^2$ by considering flow lines of GFF started at these points, and further extend this ordering to a continuous space-filling curve. See~\cite[Section 4.3]{ig4} and~\cite[Section 2.2]{AG19} for the precise definition of the above two space-filling $\SLE_\kappa$ curves.

\begin{figure}
    \centering
    \includegraphics[scale=0.68]{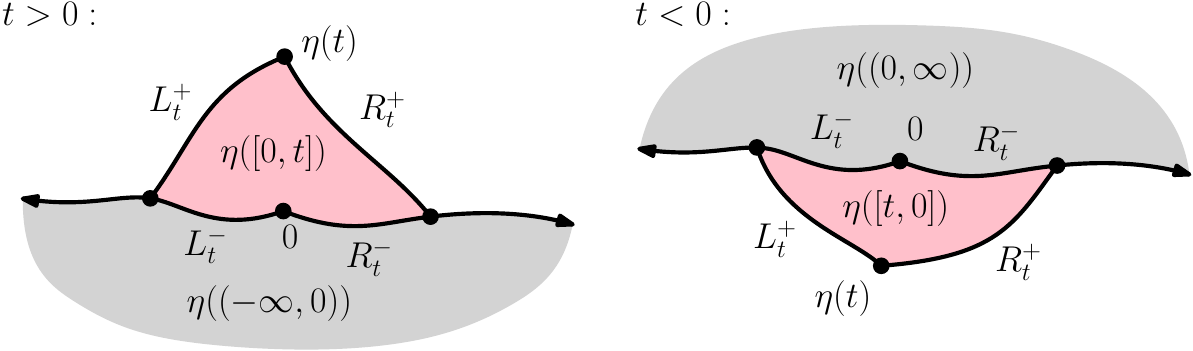}
    \caption{An illustration of the boundary length process in mating of trees when $\gamma\in(0,\sqrt2]$ and $\kappa=16/\gamma^2\geq 8$. The processes $(L_t,R_t)_{t\in\bbR}$ records the change of the left and right boundary $\gamma$-LQG length of $\eta((-\infty,t])$, so $L_t=L_t^+-L_t^-$ and $R_t=R_t^+-R_t^-$.}
    \label{fig:quantumcell3}
\end{figure}

Now we briefly recap the theory of mating of trees as in~\cite{DMS14,MS-mot-finite,AG19}. Let $\gamma\in(0,2)$, $(\bbC,h,0,\infty)$ be a $\gamma$-quantum cone as in Definition~\ref{def:alpha-cone} for $\alpha=\gamma$ with circle average embedding, $\eta$ be a space-filling $\SLE_{16/\gamma^2}$ curve in $\wh\bbC$ from $\infty$ to $\infty$ and  parameterized by the $\gamma$-LQG mass with respect to $h$ such that $\eta(0)=0$, as in the discussion of the a priori embedding for $\cG^{1/n}$ discussed in Section~\ref{subsubsec:intro-mated-CRT}. For $t\in\bbR$, let $L_t$ (resp.\ $R_t$) be the change of the $\gamma$-LQG length of the left (resp.\ right) boundary of $\eta((-\infty,t])$ with respect to time 0. See Figure~\ref{fig:quantumcell3} for an illustration when $\gamma\in(0,\sqrt2]$. By~\cite[Theorem 1.9]{DMS14}, $(L_t,R_t)_{t\in\bbR}$ is a 2D Brownian motion with variance~\eqref{eq:variance} (the correlation for $\gamma\in(0,\sqrt{2})$ was derived in~\cite{GHMS15} and the variance $\mathbbm{a}^2$ was computed in~\cite{ARS21}), and by~\cite[Theorem 1.11]{DMS14}, the process  $(L_t,R_t)_{t\in\bbR}$ a.s.\ determines $(h,\eta)$ up to a rotation.  Now let $(\bbD,h^\disk,1)$ be an embedding of quantum disk of unit area and unit boundary length with a single marked point located at point 1, and $\eta^\disk:[0,1]\to\ol\bbD$ be a counterclockwise space-filling $\SLE_\kappa$ loop in $\ol \bbD$ rooted at 1 and parameterized by the $\gamma$-LQG mass with respect to $h^\disk$. We let $L_t^\disk$ and $R_t^\disk$ be the left and right boundary length of $\eta^\disk([t,1])$.    By~\cite[Theorem 1.1]{AG19}, $(L_t^\disk,R_t^\disk)$ evolves as a Brownian motion of variance~\eqref{eq:variance} that starts from $(0,1)$ and is conditioned on staying within $\bbR_+^2$ and exits $\bbR_+^2$ at $(0,0)$ at time 1. Similarly, let $(\wh\bbC,h^\sph,0)$ be an embedding of quantum sphere of unit area a  with a single marked point located at point $\infty$, and $\eta^\sph:[0,1]\to\wh\bbC$ be a counterclockwise space-filling $\SLE_\kappa$ loop in $\wh\bbC$ rooted at $\infty$ and parameterized by the $\gamma$-LQG mass with respect to $h^\sph$. We let  $L_t^\sph$ and $R_t^\sph$ be the left and right boundary length of $\eta^\sph([0,t])$. Then by~\cite[Theorem 1.1]{MS-mot-finite}, $(L_t^\sph,R_t^\sph)$ evolves as a Brownian motion of variance~\eqref{eq:variance} that starts from $(0,0)$ and is conditioned on staying within $\bbR_+^2$ and exits $\bbR_+^2$ at $(0,0)$ at time 1.

\subsection{Quantitative bounds on SLE and LQG}

In this section we gather estimates on SLE and LQG that will  be used in the remainder of the paper. We will use the following set of notations. For an interval $[a,b)$, we call the intervals $[a,\frac{a+b}{2})$ and $[\frac{a+b}{2},b)$ the dyadic children of $[a,b)$, and we say $[a,b)$ is an dyadic parent of $[a,\frac{a+b}{2})$ or $[\frac{a+b}{2},b)$. We say $I\subseteq I'$ is a dyadic descendant of $I'$, if there is $I= I_1\subseteq I_2\subseteq\cdots\subseteq I_n = I'$ such that for each $j=1,...,n-1$, $I_j$ is a dyadic children of $I_{j+1}$.
\begin{definition}\label{def:1ddyadic}
    Let $\theta$ be uniformly randomly sampled in $[0,1)$. Let $I(0) = [\theta-1,\theta)$. For $k\in\bbN$, inductively let $I(k)$ be sampled uniformly from the two dyadic parents of $I(k-1)$.  The \emph{1D uniform dyadic system} $\fD$ is defined by the set of all dyadic descendants of $I(k)$ with length at least 1 for each $k\in\bbN$. We write  $\Omega_\fD$ for the space of all possible realizations of $\fD$. For $s\in\bbR$, we write $\fD-s$ for the set $\{[a-s,b-s):[a,b)\in \fD\}$.
\end{definition}

We also introduce the measures $\bbP_\rmE$ and $\bbP_\rmQ$ describing the laws of space-filling $\SLE_\kappa$ decorated 0-quantum cone and $\gamma$-quantum cone, respectively. Here the letters ``$\rmE$" (resp.\ ``$\rmQ$") is chosen since it is the first letter of the word ``Euclidean" (resp.\ ``Quantum").
\begin{definition}\label{def:P_EP_Q}
    Let $\bbP_\rmE$ be the law of $(h, \eta,\fD)$ where $(\bbC,h^\IG,h,0,\infty)$ is the  {circle average embedding} of the 0-quantum cone as in Definition~\ref{def:alpha-cone}, $h^\IG$ is a whole plane GFF modulo an integer multiple of $2\pi(2/\gamma-\gamma/2)$, $\eta$ is a whole-plane space-filling $\SLE_\kappa$ generated from $h^\IG$, $\fD$ is a uniform dyadic system as in Definition~\ref{def:1ddyadic},  and the three random variables  $h,\eta,\fD$ are all independent. Let $\theta\sim\mathrm{Unif}([0,1])$ be the uniform random variable generating $\fD$ as in Definition~\ref{def:1ddyadic}. We parametrize $\eta$ by the $\gamma$-LQG area induced by $h$ such that $\eta(0)=0$. Let $\bbP_\rmQ$ be defined in the exact same way, except that $(\bbC,h,0,\infty)$ is the circle average embedding of the $\gamma$-quantum cone instead of the 0-quantum cone. Under either of these probability measures we define the boundary length process $Z$ of  $(h,\eta)$ as in Section~\ref{subsubsec:MatedCRT}.
\end{definition}

Now we are ready to state the quantitative bounds on SLE and LQG. The first property is the scale invariance property for quantum cones, which is from~\cite[Lemma 2.1]{GMS21tutte}.
\begin{lemma}\label{lem:Rb}
    Let $\gamma\in(0,2)$, $\alpha<Q$ and $(\bbC,h,0,\infty)$ be the circle average embedding of an $\alpha$-quantum cone as in Definition~\ref{def:alpha-cone}. Let $\eta$ be an independent space-filling SLE loop from $\infty$ to $\infty$ parameterized by $\mu_h$.  For $b>0$, let 
    \begin{align}
        &\fR_b = \sup\{r>0:h_r(0)+Q\log r = \frac{1}{\gamma}\log b\}; \label{eq:Rb}
        \\  &h^b(\cdot)=h(\fR_b\cdot)+Q\log \fR_b-\frac{1}{\gamma}\log b, \qquad \eta^b(\cdot) = \fR_b^{-1}\eta(b\cdot). \label{eq:hb-etab}
    \end{align}
   (Note that by definition $\fR_1=1$.)  Then $(h^b,\eta^b)$ has the same law as $(h,\eta)$. Moreover, there is a constant $a = a(\alpha,\gamma)>0$ such that for any $b_2>b_1>0$ and $C>1$, 
     \begin{equation}\label{eq:Rb-bound}
      \bbP\Big[C^{-1}(\frac{b_2}{b_1})^{\frac{1}{\gamma(Q-\alpha)}}\leq \frac{\fR_{b_2}}{\fR_{b_1}}\leq C(\frac{b_2}{b_1})^{\frac{1}{\gamma(Q-\alpha)}}\Big]\geq 1-3\exp\big(-\frac{a (\log C)^2}{\log(b_2/b_1)+\log C}\big).
  \end{equation}
\end{lemma}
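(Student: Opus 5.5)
We will derive Lemma~\ref{lem:Rb} from the explicit radial description of the $\alpha$-quantum cone in Definition~\ref{def:alpha-cone}, following~\cite[Lemma 2.1]{GMS21tutte}.

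\textbf{Step 1: the scaling map.} Under the coordinate change $z\mapsto \fR_b z$, rule~\eqref{eq:lqg-changecoord} shows that $h(\fR_b\cdot)+Q\log\fR_b$ is another embedding of the same quantum surface. Subtracting the constant $\frac1\gamma\log b$ multiplies the area measure by $b^{-1}$. Hence $\mu_{h^b}(A)=b^{-1}\mu_h(\fR_b A)$. The curve $\eta^b(t)=\fR_b^{-1}\eta(bt)$ is then the image of $\eta$ under $z\mapsto z/\fR_b$, reparametrized by $\mu_{h^b}$-area with $\eta^b(0)=0$. Whole-plane space-filling SLE from $\infty$ to $\infty$ is scale invariant and independent of $h$. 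Therefore it suffices to show that $h^b\overset{d}{=}h$, and that $\fR_b$ is determined by $h$ alone.

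\textbf{Step 2: law of the field.} In the variable $t=-\log r$, the circle average process of $h^b$ is
\[
t\mapsto h_{e^{-t}\fR_b}(0)+Q\log\fR_b-\tfrac1\gamma\log b=Y_{t+\tau}-(Q-\alpha)...
\]
More precisely, write $X_t:=h_{e^{-t}}(0)-Qt=Y_t-Qt$. This is a two-sided Brownian motion with drift $-(Q-\alpha)$, and on the side $t<0$ it is conditioned so that $\wt B_s+(Q-\alpha)s>0$. Set $\tau=-\log\fR_b$. Since $\fR_b$ is the largest $r$ in~\eqref{eq:Rb}, $\tau$ is the first time $X$ hits $\frac1\gamma\log b$ when coming from $-\infty$, i.e.\ a last-exit/first-hitting time of the conditioned process. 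The circle average process of $h^b$ is $X_{t+\tau}+Qt-\frac1\gamma\log b+\dots$, which after recentering equals $X_{\cdot+\tau}-X_\tau$ plus the drift term. By the strong Markov property and Williams' path decomposition, the part after $\tau$ is a Brownian motion with drift. The part before $\tau$, read backward, is a drifted Brownian motion conditioned to stay on one side, since it never revisits level $\frac1\gamma\log b$ before $\tau$. This is exactly the law in Definition~\ref{def:alpha-cone}. The lateral part $\psi_2$ is independent of the radial part and scale invariant in law. It remains so after scaling by the $\psi_1$-measurable radius $\fR_b$, because $\psi_2(\fR\cdot)\overset{d}{=}\psi_2$ for every deterministic $\fR$, and we can condition on $\psi_1$.

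\textbf{Step 3: the tail bound~\eqref{eq:Rb-bound}.} Let $s_j=\frac1\gamma\log b_j$ and $\tau_j$ the corresponding hitting times, so that $\log(\fR_{b_2}/\fR_{b_1})=\tau_1-\tau_2$. After $\tau_2$... wait: $\tau_2<\tau_1$ is not right. In fact larger $b$ gives a larger radius, so $\tau_2<\tau_1$, and $\tau_1-\tau_2$ is the time for the conditioned process to travel from level $s_2$ down to $s_1$. That process is a drifted Brownian motion whose drift has magnitude $Q-\alpha$, with the conditioning handled via Step 2. So the travel time has mean $(s_2-s_1)/(Q-\alpha)=\frac{\log(b_2/b_1)}{\gamma(Q-\alpha)}$, which gives the exponent.

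The remaining task is a Gaussian concentration estimate for this hitting time. For ordinary drifted Brownian motion, deviations of size $\log C$ from the mean have probability at most $\exp(-c(\log C)^2/(\log(b_2/b_1)+\log C))$, by standard inverse-Gaussian tail bounds. The conditioning to avoid a level only changes the law by an $h$-transform, which the time-reversal identification in Step 2 removes exactly. The constant $3$ accounts for a union bound over the upper deviation, the lower deviation, and possibly a re-crossing event.

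\textbf{Main obstacle.} The main obstacle is the careful path decomposition in Step 2. One has to identify the law of the recentered process around the random time $\tau$, which is a last-passage-type time, and show it reproduces the conditioned two-sided process.
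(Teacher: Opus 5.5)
Your argument is correct and is the standard one behind \cite[Lemma 2.1]{GMS21tutte}, which the paper cites without giving a proof of its own. The first assertion comes from the strong Markov property at the first hitting time $\tau$ together with Williams' path decomposition for the conditioned piece (plus the scale invariance of $\psi_2$ and of $\eta$), and the second from an inverse-Gaussian tail bound for $\log(\fR_{b_2}/\fR_{b_1})=\tau_1-\tau_2$.

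One small correction to Step 3: by Step 2 the process after $\tau_2$ is an unconditioned Brownian motion with drift $-(Q-\alpha)$, so no $h$-transform enters there. The constant $3$ is simply $1$ from the Gaussian bound on the upper deviation plus $2$ from the reflection principle on the lower deviation, not a re-crossing event.
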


The second result is the following quantitative bound on cells in mated CRT maps, which is from~\cite[Lemma 2.7]{GMS19har} and ~\cite[Lemma A.9]{Tutte}.
\begin{lemma}\label{lem:diam-cell-max}
 For each $q\in(0,2(2+\gamma)^{-2})$ and $\e\in(0,1)$, 
    $$\bbP_\rmE\Big[\diam(\eta(J))\leq \e^q \ \text{for any interval } J \text{ with } |J|\leq\e/2 \ \text{and }\eta(J)\cap B(0;3/4)\neq\emptyset\Big]\geq 1-\e^{\alpha(q,\gamma)+o_\e(1)},$$
    where the rate of the $o_\e(1)$ depends only on $q$ and $\gamma$, and 
    $$\alpha(q,\gamma):=\frac{q}{2\gamma^2}(\frac{1}{q}-2-\frac{\gamma^2}{2})^2-2q>0.$$
    The same holds for $\bbP_\rmQ$ instead of $\bbP_\rmE$.
\end{lemma}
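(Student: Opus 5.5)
The plan is to split the event into an SLE statement and an LQG statement and combine them by a union bound. Fix $q\in(0,2(2+\gamma)^{-2})$ and a small $\zeta>0$. Suppose some interval $J$ with $|J|\le\e/2$ and $\eta(J)\cap B(0;3/4)\ne\emptyset$ has $\diam(\eta(J))>\e^q$. I will show that, off small-probability events, this forces $\eta(J)$ to contain a Euclidean ball $B(z,\e^{q(1+\zeta)})$ whose $\mu_h$-mass is at most $\e/2$. I then show that such a "light" small ball inside $B(0;1)$ is unlikely.

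\textbf{Step 1 (SLE input).} For the whole-plane space-filling $\SLE_\kappa$ $\eta$, I would invoke the standard estimate (Ghosal--Holden--Miller type, as used in \cite{GMS19har}). With probability $1-O(\delta^{p})$ for every $p>0$, every segment $\eta([s,t])$ contained in $B(0;1)$ with Euclidean diameter at least $\delta$ contains a Euclidean ball of radius $\delta^{1+\zeta}$. This uses only the independence of $\eta$ from $h$ and the fact that $\eta$ is determined by $h^\IG$, so the estimate is parameterization-free. Apply it with $\delta=\e^q$; truncating $\eta(J)$ to a sub-segment of diameter about $\e^q$ keeps it inside $B(0;1)$. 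The bad event then costs only a superpolynomially small probability, which is absorbed into $\e^{o_\e(1)}$.

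\textbf{Step 2 (LQG mass lower bound).} Let $r=\e^{q(1+\zeta)}$. Cover $B(0;1)$ by $O(r^{-2})$ balls $B(z,r/2)$ with $z$ on a grid. For each grid point I lower bound $\mu_h(B(z,r))\gtrsim r^{\gamma Q}e^{\gamma h_r(z)}$ times a random factor with good negative moments, which adds only $\e^{o(1)}$ corrections. Under $\bbP_\rmE$, $h_r(z)$ is Gaussian with variance $\log(1/r)+O(1)$ away from the origin. The requirement $\mu_h(B(z,r))>\e/2$ holds unless
\[
h_r(z)\le \tfrac1\gamma\big(\log\e-\gamma Q\log r\big)=-\tfrac1\gamma\Big(\tfrac1q-\gamma Q\Big)\log(1/r)+o(\log(1/\e)).
\]
Near the origin the $0$-quantum cone field has the same law up to a bounded perturbation of the circle averages, so this still holds. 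The Gaussian tail gives, per ball, probability
\[
\exp\!\Big(-\tfrac{1}{2\gamma^2}\big(\tfrac1q-\gamma Q\big)^2 q\log(1/\e)\Big).
\]
Since $\gamma Q=2+\gamma^2/2$, multiplying by the $\e^{-2q}$ balls yields failure probability $\e^{\alpha(q,\gamma)+o_\e(1)}$, with exactly the stated exponent. The constraint $q<2(2+\gamma)^{-2}$ is exactly what makes $\alpha>0$.

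\textbf{Step 3 (transfer to $\bbP_\rmQ$).} Under $\bbP_\rmQ$ the field is $h$ plus a $-\gamma\log|\cdot|$ singularity, up to coupling of the radial parts. The singularity only increases $\mu_h$ near $0$, so the lower bound of Step 2 persists. Step 1 is unchanged because $\eta$ is the same curve. The radial part can be compared with that of the $0$-cone on $B(0;1)$ via the Brownian motion description in Definition~\ref{def:alpha-cone}, at cost $\e^{p}$ for every $p$. I expect the main obstacle to be Step 2, specifically the uniform control of the non-Gaussian factor in $\mu_h(B(z,r))$ and of the radial part near $0$. I would handle the former by standard negative moments of GMC on the unit ball, applied after rescaling, plus a union bound over grid points.
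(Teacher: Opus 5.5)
Your proposal is correct and follows essentially the standard argument behind this estimate; the paper does not prove the lemma but quotes it from \cite[Lemma 2.7]{GMS19har} and \cite[Lemma A.9]{Tutte}, whose argument is yours: every space-filling SLE segment in $B(0;1)$ of diameter $\approx\e^q$ contains a ball of radius $\e^{q(1+\zeta)}$ with superpolynomially high probability, a Gaussian-tail lower bound on the $\mu_h$-mass of such balls plus a union bound over $\e^{-2q}$ grid balls gives exactly $\alpha(q,\gamma)$ once $\zeta\to0$, and monotonicity of $\mu_h$ in the field handles $\bbP_\rmQ$. Your hedges about the origin and the radial coupling are unnecessary: on $\bbD$ the $0$-cone field is exactly a whole-plane GFF with $h_1(0)=0$ (so $\mathrm{Var}\,h_r(z)=\log(1/r)$ whenever $B(z;r)\subseteq\bbD$), and the $\gamma$-cone field on $\bbD$ has exactly the law of that field minus $\gamma\log|\cdot|$, so the transfer costs nothing.
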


 {The following results follows from~\cite[Lemma A.5, Lemma A.6]{Tutte}.}
\begin{lemma}\label{lem:SLE-contain}
    Let $\delta>0$. There exists $c>0$ depending only on $\delta$ and $\gamma$ such that  
        \begin{align}
            & \bbP_\rmE [B(0;{2^{-\delta k}}) \nsubseteq \eta([-1,1])] \lesssim 2^{-ck}; \label{eq:SLE-contain-ball}  \\
            & \bbP_\rmE [\eta([-2^{-\delta k},2^{-\delta k}])\nsubseteq B(0;1/2) ]\lesssim 2^{-ck}. \label{eq:SLE-contained-in-ball}
        \end{align}
        {The same holds for $\bbP_\rmQ$ in place of $\bbP_\rmE$.}
\end{lemma}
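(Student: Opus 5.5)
The statement is Lemma~\ref{lem:SLE-contain}, which the paper attributes to~\cite[Lemma A.5, Lemma A.6]{Tutte}. My plan is to reprove both bounds from the quantitative estimates already recorded, namely Lemma~\ref{lem:diam-cell-max} and Lemma~\ref{lem:Rb}, together with standard facts about space-filling SLE and the GFF. I would do the argument under $\bbP_\rmE$ first and then explain the changes needed for $\bbP_\rmQ$.

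\textbf{Step 1: the bound~\eqref{eq:SLE-contained-in-ball}.} This follows almost directly from Lemma~\ref{lem:diam-cell-max}. Take $\e=2^{1-\delta k}$ and fix some admissible $q\in(0,2(2+\gamma)^{-2})$. Write $[-2^{-\delta k},2^{-\delta k}]$ as the union of two intervals $J_\pm$, each of length $2^{-\delta k}=\e/2$, and each containing $0$. Since $\eta(0)=0$, both $\eta(J_\pm)$ meet $B(0;3/4)$. On the event of Lemma~\ref{lem:diam-cell-max}, each $\eta(J_\pm)$ contains $0$ and has diameter at most $\e^q$. Hence $\eta([-2^{-\delta k},2^{-\delta k}])\subseteq B(0;\e^q)\subseteq B(0;1/2)$ once $k$ is large. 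The failure probability is at most $\e^{\alpha(q,\gamma)+o(1)}\lesssim 2^{-ck}$ for any $c<\delta\alpha(q,\gamma)$.

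\textbf{Step 2: the bound~\eqref{eq:SLE-contain-ball}.} Write $r=2^{-\delta k}$. If $B(0;r)\not\subseteq\eta([-1,1])$, then either $\mu_h(B(0;r))$ is atypically large, or the curve leaves $B(0;r)$ and returns to it after spending LQG mass at least $1$. I would control these by splitting the event into two parts.

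\emph{(a) Mass bound.} We have $\mu_h(B(0;r))\le 1/2$ except on an event of probability $\lesssim 2^{-ck}$. For the $0$-quantum cone under circle average embedding, $h|_{\bbD}$ is a whole-plane GFF normalized on the unit circle. Standard LQG moment bounds then give $\bbE[\mu_h(B(0;r))^p]\lesssim r^{\xi(p)}$ with $\xi(p)>0$ for small $p>0$, and Markov's inequality concludes.

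\emph{(b) Two-excursion bound.} Suppose $z\in B(0;r)$ is visited at a time $t$ with $|t|>1$, say $t>1$. Then $\eta([0,t])$ contains both $0$ and $z$ and has mass greater than $1$. Using the time-reversal symmetry of whole-plane space-filling SLE, this case and $t<-1$ can be handled the same way. The key geometric input is that space-filling SLE fills balls it crosses: with probability $1-O(r^{c})$, the segment of $\eta$ between its hitting time of $B(0;r)$ and its exit time from $B(0;r^{1/2})$ already contains $B(0;r)$. This comes from the standard estimate that, with superpolynomially high probability, each segment of $\eta$ crossing an annulus $A(w;\rho,\rho^{1-\epsilon})$ contains a Euclidean ball of radius $\rho$. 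If this holds, then $\eta([-s,s])\supseteq B(0;r)$, where $s$ is the mass of $B(0;r^{1/2})$. By part (a) applied at radius $r^{1/2}$, we have $s\le1$ with high probability.

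\textbf{Step 3: passing to $\bbP_\rmQ$.} The $\gamma$-quantum cone differs from the $0$-quantum cone near the origin by a $-\gamma\log|\cdot|$ singularity. With this singularity, the mass bound in (a) becomes $\bbE[\mu_h(B(0;r))^p]\lesssim r^{p\gamma(Q-\gamma)+o(1)}$. One can see this, for instance, from~\eqref{eq:Rb-bound} with $\alpha=\gamma$, since $\fR_b$ is the radius whose ball has mass of order $b$. The geometric estimate is unaffected, because $\eta$ is independent of $h$.

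The main obstacle is Step 2(b). It needs a polynomial (not merely $o(1)$) bound on the probability that the space-filling SLE revisits $B(0;r)$ after a macroscopic excursion without having first filled it. I would derive this from the annulus-crossing ball-filling estimate for space-filling SLE (from~\cite{GHM-kpz}-type results) applied at scale $r$ around the origin, plus a union bound over a grid of $O(r^{-2})$ points.
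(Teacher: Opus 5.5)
\textbf{Gap: Step 2(b).} Your Step 1 is a correct derivation of \eqref{eq:SLE-contained-in-ball} from Lemma~\ref{lem:diam-cell-max}. The paper gives no argument of its own here; it imports both bounds from \cite[Lemmas A.5, A.6]{Tutte}. The LQG mass bounds in Step 2(a) and Step 3 are also fine.

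\emph{What 2(b) actually needs.} Let $\sigma,\sigma'$ be the first and last times $\eta$ is in $\ol{B(0;r)}$. You need a \emph{non-return} estimate: with probability $1-O(r^c)$, $\eta$ does not exit $B(0;r^{1/2})$ during $[\sigma,\sigma']$. Equivalently, $\eta([\sigma,\sigma'])\subseteq\ol{B(0;r^{1/2})}$.

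\emph{Why ball-filling does not give it.} The ball-filling estimate says every segment crossing $A(w;\rho,\rho^{1-\epsilon})$ contains \emph{some} ball of radius $\rho$, not the particular ball $B(0;r)$. It is fully consistent with the following scenario: $\eta$ fills part of $B(0;r)$, leaves $B(0;r^{1/2})$, and returns later to fill the rest. Each of the two crossings then contains a ball of the required size, so no contradiction arises at any scale. A union bound over $O(r^{-2})$ grid points does not change this. So the sentence ``If this holds, then $\eta([-s,s])\supseteq B(0;r)$'' rests on a claim your cited tool does not imply.

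\emph{The missing input.} What is needed comes from imaginary geometry, for instance quantitative merging of flow lines of $h^{\IG}$. Suppose that, with probability $1-O(r^c)$, the angle-$\pi/2$ flow lines started from \emph{all} points of $B(0;r)$ merge before first exiting $B(0;r^{1/2})$, and likewise the angle-$(-\pi/2)$ flow lines. Then the following chain closes Step 2.
\begin{itemize}
\item Outside $\ol{B(0;r^{1/2})}$, the oriented curves $\eta^L_z\cup\eta^R_z$ bounding the past of $z$ do not depend on $z\in B(0;r)$.
\item Whether a point $w$ lies in the past or future of $z$ is determined locally by which side of these common oriented arcs $w$ is on. Hence each $w\notin\ol{B(0;r^{1/2})}$ is ordered either before all of $B(0;r)$ or after all of it.
\item This gives $\eta([\sigma,\sigma'])\subseteq\ol{B(0;r^{1/2})}$, so $[\sigma,\sigma']\subseteq[-s,s]$ with $s=\mu_h(B(0;r^{1/2}))$.
\item Your mass bound then shows $s\le 1$ with high probability.
\end{itemize}

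Proving this merging simultaneously for every $z\in B(0;r)$, with a polynomial rate, needs its own argument. Two-point merging estimates are not enough; one has to combine merging across scales with a grid and a continuity argument. This is the real content of \eqref{eq:SLE-contain-ball}, and your proposal does not supply it.
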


Recall $\fR_b$ from~\eqref{eq:Rb}, and $h^b,\eta^b$ from ~\eqref{eq:hb-etab}. By~\eqref{eq:Rb-bound}, for any $\delta>0$, for some constant $c$ depending only on $\delta$, we have
\begin{equation}\label{eq:Rb-gammaQ}
  1-  \bbP_\rmE[2^{(\frac{1}{\gamma Q}-\delta)k}\leq \fR_{2^k}\leq  2^{(\frac{1}{\gamma Q}+\delta)k}]\lesssim 2^{-ck}. 
\end{equation}
Since $(h^b,\eta^b)$ has the same distribution as $(h,\eta)$, combined with Lemma~\ref{lem:SLE-contain} we have
\begin{lemma}\label{lem:SLE-contain-1}
    Let $\delta>0$. There exists $c>0$ depending only on $\delta$ and $\gamma$ such that  
        \begin{align}
            & \bbP_\rmE [B(0;{2^{(\frac{1}{\gamma Q}-\delta) k}}) \nsubseteq \eta([-2^k,2^k])] \lesssim 2^{-ck}; \label{eq:SLE-contain-ball-1}  \\
            & \bbP_\rmE [\eta([-2^{ k},2^{k}])\nsubseteq B(0;{2^{(\frac{1}{\gamma Q}+\delta)k}}) ]\lesssim 2^{-ck}. \label{eq:SLE-contained-in-ball-1}
        \end{align}
\end{lemma}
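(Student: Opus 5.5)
The plan is to derive Lemma~\ref{lem:SLE-contain-1} from Lemma~\ref{lem:SLE-contain} by the scale invariance in Lemma~\ref{lem:Rb}, using the bound~\eqref{eq:Rb-gammaQ} on $\fR_{2^k}$.

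\textbf{Rescaling step.} Take $b=2^k$ and $\alpha=0$, so that $\frac{1}{\gamma(Q-\alpha)}=\frac{1}{\gamma Q}$. By~\eqref{eq:hb-etab},
\[
\eta^b(t)=\fR_b^{-1}\eta(bt), \qquad \text{hence} \qquad \eta([-2^k,2^k])=\fR_{2^k}\,\eta^{b}([-1,1]).
\]
Since $(h^b,\eta^b)$ has the same law as $(h,\eta)$ under $\bbP_\rmE$, every event expressed in terms of $\eta^b$ has the same probability as the corresponding event for $\eta$. The dyadic system $\fD$ plays no role here.

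\textbf{Containing a ball, \eqref{eq:SLE-contain-ball-1}.} Fix a small $\delta'>0$, to be chosen in terms of $\delta$. Consider the event
\[
E_1=\big\{\fR_{2^k}\ge 2^{(\frac{1}{\gamma Q}-\delta')k}\big\}\cap\big\{B(0;2^{-\delta' k})\subseteq \eta^b([-1,1])\big\}.
\]
On $E_1$ we have
\[
\eta([-2^k,2^k])=\fR_{2^k}\,\eta^b([-1,1])\supseteq B\big(0;\fR_{2^k}2^{-\delta' k}\big)\supseteq B\big(0;2^{(\frac{1}{\gamma Q}-2\delta')k}\big).
\]
Choosing $\delta'=\delta/2$ gives exactly the ball in~\eqref{eq:SLE-contain-ball-1}. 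The complement of $E_1$ is controlled by two ingredients:
\begin{itemize}
\item \eqref{eq:Rb-gammaQ} with $\delta'$ in place of $\delta$, which bounds $\bbP_\rmE[\fR_{2^k}< 2^{(\frac{1}{\gamma Q}-\delta')k}]\lesssim 2^{-ck}$;
\item \eqref{eq:SLE-contain-ball} applied to $\eta^b$ via equality in law, which bounds $\bbP_\rmE[B(0;2^{-\delta' k})\nsubseteq \eta^b([-1,1])]\lesssim 2^{-ck}$.
\end{itemize}
A union bound then gives $\bbP_\rmE[E_1^c]\lesssim 2^{-ck}$, where $c$ is the smaller of the two exponents.

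\textbf{Being contained in a ball, \eqref{eq:SLE-contained-in-ball-1}.} Here \eqref{eq:SLE-contained-in-ball} is phrased with the time interval shrinking and the ball fixed, so I will apply it with the roles arranged appropriately. Take $b=2^{k(1+\delta')}$ for a small $\delta'>0$. Then
\[
\eta([-2^k,2^k])=\fR_b\,\eta^b([-2^{-\delta' k},2^{-\delta' k}]).
\]
Consider the event
\[
E_2=\big\{\eta^b([-2^{-\delta' k},2^{-\delta' k}])\subseteq B(0;1/2)\big\}\cap\big\{\fR_b\le 2^{(\frac{1}{\gamma Q}+\delta')(1+\delta')k}\big\}.
\]
The first event fails with probability $\lesssim 2^{-ck}$ by~\eqref{eq:SLE-contained-in-ball}, and the second fails with probability $\lesssim 2^{-ck}$ by~\eqref{eq:Rb-gammaQ}, applied with exponent $(1+\delta')k$ in place of $k$ (this only rescales the constant $c$). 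On $E_2$,
\[
\eta([-2^k,2^k])\subseteq B\big(0;\tfrac12\,2^{(\frac{1}{\gamma Q}+\delta')(1+\delta')k}\big).
\]
Choose $\delta'$ small enough that $(\frac{1}{\gamma Q}+\delta')(1+\delta')\le \frac{1}{\gamma Q}+\delta$; this gives~\eqref{eq:SLE-contained-in-ball-1}.

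\textbf{Main subtlety.} Everything above is bookkeeping of exponents. The one point that needs care is that $\fR_b$ and $\eta^b$ are not independent. This causes no problem, because the argument only uses union bounds on events each of whose probability is known separately, never a product of probabilities. I would also check that~\eqref{eq:Rb-gammaQ} indeed follows from~\eqref{eq:Rb-bound}. Apply~\eqref{eq:Rb-bound} with $b_1=1$ (where $\fR_1=1$), $b_2=2^k$, and $C=2^{\delta k}$; the resulting failure probability is at most $3\exp(-a'\delta^2 k^2/((1+\delta)k))$, which is $\lesssim 2^{-ck}$.
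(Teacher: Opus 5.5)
Your proposal is correct and follows the same route as the paper, which derives the lemma in one line from the scale invariance $(h^b,\eta^b)\overset{d}{=}(h,\eta)$ of Lemma~\ref{lem:Rb}, the bound \eqref{eq:Rb-gammaQ} on $\fR_{2^k}$, and Lemma~\ref{lem:SLE-contain}. Your write-up simply makes explicit the exponent bookkeeping that the paper leaves implicit, including the union-bound point and the choice $b=2^{(1+\delta')k}$ that lets you apply \eqref{eq:SLE-contained-in-ball} in the form in which it is stated.
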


 \section{Circle packing maps and conformal maps}\label{sec:CP-conformal}


\subsection{Boundary perturbation for conformal maps}\label{subsec:conformal-map}


In this subsection, we gather several results on boundary perturbation for conformal maps that will be needed for our paper.

\begin{proposition}\label{prop:boundary-perturb}
    Let $(D^n)_{n\geq1}$,  $(\wt D^n)_{n\geq1}$, $D$ and $\wt D$ be bounded Jordan domains containing 0, and $f^n:D^n\to\wt D^n$ be conformal maps sending 0 to 0 such that $(f^n)'(0)>0$. Suppose $\partial D^n$ converges to $\partial D$ and $\partial \wt D^n$ converges to $\partial \wt D$ in the metric~\eqref{eq:curve-metric-0}. Then $\{f^n\}_{n\geq1}$ is equicontinuous, and $f^n$ converges uniformly to a conformal map $f:D\to\wt D$ with $f'(0)>0$ in the sense of Definition~\ref{def:unif-convergence}.
\end{proposition}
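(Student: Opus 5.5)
The plan is to reduce everything to Riemann maps from the unit disk and prove a uniform boundary-regularity statement for those. Let $g^n:\bbD\to D^n$, $\wt g^n:\bbD\to\wt D^n$, $g:\bbD\to D$ and $\wt g:\bbD\to\wt D$ be the Riemann maps normalized by $0\mapsto 0$ with positive derivative at $0$. By Carath\'eodory's theorem each extends to a homeomorphism of the closures. By uniqueness of the normalized map, $f^n=\wt g^n\circ(g^n)^{-1}$. The candidate limit is $f:=\wt g\circ g^{-1}$, which is conformal from $D$ onto $\wt D$ with $f'(0)>0$. It then suffices to prove two claims. First, $g^n\to g$ uniformly on $\ol\bbD$. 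Second, $(g^n)^{-1}\to g^{-1}$ uniformly in the sense of Definition~\ref{def:unif-convergence}, and likewise for the tilde maps. Composing, and using the uniform continuity of $\wt g$ on $\ol\bbD$, gives equicontinuity of $\{f^n\}$ and uniform convergence to $f$.

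\textbf{Step 1: uniform local connectivity.} Let $\beta_n$ parametrize $\partial D^n$ and $\beta$ parametrize $\partial D$, with $\mathbb{d}^{\rm CMP}(\beta_n,\beta)\to0$. The claim is: for every $\e>0$ there are $\delta>0$ and $N$ such that for $n>N$, any two points of $\partial D^n$ at distance $<\delta$ are joined by a subarc of $\partial D^n$ of diameter $<\e$. To see this, reparametrize $\beta_n$ so that it is uniformly close to $\beta$. Two close points $\beta_n(s)$ and $\beta_n(t)$ then have $\beta(s)$ and $\beta(t)$ close. Since $\beta$ is a Jordan curve, hence uniformly locally connected, $s$ and $t$ are close in the circular parameter, so the short arc $\beta_n([s,t])$ stays near the small arc $\beta([s,t])$. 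The same convergence gives Hausdorff convergence $\ol{D^n}\to\ol D$. It also gives a fixed $\rho>0$ with $B(0;\rho)\subseteq D^n$ for all large $n$, since $0\in D$ and $\partial D^n$ is eventually far from $0$.

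\textbf{Step 2: equicontinuity of $g^n$ on $\ol\bbD$ via Wolff's lemma.} This is the main obstacle. Fix $z\in\ol\bbD$ and $\delta$ small. By the length--area argument (Lemma~\ref{lem:He-Schramm-unif}), using that $\area(D^n)$ is uniformly bounded, there is some $r\in(\delta,\sqrt\delta)$ with
\[
\mathrm{length}\big(g^n(\{|w-z|=r\}\cap\bbD)\big)\le C/\sqrt{\log(1/\delta)}.
\]
For $z$ near $\partial\bbD$, this image is a crosscut of $D^n$ whose two endpoints on $\partial D^n$ are close. By Step 1 they are joined by a boundary arc of small diameter, so one of the two complementary pieces of $D^n$ has small diameter. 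That piece cannot be the one containing $0$, because that one contains $B(0;\rho)$; here we use that $\{|w-z|<r\}$ misses a neighborhood of $0$ when $\delta$ is small. Hence $g^n(B(z;\delta)\cap\ol\bbD)$ has diameter $o_\delta(1)$, uniformly in $n$. Interior points are handled by Koebe distortion. By Arzel\`a--Ascoli, subsequences of $g^n$ converge uniformly on $\ol\bbD$. By the Carath\'eodory kernel theorem, the kernel of $(D^n)$ with respect to $0$ is $D$, using Hausdorff convergence of the Jordan boundaries. So every subsequential limit equals $g$, and the whole sequence converges uniformly on $\ol\bbD$.

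\textbf{Step 3: inverses.} Suppose $(g^n)^{-1}$ fails to converge uniformly to $g^{-1}$ in the sense of Definition~\ref{def:unif-convergence}. Then there are $z_n'\in\ol{D^n}$ and $z_n\in\ol D$ with $|z_n'-z_n|\to0$ but $|(g^n)^{-1}(z_n')-g^{-1}(z_n)|\ge\e$. Pass to a subsequence with $w_n:=(g^n)^{-1}(z_n')\to w$ and $z_n\to z$. By Step 2, $g(w)=\lim g^n(w_n)=z$, so $w=g^{-1}(z)$. This contradicts the choice of $w_n$, since $g^{-1}$ is continuous on $\ol D$. The same argument applies to the tilde maps. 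Composing the uniform statements yields the proposition.
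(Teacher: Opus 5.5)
Your proof is correct and has the same structure as the paper's. Both arguments get uniform local connectivity of the boundary curves from $\mathbb{d}^{\rm CMP}$-convergence, prove uniform convergence of the normalized Riemann maps on $\ol\bbD$, handle the inverses by the compactness/injectivity argument of Lemma~\ref{lem:unifconvergence-inv}, and then compose.

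The only difference is the middle step. The paper imports it from Pommerenke, passing to uniform local connectivity of the complements and invoking his Corollary 2.4 together with the kernel theorem. You prove it directly via the length--area Lemma~\ref{lem:He-Schramm-unif}, Arzel\`a--Ascoli and the kernel theorem, which is essentially how that corollary is proved. One small fix: the piece containing $0$ need not contain $B(0;\rho)$ itself. What you can say is that it contains $g^n(B(0;1/2))\supseteq B(0;\rho/2)$, by the Schwarz lemma applied to $(g^n)^{-1}$ on $B(0;\rho)$. This suffices for your argument.
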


\begin{lemma}\label{lem:unifconvergence-inv}
    Suppose $D^n$ and $D$ are bounded Jordan domains, $f^n:\ol {D^n}\to\bbC$ and $f:\ol D\to\bbC$ are bounded, continuous and injective. If $f^n$ converges to $f$ uniformly in the sense of Definition~\ref{def:unif-convergence}, then $(f^n)^{-1}$ converge to $f^{-1}$ uniformly in the sense of Definition~\ref{def:unif-convergence}.
\end{lemma}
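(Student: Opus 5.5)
The plan is a compactness-and-contradiction argument, after first checking that $(f^n)^{-1}$ and $f^{-1}$ fit the setting of Definition~\ref{def:unif-convergence}.

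\textbf{Setting up the inverses.} Since $\ol{D^n}$ and $\ol D$ are compact and $f^n,f$ are continuous and injective, $f^n$ and $f$ are homeomorphisms onto their images $K^n:=f^n(\ol{D^n})$ and $K:=f(\ol D)$. By invariance of domain, $f^n(D^n)$ and $f(D)$ are open. Their boundaries are the images of the Jordan curves $\partial D^n$ and $\partial D$, so they are again bounded Jordan domains with closures $K^n$ and $K$. In particular $f^{-1}:K\to \ol D$ is uniformly continuous.

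\textbf{Hausdorff convergence of the images.} Next I show $d_H(K^n,K)\to0$. Given $w'=f^n(z')\in K^n$, the Hausdorff convergence of $\ol{D^n}$ to $\ol D$ gives a point $z\in\ol D$ with $|z-z'|<\delta$ once $n$ is large. Uniform convergence then gives $|f^n(z')-f(z)|<\e$. The reverse inclusion is proved the same way, starting from $z\in\ol D$ and choosing $z'\in \ol{D^n}$ nearby. Hence Definition~\ref{def:unif-convergence} makes sense for the sequence $(f^n)^{-1}$ and the limit $f^{-1}$.

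\textbf{Contradiction argument.} Suppose uniform convergence of the inverses fails. Then there are $\e>0$, a subsequence $n_k$, points $w_k'\in K^{n_k}$ and points $w_k\in K$ with $|w_k'-w_k|\to0$ but
\[
|(f^{n_k})^{-1}(w_k')-f^{-1}(w_k)|\ge\e .
\]
Set $z_k'=(f^{n_k})^{-1}(w_k')\in\ol{D^{n_k}}$ and $z_k=f^{-1}(w_k)\in\ol D$.
\begin{enumerate}
\item All these points lie in a fixed bounded set, because the domains are bounded and converge in the Hausdorff sense. So after passing to a further subsequence, $w_k\to w^*\in K$ and $z_k'\to z^*$.
\item Since $\mathrm{dist}(z_k',\ol D)\to0$ by the Hausdorff convergence of the domains, we get $z^*\in\ol D$.
\item Uniform convergence in the sense of Definition~\ref{def:unif-convergence}, applied to $z_k'$ and $z^*$ (which satisfy $|z_k'-z^*|\to0$), gives $|f^{n_k}(z_k')-f(z^*)|\to0$. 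That is, $w_k'\to f(z^*)$.
\item Since also $w_k'-w_k\to0$, we conclude $f(z^*)=w^*$, i.e. $z^*=f^{-1}(w^*)$.
\item By continuity of $f^{-1}$ on $K$, $z_k=f^{-1}(w_k)\to f^{-1}(w^*)=z^*$.
\end{enumerate}
Therefore $|z_k'-z_k|\to0$, contradicting $|z_k'-z_k|\ge\e$.

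\textbf{Main obstacle.} There is no serious obstacle. The only point needing care is the preliminary step: verifying that the images are Jordan domains whose closures converge in the Hausdorff sense, so that the conclusion is meaningful within Definition~\ref{def:unif-convergence}. The core argument is the standard compactness argument above.
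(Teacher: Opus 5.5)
Your proposal is correct and follows essentially the same route as the paper. Both first prove Hausdorff convergence of the images $f^n(\overline{D^n})\to f(\overline D)$ and then run a compactness-and-contradiction argument; the paper also extracts a convergent subsequence of the points $f^{-1}(w_k)$ and concludes via injectivity of $f$, whereas you use continuity of $f^{-1}$ on the compact set $f(\overline D)$, which amounts to the same thing. Your preliminary check that the images are again bounded Jordan domains, so that Definition~\ref{def:unif-convergence} literally applies to the inverses, is a point the paper leaves implicit.
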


\begin{proof}
Indeed, $f^n(\ol {D^n})$ and $f(\ol D)$ are compact. We   verify that the Hausdorff distance between $f^n(\ol {D^n})$ and $f(\ol D)$ tends to 0 as $n\to\infty$. Fix $\e>0$, using the uniform convergence of $f^n$ to $f$, there exists $\delta,N_1>0$, such that for any $n>N_1$, $x\in \ol {D^n}$ and $x'\in \ol D$ with $|x-x'|<\delta$, $|f^n(x)-f(x')|<\e$. Using that the Hausdorff distance between $\ol {D^n}$ and $\ol D$ tends to 0 as $n\to\infty$, there exists $N_2>0$ such that the Hausdorff distance between $\ol {D^n}$ and $\ol D$ is less than $\delta/2$ for $n>N_2$. Therefore for $n>\max\{N_1,N_2\}$, the Hausdorff distance between $f(\ol D)$ and $f^n(\ol {D^n})$ is less than $\e$.

Now assume on the contrary that $(f^n)^{-1}$ does not converge to $f^{-1}$ uniformly. Then there exists some $\delta>0$, such that for some subsequence $\{k_n\}$, there exists $x_{k_n}\in f^{k_n}(\ol D^{k_n})$ and $\wt x_{k_n}\in f(\ol D)$, we have $|x_{k_n}-\wt x_{k_n}|\to 0$ as $n\to\infty$ and $|y_{k_n}-\wt y_{k_n}|>\e_0$, where $y_{k_n} = (f^{k_n})^{-1}(x_{k_n})$ and $\wt y_{k_n} = f^{-1}(\wt x_{k_n})$. Using compactness, we can pick a further subsequence (still denoted by $\{k_n\}$), such that $y_{k_n}\to y$ and $\wt y_{k_n}\to\wt y$ as $n\to\infty$. Since the Hausdorff distance between $\ol D_n$ and $\ol D$ tends to 0 as $n\to\infty$, we have $y\in \ol D$, and $|y-\wt y|\geq\e_0$ by our assumption. By the continuity of $f$, we have $f(\wt y) = \lim_{n\to\infty} f(\wt y_{k_n}) = \lim_{n\to\infty} \wt x_{k_n}$. Using the uniform convergence of $f^n$ to $f$, we have $|x_{k_n}-f(y)| = |f^n(y_{k_n})-f(y)|\to0$ as $n\to\infty$. Since $|x_{k_n}-\wt x_{k_n}|\to 0$ as $n\to\infty$, we have $f(y) = f(\wt y)$, which contradicts with the injectivity of $f$. 
\end{proof}

\begin{proof}[Proof of Proposition~\ref{prop:boundary-perturb}]
    We start with the case where $D^n = \bbD$. We verify that $\partial \wt D^n$ is uniformly locally connected, in the sense that for any $\e>0$, there exists a $\delta>0$ such that for any $n$ and $a,b\in\partial\wt D^n$ with $|a-b|<\delta$, one of the segment of $\partial\wt D^n$ joining $a$ and $b$ has diameter less than $\e$. We pick $\delta_1>0$, such that for any $a,b\in \partial \wt D$ with $|a-b|<\delta_1$, $a$ and $b$ can be joined by a segment of $\partial \wt D$ with diameter less than $\e/2$; the existence of $\delta_1$ follows from the fact that $\partial \wt D$ is a Jordan curve. If the distance~\eqref{eq:curve-metric-0} between $\partial\wt D^n$ and $\partial \wt D$ is less than $\min\{\delta_1,\e\}/10$, then for any $a,b\in \partial\wt D^n$ with $|a-b|<\delta_2:=\delta_1/4$, $a$ and $b$ can be joined by a segment of $\partial \wt D^n$ with diameter less than $\e$. This verifies that for some $N>0$, the uniform locally connected property for $(\e,\delta_2)$ holds for every $n>N$, and we may select smaller $\delta_2$ such that the property holds for every $n\geq1$. Therefore $\partial \wt D^n$ is uniformly locally connected, and by the proof of (iii)$\Rightarrow$(iv) in~\cite[Theorem 2.1]{BdryBehaviour}, $\bbC\backslash \wt D^n$ is  uniformly locally connected, and by~\cite[Corollary 2.4]{BdryBehaviour} and the Carath\'{e}odory's kernel theorem, $f^n$ converges to $f$ uniformly on $\ol \bbD$. This verifies the case where $D^n=\bbD$. 

    By Lemma~\ref{lem:unifconvergence-inv}, Proposition~\ref{prop:boundary-perturb} holds if  $\wt D^n = \bbD$. For the general setting, consider the conformal maps $\varphi^n:D^n\to\bbD$ and $\wt\varphi^n:\wt D^n\to\bbD$ fixing 0 such that $(\varphi^n)'(0)>0,(\wt\varphi^n)'(0)>0$, and the conformal maps $\varphi:D\to\bbD$ and $\wt\varphi:\wt D\to\bbD$ fixing 0 such that $\varphi'(0)>0,\wt\varphi'(0)>0$. Then $\varphi^n$ converges to $\varphi$ uniformly and $(\wt\varphi^n)^{-1}$ converges to $\wt\varphi^{-1}$ both in the sense of Definition~\ref{def:unif-convergence}. For any $\e>0$, we pick $\delta_3,N_3>0$ such that  $|(\wt\varphi^n)^{-1}(y)-\wt\varphi^{-1}(y')|<\e$ for every $n>N_3$ and $y,y'\in\bbD$ with $|y-y'|<\delta_3$, and pick $\delta_4,N_4>0$ such that  $|\varphi^n(x)-\varphi(x')|<\delta_3$ for every $n>N_4$ and $x\in D^n,x'\in D$ with $|x-x'|<\delta_4$. Then for any $n>\max\{N_3,N_4\}$, $x\in D^n,x'\in D$ with $|x-x'|<\delta_4$, we have $|(\wt\varphi^n)^{-1}(\varphi^n(x))-\wt\varphi^{-1}(\varphi(x'))|<\e$. This finishes the proof.
\end{proof}

The following fundamental estimate is from Wolff~\cite{wolff1935demonstration} and we complete the proof here for completeness.
\begin{lemma}\label{lem:He-Schramm-unif}
  Let $D$ and $\wt D$ be Jordan domains.  Let $\varphi:D\to\wt D$ be a conformal map. Fix $z_0\in\bbC$,  let $\gamma_\rho = \partial B(z_0;\rho)\cap D$, and $L(\rho) =\mathrm{len}(\varphi(\gamma_\rho))$ be the length of the curve $\varphi(\gamma_\rho)$. Then 
  \begin{equation}\label{eq:lem:He-Schramm-unif}
      \int_0^\infty \frac{L(\rho)^2}{\rho}\, d\rho\leq 2\pi\, \area(\wt D).
  \end{equation}
 In particular, for any $0<r_0<r_1$, there exists some $\rho\in[r_0,r_1]$ such that $L(\rho)\leq \Big(2\pi\, \area(\wt D)\big/ \log(r_1/r_0)\Big)^{1/2}.$
\end{lemma}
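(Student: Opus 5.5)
The plan is the classical length--area argument in polar coordinates centered at $z_0$. Write $z=z_0+\rho e^{i\theta}$ and, for each $\rho>0$, let $\Theta_\rho=\{\theta\in[0,2\pi):z_0+\rho e^{i\theta}\in D\}$. Since $\gamma_\rho$ is an open subset of the circle $\partial B(z_0;\rho)$ and $\varphi$ is smooth on $D$, the length of its image is
\[
L(\rho)=\int_{\Theta_\rho}|\varphi'(z_0+\rho e^{i\theta})|\,\rho\,d\theta .
\]
This may be $+\infty$ for some $\rho$, but it is a measurable function of $\rho$, so the integral in \eqref{eq:lem:He-Schramm-unif} makes sense.

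\textbf{Step 1 (Cauchy--Schwarz on each circle).} Apply Cauchy--Schwarz in $\theta$, using $|\Theta_\rho|\le 2\pi$:
\[
L(\rho)^2\le 2\pi\,\rho^2\int_{\Theta_\rho}|\varphi'(z_0+\rho e^{i\theta})|^2\,d\theta .
\]
Dividing by $\rho$ gives
\[
\frac{L(\rho)^2}{\rho}\le 2\pi\int_{\Theta_\rho}|\varphi'(z_0+\rho e^{i\theta})|^2\,\rho\,d\theta .
\]

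\textbf{Step 2 (integrate in $\rho$).} Integrate over $\rho\in(0,\infty)$ and use Tonelli; the integrand is nonnegative, so no integrability assumption is needed. Since $\rho\,d\rho\,d\theta$ is Lebesgue area measure in polar coordinates, the right-hand side becomes $2\pi\int_D|\varphi'(z)|^2\,dA(z)$. Because $\varphi$ is conformal, hence injective, $|\varphi'|^2$ is its Jacobian, and the change of variables formula gives $\int_D|\varphi'|^2\,dA=\area(\varphi(D))=\area(\wt D)$. This yields \eqref{eq:lem:He-Schramm-unif}.

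\textbf{Step 3 (the ``in particular'').} Suppose, for contradiction, that $L(\rho)^2>2\pi\,\area(\wt D)/\log(r_1/r_0)$ for every $\rho\in[r_0,r_1]$. Integrating over $[r_0,r_1]$ against $d\rho/\rho$ gives
\[
\int_{r_0}^{r_1}\frac{L(\rho)^2}{\rho}\,d\rho>2\pi\,\area(\wt D),
\]
which contradicts Step 2.

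There is no real obstacle here. The only points needing care are measurability and finiteness of $L(\rho)$, which Tonelli handles, and the observation that if $\area(\wt D)=\infty$ the statement is vacuous, though for Jordan domains the area is finite. The Jordan hypothesis is not otherwise used beyond ensuring that $\wt D$ is a domain of finite area.
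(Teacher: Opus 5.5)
Your proposal is correct and follows the paper's argument: Cauchy--Schwarz along each arc $\gamma_\rho$, then integration in $\rho$, then the identity $\int_D|\varphi'|^2\,dA=\area(\wt D)$. You also spell out the ``in particular'' step and the measurability (via Tonelli), which the paper leaves implicit, and you are right that the continuous boundary extension the paper invokes is not needed.
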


\begin{proof}
    Since  $D$ and $\wt D$ be Jordan domains, $\varphi$ extends continuously to the boundary. Now for each $\rho$, we have
    $$\frac{L(\rho)^2}{\rho} = \frac{1}{\rho}\Big(\int_{\gamma_\rho} |\varphi'(z)||dz|\Big)^2\leq \frac{1}{\rho}\Big(\int_{\gamma_\rho} |dz|\Big)\Big(\int_{\gamma_\rho}|\varphi'(z)|^2|dz|\Big) \leq 2\pi \Big(\int_{\gamma_\rho}|\varphi'(z)|^2|dz|\Big).$$
     Integrating over $\rho$ and using $\int_D |\varphi'(z)|^2dz = \area(\wt D)$ gives~\eqref{eq:lem:He-Schramm-unif}.
\end{proof}

Now we prove the following corollary of Lemma~\ref{lem:He-Schramm-unif}.

\begin{proposition}\label{prop:unif-disk}
    Let $\theta_n>0$ be a sequence tending to 0, $D^n\subseteq \bbD$ be a Jordan domain,  $\varphi^n:\ol \bbD\to \ol {D^n}$ be a conformal map fixing 0 such that $(\varphi^n)'(0)>0$, and $\varphi^n(e^{i\theta})\in\partial\bbD$ for $\theta\in [\theta_n,2\pi]$. Then the diameter of $\bbD\backslash D^n$ tends to 0, and $\varphi^n$ converges uniformly on $\ol \bbD$ to the identity function. 
\end{proposition}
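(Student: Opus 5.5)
The plan is to use Wolff's lemma (Lemma~\ref{lem:He-Schramm-unif}) applied to $\varphi^n$ on $\bbD$ around the point $z_0=1$. Note $e^{i\theta}$ for $\theta\in[0,\theta_n)$ is the only boundary arc of $\bbD$ whose image may leave $\partial\bbD$; call it $A_n$, with $\diam(A_n)\to0$. Fix $\e>0$ and pick radii $r_0<r_1$ with $r_1$ small and $\log(r_1/r_0)$ large. For $n$ large, $A_n\subseteq B(1;r_0)$. By Lemma~\ref{lem:He-Schramm-unif} (since $\area(D^n)\le\pi$), there is $\rho\in[r_0,r_1]$ with $L(\rho)\le (2\pi^2/\log(r_1/r_0))^{1/2}<\e$. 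The crosscut $\gamma_\rho=\partial B(1;\rho)\cap\bbD$ has endpoints on the arc $\{\theta\in[\theta_n,2\pi]\}$, so $\varphi^n$ maps its endpoints to two points of $\partial\bbD$ at distance $<\e$. Hence $\varphi^n(\gamma_\rho)$ is a crosscut of $D^n$ of length $<\e$, joining two points of $\partial\bbD$ that are $\e$-close.

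Next I will identify the piece cut off. The crosscut $\varphi^n(\gamma_\rho)$ separates $D^n$ into two parts: the image of $B(1;\rho)\cap\bbD$ and the image of $\bbD\setminus \ol{B(1;\rho)}$, which contains $0$. The boundary of $D^n$ consists of the arc $\varphi^n(\partial\bbD\setminus A_n)\subseteq\partial\bbD$ together with $\varphi^n(A_n)$. Since $\varphi^n$ is an orientation-preserving homeomorphism of boundaries, $\varphi^n(\partial\bbD\setminus B(1;\rho))$ is an arc of $\partial\bbD$ between the two $\e$-close endpoints. The key claim is that it is the long arc. To see this, the harmonic measure from $0$ of $\partial\bbD\setminus B(1;\rho)$ in $\bbD$ is close to $1$, and harmonic measure is conformally invariant with $\varphi^n(0)=0$. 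Since $D^n\subseteq\bbD$, the harmonic measure in $D^n$ from $0$ of a subset of $\partial\bbD$ is at most its harmonic measure in $\bbD$, i.e.\ its normalized length. Hence the image arc has length close to $2\pi$.

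It follows that the region of $\bbD\setminus D^n$, together with the part $\varphi^n(B(1;\rho)\cap \bbD)$, is enclosed by the short arc of $\partial\bbD$ and the short crosscut. This region has diameter $O(\e)$: it is bounded by a Jordan curve of length $O(\e)$, namely the crosscut plus a short circular arc, and it lies inside $\bbD$. This gives $\diam(\bbD\setminus D^n)\to0$, and also that $\varphi^n(B(1;r_1)\cap\ol\bbD)$ has small diameter uniformly in $n$. I expect this step to be the main obstacle: justifying that the enclosed side is the small one, and making the "short Jordan curve encloses small set" conclusion rigorous. I would handle it with the harmonic-measure comparison above, or alternatively by noting that the other side contains $0$ and has area bounded below.

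For the convergence, since $\partial D^n\to\partial\bbD$ in the metric~\eqref{eq:curve-metric-0}, I would like to apply Proposition~\ref{prop:boundary-perturb} directly. Parametrizing $\partial D^n$ by $\varphi^n(e^{i\theta})$ gives a curve that agrees with a reparametrized circle off a set whose image has diameter $O(\e)$. So $\mathbb{d}^{\rm CMP}(\partial D^n,\partial\bbD)\to0$. Proposition~\ref{prop:boundary-perturb}, applied to the inverse maps $(\varphi^n)^{-1}:D^n\to\bbD$ or directly to $\varphi^n:\bbD\to D^n$, gives uniform convergence to a conformal self-map of $\bbD$ fixing $0$ with positive derivative, which is therefore the identity.
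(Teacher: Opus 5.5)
Your proposal is correct and follows essentially the same route as the paper. Both arguments apply Wolff's lemma at $z_0=1$ to find a crosscut $\partial B(1;\rho)\cap\bbD$ whose image is short and has both endpoints on $\partial\bbD$. Both then deduce that $\varphi^n(\{e^{i\theta}:\theta\in[0,\theta_n]\})$ is cut off from $0$ by a short crosscut, and conclude with Proposition~\ref{prop:boundary-perturb}.

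Your harmonic-measure argument (or simply ``$0$ is at distance $1$ from $\partial\bbD$'') for why the cut-off side is the small one fills in a step the paper leaves implicit. One small correction: what you actually obtain is smallness of $\varphi^n(B(1;\rho)\cap\ol\bbD)\supseteq\varphi^n(B(1;r_0)\cap\ol\bbD)$, not of $\varphi^n(B(1;r_1)\cap\ol\bbD)$. This is harmless for the proof.
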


\begin{proof}
    By Proposition~\ref{prop:boundary-perturb}, it suffices to prove that $\diam(\varphi^n(\{e^{i\theta}:\theta\in[0,\theta_n]\}))\to 0$ as $n\to\infty$. Let $\e>0$ and $\delta = 0.01e^{-\frac{100\pi^2}{\e^2}}$. Suppose $n$ is large enough such that $\theta_n<\delta/5$. We apply Lemma~\ref{lem:He-Schramm-unif} with the conformal map $\varphi^n$ and $r_0=\delta$,$r_1=0.01$. Then there is some $\rho\in[r_0,r_1]$ such that $\mathrm{len}(\varphi^n(\partial B(1;\rho)\cap\bbD))<\e/2$. Since $\rho\in[\delta,0.01]$,   by our assumption, the endpoints of the curve $\varphi^n(\partial B(1;\rho)\cap\bbD)$ are on $\partial\bbD$. It follows that $\{e^{i\theta}:\theta\in[0,\theta_n]\}$ is separated from 0 by $\partial B(1;\rho)\cap\bbD$, and therefore $\varphi^n(\{e^{i\theta}:\theta\in[0,\theta_n]\})$ is separated from 0 in $\bbD$ by a curve of length less than $\e/2$. This completes the proof.
\end{proof}

We will also use the following.

\begin{proposition}\label{prop:unif-sph}
    Let $D^n,\wt D^n$ be simply connected domains containing 0 and 1, $\varphi^n:D^n\to\wt D^n$ be conformal maps fixing 0 and 1. Further assume that for any $R>0$,  $B(0;R)\subseteq D^n$ for large enough $n$. Then for each $R>0$, for large enough $n$, $B(0;R)\subseteq \wt D^n$. Moreover,  $\varphi^n$ converges uniformly to the identity function, in the sense that for any $\e>0$, there exists $N>0$ such that for every $n>N$ and $z\in D^n$, $d_\sph(\varphi^n(z),z)<\e$.
\end{proposition}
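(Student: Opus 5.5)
We will prove Proposition~\ref{prop:unif-sph} in two steps: first local uniform convergence $\varphi^n\to\mathrm{id}$ on compact subsets of $\bbC$, then an upgrade to uniform convergence in the spherical metric on all of $D^n$ by controlling the images of neighbourhoods of $\infty$ with Lemma~\ref{lem:He-Schramm-unif}.

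\textbf{Step 1 (local convergence).} For fixed $R$ and large $n$, $\varphi^n$ is univalent on $B(0;2R)$. Koebe distortion, combined with the normalizations $\varphi^n(0)=0$ and $\varphi^n(1)=1$, bounds $|(\varphi^n)'(0)|$ above and below. Hence $\{\varphi^n\}$ is a normal family on every ball, and by a diagonal argument every subsequence has a further subsequence converging locally uniformly on $\bbC$. By Hurwitz, the limit is either univalent or constant, and the constant case is excluded because $\varphi^n(0)=0\neq1=\varphi^n(1)$. So the limit is an entire univalent function fixing $0$ and $1$, i.e.\ $z\mapsto az+b$, which forces the identity. Hence $\varphi^n\to\mathrm{id}$ locally uniformly. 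The Koebe $1/4$ theorem applied on $B(0;4R)$, together with the bound on the derivative at $0$ (which converges to $1$), then gives $B(0;R)\subseteq\varphi^n(B(0;4R))\subseteq\wt D^n$ for large $n$. This is the first claim.

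\textbf{Step 2 (near $\infty$).} Fix $\e>0$. It suffices to find $R$ and $N$ such that for $n>N$, every $z\in D^n\setminus B(0;R)$ has $|\varphi^n(z)|$ large, since then both $z$ and $\varphi^n(z)$ lie in a small spherical neighbourhood of $\infty$. On the compact part $B(0;R)$ the local uniform convergence from Step 1 handles everything.

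For the outer region, apply Wolff's lemma after inverting. Set $\psi^n(w)=1/\varphi^n(1/w)$, defined on $\{w:1/w\in D^n\}\cup\{0\}$ as a map into $\{1/u:u\in\wt D^n\}$, with a singularity issue at $w=0$ if $\infty\notin D^n$. Alternatively, work directly: apply the length--area estimate to the restriction of $\varphi^n$ to $D^n\cap\{|z|>r\}$ centered at $z_0=0$, but with the area measured spherically. Concretely, replace $\area$ by the spherical area and length by spherical length; the Cauchy--Schwarz proof of Lemma~\ref{lem:He-Schramm-unif} works verbatim, because $|(\varphi^n)^\#|^2$ integrates to the spherical area of the image, which is at most $\pi$.

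This yields radii $\rho\in[R,R^2]$ with the spherical length of $\varphi^n(\partial B(0;\rho)\cap D^n)$ at most $C/\sqrt{\log R}$. Since $B(0;R^2)\subseteq D^n$ for large $n$, the curve is the full circle, and its image is a closed curve of small spherical diameter. That image also lies close to $\partial B(0;\rho)$ by Step 1, since $\rho\le R^2$ lies in a compact range. Hence the image curve sits in a small spherical neighbourhood of $\infty$ and separates $0$ from $\varphi^n(D^n\setminus B(0;\rho))$. The latter set is therefore contained in the complementary component containing $\infty$, which is spherically small.

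\textbf{Main obstacle.} The main obstacle is Step 2, namely justifying the spherical length--area inequality when the domains may be unbounded (neither $D^n$ nor $\wt D^n$ is assumed Jordan or bounded). I would handle this by observing that the proof of Lemma~\ref{lem:He-Schramm-unif} only uses Cauchy--Schwarz and the change of variables for area, both valid for any conformal map with the spherical derivative $|\varphi'|/(1+|\varphi|^2)$. No boundary extension is needed, since we only use circles entirely contained in $D^n$.
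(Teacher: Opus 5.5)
Your proof is correct, and Step~1 is the paper's argument: Koebe distortion pins down $|(\varphi^n)'(0)|$, normality gives subsequential limits, and an injective entire map fixing $0$ and $1$ is the identity.

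You diverge in Step~2. The paper controls the neighbourhood of $\infty$ by Koebe alone. It uses the inclusion $B(0;c_3R)\subseteq\varphi^n(B(0;R/2))$ together with injectivity of $\varphi^n$ on $D^n$, which gives $|\varphi^n(z)|\ge c_3R$ whenever $z\in D^n$ and $|z|\ge R/2$.

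You already had this in hand. Your inclusion $B(0;R)\subseteq\varphi^n(B(0;4R))$, plus injectivity, finishes Step~2 in one line, so the spherical length--area estimate is not needed. (That inclusion needs a slightly larger disk, e.g.\ $B(0;8R)$, since $|(\varphi^n)'(0)|$ only tends to $1$.)

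Your separation argument is also doubly redundant, since either ingredient suffices on its own:
\begin{enumerate}
\item Closeness of $\varphi^n(\partial B(0;\rho))$ to the circle $\partial B(0;\rho)$ forces $\varphi^n(D^n\setminus B(0;\rho))\subseteq\{|w|\ge\rho-1\}$. The reason is that $B(0;\rho-1)$ is connected, misses the image curve, and contains $0=\varphi^n(0)$.
\item The small spherical diameter of the image curve suffices too, because the bounded complementary component contains $0$ and $1$ and so cannot be the small one.
\end{enumerate}

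Your route is the conformal analogue of the length--area argument the paper uses for circle packings in Lemma~\ref{lem:pf-He-Schramm-sphere-1}, where no Koebe theorem is available, so it is the more robust one. In the present conformal setting it costs you a weaker rate: $(\log R)^{-1/2}$ from length--area alone, versus linear control from Koebe. It also costs an extra bookkeeping step for the points with $R\le|z|<\rho$, which you should dispatch explicitly using uniform convergence on $\overline{B(0;R^2)}$.
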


\begin{proof}
    By the Koebe distortion theorem, there exists some absolute constants $c_1,c_2>0$ such that $c_2^{-1}<|\frac{\varphi^n(1)}{(\varphi^n)'(0)}|<c_1^{-1}$ and therefore $c_1<|(\varphi^n)'(0)|<c_2.$ Let $R>0$ and suppose $n$ is large enough such that $B(0;R)\subseteq D^n$. By another application of the Koebe distortion theorem to the conformal map $R^{-1}\varphi^n(Rz)$ defined on $\bbD$, we see that for some absolute constants $c_3,c_4>0$, \eqb\label{eq:pf-prop-unif-sph} B(0;c_3R)\subseteq \varphi^n(B(0;R/2))\subseteq B(0;c_4R) \text{ for every $n$ and $R>3$ such that $B(0;R)\subseteq D^n$. }\eqe  By classical results on conformal maps, for any sequence of $\bbN$, there is a further subsequence $\{n_k\}$ such that $\varphi^{n_k}$ converges uniformly on compacts to a function $\varphi$ that is conformal (i.e., holomorphic and one-to-one) on $\bbC$. Since $\varphi$ fixes 0 and 1,  $\varphi$ must be the identity function. This proves the uniform convergence of $\varphi^n$ on compacts.  Furthermore, by the claim~\eqref{eq:pf-prop-unif-sph}, for any $\e\in(0,0.01)$, there exists $\delta\in(0,\e)$ such that if $d_\sph(z,\infty)<\delta$ then $d_\sph(\varphi^n(z),\infty)<\e$ for large enough $n$, which further implies that the convergence is uniform on $D^{n}$ as in the statement of the proposition.
\end{proof}

\subsection{Convergence of circle packing map to conformal map}\label{subsec:HSextension-pf}

In this subsection, we prove our extensions on the convergence of circle packing map to conformal map. In Section~\ref{subsubsec:HS-boundary}, we prove Theorem~\ref{thm:He-Schramm-extension-0}, Theorem~\ref{thm:He-Schramm-extension-1}. In Section~\ref{subsubsec:HS-cone}, we prove Theorem~\ref{thm:He-Schramm-sphere}. In Section~\ref{subsubsec:HS-disk}, we prove Theorem~\ref{thm:He-Schramm-disk}.

\subsubsection{An extension of the He-Schramm theorem to the boundary}\label{subsubsec:HS-boundary}

Our starting point is the  following lemma on the continuity of circle packing maps, which can be viewed as an analog of Lemma~\ref{lem:He-Schramm-unif} for circle packings. This is proved in~\cite[Equation (2.2)]{HeSchramm}. Since this lemma will also be used a number of times later, we include the proof here for completeness. 
For $z_0\in\bbC$ and $\rho<\pi$, let $C_{\rm sp}(z_0,\rho)$ be the circle $\{z:d_{\rm sp}(z,z_0)=\rho\}$. 
See Figure~\ref{fig:pf-He-Schramm-extension-3} for an illustration.
\begin{lemma}\label{lem:He-Schramm}
    Suppose $\cP$ and $\wt\cP$ are two circle embeddings in $\bbC$ for a disk triangulation $T$. Let $\delta$ be the maximal spherical diameter of the closed disks in $\cP$, and assume $\delta\leq 0.1$. For each vertex $v$, let $D_v$ be its disk in $\cP$ and let $\wt D_v$ be its disk in $\wt\cP$. Then there exists some universal constant $c_0$ such that the following is true. For each $z_0\in\wh\bbC$, $r_0\in [4\delta,0.2]$  and $r_1\in [2r_0,1]$, there exists $\rho\in [r_0,r_1]$, such that 
    \begin{equation}\label{eq:lem-He-Schramm}
      L(\rho):=\sum_{v:\ol D_v\cap C_{\mathrm{sp}}(z_0,\rho)\neq\emptyset} \mathrm{diam}_{\rm sp}(\wt D_{v})\leq c_0\log(r_1/r_0)^{-1/2},
    \end{equation}
    where $\mathrm{diam}_{\rm sp}$ stands for the spherical diameter of a disk.
\end{lemma}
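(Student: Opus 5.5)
The plan is the discrete length/area (Cauchy--Schwarz) argument of He and Schramm. I will work entirely in the spherical metric. For a vertex $v$ whose disk $D_v$ meets the annulus $A=\{z: r_0/2\le d_{\rm sp}(z,z_0)\le 2r_1\}$, set $I_v=\{\rho\in[r_0,r_1]:\ol D_v\cap C_{\rm sp}(z_0,\rho)\neq\emptyset\}$. Then $I_v$ is an interval of length at most $\diam_{\rm sp}(D_v)=:d_v$. I will integrate $L(\rho)^2$ against the measure $d\rho/\rho$ over $[r_0,r_1]$, show that the integral is at most a universal constant, and then pick a good $\rho$ by averaging.

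\textbf{Step 1 (Cauchy--Schwarz in $v$).} For fixed $\rho$, write $N(\rho)$ for the set of $v$ with $\rho\in I_v$. Cauchy--Schwarz gives
\[
L(\rho)^2\le \Big(\sum_{v\in N(\rho)} \frac{d_v}{\rho}\Big)\Big(\sum_{v\in N(\rho)}\frac{\rho}{d_v}\,\diam_{\rm sp}(\wt D_v)^2\Big).
\]
The first factor is bounded by a universal constant. The disks in $N(\rho)$ have disjoint interiors, have diameter $d_v\le\delta\le\rho/4$, and all meet the circle $C_{\rm sp}(z_0,\rho)$. Hence they lie in a spherical annulus of width about $2d_v$ around that circle. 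Comparing spherical area, $\sum d_v^2\lesssim \rho\cdot\max d_v$ is too crude, so I will argue differently: each disk covers an arc of the circle of length comparable to $d_v$ when the disk is crossed by the circle. More carefully, I will dyadically group the disks by size $d_v\in[2^{-j-1},2^{-j})$ and bound the number in each group by $O(\rho 2^{j})$ using area. A cleaner fix, which is the one I will actually use, is to keep $d_v$ inside the integral rather than bounding the first factor pointwise (Step 2). The assumption $\rho\ge r_0\ge 4\delta$, together with $r_1\le 1$ so that circles stay away from the antipode, keeps spherical geometry comparable to Euclidean geometry.

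\textbf{Step 2 (integrate in $\rho$).} I will use the weighted form
\[
L(\rho)^2\le \Big(\sum_{v\in N(\rho)} d_v\Big)\Big(\sum_{v\in N(\rho)} \frac{\diam_{\rm sp}(\wt D_v)^2}{d_v}\Big).
\]
The first factor is at most $C\rho$: the disks meeting $C_{\rm sp}(z_0,\rho)$ with $d_v\le\rho/4$ and disjoint interiors have total diameter bounded by a constant times the circle length, since each disk contains a sub-disk of area $\asymp d_v^2$ inside a band whose area is controlled. This bound is the step I expect to be the main obstacle. Small disks tangent to the circle but barely overlapping it are the problem, and I would handle them by a packing/area count on the $d_v$-neighbourhood of the circle, scale by scale. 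Given this bound, dividing by $\rho$ and integrating yields
\[
\int_{r_0}^{r_1}\frac{L(\rho)^2}{\rho}\,d\rho\le C\sum_v \frac{\diam_{\rm sp}(\wt D_v)^2}{d_v}\,|I_v|\le C\sum_v \diam_{\rm sp}(\wt D_v)^2\le C'\,\area_{\rm sp}(\wh\bbC)=4\pi C'.
\]
Here I use $|I_v|\le d_v$ and the fact that the disks $\wt D_v$ have disjoint interiors, so that the sum of $\diam_{\rm sp}^2$ is bounded by a constant times the total spherical area. For disks of large spherical diameter, diameter squared is still comparable to spherical area up to a universal constant.

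\textbf{Step 3 (choose $\rho$).} Since $\int_{r_0}^{r_1}d\rho/\rho=\log(r_1/r_0)$, there is some $\rho\in[r_0,r_1]$ with $L(\rho)^2\le 4\pi C'/\log(r_1/r_0)$. Taking $c_0=(4\pi C')^{1/2}$ gives \eqref{eq:lem-He-Schramm}. The hypothesis $r_1\ge 2r_0$ ensures that $\log(r_1/r_0)$ is bounded below, so the bound is meaningful, but it is not otherwise needed.
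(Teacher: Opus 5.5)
Your whole argument hinges on the pointwise bound $\sum_{v\in N(\rho)}d_v\le C\rho$. It appears in Step~1 as ``the first factor is bounded'' and in Step~2 as ``the first factor is at most $C\rho$''. This bound is false, even for genuine circle packings satisfying the hypotheses of the lemma.

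Disjoint disks touching a circle from one side only need their contact points to be $\sqrt{d_vd_w}$ apart, rather than $(d_v+d_w)/2$. So every dyadic scale of sizes can contribute total diameter comparable to the arc length simultaneously. Concretely, take the Ford circles at the fractions $p/q\in[0,1]$ with $q\le N$ (tangent to $\bbR$ at $p/q$, of diameter $q^{-2}$). Their tangency graph is the Farey graph on these fractions, which is a triangulated polygon, so they form a circle packing of a disk triangulation. All of them have diameter at most $1$ and touch $\bbR$, yet their total diameter is $\sum_{q\le N}\phi(q)q^{-2}\asymp\log N$.

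Transport this configuration by a M\"obius map with bounded distortion near $[0,1]$ that sends $\bbR$ to a circle of radius $\rho=10$, and rescale to the spherical normalization. The hypotheses of the lemma hold, but $\sum_{v\in N(\rho)}d_v/\rho\gtrsim\log N$. Moreover, radii at distance $s$ from $\rho$ on the side of the disks still give a sum of order $\log(1/s)$ for $N^{-2}\le s\le 1$. So the bad radii form a set of positive measure and cannot be ignored inside $\int d\rho/\rho$. The scale-by-scale area count you suggest gives $O(\rho)$ per dyadic scale of $d_v$, hence only $O(\rho)$ times the number of scales, and the lemma gives no lower bound on disk sizes. Consequently your inequality $\int_{r_0}^{r_1}L(\rho)^2\rho^{-1}\,d\rho\le C\sum_v\diam_{\rm sp}(\wt D_v)^2$ is not established by the argument you give.

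The fix, which is the paper's argument, is not to square $L$ before integrating. Write $I_v=[a_v,b_v]\subseteq[r_0,r_1]$ and $A=\{r_0\le d_{\rm sp}(z,z_0)\le r_1\}$. Integrate $L$ itself and apply Cauchy--Schwarz over $v$ only after integrating in $\rho$:
\[
\inf_{\rho\in[r_0,r_1]}L(\rho)\,\log\frac{r_1}{r_0}\le\int_{r_0}^{r_1}\frac{L(\rho)}{\rho}\,d\rho=\sum_v\diam_{\rm sp}(\wt D_v)\int_{a_v}^{b_v}\frac{d\rho}{\rho}\le\Big(\sum_v\diam_{\rm sp}(\wt D_v)^2\Big)^{1/2}\Big(\sum_v\frac{(b_v-a_v)^2}{a_v^2}\Big)^{1/2}.
\]
This requires only a global area estimate, with three ingredients:
\begin{itemize}
\item each $D_v$ contains a disk $Q_v\subseteq A$ of diameter comparable to $b_v-a_v$, whose points lie at distance between $a_v$ and $b_v$ from $z_0$;
\item the $Q_v$ are disjoint;
\item $b_v\le 2a_v$, because $b_v-a_v\le\delta\le r_0/4$.
\end{itemize}
Hence $\sum_v(b_v-a_v)^2/a_v^2\lesssim\int_A d_{\rm sp}(z,z_0)^{-2}\,d\area_{\rm sp}\lesssim\log(r_1/r_0)$. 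Together with $\sum_v\diam_{\rm sp}(\wt D_v)^2\lesssim\area_{\rm sp}(\wh\bbC)$, this yields a $\rho$ with $L(\rho)\lesssim\log(r_1/r_0)^{-1/2}$.

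Your weighted $L^2$ bound is in fact true, but proving it needs something like Carleson's embedding theorem applied to the intervals $[\log a_v,\log b_v]$. Their Carleson condition $\sum_{[\log a_v,\log b_v]\subseteq J}\log(b_v/a_v)^2\lesssim|J|$ follows from the same area count. The $L^1$ route above makes this unnecessary.
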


\begin{figure}
    \centering
  \includegraphics[scale=0.8]{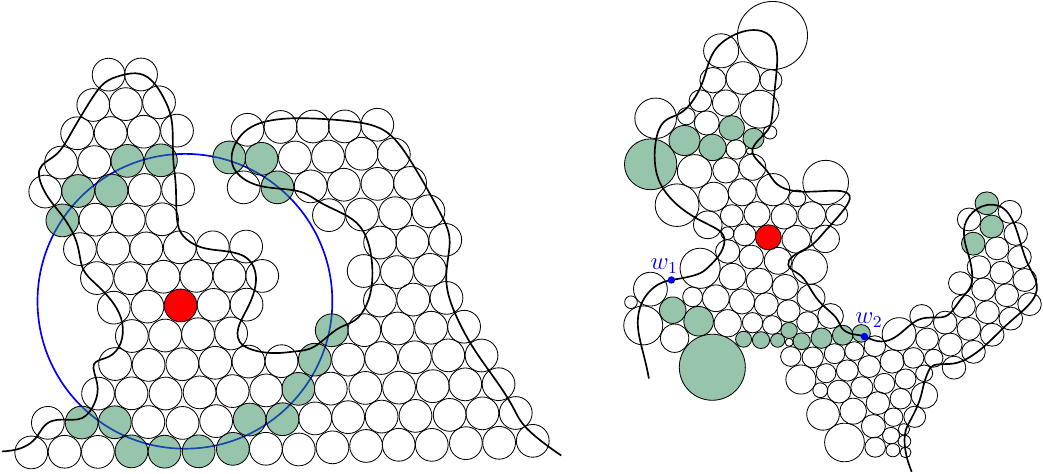}
    \caption{An illustration of the disks in Lemma~\ref{lem:He-Schramm}. The blue circle in the left panel is $C_{\sph}(z_0,\rho)$.  The disk closest to $z_0$ is colored in red, and the circles $D_v^n$ and $\wt D_v^n$ where $D_v^n$ intersects $C_{\rm sp}(z_0,\rho)$ are colored in dark green.}
    \label{fig:pf-He-Schramm-extension-3}
\end{figure}

\begin{proof}
    Define the annulus $A = \{z:r_0\leq d_{\rm sp}(z,z_0)\leq r_1\}$, and let $V^*$ be the set of vertices $v$ in $\cV\cG$ such that $\ol D_v$ intersects $A$. For each $v\in V^*$, let {$[a_v,b_v]\subset [r_0,r_1]$ be the set of $\rho\in[r_0,r_1]$ such that $\ol D_v$ intersects $C_{\rm sp}(z_0,\rho)$.} 
    Then 
    \begin{equation}
    \begin{split}
        &\inf\{L(\rho):r_0\leq \rho\leq r_1\}\log(r_1/r_0)\leq \int_{r_0}^{r_1}\frac{L(\rho)}{\rho}d\rho = \sum_{v\in V^*}\diam_{\rm sp}(\wt D_v)\int_{a_v}^{b_v}\frac{1}{\rho}d\rho\\
        &\qquad\qquad\qquad\leq  \sum_{v\in V^*}\diam_{\rm sp}(\wt D_v)\frac{b_v-a_v}{a_v}
        \leq \big( \sum_{v\in V^*}\diam_{\rm sp}(\wt D_v)^2\big)^{1/2}\big( \sum_{v\in V^*}\frac{(b_v-a_v)^2}{a_v^2}\big)^{1/2}.
        \end{split}
    \end{equation}
    Note that there exists some constant $c_1$ such that for any disk $D_0\subset \bbC\cup\{\infty\}$, $\diam_{\rm sp}(D_0)^2\leq c_1\area_{\rm sp}(D_0)$. Since $\area_{\rm sp}(\bbC\cup\{\infty\})=4\pi$ and $\wt\cP$ is a circle embedding, 
    \begin{equation}
        \sum_{v\in V^*}\diam_{\rm sp}(\wt D_v)^2\leq 4\pi c_1.
    \end{equation}
   On the other hand, for each $v\in V^*$, there exists a disk $Q_v$ inside the disk $D_v$ with spherical diameter $b_v-a_v$ and spherical distance $a_v$ from $z_0$. Moreover, $Q_v$ is contained within $A$. Consequently, 
   \begin{equation}
   \begin{split}
       &\int_{z\in A}d_{\rm sp}(z,z_0)^{-2}d\,\area_{\rm sp}\geq \sum_{v\in V^*}\int_{z\in Q_v}d_{\rm sp}(z,z_0)^{-2}d\,\area_{\rm sp}\\
       &\qquad\geq \sum_{v\in V^*}\area_{\rm sp}(Q_v)/b_v^2\geq \sum_{v\in V^*}c_1^{-1}\diam_{\rm sp}(Q_v)^{2}/b_v^2 = c_1^{-1}\sum_{v\in V^*}\frac{(b_v-a_v)^2}{b_v^2}\geq \frac{1}{4c_1}\sum_{v\in V^*}\frac{(b_v-a_v)^2}{a_v^2},
       \end{split}
   \end{equation}
  {where we used in the last step that $b_v\leq 2a_v$ since $b_v-a_v=\diam_{\rm sp}(Q_v)\leq \delta$ and $a_v\geq r_0\geq \delta$.}
   Finally, as evaluated in~\cite[Equation (2.8)]{HeSchramm},  {the integral on the left side of the last display is bounded above by $2\pi\log(r_1/r_0)$. Combining this with the indented equations above we get} 
   $$ \inf\{L(\rho):r_0\leq \rho\leq r_1\}\log(r_1/r_0)\leq 4\sqrt{2}\pi c_1\log(r_1/r_0)^{1/2}$$
   which further implies~\eqref{eq:lem-He-Schramm} is true for some $\rho\in[r_0,r_1]$.
\end{proof}

\begin{remark}\label{remark:He-Schramm}
    The original statement in~\cite[Equation (2.2)]{HeSchramm} is in the form of spherical distance as above. It is straightforward to check that following the same proof there, if a priori we know that $\cP \subseteq \bbD$ (resp.\ $\wt\cP\subseteq \bbD$), then the spherical distance in the term $C_{\mathrm{sp}}(z_0,\rho)$ (resp.\ $\mathrm{diam}_{\rm sp}(\wt D_{v})$) can be replaced by the Euclidean distance (with potentially different absolute constant $c_0$).
\end{remark}


\begin{lemma}\label{lem:pf-He-Schramm-1}
    In the setting of Theorem~\ref{thm:He-Schramm-extension-0}, for each compact set $\wt K\subseteq \wt D$, there exists a compact set $K\subseteq D$ such that for large enough $n$, $\wt K\subseteq \carrier (\wt\cP^n)$ and $(f^n)^{-1}(\wt K)\subseteq K$. In particular, for any $\wt p_0\in\wt D$, for some $N_0$, $\{(f^n)^{-1}(\wt p_0):n\geq N_0\}$ is compact in $D$ and condition (iii) holds for $(\wt D,\wt\cP^n)$. 
\end{lemma}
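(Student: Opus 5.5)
We argue by contradiction, using the length–area estimate Lemma~\ref{lem:He-Schramm} with the roles of $\cP^n$ and $\wt\cP^n$ reversed. The heuristic is simple. If some point of $\wt K$ had a preimage under $f^n$ close to $\partial D$, a small circle around a nearby point of $\partial D$ would be crossed by disks of $\cP^n$ of small total diameter. The corresponding chain in $\wt\cP^n$ would then separate a point of $\wt K$ from $f^n(p_0)$ in a way incompatible with both points staying in a fixed compact subset of $\wt D$.

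\textbf{Step 1 (covering).} First we show $\wt K\subseteq\carrier(\wt\cP^n)$ for large $n$. By (ii), the boundary disks of $\wt\cP^n$ are close to $\partial\wt D$. The disks of $\wt\cP^n$ meeting $\bbC\backslash\ol{\wt D}$ are small, and $f^n(p_0)$ stays in a compact subset of $\wt D$ by (iii). Hence the carrier of $\wt\cP^n$ is a closed topological disk containing $f^n(p_0)$ whose boundary curve lies in a small neighborhood of $\partial\wt D$.

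To conclude, we need $\wt K$ to lie inside this boundary curve, i.e., the curve winds around $\wt K$. Suppose instead that $\wt K$ meets the complement of the carrier. Then the carrier is confined in some region, and we apply the contradiction argument of Step 2 to the point $f^n(p_0)$ and a boundary vertex whose image is near $\wt K$. The only other possibility is that the boundary loop surrounds $\wt K$ but misses part of it; this is ruled out because the loop lies close to $\partial\wt D$ and is homotopic, in the neighborhood, to a loop surrounding $f^n(p_0)$.

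\textbf{Step 2 (preimages stay away from $\partial D$).} Suppose there exist $\wt z_n\in\wt K$ with $z_n:=(f^n)^{-1}(\wt z_n)$ and $\dist(z_n,\partial D)\to0$ along a subsequence. Pick $\zeta_n\in\partial D$ near $z_n$. Fix $\e>0$ smaller than $\dist(\wt K\cup\{f^n(p_0)\},\partial\wt D)/10$, and choose $r_1$ small compared with $\dist(p_0,\partial D)$. Next choose $r_0\ll r_1$ with $c_0\log(r_1/r_0)^{-1/2}<\e$. For large $n$, the disks of $\cP^n$ have diameter $<r_0/4$ and $|z_n-\zeta_n|<r_0$.

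We apply Lemma~\ref{lem:He-Schramm} to the pair $(\cP^n,\wt\cP^n)$ centered at $\zeta_n$. This gives $\rho\in[r_0,r_1]$ such that the disks of $\cP^n$ meeting $C_{\sph}(\zeta_n,\rho)$ have $\wt\cP^n$-images of total diameter $<\e$. The arc $C_{\sph}(\zeta_n,\rho)\cap D$ separates $z_n$ from $p_0$ in $D$. Since $D$ is a Jordan domain, we choose the crosscut component; for $\rho$ small this is essentially a crosscut, and we handle the case of several components by choosing the one separating $z_n$ from $p_0$.

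The endpoints of this crosscut lie near $\partial D$, so by (ii) the chain of disks meeting it starts and ends at boundary disks of $\cP^n$, or disks nearby. Its image in $\wt\cP^n$ is therefore a connected set of diameter $<\e$ that joins two points near $\partial\wt D$ and separates $\wt z_n$ from $f^n(p_0)$ within the carrier. A set of diameter $<\e$ can only separate $\wt D$ into a component of diameter comparable to $\e$ near the boundary, by the Jordan property of $\partial\wt D$, i.e., uniform local connectivity. That component must contain either $\wt z_n$ or $f^n(p_0)$, both at distance $>10\e$ from $\partial\wt D$, which is a contradiction. Hence $(f^n)^{-1}(\wt K)$ lies in a fixed compact set $K\subseteq D$.

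\textbf{Step 3 (condition (iii) for $(\wt D,\wt\cP^n)$).} The final claim follows by taking $\wt K=\{\wt p_0\}$.

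\textbf{Main obstacle.} The main obstacle is the topological bookkeeping in Step 2 near $\partial D$. We must ensure that the crosscut chain actually terminates on boundary disks, using (ii), so that its image genuinely separates. We also have to control the parts of disks lying outside $D$ and $\wt D$, which is where the smallness assumption on the disks outside $\ol{\wt D}$ in (ii) is used.
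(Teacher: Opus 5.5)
Your Step~2 mechanism is the paper's: apply Lemma~\ref{lem:He-Schramm} to $(\cP^n,\wt\cP^n)$ on circles centred near the boundary of the source domain, anchor the chain at boundary disks, and conclude that its image is small and close to $\partial\wt D$. The gap is in Step~1, and Step~2 depends on it.

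Your claim that the boundary polygon of $\carrier(\wt\cP^n)$ lies in a small neighbourhood of $\partial\wt D$ does not follow from (ii). Condition (ii) only says two things: each boundary disk of $\wt\cP^n$ has \emph{some} point near $\partial\wt D$, and the disks meeting $\bbC\backslash\ol{\wt D}$ are small. A boundary disk contained in $\ol{\wt D}$ (for instance internally tangent to $\partial\wt D$, as in He--Schramm's original setting) may a priori be macroscopic. Its centre, which is a vertex of the boundary polygon, can then lie deep inside $\wt D$, even in $\wt K$. Excluding this is exactly what Lemma~\ref{lem:pf-He-Schramm-2} does, and that lemma is proved using the present one.

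Your fallback (``apply the argument of Step~2 to a boundary vertex'') does not close the gap. The final contradiction in Step~2 is itself invalid without the covering. The image chain separates $\wt z_n$ from $f^n(p_0)$ only inside $\wt\Omega_n:=\carrier(\wt\cP^n)$; it is not a crosscut of $\wt D$. A path in $\wt D$ from $\wt z_n$ to $f^n(p_0)$ that avoids the chain may simply leave $\wt\Omega_n$, so uniform local connectivity of $\partial\wt D$ gives you nothing.

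The missing idea is the component (boundary-bumping) argument, which gives the covering and the preimage bound at the same time.
\begin{itemize}
\item Enlarge $\wt K$ to be connected and to contain $f^n(p_0)$ for all large $n$; this is possible by (iii).
\item Let $\wt K_n^0$ be the component of $\wt K\cap\wt\Omega_n$ containing $f^n(p_0)$.
\item Run your Step~2 construction at every $z_0\in\Omega_n:=\carrier(\cP^n)$ close to the boundary, in particular at every $z_0\in\partial\Omega_n$. The image chain is disjoint from $\wt K$ and separates $f^n(z_0)$ from $f^n(p_0)$ in $\wt\Omega_n$, so $f^n(z_0)\notin\wt K_n^0$. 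Hence $\wt K_n^0\cap\partial\wt\Omega_n=\emptyset$.
\item Since $\wt K$ is a continuum, every component of $\wt K\cap\wt\Omega_n$ would meet $\partial\wt\Omega_n$ unless $\wt K\subseteq\wt\Omega_n$. Therefore $\wt K_n^0=\wt K\subseteq\mathrm{int}\,\wt\Omega_n$, which is the covering.
\item Rerunning the separation then shows that $(f^n)^{-1}(\wt K)$ keeps a fixed distance from the boundary.
\end{itemize}

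For this to work, the chain must come from arcs of the circle inside $\Omega_n$ (not inside $D$) that end on $\partial\Omega_n$, so that it really contains boundary disks of $\cP^n$. ``Or disks nearby'' is not enough, because (ii) locates the images of boundary disks only. The paper secures this using outer Jordan domains $U_{j(n)}\supseteq\Omega_n$ and the simple connectivity of $\Omega_n$.
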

\begin{proof}
By translation and scaling, without loss of generality assume $D,\wt D\subseteq B(0;1/2)$. Below we write $\Omega_n =\carrier(\cP^n) $ and $\wt\Omega_n =\carrier(\wt\cP^n) $. Consider a decreasing sequence of bounded Jordan domains $U_j$ and $\wt U_j$ such that $\ol D\subseteq U_j$, $\ol{\wt D}\subseteq \wt U_j$ and their boundaries converge to $\partial D$ and $\partial\wt D$ under the curve metric~\eqref{eq:curve-metric-0}, respectively. Then conditions (i) and (ii) in Theorem~\ref{thm:He-Schramm-extension-0} implies that for fixed $j$, we have $\Omega_n\subseteq U_j$ and $\wt\Omega_n\subseteq \wt U_j$ for large enough $n$, and we choose a sequence $j(n)\to\infty$ sufficiently slow such that the above containment continues to hold for $j(n)$ in place of $j$ and the boundary disk distance as in~\eqref{thm:He-Schramm-extension-0--} continues to hold with $(U_j,\wt U_j)$ in place of $(D,\wt D)$. 

Fix $\wt K\subseteq \wt D$, and we may assume that $\wt K$ is connected and contains $f^n(p_0)$ for all sufficiently large $n$. Let $d = \dist(\wt K,\partial\wt D)$. We choose $r_1< \dist(p_0,\partial D)/100$ and $r_0<r_1/2$ sufficiently small such that $c_0\log(r_1/r_0)^{-1/2}<d/10$ where $c_0$ is the constant in Lemma~\ref{lem:He-Schramm} for the Euclidean setting. For sufficiently large $n$, the maximal diameter of disks in $\cP^n$ is less than $r_0/10$ and every point on $\partial\Omega_n$ has distance at most $r_0/2$ from $\partial U_{j(n)}$, while each boundary disk of $\wt\cP^n$ has distance at most $d/4$ from $\partial\wt U_{j(n)}$. Fix $z_0\in \Omega_n$ such that $\dist(z_0,\partial U_{j(n)})<r_0/2$. By Lemma~\ref{lem:He-Schramm}, for some $\rho\in[r_0,r_1]$, if we set $V_n(z_0,\rho) = \{v: D_v^n\cap\partial B(z_0;\rho)\neq\emptyset\}$, then $\sum_{v\in V_n(z_0,\rho)} \diam(\wt D_v^n)<d/10$. 

Let $S_n(z_0,\rho) = \cup_{v\in V_n(z_0,\rho)} D_v^n\cap \Omega_n$. Then $S_n(z_0;\rho)$ separates $z_0$ from $p_0$ in $\Omega_n$, and $f^n(S_n(z_0,\rho)) = \cup_{v\in V_n(z_0,\rho)} \wt D_v^n\cap \wt\Omega_n$ by our assumption. Since $\dist(z_0,\partial U_{j(n)})<r_0/2$, each connected component of $S_n(z_0,\rho) $ intersects $\partial \Omega_n$ and $f^n(S_n(z_0,\rho))\subseteq\{w:\dist(w,\partial\wt U_{j(n)})<d/2\}$. In particular, $f^n(S_n(z_0,\rho))$ is disjoint from $\wt K$. Let $\wt K_n^0$ be the connected component of $\wt\Omega_n\cap \wt K$ containing $f^n(p_0)$. Then $f^n(z_0)\notin \wt K_n^0$. By ranging $z_0$ over $\partial\Omega_n$, $\wt K_n^0\cap \partial \wt\Omega_n=\emptyset$, and the connectedness of $\wt K_n$ implies that $\wt K$ is in the interior of $\wt\Omega_n$. Consequently, $\wt K_n^0=\wt K$ and $ (f^n)^{-1}(\wt K)\subseteq \{z\in U_{j(n)}:\dist(z,\partial U_{j(n)})\geq r_0/2\}$. Since $U_{j(n)}$ is an outer approximation to the Jordan domain $D$,  the sets on the right hand side are eventually contained in a single compact subset of $D$. This finishes the proof.
\end{proof}

\begin{lemma}\label{lem:pf-He-Schramm-2}
    In the setting of Theorem~\ref{thm:He-Schramm-extension-0}, the maximal  diameter of disks in $\wt\cP^n$ tends to 0 as $n\to\infty$.
\end{lemma}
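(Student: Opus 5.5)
We argue by contradiction and apply Lemma~\ref{lem:He-Schramm} with the roles of $\cP^n$ and $\wt\cP^n$ reversed. As in the proof of Lemma~\ref{lem:pf-He-Schramm-1}, assume $D,\wt D\subseteq B(0;1/2)$, so by Remark~\ref{remark:He-Schramm} we may use Euclidean quantities throughout. Suppose that along a subsequence there are vertices $v_n$ with $\diam(\wt D^n_{v_n})\geq 4\e_0$ for a fixed $\e_0>0$. By assumption (ii), disks of $\wt\cP^n$ meeting $\bbC\backslash\ol{\wt D}$ have diameter tending to $0$, so for large $n$ the disk $\wt D^n_{v_n}$ is contained in $\ol{\wt D}$. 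Since it has radius at least $2\e_0$, it contains a disk $\wt B_n=B(w_n;\e_0)$ with $\dist(w_n,\partial\wt D)\geq \e_0$. After passing to a further subsequence, $w_n\to w$, and the compact set $\wt K:=\ol{B(w;\e_0/2)}\subseteq\wt D$ lies inside $\wt D^n_{v_n}$ for large $n$.

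\textbf{Step 1: locate the preimage.} By Lemma~\ref{lem:pf-He-Schramm-1} there is a compact $K\subseteq D$ with $\wt K\subseteq\carrier(\wt\cP^n)$ and $(f^n)^{-1}(\wt K)\subseteq K$ for large $n$. Since $f^n$ maps $D^n_{v_n}\cap\carrier(\cP^n)$ onto $\wt D^n_{v_n}\cap\carrier(\wt\cP^n)\supseteq\wt K$, the disk $D^n_{v_n}$ meets $K$. Its diameter tends to $0$ by (i), so along a subsequence $D^n_{v_n}$ converges to a point $z\in K\subseteq D$.

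\textbf{Step 2: length--area separation.} Let $d'=\dist(K,\partial D)>0$. Fix $r_1<d'/4$ and choose $r_0<r_1/2$ so small that $c_0\log(r_1/r_0)^{-1/2}<\e_0/10$. For large $n$ the disks of $\cP^n$ have diameter below $r_0/4$, and $D^n_{v_n}\subseteq B(z;r_0/2)$. Lemma~\ref{lem:He-Schramm} gives $\rho\in[r_0,r_1]$ such that
\[
\sum_{v:\,D^n_v\cap\partial B(z;\rho)\neq\emptyset}\diam(\wt D^n_v)<\e_0/10 .
\]
Call the union of these disks, intersected with the carrier, $S_n$. For large $n$, $B(z;2r_1)\subseteq\carrier(\cP^n)$: this follows from (ii) and the outer-approximation argument of Lemma~\ref{lem:pf-He-Schramm-1}. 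Hence $S_n$ contains a closed chain surrounding $D^n_{v_n}$, and this chain separates $D^n_{v_n}$ from a point $p\in D$ with $|p-z|\geq 2r_1$.

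\textbf{Step 3: contradiction.} The image $f^n(S_n)$ is a connected closed chain of disks of total diameter less than $\e_0/10$, so it lies in a ball of radius $\e_0/10$. Because $f^n$ is a homeomorphism of carriers, $f^n(S_n)$ must separate $\wt D^n_{v_n}$ from $f^n(p)$. The disk $\wt D^n_{v_n}$ has diameter at least $4\e_0$, so it cannot lie in the bounded complementary region of a set of diameter $\e_0/5$. It follows that $f^n(p)$ lies in the bounded region, that is, within $\e_0/5$ of $\wt D^n_{v_n}$.

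Applying this argument with two different points $p$, say $p_0$ and another point $p_1$ far from $z$, we can push Step 3 toward a contradiction in the following way. Note first that the same chain $S_n$ around $z$ separates the entire disk $D^n_{v_n}$ together with everything inside $B(z;\rho)$ from the exterior. The interior of $B(z;\rho)$ has area at least of order $r_0^2$. By Lemma~\ref{lem:pf-He-Schramm-1}, the exterior contains $(f^n)^{-1}$ of a fixed compact neighborhood of some point of $\wt D$ that stays away from $\wt K$. Its image therefore contains a fixed disk $\wt B'$ of radius bounded below, disjoint from the small set $f^n(S_n)$. Since $\wt D^n_{v_n}$ is too large to be enclosed, it lies in the unbounded component together with $\wt B'$. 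But $\wt D^n_{v_n}$ is separated from $\wt B'$, which is a contradiction.

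The main obstacle is making the separation in the image rigorous near the boundary of the carrier, where $f^n(S_n)$ might fail to be a closed loop in $\wt\Omega_n$. We handle this by using $B(z;2r_1)\subseteq\carrier(\cP^n)$ together with Lemma~\ref{lem:pf-He-Schramm-1} applied to a compact neighborhood of a second reference point $\wt p_0\in\wt D$ away from $w$.
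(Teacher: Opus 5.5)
Steps 1 and 2 are sound, and Step 1 is a nice reduction. A disk of $\wt\cP^n$ of diameter at least $4\e_0$ must lie in $\ol{\wt D}$, so it contains a fixed ball $\wt K$ deep in $\wt D$, and Lemma~\ref{lem:pf-He-Schramm-1} then forces its source disk to meet $K$. This subsumes the paper's separate treatment of boundary disks and of disks whose source avoids $K$. It leaves only the interior length--area case, which the paper imports from He--Schramm. It also means the boundary difficulty you raise in your last paragraph never occurs, since you already have $B(z;2r_1)\subseteq\carrier(\cP^n)$.

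\textbf{The gap is in Step 3 and the paragraph after it.}
\begin{itemize}
\item Having found that $f^n(p)$ lies in a bounded component of $\bbC\backslash f^n(S_n)$, you do not reach a contradiction. Instead you rely on the claim that the exterior of the chain contains $(f^n)^{-1}(\wt B')$ for some fixed disk $\wt B'\subseteq\wt D$.
\item Lemma~\ref{lem:pf-He-Schramm-1} does not give this. It only places $(f^n)^{-1}(\wt B')$ in some compact subset of $D$, possibly inside $B(z;\rho)$. Excluding that a tiny neighbourhood of $z$ is mapped onto a macroscopic part of $\wt D$ is exactly what must be proved.
\item So the disjointness of $\wt B'$ from $f^n(S_n)$, and the final separation contradiction, are unsupported.
\end{itemize}

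\textbf{Missing step.} What is missing is a short topological argument applied to the \emph{inside} of the chain.
\begin{itemize}
\item Let $U_n$ be the component of $\bbC\backslash\bigcup\{D^n_v: D^n_v\cap\partial B(z;\rho)\neq\emptyset\}$ containing $B(z;r_0/2)\supseteq D^n_{v_n}$. It is bounded because the chain surrounds $z$, and $\ol{U_n}\subseteq B(z;2r_1)$ lies in the interior of $\carrier(\cP^n)$.
\item Since $f^n$ is a homeomorphism of carriers, it maps interior onto interior. Hence $f^n(U_n)$ is a bounded connected open set, disjoint from $f^n(S_n)$, with $\partial f^n(U_n)=f^n(\partial U_n)\subseteq f^n(S_n)$.
\item Therefore $f^n(U_n)$ is a bounded component of $\bbC\backslash f^n(S_n)$. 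It lies in the convex hull of $f^n(S_n)$, so its diameter is at most $\diam f^n(S_n)<\e_0/10$.
\item But $f^n(U_n)\supseteq\wt D^n_{v_n}\cap\carrier(\wt\cP^n)\supseteq\wt K$, which has diameter $\e_0$. This is a contradiction.
\end{itemize}

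Your own Step 3 conclusion was already contradictory. The point $p$ can be joined to $\partial\carrier(\cP^n)$ inside $\carrier(\cP^n)$ by a path avoiding $S_n$. Its image then puts the polygon $\partial\carrier(\wt\cP^n)$, which encloses $\wt K$, in the same component as $f^n(p)$, of diameter less than $\e_0/5$. With this replacement the argument closes.
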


\begin{proof}
 Let $\e>0$. For a disk $D_v^n\in\cP^n$, we write $\wt D_v^n$ for its corresponding disk in $\wt\cP^n$.  By  the last sentence of condition (ii), for some $N_1>0$, the maximal  diameter of disks in $\wt\cP^n$ intersecting $\bbC\backslash\ol{\wt D}$ is less than $\e/4$ for $n>N_1$. Now let $\wt K$ be a compact set that contains $\{z\in\wt D:d(z,\partial\wt D)\geq \e/4\}$. By Lemma~\ref{lem:pf-He-Schramm-1}, for some $N_2>0$ and compact $K\subseteq D$, if $D_v^n\cap \mathrm{carrier}(\cP^n)$ is disjoint from $K$, then $\wt D_v^n\cap  \mathrm{carrier}(\wt\cP^n)$ is disjoint from $\wt K$. We may further pick $N_3>0$ such that $K$ is disjoint from boundary disks of $\cP^n$ for all $n>N_3$. For $n>N_1,N_2,N_3$, if $\wt D_v^n$ intersects $\bbC\backslash\ol{\wt D}$, then $\wt D_v^n$ has  diameter less than $\e$.
 If  $\wt D_v^n$ is a boundary disk disjoint from $\bbC\backslash\ol{\wt D}$, then $\wt D_v^n\subseteq \ol{\wt D}$. Then  $\wt D_v^n$ has diameter less than $\e$, since otherwise the center of $\wt D_v^n$ is in $\mathrm{carrier}(\wt\cP^n)\cap\wt K$ while $D_v^n$ is disjoint from $K$, which is a contradiction. If $D_v^n$ is disjoint from $K$ and not a boundary disk, then by our choice of $\wt K$, $\wt D_v^n$ also has  diameter less than $\e$.  Finally, for disks whose corresponding source disks intersect $K$, we apply the proof of ~\cite[Lemma 2.2]{HeSchramm} with $D,\wt D$ replaced by the outer domains $U_{j(n)}, \wt U_{j(n)}$ constructed in the proof of Lemma~\ref{lem:pf-He-Schramm-1}, where the parameters   in that proof can also be chosen uniformly in $n$. Hence the maximal  diameter of these target disks tends to zero.
\end{proof}

Below we call a bounded interstice a facial interstice, if the vertices for its three neighboring disks form a face of $T$.
\begin{proof}[Proof of Theorem~\ref{thm:He-Schramm-extension-0}]
    The claim that conditions (i) and (iii) holds for $( \wt\cP^n,\wt D)$ in place of $(\cP^n,D)$ is immediate from Lemma~\ref{lem:pf-He-Schramm-1} and Lemma~\ref{lem:pf-He-Schramm-2}.
In particular, for each compact $K\subseteq D$ and $\wt K\subseteq \wt D$,  there exists compact sets $K_1\subseteq D$ and $\wt K_1\subseteq \wt D$ such that for every large enough $n$, 
\begin{equation}\label{eq:thm:pf-He-Schramm-extension-0}
    f^n(K)\subseteq \wt K_1, \ \ \ (f^n)^{-1}(\wt K) \subseteq K_1. 
\end{equation}
Since we are working with bounded domains, the maximal Euclidean diameter of facial interstices in $\cP^n$ and $\wt\cP^n$ tends to 0 as $n\to\infty$ by our conditions and Lemma~\ref{lem:pf-He-Schramm-2}, and we may assume $f^n$ is M\"{o}bius on the facial interstices since the error by this change is tending to 0 uniformly.    Theorem~\ref{thm:He-Schramm-extension-0} then follows from the local arguments of~\cite[Lemmas 4.2-4.3, Section 5]{HeSchramm}, which uses the compact containment properties~\eqref{eq:thm:pf-He-Schramm-extension-0}, convergence of maximal spherical diameters of disks in $\cP^n$ and $\wt\cP^n$ and the canonical M\"{o}bius maps on triangular interstices, but does not require the entire packings to be contained in the fixed limiting domains.
\end{proof}

To prove Theorem~\ref{thm:He-Schramm-extension-1}, we start with the following lemma on equicontinuity and uniform convergence.
\begin{lemma}\label{lem:unif-conv-def}
   Let $((D^n)_{n\in\bbN},D,f^n,f)$ be as in Definition~\ref{def:unif-convergence}. Suppose $\{f^n\}_{n\in\bbN}$ is equicontinuous, $f$ is uniformly continuous, and   $f^n$ converges to $f$ uniformly on each compact $K\subseteq D$. Then $f^n$ converges to $f$ uniformly, in the sense of Definition~\ref{def:unif-convergence}.
\end{lemma}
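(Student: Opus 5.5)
We fix $\e>0$ and aim to produce $\delta>0$ and $N$ such that for all $n>N$, $z'\in\ol{D^n}$ and $z\in\ol D$ with $|z'-z|<\delta$, we have $|f^n(z')-f(z)|<\e$. The idea is to push both points slightly into the interior of $D$, where the locally uniform convergence applies, and to pay for this displacement using equicontinuity of $\{f^n\}$ and uniform continuity of $f$.

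First, by equicontinuity of $\{f^n\}$ there are $\delta_1>0$ and $N_1$ with $|f^n(x)-f^n(y)|<\e/3$ whenever $n>N_1$, $x,y\in\ol{D^n}$ and $|x-y|<\delta_1$. By uniform continuity of $f$ on $\ol D$ (extended continuously to $\ol D$ if necessary, which is how the hypothesis is read), there is $\delta_2>0$ with $|f(x)-f(y)|<\e/3$ for $x,y\in\ol D$ with $|x-y|<\delta_2$. Set $\delta_0=\min\{\delta_1,\delta_2\}$ and consider the compact set $K=\{w\in\ol D:\dist(w,\partial D)\geq \delta_0/4\}\subseteq D$. By local uniform convergence there is $N_2$ with $\sup_{w\in K}|f^n(w)-f(w)|<\e/3$ for $n>N_2$. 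This requires $K\subseteq\ol{D^n}$, so we also take $N_2$ large enough that this containment holds.

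The key geometric input, and the only place needing care, is the following claim: every point of $\ol D$ lies within distance $\delta_0/2$ of $K$. Indeed, $D$ is a Jordan domain, so by compactness of $\ol D$ and the fact that $\partial D$ has empty interior, for small enough margin every point of $\ol D$ is within $\delta_0/2$ of a point at distance at least $\delta_0/4$ from $\partial D$. If necessary, we shrink the margin $\delta_0/4$ to some $\eta\le\delta_0/4$ chosen by a finite covering argument, and define $K$ with $\eta$ instead. We also use that the Hausdorff distance between $\ol{D^n}$ and $\ol D$ tends to $0$, and pick $N_3$ so that this distance is less than $\delta_0/8$ for $n>N_3$.

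Now take $n>\max\{N_1,N_2,N_3\}$ and $\delta=\delta_0/8$, with $|z'-z|<\delta$. Pick $w\in K$ with $|w-z|<\delta_0/2$. Then $|w-z'|<\delta_0/2+\delta_0/8<\delta_1$, and $w\in\ol{D^n}$, so
\[
|f^n(z')-f(z)|\le|f^n(z')-f^n(w)|+|f^n(w)-f(w)|+|f(w)-f(z)|<\e.
\]
The main obstacle is thus only the bookkeeping around $K\subseteq\ol{D^n}$ and the density of $K$ near the boundary. Both follow from the Hausdorff convergence of $\ol{D^n}$ to $\ol D$ and from $D$ being a Jordan domain.
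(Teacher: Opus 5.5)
Your argument is correct and is essentially the paper's: the same $\e/3$ triangle inequality through an interior point. The paper directly picks a finite $\delta/3$-net $w_1,\dots,w_m\in D$ of $\ol D$, while you use the inner compact set $K_\eta$, whose density you obtain from exactly such a finite net (rightly noting that the margin $\delta_0/4$ can fail for domains with thin fingers). One correction: $K\subseteq\ol{D^n}$ for large $n$ does not follow from Hausdorff convergence of $\ol{D^n}$ to $\ol D$ (cut a thin wedge reaching toward the center out of $\bbD$); it is implicit in the hypothesis of uniform convergence on compacts, which is also how the paper uses it, and your $N_3$ is never needed.
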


\begin{proof}
{
Fix $\e>0$. Let $N_1,\delta>0$ be such that $|f^n(x)-f^n(y)|<\e/3$ for any $n>N_1$ and $x,y\in \ol{D^n}$ with $|x-y|<\delta$, and $|f(x)-f(y)|<\e/3$ for $x,y\in\ol D$ with $|x-y|<\delta$. Since $D$ is a bounded Jordan domain, we may pick $w_1,...,w_m\in D$ such that for any $z\in \ol D$, there exists $j$ such that $|w_j-z|<\delta/3$. By local uniform convergence, we can find $N_2$ such that $w_j\in D^n$, $|f^n(w_j)-f(w_j)|<\e/3$ for each $j$ and $n>N_2$. Now for each $n>N_1,N_2$, $z'\in \ol D^n$ and $z\in \ol D$ with $|z'-z|<\delta/3$, we pick $w_j\in D$ with $|w_j-z|<\delta/3$. Then $|z'-w_j|\leq |z-z'|+|z-w_j|<\delta$ and $|f^n(z')-f(z)|\leq |f^n(z')-f^n(w_j)|+|f^n(w_j)-f(w_j)|+|f(w_j)-f(z)|<\e$. This finishes the proof.}
\end{proof}

\begin{proof}[Proof of Theorem~\ref{thm:He-Schramm-extension-1}]
By translation and scaling, we may work on the case where $D,\wt D\subseteq \bbD$ and assume $\supp(\cP^n),\supp(\wt\cP^n)$ are contained with $B(0;2)$ by conditions (i) and (ii).
Therefore we may replace the spherical distance in Lemma~\ref{lem:He-Schramm} with Euclidean distance. In light of Lemma~\ref{lem:unif-conv-def} and Theorem~\ref{thm:He-Schramm-extension-0}, it suffices to prove the equicontinuity of $\{f^n\}$.

We write $\Omega_n$ and $\wt\Omega_n$ for the interior of $\mathrm{carrier}(\cP^n)$ and $\mathrm{carrier}(\wt\cP^n)$, respectively. It follows from condition (iv) that $\partial\Omega_n \to \partial D$ and $\partial\wt\Omega_n\to\partial\wt D$ under the curve metric~\eqref{eq:curve-metric-0}. We claim that for $x_n,y_n\in \ol\Omega_n$ with $|x_n-y_n|\to 0$, there are paths $K_n\subseteq \ol\Omega_n$ joining them with $\diam(K_n)\to 0$, and for $\wt q_n\in \wt\Omega_n$ in a fixed compact subset of $\wt D$ and $\wt\alpha_n$ is a crosscut of $\wt\Omega_n$ with $\diam(\wt \alpha_n)\to 0$, then the connected component of $\wt \Omega_n\backslash\wt\alpha_n$ that does not contain $\wt q_n$ has diameter tending to 0. One can see this by considering conformal maps sending $\bbD$ to $\Omega_n$ or $\wt\Omega_n$ and applying the uniform convergence result from Proposition~\ref{prop:boundary-perturb}.

Now assume the equicontinuity fails. Then possibly passing to a further subsequence, we may assume that there is some $\e_0>0$ and $x_n,y_n\in \Omega_n$, such that $x_n,y_n\to z_0\in\ol D$ such that \eqb\label{eq:pf-thm:He-Schramm-extension-1}|f^n(x_n)-f^n(y_n)|\geq \e_0.\eqe If $z_0\in D$, then~\eqref{eq:pf-thm:He-Schramm-extension-1} directly contradicts with the local equicontinuity supplied by the local uniform convergence in Theorem~\ref{thm:He-Schramm-extension-0} and therefore we may assume $z_0\in\partial D$. Let $K_n\subseteq\ol\Omega_n$ be the path connecting $x_n$ and $y_n$ with diameter tending to 0 as above, and fix $r_1\in(0,|z_0-p_0|/3)$, and choose $r_{0,n}\downarrow 0$ sufficiently slowly such that $K_n\subseteq B(z_0;r_{0,n}/2)$ and the maximal diameter of disks in $\cP^n$ and $\wt\cP^n$ are $o(r_{0,n})$. Let $L_n(\rho)$ be defined in the same way as Lemma~\ref{lem:He-Schramm} using $\cP^n,\wt\cP^n$ and the Euclidean metric in place of spherical metric. By our choice of $r_{0,n},r_1,\rho_n$, $\partial B(z_0;\rho_n)$ separates $p_0$ from $B(z_0;2r_{0,n}/3)$ in $\Omega_n$.   Then the Euclidean version of Lemma~\ref{lem:He-Schramm} implies that there exists $\rho_n\in [r_{0,n},r_1]$ such that $L_n(\rho_n)\leq C\log(r_1/r_{0,n})^{-1/2}\to 0$. One can further construct a crosscut or a simple closed curve $\sigma_n\subseteq \ol \Omega_n\cap B(z_0;2r_1)$ within the union of closed disks in $\cP^n$ intersecting $\partial B(z_0;\rho_n)$ that separates $K_n$ from $p_0$ in $\Omega_n$ such that $\diam(f^n(\sigma_n))\leq 2L_n(\rho_n)$. This can be done by taking $\sigma_n$ to be a connected component of $\partial B(z_0;\rho_n)\cap \Omega_n$ that separates  {$K_n$ from $p_0$}, and replacing all the intersection between this circle (segment) and bounded interstices with the segments of the boundaries of the disks that intersect both $\partial B(z_0;\rho_n)$ and the concerned interstice. If $\sigma_n$ is a closed curve, then $f^n(K_n)$ is enclosed by $f^n(\sigma_n)$ and therefore $\diam(f^n(K_n))\to 0$. If $\sigma_n$ is a crosscut, then using $p_0\notin B(z_0;2r_1)$, we see that $f^n(K_n)$ is separated from $f^n(p_0)$ in $\wt\Omega_n$ by $f^n(\sigma_n)$, which further combined with condition (iii) and the small crosscut claim in the previous paragraph gives $\diam(f^n(K_n))\to 0$. Both cases contradict~\eqref{eq:pf-thm:He-Schramm-extension-1}. Therefore $\{f^n\}$ is equicontinuous, and  the desired uniform convergence follows from Lemma~\ref{lem:unif-conv-def} and Theorem~\ref{thm:He-Schramm-extension-0}.

\end{proof}

\subsubsection{A version of He-Schramm theorem in whole plane topology}\label{subsubsec:HS-cone}


In Theorem~\ref{thm:He-Schramm-extension-0} and Theorem~\ref{thm:He-Schramm-extension-1}, the  domains $D,\wt D\subsetneq \bbC$. Now we use Theorem~\ref{thm:He-Schramm-extension-1} and Lemma~\ref{lem:He-Schramm} to extend to the setting where $D=\wt D=\bbC$ and prove Theorem~\ref{thm:He-Schramm-sphere}. 

\begin{lemma}\label{lem:pf-He-Schramm-sphere-1}
    In the setting of Theorem~\ref{thm:He-Schramm-sphere}, for every $R>0$, there exists $R_1>0$, such that for large enough $n$, $B(0;R)\subseteq f^n(B(0;R_1))$. In particular,  for   for large enough $n$,  $B(0;R)\subseteq \carrier(\wt\cP^n)$.
\end{lemma}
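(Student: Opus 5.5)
We argue by contradiction, using the length--area estimate of Lemma~\ref{lem:He-Schramm} to build a small ``fence'' in $\wt\cP^n$ around $f^n(0)=0$ and $f^n(1)=1$. Fix $R>0$ and suppose that along a subsequence $B(0;R)\not\subseteq f^n(B(0;R_1))$ for every fixed $R_1$. In that case we could pick $R_1^{(n)}\to\infty$ slowly and points $w_n\in\carrier(\wt\cP^n)\cup(\bbC\backslash\carrier(\wt\cP^n))$ with $|w_n|<R$ that are not in $f^n(B(0;R_1^{(n)}))$. The aim is to show instead that the image of a large ball must contain $B(0;R)$.

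\textbf{Step 1: a short circle around $\infty$.} Apply Lemma~\ref{lem:He-Schramm} (spherical version) with $z_0=\infty$. Take $r_1$ small so that $C_{\rm sp}(\infty,r_1)$ corresponds to a Euclidean circle of radius $\asymp 1/r_1\geq 10$. Take $r_0\ll r_1$ with $c_0\log(r_1/r_0)^{-1/2}<\e$, where $\e$ is a small constant fixed below. By conditions (i) and (ii), for large $n$ the disks of $\cP^n$ have spherical diameter $\le r_0/4$, and the Euclidean ball of radius $\asymp 1/r_0$ lies in $\carrier(\cP^n)$. The lemma gives $\rho\in[r_0,r_1]$ such that the disks of $\cP^n$ meeting the circle $C_n:=C_{\rm sp}(\infty,\rho)$ form a closed chain $S_n$ inside the carrier, with $\sum\diam_{\rm sp}(\wt D_v^n)<\e$. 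This chain encloses $B(0;1/(2r_1))\ni 0,1$. As in the proof of Theorem~\ref{thm:He-Schramm-extension-1}, replace the pieces of $C_n$ inside interstices by arcs of the adjacent disks. This yields a Jordan curve $\sigma_n$ in the union of those disks that separates $\{0,1\}$ from $\partial\carrier(\cP^n)$, and its image satisfies $\diam_{\rm sp}(f^n(\sigma_n))\le 2\e$.

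\textbf{Step 2: locating the image curve.} The curve $f^n(\sigma_n)$ is a Jordan curve of spherical diameter at most $2\e$. Since $f^n$ is a homeomorphism of carriers, it separates $f^n(0)=0$ and $f^n(1)=1$ from $f^n(\partial\carrier(\cP^n))=\partial\carrier(\wt\cP^n)$. A set of spherical diameter at most $2\e$ lies in a spherical $2\e$-ball, so $f^n(\sigma_n)$ lies in some spherical disk $B_n$ of radius $2\e$. If $\e$ is small compared with the spherical distance between $0$ and $1$, $B_n$ cannot contain both points. However, $0$ and $1$ lie in the same complementary component of $f^n(\sigma_n)$, namely the image of the bounded side. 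Hence that component is the one containing $\wh\bbC\backslash B_n$.

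We want $B_n$ to be near $\infty$, i.e.\ $B(0;R)\cap B_n=\emptyset$, and the remaining worry is that $B_n$ could sit near $0$ or $1$ but avoid both. To rule this out, note that $f^n$ maps the bounded side $U_n$ of $\sigma_n$ onto one complementary component $\wt U_n$ of $f^n(\sigma_n)$, namely the one containing $0$ and $1$. Then $\wh\bbC\backslash B_n\subseteq\wt U_n$ unless $\wt U_n$ is the small component. But $\wt U_n$ contains $0$ and $1$, so it is not contained in $B_n$, and therefore $\wt U_n\supseteq\wh\bbC\backslash B_n$. Now choose $\e$ small in terms of $R$ and apply the same argument at a second, smaller scale $r_1'$ so that $\sigma_n'\subseteq B(0;R')$ for a fixed $R'$. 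Its image $f^n(\sigma_n')$ is also spherically small and is contained in $\carrier(\wt\cP^n)$, so it cannot enclose $\infty$ unless it lies near $\infty$. Combined with nested separation, since $\sigma_n'$ lies inside $\sigma_n$, this forces both $B_n$ and $B_n'$ to be near $\infty$. In particular $B(0;R)\subseteq\wt U_n\subseteq f^n(B(0;R_1))$, with $R_1\asymp 1/r_0$ independent of $n$.

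\textbf{Main obstacle.} The delicate step is this last one: showing that the tiny image curve sits near $\infty$ rather than as a tiny loop elsewhere. Equivalently, we must prevent $f^n(B(0;R_1))$ from being the complement of a small disk centered near a point of $B(0;R)$. My first attempt is to use that $\carrier(\wt\cP^n)$ is a bounded set in $\bbC$, so $\infty$ is not in $f^n(U_n)$. Since $\wt U_n\subseteq\carrier(\wt\cP^n)\subseteq\bbC$ is bounded while $\wh\bbC\backslash B_n\subseteq\wt U_n$, the disk $B_n$ must contain $\infty$, so $B_n\cap B(0;R)=\emptyset$ once $2\e$ is small compared with the spherical distance from $\infty$ to $\partial B(0;R)$. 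This avoids the two-scale argument entirely. The second claim, $B(0;R)\subseteq\carrier(\wt\cP^n)$, is then immediate, since $f^n(B(0;R_1))\subseteq\carrier(\wt\cP^n)$.
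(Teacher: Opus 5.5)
Your argument is correct and is essentially the paper's proof: Lemma~\ref{lem:He-Schramm} at $z_0=\infty$ gives a closed chain of disks in $\cP^n$ whose corresponding disks in $\wt\cP^n$ have total spherical diameter $<\e$, and this yields a small Jordan curve separating $0,1$ from $\infty$. Since the component containing $0$ and $1$ cannot be the small one, and $\carrier(\wt\cP^n)$ is bounded, the small side must be the one around $\infty$; this boundedness observation in your last paragraph is exactly what the paper uses, so you can drop the contradiction set-up and the two-scale detour in Step~2 entirely.
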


\begin{figure}
    \centering
    \includegraphics[scale=0.7]{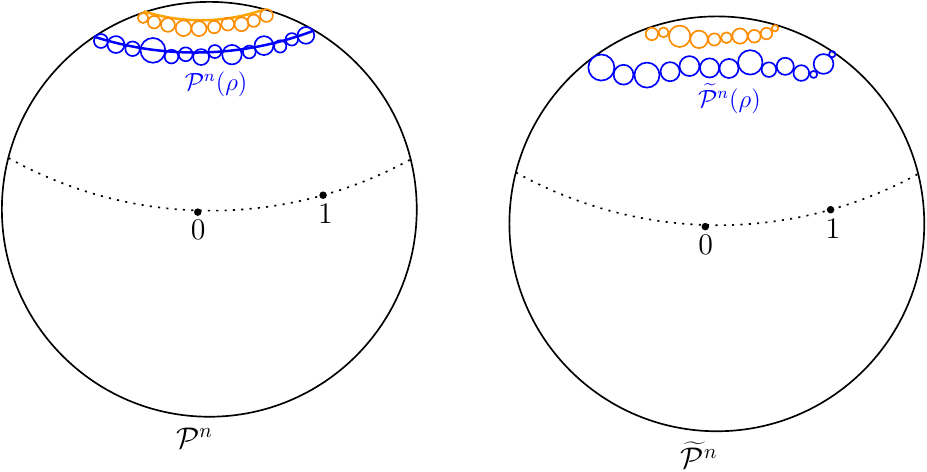}
    \caption{An illustration of the proof of Lemma~\ref{lem:pf-He-Schramm-sphere-1} using Lemma~\ref{lem:He-Schramm}. The orange disks are the boundary disks for the packing $\cP^n$ and $\wt\cP^n$, and the blue curve on the left panel represents the circle $C_{\sph}(\infty,\rho)$. We use Lemma~\ref{lem:He-Schramm} to prove that, for some appropriately chosen $\rho$, the union $\wt\cP^n(\rho)$ of the blue disks on the right panel has diameter less than $\e/10$.}
    \label{fig:packsph}
\end{figure}

\begin{proof}
    It suffices to prove that for each $\e>0$, for large enough $n$, $\{z\in\bbC:d_\sph(z,\infty)\geq\e\}\subseteq\carrier(\wt\cP^n)$.  We apply Lemma~\ref{lem:He-Schramm} for $z_0=\infty$. Let $c_0$ be the constant in Lemma~\ref{lem:He-Schramm},  $r_1 = 0.1$ and $r_0 = r_1e^{-\frac{100c_0^2}{\e^2}}$. By conditions (i) and (ii) on $\cP^n$ in Theorem~\ref{thm:He-Schramm-sphere}, there exists some $N_0$ such that for $n>N_0$, $ \{z\in\bbC:d_\sph(z,\infty)\geq r_0/4\}\subseteq\supp(\cP^n)$ and the maximal spherical diameter of disks in $\cP^n$ as well as the bounded interstices is less than $r_0/80$. Let $n>N_0$. By Lemma~\ref{lem:He-Schramm}, there exists some $\rho\in[r_0,r_1]$ such that 
    \begin{equation}\label{eq:pf-He-Schramm-sphere-1}
        \sum_{v\in\cV T^n:D_v^n\cap C_\sph(\infty,\rho)\neq\emptyset} \diam_\sph(\wt D_v^n)\leq \frac{\e}{10}. 
    \end{equation}
    See also Figure~\ref{fig:packsph} for an illustration. Let $$\cP^n(\rho) = \bigcup_{v\in\cV T^n:D_v^n\cap C_\sph(\infty,\rho)\neq\emptyset} D_v^n \ \ \text{and} \ \ \wt\cP^n(\rho) = \bigcup_{v\in\cV T^n:D_v^n\cap C_\sph(\infty,\rho)\neq\emptyset} \wt D_v^n.$$
    Then since $\rho\in [r_0,r_1]$ and  $\bbC\backslash\{z\in\bbC:d_\sph(z,\infty)<r_0/4\}\subseteq\carrier(\cP^n)$, using the diameter control on the disks and bounded interstices, $\cP^n(\rho)\neq\emptyset$, does not intersect any disk $D_v^n$ such that $v$ has graph distance at most 3 from the boundary of $T$, and separates 0 and 1 from the disks $D_v^n$ that intersects $D_v^n\cap \{z:d_\sph(z,\infty)\leq r_0/2\}\neq\emptyset$. Therefore by~\eqref{eq:pf-He-Schramm-sphere-1} and the fact that the disks in $\wt\cP^n$ are within $\bbC$, $\wt\cP^n(\rho)\subseteq\carrier (\wt\cP^n)$, and one can draw a Jordan curve $\sigma_n$ in $\wt\cP^n(\rho)$ with spherical diameter at most $\frac{\e}{10}$ that separates both 0 and 1 from infinity as well as all the disks $\wt D_v^n$ such that $D_v^n\cap \{z:d_\sph(z,\infty)\leq r_0/2\}\neq\emptyset$. Moreover, the bounded component of $\bbC\backslash\sigma_n$ contains 0 and 1 and cannot have little spherical diameter, and therefore the unbounded component of $\bbC\backslash\sigma_n$ must have spherical diameter less than $\e/4$. In particular, the union of disks $\wt D_v^n$ where $D_v^n\cap \{z\in\bbC:d_\sph(z,\infty)\geq r_0/2\}\neq\emptyset$ and their bounded interstices contains $\{z\in\bbC:d_\sph(z,\infty)\geq \e/2\}$, and none of them are boundary disks in $\wt\cP^n$. This implies that for large $n$, $\{z\in\bbC:d_\sph(z,\infty)\geq \e/2\}\subseteq f^n(\{z\in\bbC:d_\sph(z,\infty)\geq r_0/2\})$, which  is equivalent to the claim.
\end{proof}

\begin{lemma}\label{lem:pf-He-Schramm-sphere-2}
    In the setting of Theorem~\ref{thm:He-Schramm-sphere}, the maximal spherical diameter of disks as well as bounded interstices in $\wt\cP^n$ tends to 0 as $n\to\infty$.
\end{lemma}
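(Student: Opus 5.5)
The plan is to split $\wt\cP^n$ into the disks near $\infty$ and the disks in a large compact region. Fix $\e>0$.

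\textbf{Disks near $\infty$.} By the proof of Lemma~\ref{lem:pf-He-Schramm-sphere-1}, for $n$ large there is $\rho\in[r_0,r_1]$ and a Jordan curve $\sigma_n\subseteq\wt\cP^n(\rho)$ of spherical diameter at most $\e/10$. The unbounded complementary component of $\sigma_n$ has spherical diameter less than $\e/4$. It contains every disk $\wt D_v^n$ with $D_v^n\cap\{d_\sph(\cdot,\infty)\le r_0/2\}\neq\emptyset$, together with the bounded interstices among them. The disks in $\wt\cP^n(\rho)$ have total spherical diameter at most $\e/10$. Hence every disk of $\wt\cP^n$ whose source disk lies within spherical distance about $r_1$ of $\infty$ has spherical diameter less than $\e$. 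The same holds for every bounded interstice of $\wt\cP^n$ bounded by such disks.

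\textbf{Disks in a compact region.} It remains to treat the disks $\wt D_v^n$ whose source disk meets the compact set $K=\{z:d_\sph(z,\infty)\ge r_1/2\}$. By Lemma~\ref{lem:pf-He-Schramm-sphere-1} together with the separation statement above, for large $n$ these target disks lie in a fixed Euclidean ball $B(0;R_2)$, well inside the carrier of $\wt\cP^n$, and away from the boundary disks. I would first try to reduce to Theorem~\ref{thm:He-Schramm-extension-0} and Lemma~\ref{lem:pf-He-Schramm-2}. Take $D=B(0;R')$ with $R'$ chosen so that $\partial B(0;R')$ has spherical distance in $(r_0,r_1)$ from $\infty$. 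Restrict to the subtriangulation $T^n_1$ spanned by the vertices whose source disks meet $D$. Then $\cP^n|_{T^n_1}$ is a circle embedding whose boundary disks lie within $o(1)$ of $\partial D$.

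The difficulty is that the target domain $\wt D$ is not known in advance, so condition (ii) for $\wt\cP^n$ cannot be verified directly. I would therefore bypass Theorem~\ref{thm:He-Schramm-extension-0} and argue directly via Lemma~\ref{lem:He-Schramm}. Suppose some $\wt D_v^n$ with $D_v^n\cap K\neq\emptyset$ has spherical diameter at least $\e$. Apply Lemma~\ref{lem:He-Schramm} with $z_0$ equal to the center of $D_v^n$. Take $r_0'$ comparable to the maximal source disk diameter $\delta_n\to0$, and $r_1'$ a small fixed constant. This yields $\rho_n\in[r_0',r_1']$ with $L(\rho_n)\le c_0\log(r_1'/r_0')^{-1/2}\to0$.

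The union of the disks meeting $C_\sph(z_0,\rho_n)$ forms a chain encircling $D_v^n$ inside the carrier, since $K$ is far from the boundary of $T^n$. Its image is a closed chain of total spherical diameter $o(1)$ surrounding $\wt D_v^n$. By the normalization $f^n(0)=0$ and $f^n(1)=1$, the component of its complement not containing $\wt D_v^n$ has spherical diameter bounded below. When $r_1'$ is small, $0$ and $1$ are not both inside $B(z_0,r_1')$, so at least one of them lies on the outside. Hence $\wt D_v^n$ lies in a set of spherical diameter $o(1)$, a contradiction.

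\textbf{Interstices.} Each bounded interstice of $\wt\cP^n$ is bounded by three mutually tangent disks. Lying inside such a small chain, it inherits the same diameter bound. The main obstacle is making precise that the image of the small chain, a near-Jordan curve of total diameter $o(1)$, confines $\wt D_v^n$ to its small complementary component. As in the proof of Lemma~\ref{lem:pf-He-Schramm-sphere-1}, this uses that the other component must contain both of the far-apart points $0$ and $1$, or else $\infty$ together with one of them.
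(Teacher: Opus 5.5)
Your outline is the paper's. Disks whose sources are near $\infty$ are handled by the curve $\sigma_n$ from the proof of Lemma~\ref{lem:pf-He-Schramm-sphere-1}. The remaining disks are handled by applying Lemma~\ref{lem:He-Schramm} at each $z_0$, using that the image chain separates $\wt D_v^n$ from $\infty$ and from one of $0,1$. Taking $r_0'\asymp\delta_n$ instead of the paper's fixed $r_2$ is harmless.

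\textbf{The two regions do not meet.} The separation by $\wt\cP^n(\rho)$ only controls disks whose sources meet $C_\sph(\infty,\rho)$ or lie on its $\infty$-side. The radius $\rho\in[r_0,r_1]$ is supplied by Lemma~\ref{lem:He-Schramm}, not chosen by you, and it may be close to $r_0=r_1e^{-100c_0^2/\e^2}$. So ``within about $r_1$ of $\infty$'' is unjustified. With $K=\{d_\sph(\cdot,\infty)\ge r_1/2\}$, the disks whose sources lie in $\{\rho<d_\sph(\cdot,\infty)<r_1/2\}$ are treated by neither step. The fix is to take $K=\{d_\sph(\cdot,\infty)\ge r_0/2\}$ and $r_1'\le r_0/8$. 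Then every circle $C_\sph(z_0,\rho_n)$ stays in $\{d_\sph(\cdot,\infty)\ge r_0/4\}\subseteq\carrier(\cP^n)$, away from the boundary disks. This is exactly the paper's choice $\rho\in[r_2,r_0/8]$. Separately, your remark that these target disks lie in a fixed ball $B(0;R_2)$ does not follow from Lemma~\ref{lem:pf-He-Schramm-sphere-1}, which gives the opposite inclusion. Your final argument does not use it.

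\textbf{The interstice step is incomplete.} In a circle embedding, non-adjacent boundary disks may be tangent. So there can be bounded interstices outside $\carrier(\wt\cP^n)$ that are bounded by many boundary disks. These are not in the range of $f^n$, so no chain argument reaches them. Instead, Lemma~\ref{lem:pf-He-Schramm-sphere-1} gives $B(0;R)\subseteq\carrier(\wt\cP^n)$ for large $n$. Hence such pockets lie in $\{|z|>R\}$ and are spherically small.

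For facial interstices, smallness cannot be read off from the smallness of the three bounding disks, as your Step~1 sentence suggests. Take three mutually tangent disks of radius $r$ centered at the vertices of an equilateral triangle centered at $0$. Each has spherical diameter $O(1/r)$, yet their interstice contains $B(0;r/7)\ni 0,1$. You must go through $f^n$, in one of two ways:
\begin{itemize}
\item run your chain argument centered at the source interstice; or
\item use the paper's dichotomy. By the disk part, the bounding disks have spherical diameter at most $\wt\delta^n\to0$, so $\diam_\sph(\partial\wt J^n)\le 3\wt\delta^n$. Hence either $\wt J^n$ is small or it contains both $0$ and $1$. The latter pulls back under $f^n$ to a source interstice of vanishing diameter containing $0$ and $1$, which is impossible.
\end{itemize}
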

\begin{proof}
   Let $\e>0$, and let $r_0,c_0,N_0$ be as in the proof of Lemma~\ref{lem:pf-He-Schramm-sphere-1}. Then by the second last sentence of the proof of Lemma~\ref{lem:pf-He-Schramm-sphere-1}, for $n>N_0$, if $D_v^n\in\cP^n$ is disjoint from  $\{z\in\bbC:d_\sph(z,\infty)\geq r_0/2\}$, then $\wt D_v^n$ is disjoint from $\{z\in\bbC:d_\sph(z,\infty)\geq \e/2\}$ and therefore has spherical diameter less than $\e$. Now let $r_2 = 0.01r_0e^{-\frac{100c_0^2}{\e^2}}$, and assume that $n$ is large enough such that the maximal spherical diameter of disks as well as bounded interstices in $\cP^n$ is less than $r_2/8$. Let $z_0\in\bbC$ with $d_\sph(z_0,\infty)\geq r_0/2$. By Lemma~\ref{lem:He-Schramm} applied to $(r_2,r_0/8)$ in place of $(r_0,r_1)$, we see that there exists some $\rho\in [r_2,r_0/8]$ such that 
   \begin{equation}\label{eq:lem:pf-He-Schramm-sphere-2}
       \diam_\sph(  \wt\cP^n(z_0;\rho))<\frac{\e}{10}, \ \ \text{where }\ \   \cP^n(z_0;\rho) = \bigcup_{v\in \cV T^n: D_v^n\cap C_\sph(z_0,\rho)\neq\emptyset}   D_v^n, \ \   \wt\cP^n(z_0;\rho) = \bigcup_{v\in \cV T^n: D_v^n\cap C_\sph(z_0,\rho)\neq\emptyset} \wt D_v^n.
   \end{equation}
   On the other hand, since $\rho\leq r_0/8$, $d_\sph(z_0,\infty)\geq r_0/2$, $ \{z\in\bbC:d_\sph(z,\infty)\geq r_0/4\}\subseteq\carrier(\cP^n)$ and the maximal spherical diameter of disks as well as bounded interstices in $\cP^n$ is less than $r_2/8$, we see that $\cP(z_0;\rho)$ separates the disks $D_v^n$ intersecting $\{z:d_\sph(z,z_0)\leq r_2/2\}$ from infinity, and any bounded connected component of $\bbC\backslash \cP(z_0;\rho)$ cannot contain both 0 and 1. As a consequence, 
   $\wt\cP(z_0;\rho)$ separates the corresponding disks $\wt D_v^n$ from infinity and any bounded connected component of  $\bbC\backslash \wt\cP(z_0;\rho)$ cannot contain both 0 and 1. This further implies that  the disks $\wt D_v^n$ such that $D_v^n$ intersects $\{z:d_\sph(z,z_0)\leq r_2/2\}$ has diameter less than $\e$. Since $z_0$ is arbitrary, ~\eqref{eq:lem:pf-He-Schramm-sphere-2} implies that the maximal spherical diameter of disks $\wt D_v^n$ where $D_v^n\cap \{z\in\bbC:d_\sph(z,\infty)\geq r_0/2\}\neq\emptyset$ is less than $\e$. This finishes the proof on spherical diameter of disks. To control the diameter of bounded interstices,  let $\wt\delta^n$ be the maximal spherical diameter of $\wt\cP^n$. Then for any facial interstice $\wt J^n$ of $\wt\cP^n$, we have $\diam_{\sph}(\partial \wt J^n)\leq 3\wt\delta^n \to 0$, and therefore one component of $\wh\bbC\backslash\partial\wt J^n$ has spherical diameter less than $6\wt\delta^n$.  If $\wt J^n$ is not this component, then $\partial \wt J^n\subseteq \{z:d_\sph(z,\infty)\leq 12\wt\delta^n\}$ since $\wt J^n$ is bounded. Then for large $n$, $\wt J^n$ contains both 0 and 1. This contradicts the fact that there cannot be a bounded interstice containing both 0 and 1 in $\cP^n$ by our assumption. Therefore we must have $\diam_\sph(\wt J^n)\leq 6\wt\delta^n$. If $\wt J^n$ is not a facial interstice, then $\wt J^n$ is outside the carrier of $\wt\cP^n$ and stays within a small spherical neighborhood of $\infty$ by Lemma~\ref{lem:pf-He-Schramm-sphere-1}.  This finishes the proof.
\end{proof}

\begin{proof}[Proof of Theorem~\ref{thm:He-Schramm-sphere}]
    By a symmetric argument, Lemma~\ref{lem:pf-He-Schramm-sphere-1} holds if one swaps $\cP^n$ and $\wt\cP^n$ and works with $(f^n)^{-1}$. In particular, both $f^n$ and $(f^n)^{-1}$ are uniformly bounded on compacts.  Furthermore, by the same argument in the proof of Lemma~\ref{lem:pf-He-Schramm-sphere-2}, using Lemma~\ref{lem:He-Schramm}, one can prove that for some constant $c_1,c_2>0$, for each $R>2$   and $r\in (0,1/2)$,  for every large $n$, for every $z_0\in B(0;R)$,
    $$B(f^n(z_0);e^{-c_1r^{-2}})\subseteq f^n(B(z_0;r))\subseteq  B(f^n(z_0);c_2(-\log r)^{-1/2}),$$
    where the first inclusion follows by the local separating-cycle argument in Lemma~\ref{lem:pf-He-Schramm-sphere-2} applied in the reverse direction.
    This implies that both $f^n$ and $(f^n)^{-1}$ are equicontinuous on compacts, and for any subsequence of $\bbN$, there is a further subsequence $\{k_n\}$ such that $f^{k_n}|_{B(0;R)}$ converges uniformly to some homeomorphism $f_R:B(0;R)\to D_R$. The same local argument as in~\cite[Lemmas 4.2-4.3, Section 5]{HeSchramm} implies that the map $f_R$ is conformal.  By further taking diagonal subsequences, we see that for any subsequence of $\bbN$, there is a further subsequence $\{k_n\}$ such that   $f^{k_n}$ converges some function $f$ uniformly on compacts and $f$ is conformal on $\bbC$. Since $f^n(0)=0$ and $f^n(1)=1$, $f$ must be the identity. Therefore for each subsequence of $\bbN$, there exists a further subsequence $\{n_k\}$ converging uniformly to the identity on compacts, which further implies that $f^n$ converges to the identity uniformly on compacts.   Furthermore, by Lemma~\ref{lem:pf-He-Schramm-sphere-1}, for any $\e>0$, there exists some $\delta\in (0,\e/3)$ and $N_0>0$ such that $d_\sph(f^n(z),\infty)<\e/3$ for every $z$ with $d_\sph(z,\infty)<\delta$ and $n>N_0$, which implies that $d_\sph(f^n(z),z)<\e$ for each $n>N_0$ and $z$ with $d_\sph(z,\infty)<\delta$.   This further gives the uniform convergence as in the theorem statement and therefore finishes the proof.
\end{proof}

\subsubsection{A comparsion of circle packing for submaps of disk triangulations}\label{subsubsec:HS-disk}

We are now ready to prove Theorem~\ref{thm:He-Schramm-disk}. Recall the setup in Figure~\ref{fig:pack-disk-1}.

\begin{proof}[Proof of Theorem~\ref{thm:He-Schramm-disk}]
We rotate $\cP^n$ and $\wt\cP^n$ such that the disks for $v_1^n$ is tangent to $\partial\bbD$ at $-i$ in both packings. 
  We first prove that in the circle packing $\wt\cP^n$, the diameter of the union of the disks for vertices on $\gamma^n\cup (\cV\wt T^n\backslash\cV T^n)$ tends to 0 as $n\to\infty$.    Without loss of generality assume that in $\wt\cP^n$, the disk for some vertex on $\gamma^n$ is tangent to $\partial\bbD$ at $-i$. Let $\e>0$ be small. Since we are working on circle packings in $\bbD$, we work with the version of Lemma~\ref{lem:He-Schramm} with Euclidean metric in place of spherical metric, and $c_0>0$ be the corresponding constant. Let $\delta = 0.01e^{-\frac{100c_0^2}{\e^2}},$ and assume that $n$ is large enough such that the quantities in (i) and (ii) are less than $\delta/8$. 
 Now we apply Lemma~\ref{lem:He-Schramm} with $r_0=\delta$, $r_1 = 0.01$ and $z_0 = -i$. For $v\in\cV T^n$, we write $D_v^n$ and $\wt D_v^n$ for the disk for $v$ in $\cP^n$ and $\wt \cP^n$, respectively.  Then there exists some $\rho\in[r_0,r_1]$, such that 
 \begin{equation}\label{eq:thm:He-Schramm-disk}
     \diam(\wt\cP^n(\rho))\leq\frac{\e}{10},\ \ \text{where } \cP^n(\rho) = \bigcup_{v\in T^n:D_v^n\cap\partial B(-i;\rho)\neq\emptyset}D_v^n \ \text{ and } \wt \cP^n(\rho) = \bigcup_{v\in T^n:D_v^n\cap\partial B(-i;\rho)\neq\emptyset}\wt D_v^n.
 \end{equation}
 Since $\rho\in[\delta,0.01]$, the  maximal diameter of disks in $\cP^n$ and the  diameter of the union of the disks in $\cP^n$ for vertices in $\gamma^n$ are less than $\delta/8$, we see that $\cP^n(\rho)$ is disjoint from the disks in $\cP^n$ for vertices on $\gamma^n$ so the first and last disk are for vertices on $\ell^n$. As a consequence, $\cP^n(\rho)$ separates $\cup_{v\in\gamma^n} D_v^n$ from $D_{v^n}^n$, and therefore $\wt\cP^n(\rho)$ separates $\cup_{v\in\gamma^n} \wt D_v^n$ from $\wt D_{v^n}^n$. This along with~\eqref{eq:thm:He-Schramm-disk} imply that the union of the disks $\wt D_v^n$ where $v\in \gamma^n\cup( \cV\wt T^n\backslash\cV T^n)$ can be separated from 0 in $\bbD$ by a crosscut of diameter less than $\e/4$ in $\bbD$ and therefore
 \eqb\label{eq:thm:He-Schramm-disk-1}\diam\big( \bigcup_{v\in \gamma^n\cup( \cV\wt T^n\backslash\cV T^n)} \wt D_v^n \big)<\e,\eqe
 which verifies the claim. Then by~\eqref{eq:thm:He-Schramm-disk-1} and Theorem~\ref{thm:He-Schramm-extension-0} applied to $(\cP^n,\bbD)$ and $(\cP_1^n,\bbD)$ (where we choose some arbitrary circle packing isomorphism $g^n$ with $g^n(0)=0$), the maximal diameter of disks in $\wt\cP^n$ tends to 0 as $n\to\infty$.  Further combined with~\eqref{eq:thm:He-Schramm-disk-1}, the distance~\eqref{eq:curve-metric-0} between the loop formed by joining centers of disks for vertices on $\partial T^n$ with line segments and $\partial\bbD$ tends to 0 as $n\to\infty$, which implies that the condition (iv) in Theorem~\ref{thm:He-Schramm-extension-1} holds for $\cP^n_1$.  By Theorem~\ref{thm:He-Schramm-extension-1}, for any subsequence of $\bbN$, there exists a further subsequence $\{n_k\}$ such that $f^{n_k}$ converges uniformly to a conformal map $f: \bbD\to\bbD$ in the sense of Definition~\ref{def:unif-convergence}, and by our choice of $\cP^n$ and $\wt\cP^n$, $f$ must be the identity function. This finishes the proof.
\end{proof}

\section{Convergence of the mated-CRT map under conformal embeddings}\label{sec:proof}

In this section, we prove the convergence of the mated-CRT map to LQG under circle packing and the Riemann uniformization embedding, namely Theorem~\ref{thm:pack-cone}-\ref{thm:unif-sph}. Throughout this section, we will assume $\gamma\in(0,2)$ and $\kappa=16/\gamma^2$. We will first define a cell configuration $\cH^\rmE$ for the space-filling $\SLE_\kappa$ on 0-quantum cone as in~\cite{GMS21tutte}, and the version $\wh\cH^\rmE$ with have multiple edges in the associated map collapsed, and verify that $\wh\cH^\rmE$ and $\cH^\rmE$ fall into the framework of Theorem~\ref{thm:CellSystemPack} and Theorem~\ref{thm:invariance-uniformization}, respectively. This is done in Section~\ref{subsec:mated-CRT-0-cone}, where we prove a version of Theorem~\ref{thm:pack-cone} and Theorem~\ref{thm:unif-cone} in the setting of 0-quantum cone in place of $\gamma$-quantum cone. Next in Section~\ref{subsubsec:matedCRT-cell}  we use Theorem~\ref{thm:He-Schramm-extension-1} along with the scale invariance of the quantum cone and local absolute continuity to prove a version of Theorem~\ref{thm:pack-cone} for the mated-CRT map on the quantum cell $\eta([-1,1])$ (i.e., the restriction of the map $\cG^{1/n}$ to the vertices in $[-1,1]$), namely Proposition~\ref{thm:circle-quantum-cell-0}. In Section~\ref{subsubsec:matedCRT-disk}-\ref{subsubsec:matedCRT-cone} we further use absolute continuity along with the results on circle packing maps developed in Theorem~\ref{thm:He-Schramm-sphere} and Theorem~\ref{thm:He-Schramm-disk} to complete the proof of convergence of the mated-CRT map under circle packing (Theorem~\ref{thm:pack-cone}-\ref{thm:pack-sph}). The proof of convergence of the mated-CRT map under the Riemann uniformization embedding (Theorem~\ref{thm:unif-cone}-\ref{thm:unif-sph}) is carried out in Section~\ref{subsec:unif-matedCRT-gamma-cone}.

\subsection{Convergence of the mated-CRT map on 0-quantum cone}\label{subsec:mated-CRT-0-cone}

In this subsection, we prove a version of Theorem~\ref{thm:pack-cone} and Theorem~\ref{thm:unif-cone} in the setting of 0-quantum cone in place of $\gamma$-quantum cone based on Theorem~\ref{thm:CellSystemPack} and Theorem~\ref{thm:invariance-uniformization}, namely Theorem~\ref{thm:circle-matedCRT-0-cone} and Theorem~\ref{thm:unif-matedCRT-0-cone}. In Section~\ref{subsubsec:cell-config}, we define the cell configuration $\cH^\rmE$ as in~\cite{GMS21tutte}, and a version  $\wh\cH^\rmE$ of $\cH^\rmE$ whose associated map is simple. In Section~\ref{subsubsec:cell-config-moment}, we verify the moment bound~\eqref{eq:thm-inv-circle} for $\cH^\rmE$ and  $\wh\cH^\rmE$,  {where we will additionally use a comparison between 0-quantum cone and $\gamma$-quantum cone in~\cite[Lemma 4.8]{Tutte} to prove the moment bound for the degree.} In Section~\ref{subsubsec:cell-config-pf}, we verify the   conditions in Definition~\ref{def:CellConfigConstraint} for $\cH^\rmE$ and $\wh\cH^\rmE$, and use Theorem~\ref{thm:CellSystemPack} and Theorem~\ref{thm:invariance-uniformization} along with scale invariance of the quantum cones in Lemma~\ref{lem:Rb} to prove Theorem~\ref{thm:circle-matedCRT-0-cone} and Theorem~\ref{thm:unif-matedCRT-0-cone}.

\subsubsection{Construction of the cell configuration}\label{subsubsec:cell-config}

We recap the construction of the cell configuration as in~\cite[Section 3.1]{GMS21tutte}. Recall the probability measure $\bbP_\rmE$ and $\bbP_\rmQ$ from Definition~\ref{def:P_EP_Q}. Let $(h,\eta,\fD)$ be sampled from $\bbP_\rmE$ and $\theta$ be the uniform random variable generating $\fD$ as in Definition~\ref{def:1ddyadic}.   Let 
$$\cH^{\rmE,1/n} = \{\eta([\frac{x-1}{n},\frac{x}{n}]):x\in\bbZ+\theta\},$$
and for $H,H'\in\cH^{\rmE,1/n }$, we declare that $H\sim H'$ if and only if $H$ and $H'$ intersect along a non-trivial boundary arc. If $H$ and $H'$ intersect along both their left and
right boundaries, then we declare that there are two edges connecting $H$ and $H'$. Let $\cG^{\rmE,1/n }$ be the graph obtained in this way. Let $Z = (L,R)$ be the boundary length process defined in terms of $(h,\eta)$ as in Section~\ref{subsubsec:MatedCRT}. Then following the description of $Z$ in terms of the change in $\gamma$-LQG boundary length for $\eta((-\infty,t])$, the graph $\cG^{\rmE,1/n }$ is isometric to the mated-CRT map defined for the process $Z$ via~\eqref{eq:def-mated-CRT}, and therefore is an infinite plane triangulation. We further set the conductance of each edge to be equal to 1. This defines the cell configuration $\cH^{\rmE,1/n }$ with the associated map $\cG^{\rmE,1/n }$ being an infinite plane triangulation.

To apply Theorem~\ref{thm:CellSystemPack} and work with circle packing, we will need a version of $\cH^{\rmE,1/n }$ whose associated map is a simple infinite plane triangulation. Let $\Phi_{\rm simple}$ be the operator defined right before Definition~\ref{def:pack-non-simple} and we let $\wh\cG^{\rmE,1/n } = \Phi_{\rm simple}(\cG^{\rmE,1/n })$. Now suppose for some $x_1<x_2-1$ and $x_1,x_2\in\theta+\bbZ$, $\eta([\frac{x_1-1}{n},\frac{x_1}{n}])$ and $\eta([\frac{x_2-1}n,\frac{x_2}{n}])$ intersect both along their left and right boundaries, we merge $\eta([\frac{x_1}{n},\frac{x_2-1}{n}])$ into $\eta([\frac{x_2-1}{n},\frac{x_2}{n}])$. On the level of the associated map, this is same as removing all the vertices and edges enclosed by double edges of $\cH^{\rmE,1/n }$ and further turning the double edges into single edges as in the definition of $\Phi_{\rm simple}$. Let $\wh\cH^{\rmE,1/n }$ be the cell configuration obtained in this way whose associated map is $\wh\cG^{\rmE,1/n }$, where if $\wh\cG^{\rmE,1/n }=\emptyset$, we set $\wh\cH^{\rmE,1/n }$ to be a single cell consisted of $\bbC$. We also extend the definition of $\Phi_{\rm simple}$ to cell configurations and denote $\wh\cH^{\rmE,1/n } = \Phi_{\rm simple}(\cH^{\rmE,1/n})$. In Proposition~\ref{prop:cell-diam2/area} below, we will show that the latter event a.s.\ does not happen. We let $\cH^{\rmQ,1/n }$ and $\wh\cH^{\rmQ,1/n }$ be defined in the identical way using samples $(h,\eta,\fD)$ from the measure $\bbP_\rmQ$. Note that using the translation invariance of Brownian motion, $\cG^{\rmQ,1/n }$ has the same law as the mated-CRT map $\cG^{1/n}$. We write $\cH^\rmE,\wh\cH^\rmE,\cH^\rmQ,\wh\cH^\rmQ$ for $\cH^{\rmE,1},\wh\cH^{\rmE,1},\cH^{\rmQ,1},\wh\cH^{\rmQ,1}$, respectively. Observe that by Lemma~\ref{lem:Rb}, for the constant $\fR_n$ defined in~\eqref{eq:Rb} with $b=n$, $\cH^{\rmE,1/n }$ has the same distribution as $\fR_n^{-1}\cH^\rmE$.

We will also constantly use the following observation when working with double edges.
\begin{lemma}\label{lem:double-edge-topology}
  Let $(h,\eta)$ be sampled from $\bbP_\rmE$ or $\bbP_\rmQ$ with boundary length process $(L,R)$.  Let $t_1<t_2<t_3<t_4$. Suppose 
    \begin{equation}\label{eq:def-mated-CRT-AA}
    \max\Big\{\inf_{t\in[t_1,t_2]}L_t,  \inf_{t\in[t_3,t_4]}L_t\Big\}\leq \inf_{t\in[t_2,t_3]}L_t \ \ \text{and} \ \ \max\Big\{\inf_{t\in[t_1,t_2]}R_t,  \inf_{t\in[t_3,t_4]}R_t\Big\}\leq \inf_{t\in[t_2,t_3]}R_t.
\end{equation}
Then $\eta([t_2,t_3])$ is disjoint from the unbounded component of $\bbC\backslash \eta([t_1,t_2]\cup [t_3,t_4])$ and $\diam(\eta([t_1,t_4]))\leq \diam(\eta([t_1,t_2]))+\diam(\eta([t_3,t_4]))$.
\end{lemma}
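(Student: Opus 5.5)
The plan is to read the two conditions in \eqref{eq:def-mated-CRT-AA} through the mating-of-trees description of $(L,R)$ as boundary length changes of $\eta((-\infty,t])$, as in Section~\ref{subsubsec:MatedCRT}, and to argue topologically.

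First I use the condition on $L$. It says that the running infimum of $L$ over $[t_2,t_3]$ is no smaller than the infima over $[t_1,t_2]$ and over $[t_3,t_4]$. In the mated-CRT description, this means that the entire left boundary of $\eta([t_2,t_3])$ is covered by $\eta([t_1,t_2])\cup\eta([t_3,t_4])$. Heuristically, the left boundary length created during $[t_2,t_3]$ is all consumed again by time $t_4$, and no left boundary from before $t_2$ is exposed between the two visits. More precisely, the left boundary of $\eta([t_2,t_3])$ consists of points that lie on the left boundary of $\eta((-\infty,t_2])$ (hit by $\eta([t_1,t_2])$) or on the left outer boundary of $\eta([t_2,t_3])$ that is later swallowed by $\eta([t_3,t_4])$. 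I would justify this exactly as in the proof that adjacency of cells corresponds to \eqref{eq:def-mated-CRT} (\cite[Section 8.2]{DMS14}, \cite[Proposition 2.2]{GMS21tutte}), which is stated for pairs of intervals and adapts to general $t_i$. The same statement with $R$ gives the right boundary.

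Combining the two sides, $\partial\eta([t_2,t_3])\subseteq\eta([t_1,t_2])\cup\eta([t_3,t_4])$, since the outer boundary of a space-filling SLE segment is the union of its left and right boundaries. Now suppose a point $z\in\eta([t_2,t_3])$ lies in the unbounded component $U$ of $\bbC\backslash\eta([t_1,t_2]\cup[t_3,t_4])$. Since $\eta([t_2,t_3])$ is compact and $U$ is unbounded and connected, a path in $U$ from $z$ to $\infty$ must exit $\eta([t_2,t_3])$. It would therefore meet $\partial\eta([t_2,t_3])$ inside $U$, contradicting the inclusion just obtained. This proves the first claim.

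For the diameter bound, the first claim shows that $\eta([t_1,t_4])$ lies in the filled set $K$, the complement of $U$. The diameter of a compact set equals the diameter of its filling, so $\diam\eta([t_1,t_4])\le\diam(A\cup B)$, where $A=\eta([t_1,t_2])$ and $B=\eta([t_3,t_4])$. It remains to bound $\diam(A\cup B)\le\diam A+\diam B$, which holds as soon as $A\cap B\neq\emptyset$. Nonemptiness of $A\cap B$ follows from the hypothesis: the infimum conditions (even just the one for $L$) imply that $A$ and $B$ share a left boundary point, namely the point where the left boundary of $\eta((-\infty,t_2])$ at the running-infimum level is revisited.

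The main obstacle I expect is the first step: rigorously converting the infimum inequalities into covering of the whole left and right boundaries, rather than just sharing some arc. This is especially delicate for $\gamma\in(\sqrt2,2)$, where cells are not simply connected and have pinch points. I would handle it by approximating by rational times and using the a.s. statements of \cite{DMS14} about boundary arcs of $\eta((-\infty,t])$.
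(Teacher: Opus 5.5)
Your proposal is correct and follows essentially the same route as the paper. The paper splits the left boundary of $\eta([t_2,t_3])$ at $\eta(s_{\mathrm{L}})$, where $s_{\mathrm{L}}$ is the last time in $[t_2,t_3]$ at which $L$ attains its infimum; the arc before $\eta(s_{\mathrm{L}})$ is covered by $\eta([t_1,t_2])$ and the arc after it by $\eta([t_3,t_4])$, likewise for $R$. This is exactly your decomposition. The paper then concludes as you do from $\eta([t_1,t_2])\cap\eta([t_3,t_4])\neq\emptyset$, and your filling argument for $\diam(\eta([t_1,t_4]))=\diam(\eta([t_1,t_2]\cup[t_3,t_4]))$ makes explicit a step the paper leaves implicit.
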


\begin{proof}
    Let $s_\rmL$ be the last time in $[t_2,t_3]$ when $L_s = \inf_{[t_2,t_3]}L_t$. 
    Then the condition $\inf_{t\in[t_1,t_2]}L_t\leq \inf_{t\in[t_2,t_3]}L_t$ implies that the left boundary arc of $\eta([t_2,t_3])$ from $\eta(t_2)$ to $\eta(s_\rmL)$ is contained within the left boundary arc of $\eta([t_1,t_2])$, while the condition $\inf_{t\in[t_3,t_4]}L_t\leq \inf_{t\in[t_2,t_3]}L_t$ implies that the left boundary arc of  $\eta([t_2,t_3])$ from $\eta(s_\rmL)$ to $\eta(t_3)$ is contained within the left boundary arc of $\eta([t_3,t_4])$. By a symmetric argument, the right boundary arc of $\eta([t_2,t_3])$ is also contained within the union of the right boundary arcs of $\eta([t_1,t_2])$ and $\eta([t_3,t_4])$. This gives the first claim. Furthermore, the condition~\eqref{eq:def-mated-CRT-AA} implies that $\eta([t_1,t_2])\cap \eta([t_3,t_4])\neq\emptyset$ and therefore $\diam(\eta([t_1,t_4]) = \diam(\eta([t_1,t_2]\cup [t_3,t_4]))\leq \diam(\eta([t_1,t_2]))+\diam(\eta([t_3,t_4]))$.
\end{proof}

\subsubsection{Cell configuration moment bounds}\label{subsubsec:cell-config-moment}

We start with checking the moment bound~\eqref{eq:thm-inv-circle} for $\cH^\rmE$ and $\wh\cH^\rmE$. Much of the proof idea is similar to~\cite[Theorem 4.1]{GMS21tutte}, except for two differences. First, ~\cite[Theorem 4.1]{GMS21tutte} is mostly on the cell $H_0$ in $\cH^\rmE_{ }$ containing 0, and we need to take care of the collapsing of the double edges in the context of circle packings. Second, ~\cite[Theorem 4.1]{GMS21tutte} only showed moment bounds for $\deg(H_0)$ up to $4/\gamma^2$-th moment, while we need at least fourth moment bound in~\eqref{eq:thm-inv-circle}. To circumvent this,  {we are going to use a comparison lemma between 0-quantum cone and $\gamma$-quantum cone developed in ~\cite[Lemma 4.8]{Tutte}, as the degree moment bound in the latter setting can be derived via Brownian motion estimates.}

In~\cite[Theorem 4.1]{GMS21tutte}, it has been proved that for the cell $H_0$ in $\cH^\rmE$ containing 0, for any $p>0$,   \begin{equation}\label{eq:cell-diam2/area-E}
        \bbE_\rmE\big[\Big(\frac{\diam(  H_0)^2}{\area(  H_0)}\Big)^p\big]<\infty.
    \end{equation}   
    We start with the following analog for $\wh\cH^\rmE$.
\begin{proposition}
\label{prop:cell-diam2/area}
     Let $\wh H_0$ be the cell in $\wh\cH^{\rmE}$ containing 0. Then under the measure $\bbP_\rmE$, we have the following:
     \begin{enumerate}[(i)]
         \item  
     there exists some constant $c>0$, such that with probability at least $1-c^{-1}2^{-cm}$, for each $\wh\theta\in[0,1]$ and $x_1,x_2\in \bbZ+\wh\theta$ such that $x_1\leq\wh\theta$, $x_2\geq\wh\theta$ and both inequalities in~\eqref{eq:def-mated-CRT} for $n=1$ holds, $x_1,x_2\in [-2^m,2^m]$;
     \item $\wh\cG^\rmE$ is a.s.\ not empty, and almost surely, all the cells in $\wh\cH^\rmE$ are compact;
     \item  for any $p>1$, 
    \begin{equation}\label{eq:cell-diam2/area}
        \bbE_\rmE\big[\Big(\frac{\diam(\wh H_0)^2}{\area(\wh H_0)}\Big)^p\big]<\infty.
    \end{equation}    
     \end{enumerate}
    The same holds for $\bbP_\rmQ$ instead of $\bbP_\rmE$, and also $\bbP_\rmQ$ conditioned on $\theta=0$.
\end{proposition}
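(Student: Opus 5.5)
The plan is to reduce everything to Brownian motion estimates for the boundary length process $Z=(L,R)$, using Lemma~\ref{lem:double-edge-topology} for the geometry and \eqref{eq:cell-diam2/area-E} together with Lemma~\ref{lem:diam-cell-max} and Lemma~\ref{lem:SLE-contain-1} for the quantitative cell bounds.

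\textbf{Part (i).} A pair $x_1\le\wh\theta\le x_2$ satisfying both conditions in \eqref{eq:def-mated-CRT} with $n=1$ and $x_2-x_1\ge 2^m$ forces the following: both $L$ and $R$ attain, on the short intervals $[x_1-1,x_1]$ and $[x_2-1,x_2]$, values at most their infima over the long interval $[x_1,x_2-1]$, which has length $\ge 2^m-1$ and straddles $[-1,1]$. I will bound this event by a union over dyadic scales $k\ge m$ and over the $O(1)$ relevant positions of $x_1$ (resp.\ $x_2$) at scale $2^k$.

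For fixed positions, the event requires a two-sided "cone excursion": the correlated 2D Brownian motion $Z$, started at time $x_1$, stays in a translate of the quadrant $\{L\ge L_{x_1}-\Delta_1,\ R\ge R_{x_1}-\Delta_1\}$ for time $2^k$. Here $\Delta_1=\sup_{[x_1-1,x_1]}|Z-Z_{x_1}|$ has Gaussian tails. The same holds near $x_2$. By the standard cone-exit estimate, a correlated Brownian motion stays in a cone of angle $\pi\gamma^2/4$ for time $t$ with probability $\approx t^{-\pi/(2\cdot\text{angle})}$, up to the factor from $\Delta$. So each scale contributes at most $2^{-ck}$ after absorbing the polynomially many positions and the Gaussian tails of $\Delta$, and summing over $k\ge m$ gives $c^{-1}2^{-cm}$. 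Under $\bbP_\rmE$ the law of $Z$ is not exactly Brownian. I will handle this by using that the event is determined by $Z$ on $[-2^{m+1},2^{m+1}]$ and applying the comparison in \cite[Lemma 4.8]{Tutte}, or simply by proving (i) under $\bbP_\rmQ$ and transferring it via Lemma~\ref{lem:Rb}.

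\textbf{Part (ii).} By (i) and Borel--Cantelli, almost surely only finitely many nested double-edge pairs enclose the origin cell. More generally, by translation invariance every vertex is enclosed by only finitely many such pairs, so each merged cell is a finite union of cells. Compactness then follows from the fact that each original cell is compact. Non-emptiness follows because a vertex that is outermost among the finitely many pairs enclosing it survives $\Phi_{\rm simple}$.

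\textbf{Part (iii).} Let $x_1<x_2$ be the outermost double-edge pair containing $0$, so that $\wh H_0=\eta([x_1-1,x_2])$ (or $\wh H_0=H_0$ if there is no such pair).
\begin{itemize}
\item \emph{Diameter.} Lemma~\ref{lem:double-edge-topology} gives
$\diam(\wh H_0)\le\diam(\eta([x_1-1,x_1]))+\diam(\eta([x_2-1,x_2]))$.
\item \emph{Area.} We have $\area(\wh H_0)\ge\area(H_0)$.
\item \emph{Combining.} On the event $E_m$ that $x_1,x_2\in[-2^m,2^m]$, the diameter is at most $2\max\{\diam(\eta([j-1,j])):|j|\le 2^m+1\}$.
\end{itemize}
Writing $\frac{\diam(\wh H_0)^2}{\area(\wh H_0)}\le\frac{\diam(\wh H_0)^2}{\diam(H_0)^2}\cdot\frac{\diam(H_0)^2}{\area(H_0)}$ and applying H\"older together with \eqref{eq:cell-diam2/area-E}, it suffices to bound the $2p$-th moments of $\diam(\wh H_0)/\diam(H_0)$. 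I will decompose over $E_m\setminus E_{m-1}$, which has probability $\lesssim 2^{-cm}$ by (i). On that event the ratio is controlled by the maximal diameter of the $2^{m+2}$ cells near $0$ divided by $\diam(H_0)$. This is at most polynomial in $2^m$ outside an event of probability $2^{-c'm}$: use Lemma~\ref{lem:diam-cell-max} after rescaling via Lemma~\ref{lem:Rb}/\eqref{eq:Rb-gammaQ} for the upper bound on the numerator, and Lemma~\ref{lem:SLE-contain-1} or a lower bound on $\diam(H_0)$ coming from the area of $H_0$ being $1$ for the denominator.

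\textbf{Main obstacle.} Polynomial growth times $2^{-cm}$ is not automatically summable for large $p$. So the main obstacle will be showing that the exponent $c$ in (i) can be taken arbitrarily large at the cost of restricting to larger scales, or else that the polynomial factor is uniformly small. The way to handle it is to use the independence of $Z$ over disjoint dyadic blocks: the cone event at scale $2^k$ requires about $k$ independent cone-survival events across scales $2^j$, $j\le k$, each of probability bounded away from $1$, which yields superpolynomial decay $e^{-ck^2}$, or at least a bound uniform in $p$. Finally, $\bbP_\rmQ$ and $\bbP_\rmQ$ conditioned on $\theta=0$ follow by the same argument, since all inputs hold for $\bbP_\rmQ$ and conditioning on $\theta$ only shifts the lattice by a bounded amount.
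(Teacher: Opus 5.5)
\textbf{Your reduction in (iii) cannot work, and the proposed fix is false.} Using $\area(\wh H_0)\ge\area(H_0)$ throws away exactly the area that compensates for a large merged cell. It forces you to control all moments of $\diam(\wh H_0)/\diam(H_0)$, and this ratio has only finitely many moments:
\begin{enumerate}
\item On the event that the outermost double edge around $0$ spans time $\asymp 2^m$, the ratio is polynomially large in $2^m$.
\item That event has probability $2^{-cm}$ with a \emph{fixed} exponent. Your own cone estimate $t^{-2/\gamma^2}$ is polynomial.
\item Likewise, $\mu_h(H_0)=1$ gives only a fixed polynomial tail for $\diam(H_0)^{-1}$, cf.\ \eqref{eq:pf-double-edge-0-cone-42}.
\end{enumerate}
So your route gives neither all $p>1$ nor, without explicit control of the exponents, even some $p>1$. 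The superpolynomial rescue does not hold either: $k$ roughly independent scale events, each of probability at most $1-\delta$, give $(1-\delta)^k=e^{-ck}$, which is again $2^{-c'k}$, not $e^{-ck^2}$.

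The missing idea is a scale-uniform lower bound on $\area(\wh H_0)$ itself. Since $\wh H_0=\eta([x,y])$ with $x\le 0\le y$, if $\wh H_0\not\subseteq B(0;2^k)$ then it contains $\eta([\tau_k^-,0])$ or $\eta([0,\tau_k^+])$. Here $\tau_k^+$ (resp.\ $\tau_k^-$) is the first time after (resp.\ last time before) $0$ that $\eta$ hits $\partial B(0;2^k)$. By \cite[Lemma 3.6]{GHM20KPZ}, both segments have Lebesgue area at least $2^{2k}/C$ with conditional probability $1-Ce^{-c\sqrt C}$. Take a union bound over the $O(\log(\fR_{2^n}/\fR_{2^{-n}}))$ scales in play. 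The error terms are:
\begin{enumerate}
\item $Ce^{-C^{1/4}}$ from the union bound, plus a superpolynomially small term;
\item $2^{-c_6 n}$, covering $\{\wh H_0\subseteq B(0;\fR_{2^{-n}})\}$ via \eqref{eq:pf-double-edge-0-cone-42} and $\{\wh H_0\not\subseteq B(0;\fR_{2^n}/2)\}$ via \eqref{eq:pf-AAA-nnew}--\eqref{eq:pf-AAAnew}.
\end{enumerate}
Off these events, $\diam(\wh H_0)^2/\area(\wh H_0)\le 100C$. Choosing $n\asymp\frac{p+2}{c_6}\log_2 C$ then gives a $C^{-p-2}$ tail for every $p$. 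The fixed polynomial rates only need to be beaten at scales $n\approx\log C$, and a large merged cell no longer costs anything in the ratio.

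\textbf{Part (i) is also not established, in particular under $\bbP_\rmE$.} Even under $\bbP_\rmQ$, the one-sided exit bound $t^{-2/\gamma^2}$ is not enough. At span $\asymp 2^k$ the far endpoint has $\asymp 2^k$ possible positions, not $O(1)$. The union bound is then $2^{k(1-2/\gamma^2)}$, which does not decay for $\gamma\ge\sqrt2$. For $\gamma>\sqrt2$ this is not just a lossy union bound: the quadrant is a cone of angle $>\pi/2$, and simultaneous running infima of $L,R$ occur at every scale. You would need the two-sided excursion bound. The constraints also force $|Z_{x_2}-Z_{x_1}|\le\Delta_1+\Delta_2$, so the killed heat kernel of the cone gives $\lesssim t^{-1-4/\gamma^2}$ per pair. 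This is summable against the $\asymp t^2$ pairs for every $\gamma<2$.

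Under $\bbP_\rmE$, $Z$ is not a Brownian motion, and neither transfer you suggest works:
\begin{enumerate}
\item Lemma~\ref{lem:Rb} is a self-similarity of each cone separately and relates nothing between $\bbP_\rmQ$ and $\bbP_\rmE$.
\item The comparison \eqref{eq:0-gamma-comparison-00} loses a factor $2^{C_0k}$ that your unspecified exponent would have to beat.
\item Its bootstrapped form, Proposition~\ref{prop:0-cone-gamma-compare}, needs as input that the event is localized to $Z|_{I(k)}$ up to error $2^{-ck}$ under $\bbP_\rmE$. That is essentially (i) itself; the paper obtains such localization from the proof of (i), in Lemma~\ref{lem:prop:0-cone-deg-2}.
\end{enumerate}
The paper's argument is geometric and works for both measures. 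Let $A_n$ be the event that all unit cells meeting $B(0;2\fR_{2^n}/3)$ have diameter at most $2^{-nq}\fR_{2^n}$ (Lemma~\ref{lem:diam-cell-max} plus scaling). Let $B_n=\{2^{-nq}\fR_{2^n}<\fR_{2^{n-1}}/4\}$. On $\bigcap_{k\ge n}(A_k\cap B_k)$, Lemma~\ref{lem:double-edge-topology} and a minimal-scale contradiction put every $\eta([a-1,b])$ inside $B(0;\fR_{2^n}/2)$. Lemma~\ref{lem:SLE-contain-1} then converts this to a statement about times.

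Similarly, in (ii) translation invariance of the vertex lattice is not available under $\bbP_\rmE$. Use instead that any double edge enclosing $x$ but not $\theta$ must be incident to a fixed finite path from $\theta$ to $x$, whose vertices have finite degree.
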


\begin{proof}
We start with  the measure $\bbP_\rmE$. For $b>0$,   let $\fR_b,h^b,\eta^b$ be as in~\eqref{eq:Rb} and~\eqref{eq:hb-etab}. Let $q\in(0,2(2+\gamma)^{-2})$. Let $A_n$ be the event where any cell of the form $\eta([x,x+1])$ intersecting $B(0;{2\fR_{2^n}/3})$ has diameter at most $2^{-nq}\fR_{2^n}$.  By Lemma~\ref{lem:diam-cell-max} applied to $(h^b,\eta^b)$ for $b=2^{n}$,  for some $\alpha>0$, \eqb\label{eq:pf-cell-diam2/area-1}\bbP_\rmE[A_n^c]\lesssim 2^{-\alpha n},\eqe and therefore $1-\bbP_\rmE\big[\cap_{k=n}^\infty A_k\big]\lesssim 2^{-\alpha n}$.

Define the event $B_n = \{2^{-nq}\mathfrak{R}_{2^n}< \frac{1}{4} \fR_{2^{n-1}}\}$. Lemma~\ref{lem:Rb} and~\eqref{eq:pf-cell-diam2/area-1} imply that for some constant $C_1,c_1>0$, $\bbP_\rmE[A_n^c]+\bbP_\rmE[B_n^c] \leq C_12^{-c_1n}$, and by a union bound, if we set $F_n = \cap_{k=n}^\infty (A_k\cap B_k)$, we have \eqb\label{eq:pf-AAA-nnew}\bbP_\rmE[F_n^c]\lesssim 2^{-c_1n}.\eqe Fix $\wh\theta\in [0,1]$, and let
$\mathrm{Double}_{\wh\theta}$ be the union of $\{(\wh\theta,\wh\theta)\}$ and the collection of all pairs $(a,b)$ such that $a\leq\wh\theta \leq b$, $a,b\in \bbZ+\wh\theta$, $b-a\neq 1$ and both inequalities in~\eqref{eq:def-mated-CRT} hold for $(a,b)=(x_1,x_2)$ and $n=1$. In other words, in the mated-CRT map $\cG^\rmE$ or $\cG^\rmQ$ with vertex set $\wh\theta+\bbZ$ defined via $Z$ using~\eqref{eq:def-mated-CRT}, for each pair of vertices $(a,b)\in \mathrm{Double}_{\wh\theta}\backslash\{(\wh\theta,\wh\theta)\}$, there are two edges connecting them, and all these double edges either have $\wh\theta$ as an endpoint or encloses $\wh\theta$. Let $$\mathcal{K}_{\wh\theta} = \eta([\wh\theta-1,\wh\theta]) \cup \bigcup_{(a,b)\in\mathrm{Double}_{\wh\theta}}\eta([a-1,b]).$$
We claim that for all large enough $n$, under the event $F_n$,  \eqb\label{eq:pf-AAAnew} \wh H_0 \subseteq \cup_{\wh\theta\in[0,1]} \mathcal{K}_{\wh\theta}\subseteq B(0;\fR_{2^n}/2).\eqe
Since $\theta\in[0,1]$, it is clear that under the event $A_n$ we have $\cup_{\wh\theta\in[0,1]}\eta([\wh\theta-1,\wh\theta])\subseteq B(0;\fR_{2^n}/2)$. Now assume on the contrary that 
~\eqref{eq:pf-AAAnew} fails. Then for some $\wh\theta$ and $(a,b)\in \mathrm{Double}_{\wh\theta}$, $\eta([a-1,b])\nsubseteq B(0;\fR_{2^n}/2)$. Let $N$ be the smallest $j>n$ such that  $\eta([a-1,b])\subseteq B(0;\fR_{2^j}/2)$. Then by minimality and $a-1\leq 0\leq b$, $\diam(\eta([a-1,b]))\geq  \fR_{2^{N-1}}/2$ and therefore by Lemma~\ref{lem:double-edge-topology}, $\max\{\diam(\eta[a-1,a]),\diam(\eta[b-1,b])\}\geq \fR_{2^{N-1}}/4>2^{-Nq}\mathfrak{R}_{2^N}$ by the event $B_N$. On the other hand, by the event $A_N$,  $\max\{\diam(\eta[a-1,a]),\diam(\eta[b-1,b])\}\leq  2^{-Nq}\mathfrak{R}_{2^N}$. This is a contradiction. Therefore~\eqref{eq:pf-AAAnew} holds. By~\eqref{eq:Rb-gammaQ} and Lemma~\ref{lem:SLE-contain-1}, for some $c'>0$, we have $\bbP_\rmE\big[B(0;{\fR_{2^n}/2})\nsubseteq \eta([-2^{2n},2^{2n}])\big]\lesssim 2^{-c'n}$. This together with~\eqref{eq:pf-AAA-nnew} and~\eqref{eq:pf-AAAnew} gives Assertion (i) and the claim $\wh\cG^\rmE\neq\emptyset$.

Now for any other vertex $x\in \bbZ+\theta$ of $\cG^\rmE$, consider a finite path in $\cG^\rmE$ from $\theta$ to $x$. Then by planarity, any double edge in $\cG^\rmE$ that encloses $x$ but not $\theta$ must be incident to a vertex on this path, and therefore combined with Assertion (i) there are finitely many double edges that encloses $x$. This implies compactness of cells in $\wh\cH^\rmE$.

To prove Assertion (iii), as shown in~\cite[Lemma 4.3]{GMS21tutte}, there exists constants $c_4>0$ such that 
\begin{equation}\label{eq:pf-double-edge-0-cone-42}
    \bbP_\rmE\big[\wh H_0\subseteq B(0;{\fR_{2^{-n}}}) \big] \leq \bbP_\rmE\big[H_0\subseteq B(0;{\fR_{2^{-n}}}) \big]\leq \bbP_\rmE\big[\mu_h\big(B(0;{\fR_{2^{-n}}})\big)\geq 1 \big] \lesssim  2^{-c_4n},
\end{equation}
where the last inequality comes from the scaling in Lemma~\ref{lem:Rb} that $  \bbP_\rmE\big[\mu_h\big(B(0;{\fR_{2^{-n}}})\big)\geq 1 \big] = \bbP_\rmE[\mu_{h-\frac{n}{\gamma}\log 2}(\bbD)\geq 1] = \bbP_\rmE[\mu_h(\bbD)\geq 2^n]$ and~\cite[Theorem 2.11]{RV144} where $\bbP_\rmE[\mu_h(\bbD)^\beta]<\infty$ for $\beta\in (0,\frac{4}{\gamma^2})$.

By our construction, $\wh H_0$ has the  form $\eta([x,y])$ for some $x\leq 0\leq y$.  Now by~\cite[Lemma 3.6]{GHM20KPZ} along with the reversibility of $\eta$, if we let $\tau_k^+$ (resp.\ $\tau_k^-$) be the first time after 0 (resp.\ last time before 0) when $\eta$ hits $\partial B(0;2^k)$, then for some constant $c_5>0$ and any $k\in\bbZ,C>0$,
\begin{equation}
    \bbP_\rmE\big[\area(\eta([\tau_k^-,0]))\geq 2^{2k}C^{-1}, \area(\eta([0,\tau_k^+]))\geq 2^{2k}C^{-1} |h\big] \geq 1-Ce^{-c_5C^{1/2}}.
\end{equation}
By a union bound, conditioned on $h$, for any $n$, with probability at least $1-C(2+\log_2(\fR_{2^n}/\fR_{2^{-n}}))e^{-c_5C^{1/2}}$,  $\area(\eta([\tau_k^-,0]))\geq 2^{2k}C^{-1}$ and $ \area(\eta([0,\tau_k^+]))\geq 2^{2k}C^{-1}$ for any $k\in [\log_2 \fR_{2^{-n}}-1, \log_2 \fR_{2^n} +1]\cap\bbZ$.  In particular, under this event, if $\wh H_0\not\subseteq B(0;{\fR_{2^{-n}}})$ and $\wh H_0\subseteq  B(0;{\fR_{2^n}})$, then one can check that $$\frac{\diam(\wh H_0)^2}{\area(\wh H_0)}\leq 100C.$$
Therefore together with~\eqref{eq:pf-AAA-nnew},~\eqref{eq:pf-AAAnew} and~\eqref{eq:pf-double-edge-0-cone-42}, for some constant $c_6>0$ and all $C$ large enough, 
\begin{equation}\label{eq:pf-double-edge-0-cone-43}
    \bbP_\rmE\big[\frac{\diam(\wh H_0)^2}{\area(\wh H_0)}\geq 100C\big] \leq \bbP\big[ \log_2(\fR_{2^n}/\fR_{2^{-n}})\geq e^{C^{1/4}} \big] + Ce^{-C^{1/4}} + c_6^{-1}2^{-c_6n}.
\end{equation}
Finally, if we choose $n$ such that $c_6^{-1}2^{c_6n}<C^{p+2}<c_6^{-1}2^{c_6(n+1)}$, by~\eqref{eq:Rb-bound} and~\eqref{eq:pf-double-edge-0-cone-43}, we conclude that for some constant $c>0$ and all large enough $C$,
\begin{equation}
    \bbP_\rmE\big[\frac{\diam(\wh H_0)^2}{\area(\wh H_0)}\geq 100C\big] \leq c(e^{-C^{1/5}} + C^{-p-2})\leq 2cC^{-p-2}
\end{equation}
and therefore $ \bbE_\rmE\big[\Big(\frac{\diam(\wh H_0)^2}{\area(\wh H_0)}\Big)^p\big]<\infty.$ The proof for $\bbP_\rmQ$ is identical, where one uses the version of Lemma~\ref{lem:Rb} with $\alpha=\gamma$ and  {the fact that $\bbE_\rmQ[\mu_h(\bbD)^\beta]<\infty$ for $\beta\in (0,1\wedge(\frac{4}{\gamma^2}-1))$ from ~\cite[Lemma A.4]{Tutte}} to replace~\eqref{eq:pf-double-edge-0-cone-42}.
\end{proof}

One consequence is the following analog of Lemma~\ref{lem:diam-cell-max} for $\wh\cH^{\rmE,1/n }$ and $\wh\cH^{\rmQ,1/n }$.
\begin{lemma}\label{lem:diam-cell-max-1}
  Let $q\in(0,2(2+\gamma)^{-2})$.  There exists some constant $c,\alpha>0$ such that 
    \eqb\label{eq:lem:diam-cell-max-1}\bbP_\rmE\big[\diam(\wh H)\leq 2^qn^{-q}, \ \ \text{for all }\,\wh H\in\wh\cH^{\rmE,1/n }_{}(B(0;2/3))\big]\geq 1-cn^{-\alpha}.\eqe
Furthermore, for each compact $K\subseteq\bbC$, the maximal diameter of cells in  $\cH^{\rmE,1/n }(K)$ and $\wh\cH^{\rmE,1/n }(K)$ tends to 0 in probability as $n\to\infty$.   The same holds for $\bbP_\rmQ$ instead of $\bbP_\rmE$.
    \end{lemma}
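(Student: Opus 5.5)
We deduce \eqref{eq:lem:diam-cell-max-1} from Lemma~\ref{lem:diam-cell-max}, Lemma~\ref{lem:double-edge-topology} and the scaling argument in the proof of Proposition~\ref{prop:cell-diam2/area}. Set $\e = 2/n$ in Lemma~\ref{lem:diam-cell-max}, so $\e^q = 2^qn^{-q}$ and $|J|\le \e/2 = 1/n$. With probability at least $1-(2/n)^{\alpha(q,\gamma)+o(1)}$, every interval $J$ with $|J|\le 1/n$ and $\eta(J)\cap B(0;3/4)\neq\emptyset$ has $\diam(\eta(J))\le 2^qn^{-q}$. This covers all cells of $\cH^{\rmE,1/n}(B(0;3/4))$ at once.

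A merged cell of $\wh\cH^{\rmE,1/n}$ has the form $\eta([\frac{a-1}{n},\frac{b}{n}])$, where $(a,b)$ is an outermost double edge. Lemma~\ref{lem:double-edge-topology} gives
\[
\diam \eta([\tfrac{a-1}{n},\tfrac bn]) \le \diam \eta([\tfrac{a-1}{n},\tfrac an]) + \diam \eta([\tfrac{b-1}{n},\tfrac bn]),
\]
so its diameter is at most twice a single-cell diameter. The merged cell also lies in the bounded hull of its two endpoint cells. Hence if the merged cell meets $B(0;2/3)$, one endpoint cell is within distance $2^{q}n^{-q}$ of $B(0;2/3)$, so for large $n$ both endpoint cells meet $B(0;3/4)$. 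To remove the factor $2$, apply Lemma~\ref{lem:diam-cell-max} with $q'$ slightly larger than $q$ (still below $2(2+\gamma)^{-2}$); then $2\cdot 2^{q'}n^{-q'}\le 2^qn^{-q}$ for large $n$, and adjusting $c$ gives the bound for all $n$ with $\alpha=\alpha(q',\gamma)/2$.

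\textbf{Main obstacle: the double edge may be nonlocal.} The endpoints $a,b$ could be far apart in time, with an endpoint cell far from $B(0;3/4)$ even though the merged cell meets $B(0;2/3)$. This must be ruled out with polynomially small probability. The plan is to argue as in the proof of \eqref{eq:pf-AAAnew}, but on scales between $1$ and $1/n$. We use the events that all cells $\eta(J)$ with $|J|\le 2^{-k}$ meeting $B(0;3/4)$ have diameter at most $2^{-kq}$, together with the fact that cells of size $2^{-k}$ cannot be macroscopic.

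Rather than reworking this, it is cleaner to reduce to the unit-scale statement. Proposition~\ref{prop:cell-diam2/area}(i), combined with the scaling $\cH^{\rmE,1/n}\overset{d}{=}\fR_n^{-1}\cH^\rmE$ from Lemma~\ref{lem:Rb} and translation invariance of $Z$, shows the following: for a fixed time, the double edges enclosing it have time span at most $2^m/n$ with probability $1-c^{-1}2^{-cm}$. We would take $2^m=n^{\beta}$ for small $\beta>0$ and union bound over the $O(n^{1+\beta'})$ times $x/n$ with $\eta(x/n)$ in a neighborhood of $B(0;3/4)$; by Lemma~\ref{lem:SLE-contain-1} these times lie in $[-n^{\beta'}, n^{\beta'}]$ with high probability. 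This step needs care: the tail $2^{-cm}=n^{-c\beta}$ is not strong enough to union bound over $n$ points. The fix is to run the event $F_n$ argument from Proposition~\ref{prop:cell-diam2/area} directly at all scales simultaneously. That argument is deterministic given the events $A_k\cap B_k$, so it controls every double edge meeting the region at once. It shows that any double edge whose merged cell meets $B(0;2/3)$ has merged diameter at most $2\cdot 2^{q'}n^{-q'}$, with no union bound over vertices needed.

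\textbf{Second claim and the $\bbP_\rmQ$ case.} For a compact $K\subseteq B(0;R)$, we rescale by Lemma~\ref{lem:Rb}. With $b$ chosen so that $\fR_b\ge 2R$ with high probability, the first part applied at that scale shows the maximal cell diameter in $\wh\cH^{\rmE,1/n}(K)$ is $O(\fR_b n^{-q}b^{q})\to0$ in probability. The same bound holds for $\cH^{\rmE,1/n}(K)$, whose cells are subsets of $\wh\cH$ cells. The $\bbP_\rmQ$ case is identical, using the $\bbP_\rmQ$ versions of Lemmas~\ref{lem:Rb}, \ref{lem:diam-cell-max} and \ref{lem:SLE-contain-1}.
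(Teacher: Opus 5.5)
There is a genuine gap, at exactly the step you flag as the main obstacle.

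In your second paragraph you claim that an endpoint cell of a merged cell $\wh H$ meeting $B(0;2/3)$ lies within $2^qn^{-q}$ of $B(0;2/3)$. You justify this using the smallness of the endpoint cells, but you only know they are small once they meet $B(0;3/4)$. The claim can also fail outright when $B(0;2/3)\subseteq \wh H$.

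Your proposed fix does not repair this as stated. Consider the argument behind \eqref{eq:pf-AAAnew}, run on the event $F_m=\bigcap_{k\geq m}(A_k\cap B_k)$ with $m$ a scale index.
\begin{itemize}
\item It controls only merged cells whose time interval contains $0$. Its key step $\diam(\eta([a-1,b]))\geq \fR_{2^{N-1}}/2$ uses $\eta(0)=0\in\eta([a-1,b])$.
\item It runs only over large scales $k\geq m$. Including small $k$ destroys the bound $\bbP[F_m^c]\lesssim 2^{-c_1m}$.
\item For a merged cell that merely meets $B(0;2/3)$ at mesh $1/n$, you must subtract the radius $2/3$ in the key step. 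So the argument cannot go below scales comparable to the region. It yields at most containment in a large ball, not the diameter bound $2\cdot 2^{q'}n^{-q'}$ you assert.
\end{itemize}
This route can be salvaged: after rescaling, run the argument down to $2^m=n^{1+\delta}$, then apply Lemma~\ref{lem:diam-cell-max} in the larger ball with $q'>q$. But that is a different argument and is not in your proposal.

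The missing idea is a topological reduction that makes the nonlocality harmless; the paper uses it.
\begin{itemize}
\item By Lemma~\ref{lem:double-edge-topology}, the outer boundary of $\wh H$ lies in the union of its two endpoint cells.
\item If $\wh H$ meets $B(0;2/3)$ but does not contain it, that outer boundary meets $\ol{B(0;2/3)}$. Hence some endpoint cell meets $\ol{B(0;2/3)}$, with no a priori size information needed.
\item On the event of Lemma~\ref{lem:diam-cell-max} that cell is small. The other endpoint cell intersects it, so it also meets $B(0;3/4)$, and your factor-$2$ bound applies.
\end{itemize}
The paper phrases this as a split on the larger endpoint cell: either it meets $B(0;3/4)$, or the smaller one crosses the annulus $B(0;3/4)\setminus B(0;2/3)$ and is macroscopic.

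The only residual bad event is $\{B(0;2/3)\subseteq\wh H\}$. This forces $\wh H$ to be the merged cell containing $0$, which is exactly the single cell that \eqref{eq:pf-AAAnew} controls. Use $\wh\cH^{\rmE,1/n}\overset{d}{=}\fR_n^{-1}\wh\cH^{\rmE}$, apply \eqref{eq:pf-AAA-nnew}--\eqref{eq:pf-AAAnew} with $2^m\approx n^{1-\delta}$, and use \eqref{eq:Rb-bound} to ensure $\fR_{2^m}/2<2\fR_n/3$. Then this event has probability $O(n^{-c})$. No union bound over vertices and no new multiscale argument is needed.

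The rest of your proposal is fine: the $q'>q$ device and the rescaling for compact $K$. One correction: the rescaled bound is of order $\fR_b(nb)^{-q}$, not $\fR_bn^{-q}b^{q}$.
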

\begin{proof}
    By Lemma~\ref{lem:diam-cell-max}, \eqb\label{eq:pf-lem:diam-cell-max-1}\bbP_\rmE\big[\diam(H)\leq n^{-q}/4, \ \text{ for all }\,H\in \cH^{\rmE,1/n }_{}( B(0;3/4))\big]\geq 1-C_1n^{-\alpha_1}\eqe for some $C_1,\alpha_1>0$. For each $\wh H\in \wh\cH^{\rmE,1/n }_{\ }(B(0;2/3) )$,  we claim that if $B(0;2/3) \nsubseteq \wh H$, then there   exists   $H\in \cH^{\rmE,1/n }( B(0;3/4))$ such that $\diam(H)\geq \min\{\diam(\wh H)/2, 0.01\}$. We can express $\wh H = \eta([a,b])$. By Lemma~\ref{lem:double-edge-topology}, the outer boundary of $\wh H$ is contained within $\eta([a-\frac1n,a])\cup\eta([b-\frac{1}{n},b])$. Consider  the cell $\eta([a-\frac1n,a])$ or $\eta([b-\frac{1}{n},b])$ with larger diameter. If this cell intersects $B(0;3/4)$, $\diam(\wh H)$ is controlled by two times the diameter of this cell and  the claim holds . Otherwise the other cell must contain a path crossing $B(0;3/4)\backslash B(0;2/3)$ and the claim is also true.
    By the scale invariance in Lemma~\ref{lem:Rb},~\eqref{eq:pf-AAA-nnew} and~\eqref{eq:pf-AAAnew}, the   event that $B(0;2/3)$ is contained within a single cell in $\wh\cH$ has probability at most $C_2n^{-\alpha_2}$ for some $C_2,\alpha_2>0$. Then~\eqref{eq:lem:diam-cell-max-1} follows from~\eqref{eq:pf-lem:diam-cell-max-1} and our claim. Now we scale by $\fR_j^{-1}$ as in Lemma~\ref{lem:Rb}, and $\fR_j^{-1}\wh\cH^{\rmE,1/n }_{}$ has the same law as $\wh\cH^{\rmE,1/n j}_{ }$. The second claim then follows by applying~\eqref{eq:lem:diam-cell-max-1} and sending $j\to\infty$ sufficiently slower than $n\to\infty$.  The proof for $\bbP_\rmQ$ is similar.
\end{proof}

Next we work with the moment bound for degrees of the cell $H_0$ and $\wh H_0$. We will prove the exponential tail in  the setting of $\bbP_\rmQ$ in Lemma~\ref{lem:prop:0-cone-deg-3} and use the following comparison between $\bbP_\rmQ$ and $\bbP_\rmE$ to transfer to the setting of $\bbP_\rmE$.
 {Let $D(\bbR,\bbR^2)$ be the space of functions from $\bbR$ to $\bbR^2$. Then following from ~\cite[Lemma 4.8]{Tutte}, for any function $G_1:D(\bbR,\bbR^2)\times \Omega_\fD\to \{0,1\}$ such that the event
    \begin{equation}
          A_1:=\{\exists s\in I(0):  G_1(Z(\cdot+s) - Z(s), \fD-s) =  1\}
    \end{equation}
    is in $\sigma(Z|_{I(k)}, \fD)$, one has
    \begin{equation}\label{eq:0-gamma-comparison-00}
        \bbP_\rmE [A_1]\leq \bbP_\rmQ[ A_1]^{1/2}2^{C_0k} + 2^{-k/C_0}
    \end{equation}
    where $C_0$ is some constant depending only on $\gamma$. Then combining~\eqref{eq:0-gamma-comparison-00} with the same argument as ~\cite[Corollary 4.3]{Tutte}, we have the following. 
    \begin{proposition}\label{prop:0-cone-gamma-compare}
    Consider functions $G, G_1,G_2,...:D(\bbR,\bbR^2)\times \Omega_\fD\to\{0,1\}$.  For $k\in\bbN$, define
    \begin{equation*}
    \begin{split}
    & A:=\{\exists s\in I(0):  G(Z(\cdot+s) - Z(s), \fD-s) =  1\}
    \\& A_k:=\{\exists s\in I(0):  G_k(Z(\cdot+s) - Z(s), \fD-s) =  1\}. 
    \end{split}
    \end{equation*}
    Suppose for any $k\in\bbN$, $A_k\in \sigma(Z|_{I(k)}, \fD)$ (i.e., $A_k$ is measurable with respect to $(Z|_{I(k)},\fD)$), $A\in\sigma(Z,\fD)$, and $A_k\subseteq A_{k+1}\subseteq A$. Also assume there is some constant $c>0$ such that  {$\bbP_\rmE[\mathds{1}_{A_k}\neq \mathds{1}_{A}]<c^{-1}2^{{-}ck}$}. Then there exists some constant $\e>0$ depending only on $c$ {and $\gamma$} such that
    \begin{equation}\label{eq:0-cone-gamma-compare}
        \bbP_\rmE[A]\leq \e^{-1}\bbP_\rmQ[A]^\e.
    \end{equation}
    \end{proposition}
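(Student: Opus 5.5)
The plan is to derive \eqref{eq:0-cone-gamma-compare} from the one-step comparison \eqref{eq:0-gamma-comparison-00}, applied to the approximating events $A_k$, and then to optimize over $k$.

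\textbf{Step 1: approximate $A$ by $A_k$.} For each $k\in\bbN$ the event $A_k$ has the required form $\{\exists s\in I(0): G_k(Z(\cdot+s)-Z(s),\fD-s)=1\}$ and lies in $\sigma(Z|_{I(k)},\fD)$. So \eqref{eq:0-gamma-comparison-00} applies with $G_1=G_k$ and gives
\[
\bbP_\rmE[A_k]\leq \bbP_\rmQ[A_k]^{1/2}2^{C_0k}+2^{-k/C_0}.
\]
Since $A_k\subseteq A$, we have $\bbP_\rmQ[A_k]\leq \bbP_\rmQ[A]$. Using the hypothesis $\bbP_\rmE[\mathds{1}_{A_k}\neq\mathds{1}_A]<c^{-1}2^{-ck}$, we obtain for every $k$:
\[
\bbP_\rmE[A]\leq \bbP_\rmQ[A]^{1/2}2^{C_0k}+2^{-k/C_0}+c^{-1}2^{-ck}.
\]

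\textbf{Step 2: optimize over $k$.} Write $p=\bbP_\rmQ[A]$.
\begin{itemize}
\item If $p=0$, let $k\to\infty$; this gives $\bbP_\rmE[A]=0$.
\item If $p$ is bounded below, say $p\geq 2^{-4C_0}$, the bound is trivial once $\e$ is small, because $\bbP_\rmE[A]\leq 1\leq \e^{-1}p^\e$.
\item Otherwise choose $k=\lfloor \log_2(1/p)/(4C_0)\rfloor\geq 1$. Then $2^{C_0k}\leq p^{-1/4}$, so the first term is at most $p^{1/4}$. The remaining terms are at most $2\cdot 2^{-k/C_0}+c^{-1}2^{-ck}\lesssim p^{a}$, with $a=\min\{1/(4C_0^2),c/(4C_0)\}$ up to constants depending only on $c$ and $C_0$.
\end{itemize}
Taking $\e=\min\{1/4,a\}$, further reduced to absorb the multiplicative constants, yields $\bbP_\rmE[A]\leq \e^{-1}p^\e$. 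The constant $\e$ depends only on $c$ and $\gamma$, since $C_0=C_0(\gamma)$.

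\textbf{Main obstacle.} The only delicate point is the measurability requirement in \eqref{eq:0-gamma-comparison-00}: the input event must be measurable with respect to $(Z|_{I(k)},\fD)$ and have the stated existential form over $s\in I(0)$. This is exactly why the approximation goes through $A_k$ and not through $A$ directly. Everything else is the elementary balancing above, mirroring \cite[Corollary 4.3]{Tutte}. The monotonicity $A_k\subseteq A_{k+1}$ is not needed beyond $A_k\subseteq A$.
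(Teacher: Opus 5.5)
Your proposal is correct and takes the same approach as the paper. The paper simply applies \eqref{eq:0-gamma-comparison-00} to the events $A_k$, uses $A_k\subseteq A$ and the exponential approximation hypothesis, and balances over $k$ as in \cite[Corollary 4.3]{Tutte}; your Step 2 makes that balancing explicit by choosing $k\asymp \log_2(1/\bbP_\rmQ[A])$.
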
}

    \begin{proposition}\label{prop:0-cone-deg-1}
     Let $H_0$ be the cell in $\cH^\rmE_{ }$ containing 0 and $ \wh H_0$ be the cell in $\wh\cH^\rmE_{ }$ containing 0. Then for any $p>0$, 
    \begin{equation}\label{eq:prop:0-cone-deg-1}
    \bbE_\rmE [\deg(  H_0)^p]<\infty, \ \ \     \bbE_\rmE [\deg( \wh H_0)^p]<\infty.
    \end{equation}
\end{proposition}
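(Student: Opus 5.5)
The plan is to prove an exponential (or at least stretched-exponential) tail for the degree under $\bbP_\rmQ$, and then move it to $\bbP_\rmE$ with Proposition~\ref{prop:0-cone-gamma-compare}. The paper already announces the $\bbP_\rmQ$ bound as Lemma~\ref{lem:prop:0-cone-deg-3}.

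\textbf{Step 1: the tail under $\bbP_\rmQ$.} Under $\bbP_\rmQ$, the increments $Z(\cdot+s)-Z(s)$ form a two-sided Brownian motion for every $s$. The degree of the cell $\eta([x-1,x])$ in $\cG^\rmQ$ is then a Brownian functional. It counts the integers $y$ such that the running infimum of $L$ (or $R$) over $[x,y-1]$ is at least the minimum over one of the two endpoint unit intervals. Standard estimates give, uniformly in $x$,
\[
\bbP_\rmQ[\deg\ge M]\lesssim e^{-cM}.
\]
The reason is that each new neighbour to the right requires a new running infimum of $L$ or $R$ over a unit block, and the number of such records before the walk drops below the local minimum has a geometric tail.

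For $\wh H_0$ we also need the collapsed map. The cell $\wh H_0=\eta([a,b])$ arises from merging inside an outermost double edge, and its neighbours in $\wh\cG$ are neighbours in $\cG$ of the vertices $a,b$ or of the absorbed vertices. In fact, by Lemma~\ref{lem:double-edge-topology}, they are neighbours of the boundary vertices $a$ and $b$, so $\deg(\wh H_0)\le \deg(a)+\deg(b)$. Proposition~\ref{prop:cell-diam2/area}(i) gives $|a|,|b|\le 2^m$ outside an event of probability $2^{-cm}$. A union bound over $x\in[-2^m,2^m]$, taking $2^m\approx M$, then yields a polynomial tail of every order for $\deg(\wh H_0)$ under $\bbP_\rmQ$.

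\textbf{Step 2: setting up the transfer.} Fix $M$. Let $A=A^{(M)}$ be the event that for some $s\in I(0)$ the cell containing $\eta(s)$, read in the recentered process, has degree at least $M$. Since $0\in I(0)$ this covers $H_0$. Because $I(0)$ has length $1$, the supremum over $s$ of the degree is controlled by the sum of the degrees of at most two cells, so Step 1 still applies: $\bbP_\rmQ[A]\lesssim_p M^{-p}$ for every $p$.

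Let $A_k$ be the same event with the additional requirement that all relevant neighbours and double edges lie in $I(k)$. Then $A_k\in\sigma(Z|_{I(k)},\fD)$ and $A_k\uparrow A$. The bound $\bbP_\rmE[A\setminus A_k]\lesssim 2^{-ck}$ follows from Proposition~\ref{prop:cell-diam2/area}(i) under $\bbP_\rmE$, together with the fact that an edge from a cell near $0$ to a vertex outside $I(k)$ is unlikely. The latter holds because such an edge produces a cell of diameter comparable to $\fR_{2^k}$ near the origin, which Lemma~\ref{lem:diam-cell-max} and Lemma~\ref{lem:Rb} rule out with probability $1-O(2^{-ck})$.

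\textbf{Step 3: conclusion and the main obstacle.} Proposition~\ref{prop:0-cone-gamma-compare} gives
\[
\bbP_\rmE[\deg(H_0)\ge M]\le \e^{-1}\bbP_\rmQ[A]^\e\lesssim M^{-\e p},
\]
and similarly for $\deg(\wh H_0)$. Since $p$ is arbitrary and $\e$ depends only on $c$ and $\gamma$, not on $M$, all moments are finite.

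The main obstacle is Step 2. The constant $c$ in $\bbP_\rmE[\mathds 1_{A_k}\ne\mathds 1_A]<c^{-1}2^{-ck}$ must be uniform in $M$, because otherwise $\e$ would depend on $M$. The localization estimates used there do not involve $M$ at all, so I expect this uniformity to hold.
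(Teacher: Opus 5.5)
Your strategy is the same as the paper's. You prove an exponential tail under $\bbP_\rmQ$ from a per-vertex degree tail, Proposition~\ref{prop:cell-diam2/area}(i) and a union bound, then transfer it with Proposition~\ref{prop:0-cone-gamma-compare} using a localization rate independent of $M$. Your reduction of $\deg(\wh H_0)$ to the $\cG$-degrees of the endpoints of the outermost double edge is also fine, since $\Phi_{\rm simple}$ creates no edges. However, two steps need repair.

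\textbf{The choice $2^m\approx M$.} This does not give a tail of every order. With it, the term $\bbP_\rmQ[\Lambda\nsubseteq[-2^m,2^m]]\lesssim 2^{-cm}$ equals $M^{-c}$ for the \emph{fixed} constant $c$ of Proposition~\ref{prop:cell-diam2/area}(i). After the transfer you then only get $\bbP_\rmE[\deg(\wh H_0)\ge M]\lesssim M^{-\e c}$, i.e.\ finitely many moments. You need $m$ linear in $M$. The paper takes $m=\lceil c_2M/2\rceil$, where $\bbP_\rmQ[\deg(x)\ge M/16]\lesssim 2^{-c_2M}$, so that $2^{-cm}+2^{m+2}2^{-c_2M}$ decays exponentially in $M$ (Lemma~\ref{lem:prop:0-cone-deg-3}).

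\textbf{The events $A_k$ for $\wh H_0$.} These cannot be obtained by asking that ``the relevant double edge'' lie in $I(k)$. Whether a double edge enclosing $\theta$ is outermost is not determined by $Z|_{I(k)}$. If you instead use the outermost double edge inside $I(k)$, the resulting events are neither increasing in $k$ nor contained in $\{\deg(\wh H_0)\ge M\}$.

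The paper handles this by replacing the degree event with a larger surrogate that quantifies existentially over \emph{all} double edges enclosing $\theta$: $E=\{\exists(a,b)\in\Lambda: N(a,b)>M/4-4\}$. The localized version $E_k$ counts only double edges and neighbours inside $I(k)$. Then $E_k\subseteq E_{k+1}\subseteq E$, $E_k\in\sigma(Z|_{I(k)},\fD)$, and $\{\deg(\wh H_0)\ge M\}\subseteq E$. Your union bound controls $\bbP_\rmQ[E]$ just as well.

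Finally, your localization claim that ``a neighbour outside $I(k)$ forces a macroscopic cell near the origin'' also needs $B(0;\fR_{2^n})\subseteq\eta(I(2n))$ with probability $1-O(2^{-cn})$. This comes from Lemma~\ref{lem:SLE-contain-1} together with $[-2^{1.5n},2^{1.5n}]\subseteq I(2n)$. Without it, a vertex outside $I(k)$ need not correspond to a cell far from the origin.
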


Let $M>400$.   Recall the notation $\Lambda:=\mathrm{Double}_\theta$ defined below~\eqref{eq:pf-AAA-nnew}, which is a.s.\ finite by Proposition~\ref{prop:cell-diam2/area}. Let $\Lambda_k\subseteq \Lambda$ be the set of $(a,b)$ where $[a-1,b]\subseteq I(k)$. Let $N_k(a,b) = \#\{x\in\theta+\bbZ:[x-1,x]\subseteq I(k), x\sim a \text{ or } x\sim b\}$, $N(a,b) = \#\{x\in\bbZ+\theta:x\sim a \text{ or } x\sim b\}$, where $\sim$ is the adjacency in the original graph $\cG^\rmE$ or $\cG^\rmQ$. Set $E_k = \{\exists (a,b)\in \Lambda_k: N_k(a,b)>M/4-4\}$, $E = \{\exists (a,b)\in \Lambda: N(a,b)>M/4-4\}$. Since $(\theta,\theta)\in\Lambda$, by definition $E^c\subseteq \{\deg (H_0)<M,\deg(\wh H_0)<M\}$. Let $G_k = \mathds{1}_{E_k}$ and $G = \mathds{1}_E$  be the indicator function for $E_k$ and $E$, respectively,   and let $(A_k,A)$ be defined in terms of $(G_k,G)$ as in Proposition~\ref{prop:0-cone-gamma-compare}.

 \begin{lemma}\label{lem:prop:0-cone-deg-1}
     Let $A,A_k$ be defined as above. Then $A_k=E_k$ and $A=E$. Furthermore, $A_k\in \sigma(Z|_{I(k)},\fD)$, $A\in\sigma(Z,\fD)$, and  $A_k\subseteq A_{k+1}\subseteq A$.
 \end{lemma}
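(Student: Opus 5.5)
The plan is to unwind the definitions. By Proposition~\ref{prop:0-cone-gamma-compare}, $A_k$ is the event that there is $s\in I(0)$ with $G_k(Z(\cdot+s)-Z(s),\fD-s)=1$. The map $G_k$ is the indicator of $E_k$, read off from the shifted process and shifted dyadic system. So the first step is to check that $E_k$ is invariant under the simultaneous shift $(Z,\fD,\theta)\mapsto(Z(\cdot+s)-Z(s),\fD-s,\theta-s)$ for $s\in I(0)=[\theta-1,\theta)$.

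This invariance holds for three reasons. First, the adjacency relation~\eqref{eq:def-mated-CRT} depends only on increments of $Z$. Second, the vertex set $\theta+\bbZ$ is preserved, since shifting by $s$ relabels the vertex $x$ as $x-s$ and $\theta-s\in(0,1]$ is the new base point. Third, the intervals $I(k)$ are shifted along with $\fD$. The one subtlety is that the base point defining $\mathrm{Double}$ changes from $\theta$ to $\theta-s$. To handle it, I would view $G_k$ as evaluated at the vertex whose cell contains time $0$ in the shifted picture, which is the vertex $\theta$ in the original labelling. Since $s$ ranges over a full unit interval, the existential quantifier over $s$ is harmless: every $s$ produces the same event, namely $E_k$. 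Hence $A_k=E_k$, and likewise $A=E$.

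For measurability, I would observe that membership of $(a,b)$ in $\Lambda_k$ requires $[a-1,b]\subseteq I(k)$. Under that constraint, both inequalities of~\eqref{eq:def-mated-CRT} for $(a,b)$ involve only $Z$ on $[a-1,b]\subseteq I(k)$. The count $N_k(a,b)$ involves only vertices $x$ with $[x-1,x]\subseteq I(k)$, and adjacency between two such cells again depends only on $Z|_{I(k)}$, because the relevant infima are over intervals inside $I(k)$. Also, $\theta$ and $I(k)$ are determined by $\fD$. Therefore $E_k\in\sigma(Z|_{I(k)},\fD)$, and clearly $E\in\sigma(Z,\fD)$.

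For monotonicity, I would use $I(k)\subseteq I(k+1)$. This gives $\Lambda_k\subseteq\Lambda_{k+1}\subseteq\Lambda$ and $N_k(a,b)\le N_{k+1}(a,b)\le N(a,b)$, so $E_k\subseteq E_{k+1}\subseteq E$. The main point requiring care is the shift-invariance bookkeeping in the identification $A_k=E_k$; I would state explicitly that $G_k$ is defined through the base vertex of the cell containing time $0$, so that this step becomes a tautology.
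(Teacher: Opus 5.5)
Your proposal is correct and follows the paper's proof. Both arguments rest on the invariance of $E_k$ and $E$ under adding constants to $Z$ and under the shift $(Z,\fD)\mapsto(Z(\cdot+s)-Z(s),\fD-s)$ for $s\in I(0)$, which, as you observe, preserves the base vertex $\theta$ and the intervals $I(k)$ up to relabeling; measurability and the monotonicity $A_k\subseteq A_{k+1}\subseteq A$ are then read off directly from the definitions and from $I(k)\subseteq I(k+1)$.
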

 \begin{proof}
 The first claim is immediate from the definition since the definition of $E_k$ and $E$ is invariant if we add random constants to $Z$, and the fact that the events $E_k$ and $E$ are invariant under translation $(Z,\cD)\mapsto (Z(\cdot+s)-Z(s),\cD-s)$ when $s\in I(0)$. The other claims are then straightforward from the definition of $E_k$ and $E$.
 \end{proof}

\begin{lemma}\label{lem:prop:0-cone-deg-2}
  There is some constant $c>0$ such that $\bbP_\rmE[\mathds{1}_{A_k}\neq \mathds{1}_{A}]\leq c^{-1}2^{{-}ck}$.
\end{lemma}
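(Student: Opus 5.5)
We bound $\bbP_\rmE[\mathds{1}_{A_k}\neq\mathds{1}_A]$ by showing that, outside an event of probability $O(2^{-ck})$, the events $E_k$ and $E$ agree. By Lemma~\ref{lem:prop:0-cone-deg-1} we have $A_k=E_k\subseteq E=A$, so it suffices to bound $\bbP_\rmE[E\setminus E_k]$.

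\textbf{Reduction to a localization event.} Suppose $E$ holds but $E_k$ fails. Then either some $(a,b)\in\Lambda$ has $[a-1,b]\not\subseteq I(k)$, or some $(a,b)\in\Lambda_k$ has $N(a,b)\neq N_k(a,b)$. The latter means some neighbor $x$ of $a$ or $b$ has $[x-1,x]\not\subseteq I(k)$. In either case there is a vertex at distance at most $1$ in $\cG^\rmE$ from a pair in $\Lambda$ whose cell is not contained in $I(k)$. Let $F_k'$ be the event that all these cells satisfy
\[
\bigcup_{(a,b)\in\Lambda}\ \bigcup_{x\sim a\text{ or }x\sim b\text{ or }x\in\{a,b\}}[x-1,x]\subseteq [-2^{k/2},2^{k/2}].
\]
We then need two bounds: $\bbP_\rmE[(F_k')^c]\lesssim 2^{-ck}$, and the probability that $[-2^{k/2},2^{k/2}]\not\subseteq I(k)$ is $\lesssim 2^{-ck}$. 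The second is a property of the uniform dyadic system: $I(k)$ has length $2^k$ and contains $I(0)\ni 0$, and the probability that $0$ lies within $2^{k/2}$ of an endpoint of $I(k)$ is $O(2^{-k/2})$, using a union bound over scales or the uniform distribution of $0$'s relative position.

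\textbf{Bounding $(F_k')^c$.} By Proposition~\ref{prop:cell-diam2/area}(i) with $m=k/4$, with probability at least $1-c^{-1}2^{-ck/4}$ every pair $(a,b)\in\Lambda$ lies in $[-2^{k/4},2^{k/4}]$. It remains to control neighbors of vertices in $[-2^{k/4},2^{k/4}]$.

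Any $x\sim a$ with $|x|>2^{k/2}$ would require, by~\eqref{eq:def-mated-CRT}, that the running infimum of $L$ or $R$ over $[a,x-1]$ be at least the minimum over a unit interval at $a$ or at $x$. We handle this spatially. Arguing as for~\eqref{eq:pf-AAAnew}, on the event $F_n$ with $n\approx k/8$, adjacent cells intersect, so $\eta([x-1,x])$ meets a cell inside $B(0;\fR_{2^n}/2)$ and has diameter at most $2^{-Nq}\fR_{2^N}$ at its own scale. Hence $\eta([x-1,x])\subseteq B(0;\fR_{2^{n+1}})$. Lemma~\ref{lem:SLE-contain-1} then gives $B(0;\fR_{2^{n+1}})\subseteq\eta([-2^{k/2},2^{k/2}])$ with probability $1-O(2^{-ck})$ by~\eqref{eq:Rb-gammaQ}, forcing $[x-1,x]\subseteq[-2^{k/2},2^{k/2}]$ up to a null overlap issue. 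The same minimality argument as in the proof of~\eqref{eq:pf-AAAnew} (taking $N$ minimal with the cell in $B(0;\fR_{2^N}/2)$) excludes large far cells.

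\textbf{Expected obstacle.} The main difficulty is making the spatial-containment step rigorous: a cell adjacent to a cell near the origin can have a large time index yet lie spatially close. Converting spatial inclusion into time inclusion requires $B(0;r)\subseteq\eta([-T,T])$, and the exponents from~\eqref{eq:Rb-gammaQ} must be matched against the range needed by Lemma~\ref{lem:SLE-contain-1}. Combining the three exceptional events gives the claim.
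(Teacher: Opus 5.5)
Your proposal is correct and follows essentially the same route as the paper. On $F_n$, \eqref{eq:pf-AAAnew} places the cells of all pairs in $\Lambda$ in $B(0;\fR_{2^n}/2)$, the event $A_n$ then confines their neighbors to $B(0;3\fR_{2^n}/4)$, and \eqref{eq:Rb-gammaQ}, Lemma~\ref{lem:SLE-contain-1} and the uniform dyadic system turn this into containment of the time intervals in $I(k)$. The only difference is the choice of scales: the paper takes $k=2n$ with window $[-2^{1.5n},2^{1.5n}]$, where you take $n\approx k/8$ with $[-2^{k/2},2^{k/2}]$.

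Your separate appeal to Proposition~\ref{prop:cell-diam2/area}(i) and the closing minimality remark are redundant. The spatial-to-time conversion you flag as the main obstacle is immediate: $\eta$ is parameterized by $\mu_h$, and images of intervals with disjoint interiors overlap in a $\mu_h$-null set.
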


 \begin{proof}
Let $A_n,B_n,F_n$ be the events considered in the proof of Proposition~\ref{prop:cell-diam2/area}.  By~\eqref{eq:pf-AAAnew}, under the event $F_n$, further using the event $A_n$, we have $$\{\eta([x-1,x]):x\in\bbZ+\theta, x\sim a \text{ or } x\sim b \text{ for some } (a,b)\in\Lambda\}\subseteq B(0;3\fR_{2^n}/4),$$
and therefore $\Lambda_{2n}=\Lambda$ and $N_{2n}(a,b)=N(a,b)$ for each $(a,b)\in \Lambda$ under the event $F_n\cap \{B(0;\fR_{2^n})\subseteq \eta(I(2n))\}$.
Combining~\eqref{eq:Rb-gammaQ} along with~\eqref{eq:SLE-contain-ball-1}, we have $\bbP_\rmE[B(0;\fR_{2^n})\nsubseteq \eta([-2^{1.5n}, 2^{1.5n}])]\lesssim 2^{-cn}$, and using the definition of $\fD$, we have $\bbP_\rmE\big[[-2^{1.5n},2^{1.5n}]\nsubseteq I(2n)\big]\lesssim 2^{-cn}$. This gives $\bbP_\rmE[B(0;\fR_{2^n})\nsubseteq \eta(I(2n))]\lesssim 2^{-cn}$, and together with~\eqref{eq:pf-AAAnew} gives the claim.
 \end{proof}

\begin{lemma}\label{lem:prop:0-cone-deg-3}
    There exists $c>0$ such that $\bbP_\rmQ[A]\leq c^{-1}2^{-cM}$ for $M\geq400$. 
\end{lemma}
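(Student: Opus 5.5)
The plan is to reduce the event $A=E$ to a degree bound for single vertices of the mated-CRT map under $\bbP_\rmQ$. Under $\bbP_\rmQ$ the process $Z$ is a two-sided Brownian motion with $Z(0)=0$, independent of $\theta$ and $\fD$, so its increments are stationary. The first observation is deterministic: since $N(a,b)\le \deg(a)+\deg(b)$ in $\cG^\rmQ$, the event $E$ is contained in
\[
\{\exists (a,b)\in\Lambda:\ \deg(a)>M/8-2 \ \text{or}\ \deg(b)>M/8-2\}.
\]
We then localize $\Lambda$. Fix a small $\varepsilon>0$, to be chosen at the end, and set $m=\lfloor \varepsilon M\rfloor$. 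By Proposition~\ref{prop:cell-diam2/area}(i) for $\bbP_\rmQ$, with probability at least $1-c^{-1}2^{-c\varepsilon M}$ every $(a,b)\in\Lambda$ satisfies $a,b\in[-2^m,2^m]$. On this event,
\[
E\subseteq \bigcup_{x\in(\theta+\bbZ)\cap[-2^m,2^m]}\{\deg(x)>M/8-2\},
\]
which is a union of at most $2^{m+1}+2$ events.

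The key step is an exponential tail for the degree of a fixed vertex: there should be $c'>0$ with $\bbP_\rmQ[\deg(x)>K]\le c'^{-1}e^{-c'K}$ for all $x\in\theta+\bbZ$ and $K>0$. By stationarity of the increments of $Z$ (conditioning on $\theta$, and noting that~\eqref{eq:def-mated-CRT} depends only on increments), it is enough to treat the vertex $x=\theta$. We split the neighbors into four classes: those $x'>\theta$ adjacent via $L$, those adjacent via $R$, and the same two classes for $x'<\theta$ (the latter by time reversal). For the forward $L$-neighbors, condition~\eqref{eq:def-mated-CRT} says that $x'$ is adjacent to $\theta$ only if $\inf_{[x'-1,x']}L\le \inf_{[\theta,x'-1]}L$, while $\inf_{[\theta,x'-1]}L\ge \inf_{[\theta-1,\theta]}L=:I$. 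Such $x'$ therefore lie before the first time $L$ drops below $I$. Moreover, each such unit interval $[x'-1,x']$ must contain a new running minimum of $L$ on $[\theta,\cdot]$.

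So the number of forward $L$-neighbors is at most the number of unit intervals $[j-1,j]$, $j>\theta$, on which $L$ attains a new running infimum before hitting level $I$. We plan to bound this by a geometric tail using the Markov property at integer times. Given the past, each unit step in which $L$ makes a new minimum while staying above $I$ has a conditional probability, bounded away from $0$, that $L$ then drops by more than its current distance to $I$ within the next unit step; this is where the argument stops. The obstacle is that this distance is not bounded. We will handle it by first using the Gaussian tail of $\sup_{[\theta-1,\theta]}|L-L(\theta)|$, which controls the distance to $I$, and then checking that each new record only decreases that distance. The same bound applies to $R$ and to the backward direction.

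Finally, we combine the pieces. The union bound gives
\[
\bbP_\rmQ[A]\le c^{-1}2^{-c\varepsilon M}+(2^{\varepsilon M+1}+2)\,c'^{-1}e^{-c'(M/8-2)}.
\]
Choosing $\varepsilon$ small compared with $c'$ makes the second term exponentially small in $M$, which yields the claimed bound $\bbP_\rmQ[A]\le c^{-1}2^{-cM}$ for a new constant $c>0$ and all $M\ge 400$.
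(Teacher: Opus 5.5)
Your reduction is the same as the paper's. The paper also localizes $\Lambda$ inside $[-2^m,2^m]$ with $m\asymp M$ via Proposition~\ref{prop:cell-diam2/area}(i) for $\bbP_\rmQ$. It then takes a union bound over the $O(2^m)$ vertices in that window, using an exponential tail for the degree of a fixed vertex. The paper does not prove that tail; it quotes it from \cite[Lemma 2.5]{GMS19har} together with translation invariance of Brownian motion. If you cite that lemma, your proof is complete. The self-contained derivation you sketch, however, has a genuine gap exactly where you say it stops.

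\textbf{Why the sketch fails.} Asking $L$ to cross the whole remaining distance $D$ to $I=\inf_{[\theta-1,\theta]}L$ within one unit of time has probability of order $e^{-cD^2}$, which is not bounded below. Your repair does not change this. The Gaussian tail of $X:=L(\theta)-I$ only lets you truncate at some level $C_0$. Then $\bbP[X>C_0]$ and the per-step success probability $q(C_0)$ are both of order $e^{-cC_0^2}$. So $\bbP[X>C_0]+(1-q(C_0))^K$ decays at best polynomially in $K$, which cannot absorb the factor $2^{\varepsilon M}$ in your union bound. Also, ``a unit step in which $L$ makes a new minimum'' is not a stopping-time event, so you cannot condition on it and apply the Markov property to that same step.

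\textbf{How to fix it.} Require only a unit drop per stage, and count successes.
Set $\tau_1=\theta$ and $\tau_{s+1}=\inf\{t\ge\tau_s+2: L_t=\inf_{[\theta,t]}L\}$. Let $\sigma$ be the first time after $\theta$ that $L$ goes below $I$.
Every record time after $\theta$ lies in some window $[\tau_s,\tau_s+2]$, and each window meets at most four grid intervals. Hence the number of forward $L$-neighbours is at most $4(\#\{s:\tau_s\le\sigma\}+1)$.
By the strong Markov property, the events $\{\inf_{[\tau_s,\tau_s+2]}L\le L_{\tau_s}-1\}$ are i.i.d.\ Bernoulli$(p)$ with $p>0$. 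Each success lowers the running infimum by at least $1$.
So $\tau_{K+1}\le\sigma$ forces the number of successes among the first $K$ stages, a $\mathrm{Bin}(K,p)$ variable, to be at most $X$. This has probability at most $\bbP[X\ge pK/2]+e^{-cK}$.
Since $X$ has a Gaussian tail, this gives the exponential tail. The other three classes of neighbours are handled in the same way.
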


\begin{proof}
   By the first claim of Proposition~\ref{prop:cell-diam2/area} for $\bbP_\rmQ$, it follows that for some constant $c_1>0$, $\bbP_\rmQ[\Lambda\nsubseteq [-2^n,2^n]]\lesssim 2^{-c_1n}$. By~\cite[Lemma 2.5]{GMS19har} and translation invariance of Brownian motion, for some constant $c_2>0$, in the map $\cG^\rmQ$, $\bbP_\rmQ[\deg(x)\geq M/16]\lesssim 2^{-c_2M}$ for each vertex $x\in \theta+\bbZ$. We set $n = \lceil \frac{c_2M}{2} \rceil$. By a union bound, for some constant $C>0$ and $c = \min\{c_1c_2/3,c_2/3\}$, we have
    \begin{equation}
        \begin{split}
            \bbP_\rmQ[A]&\leq \bbP_\rmQ\big[\Lambda\nsubseteq [-2^n,2^n]\big]+\bbP_\rmQ\big[\exists x\in (\theta+\bbZ)\cap [-2^n,2^n] \text{ with } \deg(x)\geq M/16\big]\\
            &\leq C(2^{-c_1n} + 2^{n+2}2^{-c_2M})\lesssim 2^{-cM}.
        \end{split}
    \end{equation} 
    This completes the proof.
\end{proof}

\begin{proof}[Proof of Proposition~\ref{prop:0-cone-deg-1}]
    By combining Lemma~\ref{lem:prop:0-cone-deg-1}, Lemma~\ref{lem:prop:0-cone-deg-2}, Lemma~\ref{lem:prop:0-cone-deg-3} with Proposition~\ref{prop:0-cone-gamma-compare}, it follows that for some constant $\e>0$, for all large enough $M>0$, $\bbP_\rmE[\deg(H_0)\geq M]\leq \e^{-1}2^{-\e M}$ and $\bbP_\rmE[\deg(\wh H_0)\geq M]\leq \e^{-1}2^{-\e M}$. This gives the claim.
\end{proof}

\subsubsection{Convergence of the mated-CRT map for 0-quantum cone}\label{subsubsec:cell-config-pf}

By~\eqref{eq:cell-diam2/area-E},~\eqref{eq:cell-diam2/area} and ~\eqref{eq:prop:0-cone-deg-1}, the moment bound~\eqref{eq:thm-inv-circle} holds for both $\cH^\rmE$ and $\wh\cH^\rmE$. Now we check the   conditions in Definition~\ref{def:CellConfigConstraint}. Let $\wh\eta$ be the space-filling curve $\eta$ modulo increasing time reparametrization. Then $\eta$ is measurable with respect to $(h,\wh\eta)$.  Below we write $g$ for the function sending $(h,\wh\eta,\theta)$ to the cell configuration $\wh\cH^\rmE$ as in its definition.  Fix $z\in\bbC$, $a>0$ and let $\tau_z$ be the first time when $\eta$ hits $z$. Let $h^{a,z}(\cdot) = h(a\cdot+z)+Q\log a$, $\eta^{a,z}(t) = a^{-1}(\eta(t+\tau_z)-z)$ and $\theta^{a,z} = [\theta-\tau_z]$ where $[x]$ is the fractional part of $x$. By the LQG coordinate change formula, $\eta^{a,z}$ is parameterized via the LQG area measure with respect to $h^{a,z}$, and from our construction \eqb\label{eq:SCALE-cH}  g(h^{a,z},a^{-1}(\wh\eta-z),\theta^{a,z}) = a^{-1}(g(h,\wh\eta,\theta)-z).\eqe

\begin{lemma}\label{lem:ergodicityAaAb}
    The cell configuration $\wh\cH^\rmE$ is translation invariant modulo scaling as in Condition 1.\ of Definition~\ref{def:CellConfigConstraint}.
\end{lemma}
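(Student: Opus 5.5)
We follow the proof of the analogous statement for $\cH^\rmE$ in~\cite[Section 3]{GMS21tutte}. The plan is to take $U_j=B(0;j)$ and let $z_j$ be uniform on $U_j$, independent of everything. The point $z_j$ is re-centred at the time $\tau_{z_j}$ when $\eta$ first hits it, and the scaling is read off from~\eqref{eq:SCALE-cH}. The underlying input is that under $\bbP_\rmE$ the field $h$ is a whole-plane GFF modulo additive constant near the origin, with a $0$-log singularity, so the $0$-quantum cone is "Euclidean" translation invariant modulo scaling. Concretely, we use the following fact from~\cite[Lemma 3.3 / proof of Proposition 3.1]{GMS21tutte} (ultimately from~\cite{GMSinvariance}). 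Write $h^{z}=h(\cdot+z)-h_{|z|}(z)$ and let $a_z$ be the scale at which the circle-average embedding normalization is achieved for $h(\cdot+z)$. Then the law of $(h^{a_{z_j},z_j},\,a_{z_j}^{-1}(\wh\eta-z_j))$ converges in total variation to the law of $(h,\wh\eta)$ as $j\to\infty$. Here $\wh\eta$ is whole-plane space-filling SLE, whose law is exactly translation and scale invariant modulo parameterization, and it is independent of $h$.

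\textbf{Step 1.} Recall from~\eqref{eq:SCALE-cH} that $g(h^{a,z},a^{-1}(\wh\eta-z),\theta^{a,z})=a^{-1}(g(h,\wh\eta,\theta)-z)$. We set $C_j=a_{z_j}^{-1}$. It then suffices to show that $(h^{a_{z_j},z_j},a_{z_j}^{-1}(\wh\eta-z_j),\theta^{a_{z_j},z_j})$ converges in law to $(h,\wh\eta,\theta)$, since then $g$ of the left side converges to $g$ of the right side.

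\textbf{Step 2.} The first two coordinates converge by the input above. For the third coordinate, we argue that $\theta^{a,z}=[\theta-\tau_z]$ is uniform on $[0,1]$ and independent of the other two. This holds because $\theta$ is independent of $(h,\eta)$ and uniform, and adding the $(h,\eta)$-measurable shift $-\tau_z$ modulo $1$ preserves uniformity and independence. So the triple converges in law (indeed in total variation) to $(h,\wh\eta,\theta)$ with independent coordinates. Note also that $\eta^{a,z}$ has $\eta^{a,z}(0)=0$ and is parameterized by $\mu_{h^{a,z}}$, which is consistent with the normalization used in defining $g$.

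\textbf{Step 3.} We pass the convergence through $g$. Since the convergence is in total variation, no continuity of $g$ with respect to $\mathbb{d}^{\rm CC}$ is needed: $g$ is a measurable map, and total variation convergence of the inputs gives total variation convergence of the outputs. It remains to check that $g$ is indeed measurable, including the collapse operator $\Phi_{\rm simple}$. This follows because the double-edge structure is determined by the boundary length process $Z$, which is a measurable function of $(h,\eta)$. Moreover, by Proposition~\ref{prop:cell-diam2/area}(ii) the collapsed cells are a.s.\ compact and $\wh\cG^\rmE\neq\emptyset$, so $g$ takes values in the space of cell configurations a.s.

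\textbf{Main obstacle.} The main obstacle is Step 2: obtaining the total variation statement with the correct random scaling $a_{z_j}$, including the fact that re-rooting $\eta$ at $\tau_{z_j}$ does not bias the law. For this we will quote the corresponding argument of~\cite[Lemma 3.3]{GMS21tutte} for $\cH^\rmE$ verbatim. The only additional observation needed is that the collapsing $\Phi_{\rm simple}$ commutes with translation and scaling, so the same argument applies to $\wh\cH^\rmE$.
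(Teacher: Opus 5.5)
\textbf{Gap in Step 2.} The skeleton of your argument matches the paper's. You push everything through \eqref{eq:SCALE-cH}, you note that $[\theta-\tau_z]$ is uniform and independent of $(h,\eta,z)$, and you use that $\wh\eta$ modulo parametrization is independent of $h$ and translation/scale invariant. The problem is the input you feed into Step 2.

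You assert that for deterministic windows $U_j=B(0;j)$, with $z_j$ Lebesgue-uniform on them, the recentred and rescaled pair converges in total variation to $(h,\wh\eta)$. You attribute this to \cite{GMS21tutte}. That is not the statement the paper draws from there, and it carries the entire content of the lemma, so it cannot be left as a citation.

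What the paper actually uses, \cite[Lemma A.4]{GMS21tutte}, is an \emph{exact} identity in law for a specific random choice of window:
\begin{itemize}
\item take an independent uniform dyadic system $\mathfrak{D}_\bbC$;
\item let $S_m$ be the largest square of $\mathfrak{D}_\bbC$ containing $0$ with $\mu_h(S_m)\le m$;
\item sample $z_m$ uniformly from Lebesgue measure on $S_m$.
\end{itemize}
Then $h(C_m^{-1}\cdot+z_m)-Q\log C_m\overset{d}{=}h$ for a suitable random $C_m$. The exact statement relies on this LQG-mass-dependent choice of square. With deterministic balls you would need a separate, nontrivial argument, which your proposal does not supply. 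It would have to handle:
\begin{itemize}
\item the law of the circle-average process around $z_j$ below and above the normalizing scale $a_{z_j}$;
\item the radial conditioning of the cone at the original origin becoming negligible;
\item the field's decomposition being centred at $0$ rather than at $z_j$.
\end{itemize}
As written, ``quote \cite{GMS21tutte} verbatim'' would give you the dyadic-square statement, not the deterministic-ball TV convergence your Steps 1--3 are built on.

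\textbf{Fix.} Use the random windows $U_m=\mathrm{int}(S_m)$, which Condition~1 allows since the $U_j$ may be random. Conditionally on $(h_m,z_m,\mathfrak{D}_\bbC)$, the curve $C_m(\wh\eta-z_m)$ is still whole-plane space-filling SLE modulo parametrization. Also, $[\theta-\tau_{z_m}]$ is uniform and independent of everything else. Hence $(h_m,C_m(\wh\eta-z_m),[\theta-\tau_{z_m}])\overset{d}{=}(h,\wh\eta,\theta)$, and \eqref{eq:SCALE-cH} gives $C_m(\wh\cH^\rmE-z_m)\overset{d}{=}\wh\cH^\rmE$ exactly. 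Your Step~3 then only needs measurability of $g$ to push forward an equality in law, and no limiting or total-variation argument is required.
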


\begin{proof}
    Let $\mathfrak{D}_\bbC$ be a uniform dyadic system defined in~\cite[Section 2.1]{GMSinvariance} independent of $(h,\eta,\theta)$. For $m\in\bbN$, let $S_m$ be the largest square $S$ of $\mathfrak{D}_\bbC$ containing 0 such that $\mu_h(S)\leq m$. Conditioned on $(h,\mathfrak{D}_\bbC)$, let $z_m$ be sampled uniformly from the Lebesgue measure on $S_m$ independent of $\theta$. Then by~\cite[Lemma A.4]{GMS21tutte}, for some random constant $C_m$ depending on $(h,\mathfrak{D}_\bbC,z_m)$, $h_m:=h(C_m^{-1}\cdot+z_m)-Q\log C_m\overset{d}{=}h$. Set $\wh \eta_m = C_m(\wh\eta-z_m)$ and $\theta_m=[\theta-\tau_{z_m}]$. Then conditioned on $(h_m,z_m,\mathfrak{D}_\bbC)$, $\wh\eta_m$ has the law as whole-plane space-filling SLE modulo time parameterization, and conditioned on $(h_m,z_m,\mathfrak{D}_\bbC,\wh\eta_m)$, $\theta_m$ is uniform in $(0,1)$ since $\theta$ is uniform in $(0,1)$ and independent of $(h,\eta)$. Therefore $(h_m,\wh\eta_m,\theta_m)\overset{d}{=}(h,\wh\eta,\theta)$, and by~\eqref{eq:SCALE-cH}, Condition 1.\ of Definition~\ref{def:CellConfigConstraint} holds (with in fact equality in law) for the (interior of the) square $S_m$ and $C_m$.
\end{proof}

\begin{lemma}\label{lem:ergodicityAaA}
    The cell configuration $\wh\cH^\rmE$ is ergodic modulo scaling as in Condition 2.\ of Definition~\ref{def:CellConfigConstraint}.
\end{lemma}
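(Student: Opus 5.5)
Write $F$ for a real-valued measurable function of $\wh\cH^\rmE$ that is invariant under translation and scaling, i.e.\ $F(C(\wh\cH^\rmE-z))=F(\wh\cH^\rmE)$. The plan is to reduce ergodicity of $\wh\cH^\rmE$ to the ergodicity established for $\cH^\rmE$ in~\cite{GMS21tutte}. The reduction uses that $\wh\cH^\rmE=g(h,\wh\eta,\theta)$ and that, by~\eqref{eq:SCALE-cH}, translations and scalings of the cell configuration correspond to the operations $(h,\wh\eta,\theta)\mapsto(h^{a,z},a^{-1}(\wh\eta-z),\theta^{a,z})$. Set $\wt F(h,\wh\eta,\theta):=F(g(h,\wh\eta,\theta))$. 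By~\eqref{eq:SCALE-cH}, $\wt F$ is invariant under these operations for every $z\in\bbC$ and $a>0$.

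The first step is to remove the dependence on $\theta$. Under the operation above, $\theta$ becomes $[\theta-\tau_z]$. As $z$ ranges over $\bbC$, $\tau_z$ takes every real value. Hence $\wt F(h,\wh\eta,\theta)=\wt F(h^{1,z},\wh\eta-z,[\theta-\tau_z])$ for all $z$. Taking $z=\eta(s)$ with $s$ ranging over $\theta+\bbZ$, or more generally over $\bbR$, converts the $\theta$-shift into a reparametrization of the origin along $\eta$.

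The second step is to invoke the ergodicity argument used for $\cH^\rmE$ in~\cite[Section 3]{GMS21tutte}, which is based on~\cite[Lemma A.4]{GMS21tutte} together with the tail triviality of the pair (0-quantum cone, independent whole-plane space-filling SLE modulo scaling). That argument shows that any function of $(h,\wh\eta,\theta)$ invariant under all the maps $(h,\wh\eta,\theta)\mapsto(h^{a,z},a^{-1}(\wh\eta-z),\theta^{a,z})$ is a.s.\ constant. Concretely, one approximates $\wt F$ in $L^1$ by a function $\wt F_R$ that depends only on $(h,\wh\eta)|_{B(0;R)}$ and $\theta$. One then uses scaling by $\fR_b$ from Lemma~\ref{lem:Rb} with $b\to\infty$ or $b\to 0$, where $h$ and $\eta$ asymptotically decorrelate across scales. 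This shows that $\wt F$ is independent of itself, hence constant.

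The only input that is genuinely new relative to~\cite{GMS21tutte} is that $g$, i.e.\ $\Phi_{\rm simple}$, is non-local. The main obstacle is therefore the locality needed for the approximation step: $\wh\cH^\rmE$ restricted to a ball must be determined, up to small probability, by $(h,\eta)$ on a larger ball. I would obtain this from Proposition~\ref{prop:cell-diam2/area}(i) and~\eqref{eq:pf-AAAnew}, with the scaling in Lemma~\ref{lem:Rb}. With probability $1-O(2^{-cm})$, all double edges affecting cells meeting $B(0;\fR_{2^{-m}}R)$ involve only times in $[-2^m,2^m]$. Hence $\wh\cH^\rmE(B(0;r))$ is measurable with respect to $\eta$ restricted to a bounded time interval, up to an event of small probability, and the decorrelation argument goes through.
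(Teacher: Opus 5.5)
This is essentially the paper's proof: set $\wt F=F\circ g$, read off from~\eqref{eq:SCALE-cH} that $\wt F$ is invariant under $(h,\wh\eta,\theta)\mapsto(h^{a,z},a^{-1}(\wh\eta-z),\theta^{a,z})$, and run the argument of~\cite[Proposition 3.1]{GMS21tutte} for $\wt F$. Your last paragraph and the $\theta$-removal step are not needed, because that argument only uses this invariance of $\wt F$ as a measurable function of $(h,\wh\eta,\theta)$, and $L^1$-approximation of such a function by functions of $(h,\wh\eta,\theta)$ restricted to balls is automatic by martingale convergence, so the non-locality of $\Phi_{\rm simple}$ never enters; if you write the mixing step out yourself, note that $h\mapsto h^b$ preserves the law but not $\wt F$ (the $-\frac{1}{\gamma}\log b$ shift changes the cell mass), so use a map that preserves both, such as the recentering and rescaling in Lemma~\ref{lem:ergodicityAaAb}.
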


\begin{proof}
    Suppose $F$ is a function that $F(C(\wh\cH^\rmE-z))=F(\wh\cH^\rmE)$ for every $C>0$ and $z\in\bbC$. Let $G(h,\wh\eta,\theta) = F(g(h,\wh\eta,\theta))$. Then~\eqref{eq:SCALE-cH} implies that for any deterministic $a>0,z\in\bbC$, $G(h,\wh\eta,\theta) = G(h(a\cdot+z)+Q\log a, a^{-1}(\wh\eta-z), [\theta-\tau_z])$. The proof is then identical to the corresponding proof of~\cite[Proposition 3.1]{GMS21tutte}.
\end{proof}


 \begin{lemma}\label{lem:config-pack}
     The cell configuration $\wh\cH^\rmE$ satisfies all the conditions in Definition~\ref{def:CellConfigConstraint}.
 \end{lemma}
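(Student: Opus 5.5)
Conditions 1 and 2 of Definition~\ref{def:CellConfigConstraint} are Lemma~\ref{lem:ergodicityAaAb} and Lemma~\ref{lem:ergodicityAaA}, so it remains to check Condition 3 (connectedness along macroscopic lines) and Condition 4 (rotation invariance in law). I would also record that $\wh\cH^\rmE$ is a cell configuration in the sense of Definition~\ref{def:cell-config}. Its cells are compact by Proposition~\ref{prop:cell-diam2/area}(ii). Each cell is a finite union $\eta([x,y])$ of unit-time cells, so it is connected with nonempty interior. Distinct cells meet only on boundaries, which have zero Lebesgue measure. Local finiteness follows from local finiteness of $\cH^\rmE$. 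Adjacent cells intersect because $\Phi_{\rm simple}$ creates no new edges.

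\textbf{Rotation invariance.} I would take any $\theta_0\in(0,2\pi)\backslash\{\pi\}$. The circle average embedding of the 0-quantum cone is rotation invariant in law. The whole-plane space-filling $\SLE_\kappa$ from $\infty$ to $\infty$ is also rotation invariant in law modulo parameterization, since $h^\IG$ is rotation invariant modulo the global additive constant. Since $h,\eta,\theta$ are independent, $(h(e^{-i\theta_0}\cdot), e^{i\theta_0}\wh\eta,\theta)\overset{d}{=}(h,\wh\eta,\theta)$. Applying the deterministic map $g$, which commutes with rotations just as in~\eqref{eq:SCALE-cH}, gives $e^{i\theta_0}\wh\cH^\rmE\overset{d}{=}\wh\cH^\rmE$.

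\textbf{Connectedness along lines.} This is the main step. I would first prove the property for $\cH^\rmE$ by following the argument of~\cite[Section 3]{GMS21tutte}, which verifies the same condition for the mated-CRT cell configuration. The key inputs there are as follows.
\begin{itemize}
\item By scale invariance (Lemma~\ref{lem:Rb}), the mesh goes to zero in the rescaled picture (Lemma~\ref{lem:diam-cell-max}).
\item Two cells $\eta([x-1,x])$ and $\eta([y-1,y])$ that both intersect a segment $\ell$ can be linked by a chain of cells touching $\ell$. This uses the fact that cells which intersect along more than a point share a boundary arc, together with the almost sure absence of ``pinch'' configurations at macroscopic scales.
\end{itemize}
I would then transfer the property to $\wh\cH^\rmE$. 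Each cell of $\wh\cH^\rmE$ is a union of cells of $\cH^\rmE$, and merging cells can only add adjacencies among the surviving cells. So if the cells of $\cH^\rmE$ hitting $\ell$ form a connected subgraph, I would argue as follows.
\begin{itemize}
\item Any path in that subgraph projects to a walk in $\wh\cG^\rmE$ on merged cells hitting $\ell$.
\item Consecutive cells either lie in the same merged cell, or remain adjacent in $\wh\cG^\rmE$.
\item Adjacency survives because an edge between two vertices in different merged cells is never removed by $\Phi_{\rm simple}$: the only removed edges are those enclosed by, or duplicating, a multi-edge.
\item To see this, I would use Lemma~\ref{lem:double-edge-topology}. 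The enclosed region $\eta([t_2,t_3])$ is absorbed into the cell $\eta([t_3,t_4])$, which shares its outer boundary, so every edge from an absorbed cell to an outside cell is replaced by an edge from the absorbing cell.
\end{itemize}

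I expect the main obstacle to be this edge-transfer bookkeeping, together with confirming that the $\cH^\rmE$ connectedness statement from~\cite{GMS21tutte} holds in the uniform ``for all segments of length at least $\e r$ in $B(0;r)$'' form required here. If the argument in~\cite{GMS21tutte} gives only a fixed-scale, high-probability statement, I would upgrade it as follows:
\begin{itemize}
\item Use scaling by $\fR_{2^k}$ to get probability $1-O(2^{-ck})$ at scale $\fR_{2^k}$, via Lemma~\ref{lem:diam-cell-max-1}.
\item Cover all segments in a dyadic family by a union bound.
\item Apply Borel--Cantelli over $k$, interpolating between the consecutive scales $\fR_{2^k}$ and $\fR_{2^{k+1}}$ using~\eqref{eq:Rb-bound}.
\end{itemize}
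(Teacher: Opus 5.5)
Your proposal is correct and follows essentially the same route as the paper. Conditions 1 and 2 come from Lemmas~\ref{lem:ergodicityAaAb} and~\ref{lem:ergodicityAaA}. Condition 3 comes from projecting cells of $\cH^\rmE$ onto the merged cells of $\wh\cH^\rmE$, where adjacent cells go to equal or adjacent cells and $\wh\cH^\rmE(\ell)$ is the image of $\cH^\rmE(\ell)$, and then invoking the line connectivity of $\cH^\rmE$ from \cite[Proposition 3.1]{GMS21tutte}. Condition 4 comes from rotation invariance of $(h,\eta)$. Your extra checks of the cell-configuration axioms and the fallback Borel--Cantelli upgrade go beyond what the paper writes, but they are not needed once the cited result is accepted.
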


 \begin{proof}
   By Lemma~\ref{lem:ergodicityAaAb} and Lemma~\ref{lem:ergodicityAaA}, Condition 1.\ and 2.\ in Definition~\ref{def:CellConfigConstraint} hold for $\wh\cH^\rmE$. For the line connectivity property in Condition 3., our construction of $\wh\cH^\rmE$ induces a natural map $\pi:\cH^\rmE\to\wh\cH^\rmE$ where $\pi(H)$ is the cell in $\wh\cH^\rmE$ containing $H$. In particular, from the merging process, we have $\pi(H)=\pi(H')$ or $\pi(H)\sim\pi(H')$ if $H\sim H'$ and $\wh\cH^\rmE(\ell)=\pi(\cH^\rmE(\ell))$ for line segments $\ell$, and Condition 3.\ then follows from the line connectivity for $\cH^\rmE$ as in~\cite[Proposition 3.1]{GMS21tutte}.  Condition 4.\ is a consequence of the fact that the law of $(h,\eta)$ is  invariant under rotations about the origin.   
  \end{proof}

\begin{lemma}\label{lem:config-unif}
    The cell configuration $\cH^\rmE$ satisfies all the conditions in Definition~\ref{def:CellConfigConstraint}.
\end{lemma}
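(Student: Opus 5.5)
The plan is to follow the proof of Lemma~\ref{lem:config-pack} almost verbatim, now for the unmodified configuration $\cH^\rmE$ rather than $\wh\cH^\rmE$. In fact this case is simpler, because no merging of cells takes place. Write $g_0$ for the map sending $(h,\wh\eta,\theta)$ to $\cH^\rmE$. The cells $\eta([x-1,x])$, $x\in\bbZ+\theta$, together with the adjacency rule (including double edges), are defined locally from $(h,\wh\eta,\theta)$. Hence the covariance identity~\eqref{eq:SCALE-cH} holds for $g_0$ in place of $g$: translating by $z$, rescaling by $a$, and shifting $\theta$ to $[\theta-\tau_z]$ maps cells to cells and preserves the associated map, multiplicities included.

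\emph{Conditions 1 and 2.} For translation invariance modulo scaling, I will reuse the construction in the proof of Lemma~\ref{lem:ergodicityAaAb}. Take the uniform dyadic system $\mathfrak D_\bbC$, the squares $S_m$, the points $z_m$ and the constants $C_m$ from~\cite[Lemma A.4]{GMS21tutte}. As shown there, $(h_m,\wh\eta_m,\theta_m)\overset{d}{=}(h,\wh\eta,\theta)$. Applying~\eqref{eq:SCALE-cH} for $g_0$ then gives $C_m(\cH^\rmE-z_m)\overset{d}{=}\cH^\rmE$, which is even an equality in law. For ergodicity modulo scaling, I will copy Lemma~\ref{lem:ergodicityAaA}. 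Given a translation- and scale-invariant $F$, set $G=F\circ g_0$. Then $G$ is invariant under the deterministic transformations $(h,\wh\eta,\theta)\mapsto(h(a\cdot+z)+Q\log a,\,a^{-1}(\wh\eta-z),\,[\theta-\tau_z])$, and the ergodicity argument of~\cite[Proposition 3.1]{GMS21tutte} shows that $G$ is a.s.\ constant.

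\emph{Conditions 3 and 4.} Connectedness along macroscopic lines for $\cH^\rmE$ is exactly what~\cite[Proposition 3.1]{GMS21tutte} proves, and Lemma~\ref{lem:config-pack} already relied on it. The only point to check is that the presence of double edges in the associated map does not change the induced subgraph's connectivity. It does not, because connectivity depends only on the relation $H\sim H'$, which is the one used there. Rotation invariance follows from the rotation invariance in law of $(h,\eta)$ about $0$, with $\theta$ independent, so any $\theta_0$ works.

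I expect no genuine obstacle here. The only step deserving care is verifying that the equality in law in Condition~1 holds jointly for the cells and the associated map with multiplicities. This reduces to noting that the double-edge rule is expressed through left and right boundary arcs of the cells, and these transform covariantly under $z\mapsto a^{-1}(z-z_m)$.
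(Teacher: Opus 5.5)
Your proposal is correct and is essentially the paper's argument. The paper simply cites \cite[Proposition 3.1]{GMS21tutte} for all four conditions, and your re-derivation of Conditions 1, 2 and 4 through the constructions of Lemmas~\ref{lem:ergodicityAaAb} and~\ref{lem:ergodicityAaA} unpacks that same argument. Your extra check that~\eqref{eq:SCALE-cH} respects the associated map, double edges included, is a sensible addition, since this paper's cell configurations carry the map $\cM$ rather than only the relation $\sim$ used in~\cite{GMSinvariance}.
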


\begin{proof}
    This is exactly proved in~\cite[Proposition 3.1]{GMS21tutte}.
\end{proof}

We are now ready to prove the convergence of mated-CRT map on 0-quantum cone under circle packing and Riemann uniformization. We will constantly use the following convention. Let $\wt\cG^{1/n}$ be some variant of the mated-CRT map, and let $x\in\bbR$   be viewed as a vertex on the mated-CRT map. For a  circle packing $\wt\cP^n$ for $\wt\cG^{1/n}$, we write $o_n(x)$ for the center of the disk for the vertex $x$ in $\wt\cP^n$. We will also write $x$ for the vertex on the Riemann surface $M(\wt\cG^{1/n})$ associated with $\wt\cG^{1/n}$.

\begin{theorem}\label{thm:circle-matedCRT-0-cone}
Let $(h,\eta,\theta)$ be sampled from the probability measure $\bbP_\rmE$, and $\cG^{\rmE,1/n }$  be defined as in the beginning of Section~\ref{subsubsec:cell-config}. Almost surely, the map $\cG^{\rmE,1/n }$  is circle packing parabolic. Furthermore, there exists a circle packing $\cP^{\rmE,n}$ for $\cG^{\rmE,1/n }$ in the sense of Definition~\ref{def:pack-non-simple} such that the following is true: 
\begin{enumerate}
    \item For each $R>0$, the maximal diameter of disks intersecting $B(0;R)$ tends to 0 in probability as $n\to\infty$;
    \item We have
      \begin{equation}
          \sup\big\{|\eta(x)-o_n(x)|: x\in\frac{1}{n}\bbZ+\frac{1}{n}\theta, \eta(x)\in B(0;R)  \big\} \to 0 \text{ in probability as } n\to\infty;
      \end{equation}
\end{enumerate}
\end{theorem}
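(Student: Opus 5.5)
We apply Theorem~\ref{thm:CellSystemPack} to $\wh\cH^\rmE=\Phi_{\rm simple}(\cH^\rmE)$ and then pass to $\cH^{\rmE,1/n}$ by the scaling of Lemma~\ref{lem:Rb}. By Proposition~\ref{prop:cell-diam2/area}(ii) the associated map $\wh\cG^\rmE$ is a.s.\ a nonempty simple infinite plane triangulation. By Lemma~\ref{lem:config-pack} it satisfies all conditions of Definition~\ref{def:CellConfigConstraint}, and by \eqref{eq:cell-diam2/area} and \eqref{eq:prop:0-cone-deg-1} it satisfies \eqref{eq:thm-inv-circle}, via Cauchy--Schwarz with $p=2$ for the diameter ratio and $p=8$ for the degree. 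Theorem~\ref{thm:CellSystemPack} then shows that $\wh\cG^\rmE$ is a.s.\ circle packing parabolic, hence so is $\cG^\rmE$ by Definition~\ref{def:pack-non-simple}. It also gives a packing $\wh\cP$, which after composing with a reflection if needed satisfies the following a.s.: for every $\e>0$ and large $r$, all disks meeting $B(0;r)$ have diameter $<\e r$, and $\max_{\wh H\in\wh\cH^\rmE(B(0;r))}|c(\wh H)-o(\wh H)|<\e r$. Extending $\wh\cP$ to $\cG^\rmE$ by degenerate disks as in Definition~\ref{def:pack-non-simple} gives a packing $\cP$ of $\cG^\rmE$.

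To reach scale $1/n$, set $b=n$ in Lemma~\ref{lem:Rb}. The map $\cH^{\rmE,1/n}$ is $\fR_n^{-1}$ times the cell configuration of $(h^n,\eta^n)$, and the latter has the law of $\cH^\rmE$. We define $\cP^{\rmE,n}$ as the image under $z\mapsto\fR_n^{-1}z$ of the packing produced above for $(h^n,\eta^n,\theta)$. The scaled environment carries its own dyadic offset, so we use the variant of Lemma~\ref{lem:Rb} with $\theta$ adjusted as in \eqref{eq:SCALE-cH}. Since $\Phi_{\rm simple}$ commutes with scaling, $\cP^{\rmE,n}$ is a circle packing of $\cG^{\rmE,1/n}$. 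Because $\fR_n\to\infty$ a.s., the almost sure statement from Theorem~\ref{thm:CellSystemPack} at radius $r=R\fR_n$ becomes an in-probability statement at scale $1/n$. Thus the disks meeting $B(0;R)$ have diameter $o(1)$, and $\max|c(\wh H)-o(\wh H)|=o(1)$ over cells $\wh H\in\wh\cH^{\rmE,1/n}(B(0;R))$. Equality in law holds for each fixed $n$, so this convergence is in probability rather than almost sure.

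It remains to compare $o_n(x)$ with $\eta(x)$. Fix $R$ and $x$ with $\eta(x)\in B(0;R)$, and let $\wh H\ni\eta(x)$ be the merged cell. Suppose first that $x$ is a vertex of $\wh\cG^{\rmE,1/n}$. Then $\eta(x)\in\wh H$ and $c(\wh H)\in\wh H$ up to convexity issues, and in any case $|c(\wh H)-\eta(x)|\le\diam(\wh H)$. The second claim of Lemma~\ref{lem:diam-cell-max-1} makes the maximal diameter over $\wh\cH^{\rmE,1/n}(B(0;R+1))$ tend to $0$ in probability, so $|o_n(x)-\eta(x)|\to0$ uniformly.

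Now suppose $x$ was removed. Then $o_n(x)$ is the tangency point of $D_{v_1},D_{v_2}$, where $v_1,v_2$ are the endpoints of the outermost double edge. By Lemma~\ref{lem:double-edge-topology}, $\eta(x)$ lies in the cell of $v_2$ after merging, and that cell is adjacent to the cell of $v_1$. So the bound on $|o_n(x)-\eta(x)|$ is $\diam(\wh H_{v_2})$ plus the disk diameters plus the center errors. Each of these tends to $0$ uniformly on $B(0;R)$ by the previous two paragraphs.

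The main obstacle is making the scaling step rigorous. We must define the scaled environment with its offset so that its law agrees exactly with $\bbP_\rmE$, and the almost-sure random threshold $R_0$ must be converted into a probability estimate uniform in $n$. This can be handled by the argument in the proof of Lemma~\ref{lem:diam-cell-max-1}, using \eqref{eq:Rb-gammaQ} to ensure $\fR_n\to\infty$ in probability at a controlled rate.
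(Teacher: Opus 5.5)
Your proposal is correct and follows essentially the paper's route. You apply Theorem~\ref{thm:CellSystemPack} to $\wh\cH^\rmE$, bound $|\eta(x)-o(x)|$ by merged-cell diameter plus center error plus disk diameter (which also covers removed vertices via Definition~\ref{def:pack-non-simple}), and pass to scale $1/n$ through $\cH^{\rmE,1/n}\overset{d}{=}\fR_n^{-1}\cH^\rmE$, using Lemma~\ref{lem:diam-cell-max-1} where the paper uses an almost-sure no-macroscopic-cell statement at unit scale.

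Two minor fixes are needed. First, no reflection is ever required, and applying one would break the orientation-preserving requirement on circle packings, because the cell embedding already preserves the orientation of $\wh\cG^\rmE$. Second, with $b=n$ the pathwise identity reads $\cH^{\rmE,1/n}(h^n,\eta^n,\theta)=\fR_n^{-1}\cH^\rmE(h,\eta,\theta)$ rather than the reverse, although only the resulting equality in law is actually used.
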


\begin{proof}
    By Lemma~\ref{lem:config-pack}, Proposition~\ref{prop:cell-diam2/area} and Proposition~\ref{prop:0-cone-deg-1}, $\wh\cH^\rmE$ satisfies all the conditions in Theorem~\ref{thm:CellSystemPack}. Furthermore, since the a priori embedding $\wh\cH^\rmE$ preserves the orientation of $\wh\cG^\rmE$, it must be the the circle packing for $\wh\cG^\rmE$ rather than its reflection that approximate $\wh\cH^\rmE$.
    Furthermore, by~\cite[Lemma 2.9]{GMSinvariance}, for each $\e>0$, almost surely, there exists $R>0$ such that for every $r>R$, all the cells in $\wh\cH^\rmE$ intersecting $B(0;r)$ have diameter less than $\e r$. Furthermore, for any $x\in \bbZ+\theta$, let $\wh H(x)$ be the cell in $\wh\cH^\rmE$ containing $\eta([x-1,x])$. Then in the extended circle packing,  $|\eta(x)-o(x)|\leq \diam(\wh H(x))+|c(\wh H(x))-o(\wh H(x))|+\diam (D_{\wh H(x)})$ where  $D_{\wh H(x)}$ is the disk for $\wh H(x)$ in the circle packing.
  Then by the no-macroscopic-disk conclusion as in Theorem~\ref{thm:CellSystemPack}, the conclusions in Theorem~\ref{thm:CellSystemPack} continue to hold if work with $\cH^\rmE$ instead of $\wh\cH^\rmE$ and consider the extended circle packing for $\cG^\rmE$ as in Definition~\ref{def:pack-non-simple}. Theorem~\ref{thm:circle-matedCRT-0-cone} then follow from the scale invariance using $\fR_n$ as in Lemma~\ref{lem:Rb}.
\end{proof}

\begin{theorem}\label{thm:unif-matedCRT-0-cone}
 Let $(h,\eta,\theta)$ and $\cG^{\rmE,1/n }$ be as in Theorem~\ref{thm:circle-matedCRT-0-cone}. For $n\geq1$, the Riemann surface  $M(\cG^{\rmE,1/n })$ associated with  $\cG^{\rmE,1/n }$ is a.s.\  parabolic. Moreover,  there exists a conformal map $\varphi^{\rmE,n}: M(\cG^{\rmE,1/n })\to\bbC$  such that the following is true. For each $x\in \frac{1}{n}(\theta+{\bbZ})$, we also  write     $x$ for its vertex on the Riemann surface $M(\cG^{\rmE,1/n })$. Then for any $r>0$, as $n\to\infty$, 
  \begin{enumerate}
      \item For each $R>0$, the maximal diameter of edges  $\varphi^{\rmE,n}(\fre)$  intersecting $B(0;R)$ tends to 0 in probability;
      \item We have $\varphi^{\rmE,n}(\frac{1}{n}\theta)=0$, and
      \begin{equation}
          \sup\big\{|\eta(x)-\varphi^{\rmE,n}(x)|: x\in\frac{1}{n}(\theta+\bbZ), \eta(x)\in B(0;R)  \big\} \to 0 \text{ in probability as } n\to\infty;
      \end{equation}
  \end{enumerate}
\end{theorem}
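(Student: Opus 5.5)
The plan mirrors the proof of Theorem~\ref{thm:circle-matedCRT-0-cone}, with Theorem~\ref{thm:invariance-uniformization} applied to $\cH^\rmE$ itself. No collapsing via $\Phi_{\rm simple}$ is needed, because the Riemann surface $M(T)$ is defined for triangulations with multiple edges.

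\textbf{Step 1: apply Theorem~\ref{thm:invariance-uniformization} at unit scale.} By Lemma~\ref{lem:config-unif}, $\cH^\rmE$ satisfies all conditions of Definition~\ref{def:CellConfigConstraint}. The moment bound~\eqref{eq:thm-inv-circle} follows from~\eqref{eq:cell-diam2/area-E} and Proposition~\ref{prop:0-cone-deg-1} via H\"older's inequality, since all moments of both factors are finite. Theorem~\ref{thm:invariance-uniformization} therefore yields that $M(\cG^\rmE)$ is a.s.\ parabolic, together with a conformal map $\varphi_0:M(\cG^\rmE)\to\bbC$ satisfying~\eqref{eq:thm:invariance-uniform} and~\eqref{eq:thm:invariance-uniform-1}. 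Since the a priori embedding is orientation preserving, no reflection is needed, exactly as argued for circle packing. Post-composing with a translation, I normalize so that $\varphi_0(\theta)=0$; this costs at most $|c(H_\theta)-\varphi_0(v_{H_\theta})|$, which is $o(r)$ and hence harmless. By~\cite[Lemma 2.9]{GMSinvariance}, a.s.\ every cell meeting $B(0;r)$ has diameter $o(r)$. Since $\eta(x)$ and $c(H)$ both lie in the cell $H=\eta([x-1,x])$, we get $|\eta(x)-\varphi_0(x)|\le \diam(H)+|c(H)-\varphi_0(v_H)|$. Hence, a.s.,
\[
\frac1r\max_{x:\,\eta(x)\in B(0;r)}|\eta(x)-\varphi_0(x)|\to0
\quad\text{and}\quad
\frac1r\max_{e\in\cE\cH^\rmE(B(0;r))}\diam(\varphi_0(\fre_e))\to0 .
\]
Edges whose images meet $B(0;r)$ but whose cells do not meet $B(0;2r)$ are excluded, because images of far cells stay far.

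\textbf{Step 2: transfer to mesh $1/n$ by scale invariance.} By Lemma~\ref{lem:Rb}, $(h^n,\eta^n)\overset{d}{=}(h,\eta)$, where $\eta^n(\cdot)=\fR_n^{-1}\eta(n\cdot)$. Equivalently, $\cH^{\rmE,1/n}$ has the same law as $\fR_n^{-1}\cH^\rmE$, with vertex $x\in\frac1n(\theta+\bbZ)$ corresponding to $nx$. I therefore set $\varphi^{\rmE,n}:=\fR_{n}^{-1}\varphi_0^{(n)}$, where $\varphi_0^{(n)}$ is the map of Step 1 built from the rescaled configuration $\fR_n\cH^{\rmE,1/n}$. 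This gives a genuine conformal map of $M(\cG^{\rmE,1/n})$ sending $\frac1n\theta$ to $0$. The quantities in items 1 and 2 at scale $R$ then have the same law as $\fR_n^{-1}$ times the unit-scale quantities at radius $\fR_nR$. The a.s.\ $o(r)$ statements of Step 1 then give convergence to $0$ in probability, since $\fR_n\to\infty$ in probability by~\eqref{eq:Rb-bound}.

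\textbf{Main obstacle.} The one step needing care is the passage from the a.s.\ limit in $r$ to convergence in probability along the random radii $\fR_nR$. I will handle it by noting that, for fixed $\e$, the event $\{\sup_{r>r_0}(\cdot)/r<\e\}$ has probability tending to $1$ as $r_0\to\infty$, and that $\bbP[\fR_nR<r_0]\to0$. Both the equality in law of the rescaled configurations and the compatibility of $M(\cdot)$ with scaling are immediate from~\eqref{eq:SCALE-cH}, since $M(T)$ depends only on the combinatorics of $T$.
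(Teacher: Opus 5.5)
Your proposal is correct and follows essentially the same route as the paper: you apply Theorem~\ref{thm:invariance-uniformization} to $\cH^\rmE$ (moment bound from \eqref{eq:cell-diam2/area-E} and Proposition~\ref{prop:0-cone-deg-1}, conditions from Lemma~\ref{lem:config-unif}), use the no-macroscopic-cell property \cite[Lemma 2.9]{GMSinvariance} to pass from cell centers to $\eta(x)$ and from edges in $\cE\cH^\rmE(B(0;r))$ to edges whose images meet $B(0;r)$, and then transfer to mesh $1/n$ by the scaling of Lemma~\ref{lem:Rb}. You are more explicit than the paper about the normalization $\varphi^{\rmE,n}(\frac1n\theta)=0$ and about passing from the a.s.\ limit in $r$ to convergence in probability along the random radii; the only slip is cosmetic: $c(H)$ is only guaranteed to lie in the convex hull of $H$, not in $H$, but this still gives $|\eta(x)-c(H)|\le\diam(H)$.
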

\begin{proof}
    By~\eqref{eq:cell-diam2/area-E} and ~\eqref{eq:prop:0-cone-deg-1}, the moment bound~\eqref{eq:thm-inv-circle} holds for both $\cH^\rmE$, and further by Lemma~\ref{lem:config-unif}, $\cH^\rmE$ satisfies all the conditions in Theorem~\ref{thm:invariance-uniformization}. Now we claim that for large $r$,  the maximal diameter of edges $\varphi^{\rmE}(\fre)$ intersecting $B(0;r)$ has order $o(r)$. Suppose $\fre$ has an endpoint $x\in \bbZ+\theta$ and let $H_x=\eta([x-1,x])$. For large enough $R$, if $|\eta(x)|\geq R$, using the no-macroscopic-cell result from ~\cite[Lemma 2.9]{GMSinvariance}, the cell for the other endpoint of $\fre$ has to be contained within $B(0;2|\eta(x)|)$, and by~\eqref{eq:thm:invariance-uniform-1}, $\diam(\varphi^{\rmE}(\fre)) = o(|\eta(x)|)$ and has to be contained within $B(0;2|\eta(x)|)\backslash B(0,|\eta(x)|/2)$, which verifies our claim.
    Theorem~\ref{thm:unif-matedCRT-0-cone} then follows from  Theorem~\ref{thm:invariance-uniformization} and the scaling in Lemma~\ref{lem:Rb}.
\end{proof}

\subsection{Convergence of  mated-CRT map under circle packing}\label{subsec:matedCRT-gamma-cone}

In this subsection we use the results in Section~\ref{sec:CP-conformal}    along with local absolute continuity between quantum surfaces to transfer the convergence of circle packing in Theorem~\ref{thm:circle-matedCRT-0-cone} to  the convergence of circle packing for mated CRT map in various topologies. Throughout this section, we will constantly write $(h^\rmQ,\eta)\sim \bbP_\rmQ$  for the $\gamma$-quantum cone $(\bbC,h^\rmQ,0,\infty)$ and the space-filling $\SLE_\kappa$ sampled from the probability measure $\bbP_\rmQ$, and $(h^\rmE,\eta)\sim \bbP_\rmE$  for the $0$-quantum cone $(\bbC,h^\rmE,0,\infty)$ and the space-filling $\SLE_\kappa$ sampled from the probability measure $\bbP_\rmE$.

One major difficulty is that the non-locality of the operator $\Phi_{\rm simple}$ and we need localized versions to use absolute continuity to do the transfer. In Section~\ref{subsubsec:matedCRT-cell}, for $a<t<b$, we define a disk triangulation $\cG^{\rmE,1/n,\theta}_{[a,b],t}$ as in Figure~\ref{fig:quantum-cell}, which is measurable with respect to $(\eta|_{[a,b]},\theta)$ and with high probability, $\Phi_{\rm simple}(\cG^{\rmE,1/n,\theta}_{[a,b],t})$ is a submap of $\Phi_{\rm simple}(\cG^{\rmE,1/n})$. This allows us to cut a piece $\eta([a,b])$ off the 0-quantum cone in Theorem~\ref{thm:circle-matedCRT-0-cone} and use Theorem~\ref{thm:He-Schramm-extension-1} to compare the circle packing for $\cG^{\rmE,1/n,\theta}_{[a,b],t}$ and the circle packing $\cP^{\rmE,n}$ and prove the convergence of circle packing for the mated-CRT map restricted to this piece (Lemma~\ref{lem:circle-pack-cell-1}). In Proposition~\ref{thm:circle-quantum-cell-0}, we further use local absolute continuity and standard techniques to transfer to the setting of a piece $\eta([a,b])$ cut off from the $\gamma$-quantum cone. In Section~\ref{subsubsec:matedCRT-disk}, we prove Theorem~\ref{thm:pack-disk}, where we first use absolute continuity between $\eta^\disk([\e,1-\e])$ and $\eta^\rmQ([\e,1-\e])$ to transfer Proposition~\ref{thm:circle-quantum-cell-0} to the setting of $\eta^\disk([\e,1-\e])$ and further send $\e\to 0$ and apply Theorem~\ref{thm:He-Schramm-disk}. In Section~\ref{subsubsec:matedCRT-sph}, we use   similar methods along with Theorem~\ref{thm:He-Schramm-sphere} to prove Theorem~\ref{thm:pack-sph}. Finally, in Section~\ref{subsubsec:matedCRT-cone}, we prove Theorem~\ref{thm:pack-cone} by applying the scale invariance of quantum cones  in Lemma~\ref{lem:Rb} to zoom in near 0 in the setting of Proposition~\ref{thm:circle-quantum-cell-0} and further applying Theorem~\ref{thm:He-Schramm-sphere}.
 
\subsubsection{Convergence of circle packing for mated-CRT cells}\label{subsubsec:matedCRT-cell}

Our starting point is to prove a version of convergence the mated-CRT map $\cG^{1/n}$  restricted to the time $[a,b]$ under circle packing. Recall the operation of collapsing double edges as discussed in Definition~\ref{def:pack-non-simple}. This operation is non-local. To transfer from the convergence in the infinite volume setting as in Theorem~\ref{thm:circle-matedCRT-0-cone} to the finite volume setting, we define a version of cell configuration which is locally measurable with respect to $\eta$ as follows.

Let  $h$ be some variant of GFF and $\eta$ be some variant of space-filling $\SLE_\kappa$ parameterized with respect to the $\gamma$-LQG mass with respect to $h$. Let $a<t<b$ which are possibly random  such that $\eta(t)\notin\partial\eta([a,b])$ almost surely,  and let $\theta\in[0,1)$. We define  operators $\Phi_{[a,b],t}^{\theta,n}$, $\frt_{[a,b],t}^{\theta,n,-}$ and $\frt_{[a,b],t}^{\theta,n,+}$  as follows.  Let $a_n(\theta)$   be the smallest  number in $(a,b]\cap \frac{1}{n}(\theta+\bbZ)$, and let $b_n(\theta) = a_n(\theta)+\frac{1}{n}(\lfloor n(b-a) \rfloor-1)$.  Let $t^-\in  \frac{1}{n}(\theta+\bbZ)$ be the largest time in $[a_n(\theta),t]$ such that $\eta([t^-,t^-+\frac{1}{n}])$ intersects both the left and right boundaries of $\eta([a_n(\theta),b_n(\theta)])$ along a non-trivial boundary arc,   $t^+\in  \frac{1}{n}(\theta+\bbZ)$ be the smallest time in $[t,b_n(\theta)]$ such that $\eta([t^+-\frac{1}{n},t^+])$ intersects both the left and right boundaries of $\eta([a_n(\theta),b_n(\theta)])$ along a non-trivial boundary arc.  We let $\Phi_{[a,b],t}^{\theta,n}(h,\eta)$ be the collection of cells $\{\eta([x-\frac{1}{n},x]):x\in \frac{1}{n}(\bbZ+\theta)\cap (t^-,t^+]\}$. Using that $\eta$ is non-self-crossing, the map induced by the cells in $\Phi_{[a,b],t}^{\theta,n}(h,\eta)$ is a disk triangulation. We  set $\frt_{[a,b],t}^{\theta,n,-}(h,\eta)=t^-$,   and  $\frt_{[a,b],t}^{\theta,n,+}(h,\eta)=t^+$. Let $\wh\Phi_{[a,b],t}^{\theta,n}(h,\eta)$ be the version of $\Phi_{[a,b],t}^{\theta,n}(h,\eta)$ with the double edges collapsed, i.e., if two cells $\eta([x_1-\frac{1}{n},x_1])$ and $\eta([x_2-\frac{1}{n},x_2])$ where $x_1<x_2-\frac{1}{n}$ intersect along their both  left and right boundaries intersect along a non-trivial boundary arc, then we merge $\eta([x_1,x_2-\frac{1}{n}])$ into $\eta([x_2-\frac{1}{n},x_2])$.  See Figure~\ref{fig:quantum-cell} for an illustration.

  
 Under the event that $\eta(t)\notin\partial\eta([a,b])$, let $D_{[a,b],t}$ be the connected component of the interior of $\eta([a,b])$ containing $\eta(t)$, which is a Jordan domain. Let $\psi_{[a,b],t}:D_{[a,b],t}\to\bbD$ be a conformal map sending $\eta(t)$ to 0.

  \begin{figure}
    \centering
    \includegraphics[scale=0.77]{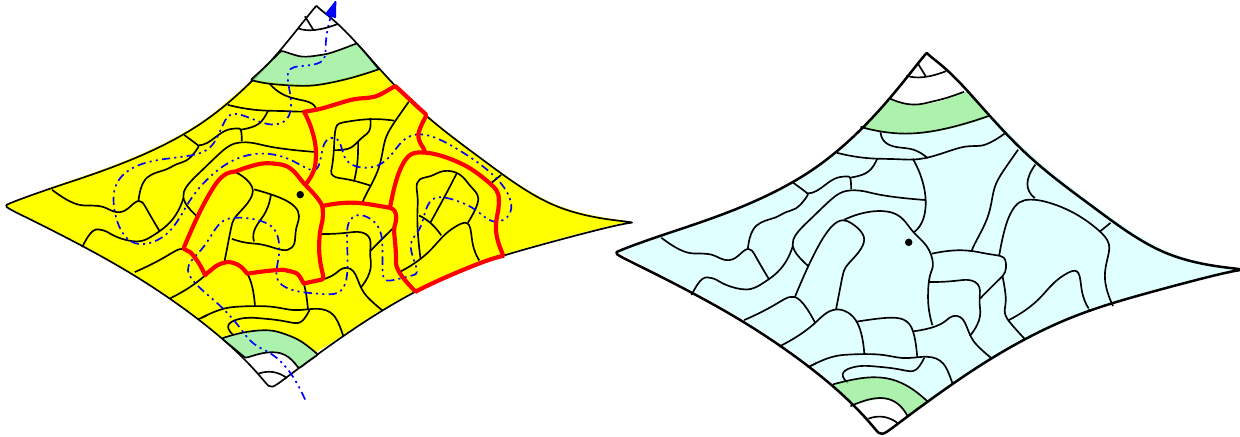}
    \caption{\textbf{Left}: An illustration of the definition $\Phi_{[a,b],t}^{\theta,n},\frt_{[a,b],t}^{\theta,n,-}$ and $\frt_{[a,b],t}^{\theta,n,+}$  where $\kappa\geq 8$. The cells and the blue trace represent $\eta([a_n(\theta),b_n(\theta)])$. The black dot in the middle represents $\eta(t)$. The cells surrounded by the red curves are either part of or surrounded by an double edge. The space-filling curve $\eta$ traverses the cells in the order depicted. The cells $\eta([\frt_{[a,b],t}^{\theta,n,-},\frt_{[a,b],t}^{\theta,n,-}+\frac{1}{n}])$ and $\eta([\frt_{[a,b],t}^{\theta,n,+}-\frac{1}{n},\frt_{[a,b],t}^{\theta,n,+}])$, which are the last cell before $t$ and the first cell after $t$ having non-empty intersection with both left and right boundary,  are colored in green. The set  $\Phi_{[a,b],t}^{\theta,n}(h,\eta)$ consists of thw two green cells and the yellow cells in their middle visited by $\eta$. The other cells in  $\Phi_{[a,b],t}^{\theta,n}(h,\eta)$ intersect only the left or right boundary of  $\eta([a_n(\theta),b_n(\theta)])$. \textbf{Right}: The cells in $\wh \Phi_{[a,b],t}^{\theta,n}$, which are colored in green and blue and obtained by collapsing the double edges involving the yellow and green cells on the left panel.}
    \label{fig:quantum-cell}
\end{figure}

Our goal is to prove the following.

\begin{proposition}\label{thm:circle-quantum-cell-0}
     Let  $a<t<b$ be deterministic, and $(h^\rmQ,\eta)\sim\bbP_\rmQ$. Let  $\psi_{[a,b],t}$ be defined as above, and let $\cH^{\rmQ,1/n,0}_{[a,b],t} =\Phi_{[a,b],t}^{0,n}(h^\rmQ,\eta)$  (with $\theta=0$). Let $\cG^{\rmQ,1/n,0}_{[a,b],t}$ be the submap of $\cG^{1/n}$ induced by $\cH^{\rmQ,1/n,0}_{[a,b],t}$. Let  $t_{a}^{n,-}=\frt_{[a,b],t}^{0,n,-}(h^\rmQ,\eta)$ and $t^{n,+}_b=\frt_{[a,b],t}^{0,n,+}(h^\rmQ,\eta)$.   There is a circle packing $\cP^{\rmQ,1/n,0}_{[a,b],t}$ for $\cG^{\rmQ,1/n,0}_{[a,b],t}$ in $\bbD$ in the sense of Definition~\ref{def:pack-non-simple} such that the following is true. 
     The maximal diameter of disks in $\cP^{\rmQ,n,0}_{[a,b],t}$ tends to 0 in probability, and up to a rotation, 
      \begin{equation}\label{eq:thm:circle-quantum-cell}
         \max_{x\in  (t_a^{n,-},t_b^{n,+})\cap \frac{1}{n}\bbZ }\dist(\psi_{[a,b],t}(\eta(x)), o_n(x))\to 0 \ \ \text{in probability.}
     \end{equation} 
\end{proposition}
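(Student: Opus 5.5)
The plan has two stages: first prove the statement under $\bbP_\rmE$ with a uniform random $\theta$, then transfer to $\bbP_\rmQ$ with $\theta=0$.

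\textbf{Stage 1: the $0$-quantum cone.} Let $(h^\rmE,\eta,\theta)\sim\bbP_\rmE$ and let $\cG^{\rmE,1/n,\theta}_{[a,b],t}$ be the disk triangulation induced by $\Phi^{\theta,n}_{[a,b],t}(h^\rmE,\eta)$. The first claim is that, with probability tending to one,
\[
\Phi_{\rm simple}\bigl(\cG^{\rmE,1/n,\theta}_{[a,b],t}\bigr)
\]
is a submap of $\Phi_{\rm simple}(\cG^{\rmE,1/n})$ whose boundary vertices are exactly the cells meeting $\partial\eta([a,b])$.

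\begin{itemize}
\item A double edge of the whole map that crosses the boundary of the piece has both endpoint cells touching both sides. By Lemma~\ref{lem:double-edge-topology}, the region it encloses then has diameter at most the sum of the diameters of the two endpoint cells.
\item The times $t^\pm$ converge to the last time before $t$, respectively first time after $t$, at which $\eta$ lies on $\partial D_{[a,b],t}$ from both sides. Combining this with Lemma~\ref{lem:diam-cell-max-1}, the collapsing operation acts locally except near $\partial D_{[a,b],t}$, on scales $o(1)$.
\end{itemize}

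Next, restrict the packing $\cP^{\rmE,n}$ of Theorem~\ref{thm:circle-matedCRT-0-cone} to the vertices of this submap. This gives a circle embedding whose disks have maximal diameter tending to $0$. By Theorem~\ref{thm:circle-matedCRT-0-cone}(2), the centers $o_n(x)$ lie uniformly close to $\eta(x)$. Since $\eta$ is continuous, the boundary loop of centers converges in $\mathbb d^{\rm CMP}$ to $\partial D_{[a,b],t}$. Here I will use that $D_{[a,b],t}$ is a Jordan domain and that the boundary cells trace $\partial D_{[a,b],t}$ in order; this boundary ordering is the step I expect to be the main obstacle. I will handle it through the left/right boundary-length description of $\partial\eta([a,b])$, using the fact that pinch points of $\partial\eta([a,b])$ lie outside $D_{[a,b],t}$.

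Apply Theorem~\ref{thm:He-Schramm-extension-0} and Theorem~\ref{thm:He-Schramm-extension-1} with $D=D_{[a,b],t}$, $\wt D=\bbD$, taking $\wt\cP^n$ to be the packing of the collapsed disk triangulation in $\bbD$ centered at the cell containing $\eta(t)$. Condition (ii) for $\wt\cP^n$ holds automatically, because its boundary disks are tangent to $\partial\bbD$. Condition (iv) for $\wt\cP^n$ is obtained by first running the argument in the proof of Theorem~\ref{thm:He-Schramm-disk}, via Lemma~\ref{lem:He-Schramm}, which shows that all boundary disks are small. The conclusion is that the packing map converges uniformly to a conformal map $D_{[a,b],t}\to\bbD$ fixing $\eta(t)\mapsto0$, that is, to $\psi_{[a,b],t}$ up to rotation. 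Together with uniform continuity of $\psi_{[a,b],t}$ up to the boundary, this gives~\eqref{eq:thm:circle-quantum-cell} under $\bbP_\rmE$. Degenerate disks are handled by Definition~\ref{def:pack-non-simple} and the smallness of the collapsed regions.

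\textbf{Stage 2: transfer to $\bbP_\rmQ$ with $\theta=0$.} The event that the maximal disk diameter and the error in~\eqref{eq:thm:circle-quantum-cell} exceed $\e$ is measurable with respect to $(Z|_{[a-1,b+1]}-Z(a),\theta)$, up to rescaling by $n$. Following~\cite[Proof of Proposition 2.10]{GMS19har}, I will make two reductions.

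\begin{itemize}
\item \emph{Removing the random phase $\theta$.} Since $\theta$ is uniform and independent, the statement for a.e.\ phase can be transferred to phase $0$ by shifting time by $\theta/n$, which only changes $[a,b]$ and $t$ by $O(1/n)$; by continuity of $\eta$ and of the domains, this is harmless.
\item \emph{Replacing $\bbP_\rmE$ by $\bbP_\rmQ$.} Under both $\bbP_\rmE$ and $\bbP_\rmQ$ the field is determined, up to the embedding, by the Brownian increments of $Z$. After zooming in by the scale invariance of Lemma~\ref{lem:Rb}, both laws are described by the same two-sided Brownian motion $Z$, and $\bbP_\rmE$ corresponds to a re-rooting at a typical point. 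So I either use the comparison~\eqref{eq:0-gamma-comparison-00} with the events $G$ built from $Z$, or argue directly that the law of $Z$ increments near a fixed time is shared by the two measures.
\end{itemize}

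The resulting probability bound, which tends to $0$ along the sequence, then yields the proposition.
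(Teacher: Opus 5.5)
Your Stage 1 is, in outline, the paper's Lemmas~\ref{lem:circle-pack-cell} and~\ref{lem:circle-pack-cell-1}: locality of the collapse, restriction of the whole-plane packing, then Theorems~\ref{thm:He-Schramm-extension-0} and~\ref{thm:He-Schramm-extension-1}. That part is fine. The gap is Stage 2, where neither reduction works.

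\emph{Change of measure.} Under $\bbP_\rmE$ the process $Z$ is not a two-sided Brownian motion; the mating-of-trees theorem gives Brownian increments only for the $\gamma$-quantum cone. Zooming in via Lemma~\ref{lem:Rb} with $\alpha=0$ just returns a $0$-quantum cone. Also, it is $\bbP_\rmQ$, not $\bbP_\rmE$, that describes the picture seen from a quantum-typical point. So the laws of the $Z$-increments near a fixed time are not shared. For a piece containing time $0$, the two laws of the parametrized curve-decorated piece are even mutually singular: there is a $\gamma$-log singularity at $\eta(0)$ under one and not the other. For a piece away from time $0$, you would need absolute continuity of the $\bbP_\rmQ$-law with respect to the $\bbP_\rmE$-law, which you do not supply. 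The comparison~\eqref{eq:0-gamma-comparison-00} and Proposition~\ref{prop:0-cone-gamma-compare} cannot fill this. They bound $\bbP_\rmE$-probabilities by powers of $\bbP_\rmQ$-probabilities, which is the wrong direction: you know failure is unlikely under $\bbP_\rmE$ and need it to be unlikely under $\bbP_\rmQ$. They also apply only to events of the form $\{\exists s\in I(0):\dots\}$.

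\emph{Phase.} Stage 1 only gives $\int_0^1\bbP_\rmE[\text{failure with phase }\theta]\,d\theta\to0$, which says nothing about phase $0$. The time shift $\eta\mapsto\eta(\cdot+\theta/n)$ does not preserve the law under $\bbP_\rmE$, because the root $\eta(0)=0$ is a Lebesgue-typical point, not a typical time. So the $O(1/n)$ change of $[a,b]$ and $t$ does not relate the two phases.

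The missing device is rooting at a quantum-typical point. The paper weights $\bbP_\rmE$ by $\mu_h(U)$ with $U=B(0;1/8)$, samples $z_*\sim\mu_h|_U$, and lets $\tau_*$ be its hitting time; call this measure $\bbP_*$. This does three things.
\begin{enumerate}
\item Since $\eta$ is parametrized by $\mu_h$, $\tau_*$ has a density in time, so $\bbP_*[E^c(h,\eta,[n\tau_*])\cap\{|\tau_*|\le N\}]\le C_U^{-1}\int_{-N}^N\bbP_\rmE[E^c(h,\eta,[ns])]\,ds\le 3C_U^{-1}N\,\bbP_\rmE[E^c(h,\eta,\theta)]$. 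This moves the uniform-phase whole-plane result to the phase $[n\tau_*]$, which after recentring at $\tau_*$ is phase $0$ (Lemma~\ref{lem:0-cone-phase-0}).
\item Your Stage 1 is run for the random piece $\eta([\tau_*-1/k,\tau_*+1/k])$ under $\bbP_*$. By \cite[Lemma A.10]{DMS14}, $h(\cdot+z_*)|_{B(0;1/2)}$ has exactly the law of $h^\rmQ|_{B(0;1/2)}$, and $\eta(\cdot+\tau_*)-z_*$ is an independent space-filling SLE parametrized by it. Hence, on the event that the piece stays in $B(z_*;1/4)$, the failure probabilities under $\bbP_*$ and under $\bbP_\rmQ$ with $\theta=0$ coincide.
\item Lemma~\ref{lem:Rb}, with $n$ replaced by $kn$ and $k\to\infty$, upgrades $[-1/k,1/k]$ to $[-1,1]$. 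General $a<t<b$ then follow from scaling and translation invariance of the $\gamma$-cone with $\beta_n=[-nt]$.
\end{enumerate}
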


Our proof follows a similar transfer as in ~\cite[Proof of Proposition 2.10]{GMS19har}.
Let $U = B(0;1/8)$ and $C_U=\bbE_\rmE[\mu_h(U)]<\infty$. Define the weighted probability measure $\bbP_*(dh,d\eta,dz_*) = C_U^{-1}\mathds{1}_U(z_*)\mu_h(dz_*)\bbP_\rmE(dh,d \eta)$ where $\eta$ is parameterized by $\mu_h$. In other words, we weight the law of $(h^\rmE,\eta)$ under $\bbP_\rmE$ by $\mu_{h^\rmE}(U)$ and sample a point $z_*\in U$ from the probability measure proportional to $\mu_{h^\rmE}|_U$. Let $\tau_*$ be the time when $\eta$ hits $z_*$. We first prove that the independent random variable $\theta$ can be removed if we recenter at $z_*$. Let $\cH^{*,1/n}$, $\wh\cH^{*,1/n}$, $\cG^{*,1/n}$, $\wh\cG^{*,1/n}$ be the corresponding versions of $\cH^{\rmE,1/n}$, $\wh\cH^{\rmE,1/n}$, $\cG^{\rmE,1/n}$, $\wh\cG^{\rmE,1/n}$ defined using $h$ and $[\tau_*]$ in place of $h^\rmE$ and $\theta$,  where $[x]$ is the fractional part of $x\in\bbR$.

\begin{lemma}\label{lem:0-cone-phase-0}
   Let $\beta_n\subseteq [0,1)$ be  deterministic, $(h,\eta,z_*)$ be sampled from $\bbP_*$ and $\tau_*$ be the time when $\eta$ hits $z_*$. Then the conclusions of Theorem~\ref{thm:circle-matedCRT-0-cone} continue to hold for some circle packing $\cP^{*,n}$ for $\cG^{*,1/n}$ if we replace $(\bbP_\rmE,\cG^{\rmE,1/n},\frac{1}{n}\theta)$ with $(\bbP_*,\cG^{*,1/n},\frac{1}{n}[n\tau_*+\beta_n])$, i.e., the map $\cG^{*,1/n}$ is defined in the identical way as $\cG^{\rmE,1/n}$ using $(h,\eta)$ and vertex set $\frac{1}{n}\bbZ+\frac{1}{n}[n\tau_*+\beta_n]$. Furthermore, for each $R>0$, the maximal diameter of cells in $\wh\cH^{*,1/n}$ intersecting $B(0;R)$ tends to 0 in probability as $n\to\infty$.
\end{lemma}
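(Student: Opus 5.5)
The plan is to reduce Lemma~\ref{lem:0-cone-phase-0} to Theorem~\ref{thm:circle-matedCRT-0-cone} in two steps. First we show that reweighting by $\mu_h(U)$ and recentering the phase at $\tau_*$ produces, jointly, a law absolutely continuous with respect to $\bbP_\rmE$. Then we show that convergence in probability survives the change of measure.

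\textbf{Step 1: distributional identity for the phase.} Under $\bbP_\rmE$, $\theta$ is uniform on $[0,1)$ and independent of $(h,\eta)$. Under $\bbP_*$, the pair $(h,\eta)$ has law $C_U^{-1}\mu_h(U)\,d\bbP_\rmE$. Conditional on $(h,\eta)$, the time $\tau_*$ is distributed as Lebesgue measure on $\{s:\eta(s)\in U\}$, normalized. We would show that, conditional on $(h,\eta)$, the phase $\theta_n:=[n\tau_*+\beta_n]$ has total variation distance to $\mathrm{Unif}[0,1)$ tending to $0$ as $n\to\infty$. The reason is that the conditional law of $n\tau_*$ has a density supported on $n\cdot\eta^{-1}(U)$. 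This set is a union of intervals whose total length is $n\mu_h(U)$. Since $\eta^{-1}(U)$ is open, its Lebesgue density is approximable by finite unions of intervals, so its image mod $1$ equidistributes. Consequently the joint law of $(h,\eta,\theta_n)$ under $\bbP_*$ is within $o(1)$ in total variation of the law of $(h,\eta,\theta)$ under $C_U^{-1}\mu_h(U)\,d\bbP_\rmE$. Here $\theta$ is an independent uniform variable.

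\textbf{Step 2: transfer of the conclusions.} The map $\cG^{*,1/n}$ and the packing are the same deterministic functions of $(h,\eta,\theta_n)$ that $\cG^{\rmE,1/n}$ and $\cP^{\rmE,n}$ are of $(h,\eta,\theta)$. Let $E_n$ denote the event that either the maximal disk diameter or the centering error in Theorem~\ref{thm:circle-matedCRT-0-cone} exceeds $\e$. We then have
\[
\bbP_*[E_n]\le C_U^{-1}\bbE_\rmE\big[\mu_h(U)\mathds{1}_{E_n}\big]+o(1).
\]
Since $\mu_h(U)$ is integrable under $\bbP_\rmE$ and $\bbP_\rmE[E_n]\to 0$, uniform integrability gives $\bbE_\rmE[\mu_h(U)\mathds{1}_{E_n}]\to0$. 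The last assertion, that cells of $\wh\cH^{*,1/n}$ meeting $B(0;R)$ have small diameter, follows the same way from Lemma~\ref{lem:diam-cell-max-1}. That lemma applies to $\wh\cH^{\rmE,1/n}$ with an independent uniform phase, and the same weighting argument transfers it.

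\textbf{Main obstacle.} The delicate point is the equidistribution of the phase in Step 1. The difficulty is that the conclusion must hold \emph{jointly} with the event $E_n$, and $E_n$ itself depends on $n$ and on the phase. I would handle this with a uniform-in-$n$ total variation bound. For a fixed realization $(h,\eta)$, the density of $\tau_*$ is $\mathds{1}_{\eta^{-1}(U)}/\mu_h(U)$. The density of $n\tau_*$ mod $1$ is then $\frac1n\sum_k \mathds{1}_{\eta^{-1}(U)}((k+y)/n)/\mu_h(U)$, which converges to $1$ in $L^1(dy)$ by Riemann-sum convergence for indicators of open sets of finite measure. The total variation bound $\|\cdot\|_{TV}\to0$ holds for a.e. $(h,\eta)$, so dominated convergence gives the joint statement regardless of how $E_n$ depends on $n$. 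The deterministic shift $\beta_n$ only rotates the phase and does not affect this bound.
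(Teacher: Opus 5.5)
Your argument is correct, but it takes a different route from the paper's.

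\textbf{The paper's route.} The paper never proves equidistribution of the phase. It disintegrates $\bbP_*$ over the hitting time:
$\bbP_*[E^c(h,\eta,[n\tau_*+\beta_n])\cap\{|\tau_*|\le N\}]=C_U^{-1}\int_{-N}^{N}\bbP_\rmE[\{\eta(s)\in U\}\cap E^c(h,\eta,[ns+\beta_n])]\,ds$.
It then drops the indicator $\mathds{1}_{\{\eta(s)\in U\}}$. On the integer-length window $[-N,N]$, the map $s\mapsto[ns+\beta_n]$ pushes Lebesgue measure exactly onto $2N$ times Lebesgue measure on $[0,1)$. Combined with the independence of $\theta$ under $\bbP_\rmE$, this gives a bound of order $N\,\bbP_\rmE[E^c(h,\eta,\theta)]$. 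The weight $\mu_h(U)$ is handled by the truncation $\{|\tau_*|\le N\}$ with $N\to\infty$ slowly, not by integrability of $\mu_h(U)$. This is more elementary: no total variation estimate, no Riemann-sum lemma and no dominated convergence.

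\textbf{Your route.} You prove a stronger structural fact. The joint law of $(h,\eta,[n\tau_*+\beta_n])$ under $\bbP_*$ is asymptotically, in total variation, the $\mu_h(U)$-size-biased $\bbP_\rmE$-law with an independent uniform phase. This costs the $L^1$ Riemann-sum step and integrability of $\mu_h(U)$. In exchange, it transfers any sequence of phase-dependent events at once, with no truncation parameter to tune against $n$.

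\textbf{One item left implicit.} The lemma also asserts almost sure circle packing parabolicity of $\cG^{*,1/n}$ for each fixed $n$. Asymptotic total variation closeness alone does not give a statement for fixed $n$. It does follow from your Step 1 computation. For each fixed $n$, the conditional law of the phase given $(h,\eta)$ has the density $\frac1n\sum_k\mathds{1}_{\eta^{-1}(U)}((k+y)/n)/\mu_h(U)$ (shifted by $\beta_n$). Hence the law of $(h,\eta,[n\tau_*+\beta_n])$ under $\bbP_*$ is absolutely continuous with respect to that of $(h,\eta,\theta)$ under $\bbP_\rmE$. This absolute continuity is exactly what the paper invokes for parabolicity, and you should state it explicitly.
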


\begin{proof}
The circle packing parabolicity of $\cG^{*,1/n}$ follows since for every fixed $n$, the law of $\cG^{*,1/n}$ is absolutely continuous with respect to that of $\cG^{\rmE,1/n}$.
    Let $E_{n,R,\e}(h,\eta,\theta)$ be the event that there is a circle packing for the $\cG^{1/n}$ map defined via $(h,\eta)$ with vertex set $\frac{1}{n}(\bbZ+\theta)$ such that the maximum of the quantities considered in part 1.\ and part 2.\ of Theorem~\ref{thm:circle-matedCRT-0-cone} is no greater than $\e$. Since $\eta$ is parameterized by LQG area and $z_*$ is sampled according to LQG area measure, for each fixed $N\in\bbN$, we have
    \eqb\begin{split}
        \bbP_*[E_{n,R,\e}^c&(h,\eta,[n\tau_*+\beta_n])\cap \{|\tau_*|\leq N\}] = C_U^{-1}\int_{-N}^N\bbP_\rmE\big[\{\eta(s)\in U\}\cap E_{n,R,\e}^c(h,\eta,[ns+\beta_n])\big]\,ds \\ &\leq C_U^{-1}\int_{-N}^N \bbP_\rmE[E_{n,R,\e}^c(h,\eta,[ns+\beta_n])]\,ds \leq 3C_U^{-1}N\bbP_\rmE[E_{n,R,\e}^c(h,\eta,\theta)].
    \end{split}
    \eqe
   Here we used   $\bbP_\rmE[E_{n,R,\e}^c(h,\eta,\theta)] = \int_0^1\bbP_\rmE[E_{n,R,\e}^c(h,\eta,s)]\,ds$. By sending $N\to\infty$ sufficiently slower than $n\to\infty$, we have $ \bbP_*[E_{n,R,\e}^c(h,\eta,[n\tau_*+\beta_n])]\to 0$ as $n\to\infty$. The  claim that Theorem~\ref{thm:circle-matedCRT-0-cone} in this setting  then follows since $R,\e$ can be arbitrary.  The last claim follows from Lemma~\ref{lem:diam-cell-max-1} and an identical argument.
\end{proof}

\begin{lemma}\label{lem:circle-pack-cell}
Let $(h,\eta,z_*)\sim\bbP_*$ and $\beta_n\in[0,1)$ be deterministic. Let $k\in\bbN$, $\theta=[n\tau_*+\beta_n]$ and $(a,t,b) = (\tau_*-1/k,\tau_*,\tau_*+1/k)$. Let $t^-=\frt_{[a,b],t}^{\theta,n,-}(h,\eta)$ and $t^+=\frt_{[a,b],t}^{\theta,n,+}(h,\eta)$. The following are true.
\begin{enumerate}[(i)]
    \item Under the event that neither $\eta([t^-+\frac{1}{n},t])$ nor $\eta([t,t^+-\frac{1}{n}])$ is contained inside a single cell $\wh H\in\wh\cH^{*,1/n}$, the cells in $\wh\Phi_{[a,b],t}^{\theta,n}(h ,\eta)$ other than the first and last cell visited by $\eta$ are also cells in  $\wh\cH^{*,1/n}$.
   In this case, the map $\wh\cG^{*,1/n}_{[a,b],t}$ induced by $\wh\Phi_{[a,b],t}^{\theta,n}(h,\eta)$  is a submap of $\wh\cG^{*,1/n}$, where edges of $\wh\cG^{*,1/n}$ with both endpoints in $\wh\cG^{*,1/n}_{[a,b],t}$ are also edges of $\wh\cG^{*,1/n}_{[a,b],t}$.
    \item Let $t_a^-$ and $t_b^+$ be such that $\eta([t_a^-,t_b^+]) = \ol{D_{[a,b],t}}$. The times $(t^-,t^+)$
    converge to $(t_a^-,t_b^+)$ almost surely as $n\to\infty$.  
\item 
With probability tending to 1 as $n\to\infty$, the map $\wh\cG^{*,1/n}_{[a,b],t}$ is a simple disk triangulation and a submap of $\wh\cG^{*,1/n}$. Consider the embedding $\wh\gamma_n$ of $\partial\wh\cG^{*,1/n}_{[a,b],t}$ where we connect the centers of the corresponding cells of $\wh\Phi_{[a,b],t}^{\theta,n}(h,\eta)$ by line segments. Then   $\wh\gamma_n$ converges to $\partial D_{[a,b],t}$ under the curve metric~\eqref{eq:curve-metric-0} as $n\to\infty$ in probability.
\end{enumerate}
The same conclusions also hold for $(h,\eta)\sim \bbP_\rmQ$, $\theta=0$ and $a<t<b$ are deterministic. 
\end{lemma}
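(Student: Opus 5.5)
The plan is to prove the three assertions in order, working throughout with the topological description of double edges from Lemma~\ref{lem:double-edge-topology} and with the finiteness and localization of double edges from Proposition~\ref{prop:cell-diam2/area}.

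\textbf{Assertion (i).} Let $\wh H$ be a cell of $\wh\Phi_{[a,b],t}^{\theta,n}(h,\eta)$ other than the first or last one. Then $\wh H=\eta([x_1,x_2])$ is obtained by merging everything enclosed by a double edge between two cells whose time indices lie in $(t^-,t^+]$. The key observation concerns any double edge in the global map $\cG^{*,1/n}$ whose endpoints are $y_1<y_2$. By Lemma~\ref{lem:double-edge-topology}, the region it encloses, $\eta([y_1,y_2-\frac1n])$, is cut off from the unbounded component by the two end cells. Hence, if such a double edge encloses a cell of $\Phi_{[a,b],t}^{\theta,n}$, then either both end cells lie in $(t^-,t^+]$, or the enclosed interval contains one of $[t^-+\frac1n,t]$ or $[t,t^+-\frac1n]$ entirely. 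The second case is excluded by hypothesis.

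The reason is as follows. The cells at $t^\pm$ touch both the left and right boundaries of $\eta([a_n,b_n])$, so a double edge straddling $t^-$ must enclose it. In that situation the whole stretch $\eta([t^-+\frac1n,t])$ is absorbed into one cell of $\wh\cH^{*,1/n}$. Consequently, the maximal double edges relevant inside $(t^-,t^+)$ are the same locally and globally, and the interior merged cells coincide. The first and last cells may differ, since they can be absorbed into bigger global cells.

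The submap and edge statements then follow from the remark after the definition of $\Phi_{\rm simple}$: collapsing creates no new edges, and adjacency is inherited.

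\textbf{Assertion (ii).} Since $a,b$ are fixed relative to $\tau_*$ and $\eta(t)\notin\partial\eta([a,b])$ a.s., the domain $D_{[a,b],t}$ is the component between the last pinch time $t_a^-$ before $t$ and the first one $t_b^+$ after $t$, where $\eta$ touches both the left and right boundaries of $\eta([a,b])$. Because $a_n\to a$ and $b_n\to b$, and the cells shrink uniformly (Lemma~\ref{lem:diam-cell-max}), any limit point of $t^-$ is a time at which $\eta$ hits both boundaries. This gives $\liminf t^-\ge t_a^-$.

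For the reverse inequality, at time $t_a^-$ the curve touches both boundaries along non-trivial arcs in the neighbourhood. I expect this to need the standard fact that a.s. the boundary-touching times are not isolated in a degenerate way. This is a Brownian-motion statement about simultaneous running infima of $L$ and $R$ on $[a,\cdot]$ and $[\cdot,b]$. It then gives cells at times $\to t_a^-$ meeting both boundaries. The argument for $t^+$ is symmetric.

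\textbf{Assertion (iii).} By Lemma~\ref{lem:0-cone-phase-0} (last claim) and (ii), the probability that $\eta([t^-+\frac1n,t])$ lies in a single cell of $\wh\cH^{*,1/n}$ tends to $0$. Indeed, such a cell would have macroscopic diameter, since $\eta(t)$ is an interior point at positive distance from $\eta(t_a^-)$. So (i) applies with probability tending to $1$. Simplicity follows because all double edges have been collapsed. The map is a disk triangulation because of non-self-crossing, with at least three boundary vertices for large $n$.

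For the curve convergence, the boundary vertices of $\wh\cG^{*,1/n}_{[a,b],t}$ are cells meeting $\partial D_{[a,b],t}$, listed in the order of traversal of the boundary arcs. Combining the uniform smallness of the collapsed cells with the uniform convergence of the Jordan curve parametrized via the left and right boundary length processes shows that $\wh\gamma_n$ is within $\mathbb{d}^{\rm CMP}$-distance $o(1)$ of $\partial D_{[a,b],t}$.

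\textbf{The $\bbP_\rmQ$ case.} This case is identical, using the $\bbP_\rmQ$ versions of the lemmas.

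\textbf{Main obstacle.} The main difficulty I expect is the case analysis in (i): showing rigorously that a global double edge cannot partially overlap the window without swallowing $[t^-+\frac1n,t]$ or $[t,t^+-\frac1n]$.
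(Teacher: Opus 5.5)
The structure of your (i) and your (iii) match the paper. However, the dichotomy at the heart of (i), which you flag yourself as the main obstacle, is asserted rather than proved, and the reason you give does not establish it.

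\textbf{Gap in (i).} Lemma~\ref{lem:double-edge-topology}, together with the fact that the first cell meets both boundaries of $\eta([a_n,b_n])$, only tells you one thing. A double edge $(y_1,y_2)$ with $y_1\le t^-$ and $y_2>t^-+\frac1n$ encloses $\eta([t^-,t^-+\frac1n])$, and that is already automatic from the time indices. It does not tell you that $[t^-+\frac1n,t]$ is swallowed. The case that must be excluded is an end $y_2$ strictly between $t^-+\frac1n$ and $t$ (and its mirror image at $t^+$), and the hypothesis of (i) says nothing about it.

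The missing ingredient is the \emph{extremality} of $t^\pm$, which your sketch never uses. Without it the claim is false. If another cell strictly between $t^-+\frac1n$ and $t$ also met both boundaries, a double edge from it to a cell before $a_n$ would swallow the vertex $t^-+\frac1n$, while the hypothesis of (i) would still hold.

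What is needed, and what the paper does, is the following. Take a global double edge with one end cell $\eta(I)$, $I\subseteq[t^-+\frac1n,t^+-\frac1n]$, and partner $\eta(J)$.
\begin{itemize}
\item If $J\subseteq[a_n,b_n]$ but $J\not\subseteq[t^-,t^+]$, adjacency is impossible. The first and last cells each cross from the left to the right boundary of $\eta([a_n,b_n])$, so they separate $\eta(I)$ from $\eta([a_n,b_n])\setminus\eta([t^-,t^+])$.
\item If $J\cap(a_n,b_n)=\emptyset$, the two arcs shared by the double edge lie one on the left and one on the right boundary of $\eta([a_n,b_n])$. So $\eta(I)$ itself meets both boundaries, contradicting the maximality of $t^-$ or the minimality of $t^+$.
\end{itemize}
Hence a global double edge with an end strictly inside the window has both ends in the window. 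A global double edge with neither end strictly inside that still encloses a window cell must enclose all of $[t^-+\frac1n,t^+-\frac1n]$, which the hypothesis excludes.

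This yields your dichotomy. It also gives the survival of the vertices $t^-+\frac1n$ and $t^+$ under the global collapsing, which your submap claim needs. The remark that collapsing creates no new edges applies only to surviving vertices, and your comment that the first and last cells ``can be absorbed into bigger global cells'' leaves this point open.

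\textbf{Smaller slip in (ii).} Your two halves are labelled backwards.
\begin{itemize}
\item The closedness argument shows every subsequential limit of $t^-$ is a time $\le t$ at which $\eta$ lies on both boundaries of $\eta([a,b])$. This gives $\limsup t^-\le t_a^-$, not $\liminf t^-\ge t_a^-$.
\item The existence of both-boundary cells at times tending to $t_a^-$ gives $\liminf t^-\ge t_a^-$.
\end{itemize}
Since both halves are present, this is harmless. For almost sure convergence, use uniform continuity of $\eta$ on $[a,b]$ rather than Lemma~\ref{lem:diam-cell-max}, which only gives a probability bound.

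For comparison, the paper gets $t^+\le t_b^++\frac1n$ from the definition. For the other direction it uses the Jordan-domain fact that a small cell visited after $t$ and meeting both boundary arcs of $D_{[a,b],t}$ must lie near $\eta(t_b^+)$, together with the absence of atoms of $\mu_h$. Your compactness route is an acceptable substitute.
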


\begin{proof}
  Since $\eta$ is non-self-crossing, for any two cells $H_1$ and $H_2$ that produces a double edge in $\cH^{*,1/n}$, $\eta$ must first visit $H_1$ or $H_2$, then the content between them, and finally the other cell.  Suppose $\eta(I_1)$ and $\eta(I_2)$ form a double edge in the map $\cH^{*,1/n}$, such that $I_1\subseteq[t^-+\frac1n,t^+-\frac1n]$. If $I_2\cap(a_n(\theta),b_n(\theta))=\emptyset$, then $\eta(I_1)$ must intersect both the left and right boundaries of $\eta([a_n(\theta),b_n(\theta)])$ along a nontrivial arc. If $I_2\subseteq [b_n(\theta),\infty)$, then $I_1\cap [t,\infty)\neq\emptyset$, otherwise $\eta([t,b_n(\theta)])$ will be contained in a single cell of $\wh\cH^{*,1/n}$; the further implies that $I_1\subseteq [t^+-\frac{1}{n},\infty)$, which is again a contradiction.  We rule out the case $I_2\subseteq (-\infty,a_n(\theta)]$ similarly. Now observe that $\eta(I_1)$ is separated from $\eta([a_n(\theta),b_n(\theta)])\backslash\eta([t^-,t^+])$ by $\eta([t^-,t^-+\frac{1}{n}])$  and $\eta([t^+-\frac1n,t^+])$, which further implies that $I_2\subseteq [t^-,t^+]$. Therefore if a cell $\wh H\in \wh\cH^{*,1/n}$ intersects the interior of $\eta([t^-+\frac{1}{n},t^+-\frac{1}{n}])$, it must either be contained in  $\eta([t^-,t^+])$ or contain $\eta([t^-+\frac{1}{n},t^+-\frac{1}{n}])$, while the latter case is ruled out by our assumption. Furthermore, if the vertex $t^-+\frac{1}{n}$ is swallowed within a double edge $u^-<t^-+\frac{1}{n}<u^+$, then $\eta([t^-,u^+])$ is in a single cell $\wh H\in \wh\cH^{*,1/n}$, so the assumption that  we must have $u^+< t^+$ by our assumption that $\eta([t^-+\frac{1}{n},t])$ is not in a single cell of $\wh H$. We also cannot have $u^+<t^+$ since $\eta([u^+-\frac1n,u^+])\subseteq  \eta([a_n(\theta),b_n(\theta)])$  and only intersects at most one side of $\partial\eta([a_n(\theta),b_n(\theta)])$ along a nontrivial boundary arc, which implies that $\eta([u^--\frac1n,u^-])\cup\eta([u^+-\frac1n,u^+])$ cannot enclose $\eta([t^-,t^-+\frac1n])$. Therefore $t^-+\frac1n$ is retained as we collapse the double edges, and by a symmetric argument the vertex $t^+$ also survives in the double edge collapsing.  This gives claim (i), where the claim on edges in the last sentence follows from the 
  fact that the operation of collapsing   double edges retains the adjacency between surviving vertices.
  
  To prove claim (ii), observe that by the definition of $t^+$, we have $t^+-\frac{1}{n}\leq t_b^+$. We may further assume $t_b^+\geq t^++\frac1n$ since the other case does not affect our conclusion on convergence of $t^+$ to $t^+_b$. 
  By our definition of $t^+$,  $\eta([t^+-\frac1n,t^+])$ intersects both the left and right boundaries of $\eta([t_a^-,t_b^+])$. Since $t_b^+>t^+\geq t$, $\partial\eta([t^+-\frac1n,t^+])$ either contains $\eta(t)$ or separates $\eta(t)$ from $\eta(t_b^+)$ in the domain $\ol {D_{[a,b],t}}$ since $\eta$ is non-self-crossing. 
  By uniform continuity of $\eta$ on $[a,b]$, we have $\diam(\eta([t^+-\frac1n,t^+]))\to 0$ almost surely as $n\to\infty$. Since $D_{[a,b],t}$ is a Jordan domain with $\eta(t)$ in the interior and $\eta$ is non-self-crossing, this readily implies that $\diam(\eta([t^+-\frac1n,t_b^+]))\to 0$ almost surely as $n\to\infty$, which further implies that $t^+\to t_b^+$ almost surely since the LQG measure a.s.\ has no atoms. The claim that $t^-\to t_a^-$ almost surely follows similarly.
  
  For claim (iii), the first part follows from claim (i) and the last claim of Lemma~\ref{lem:0-cone-phase-0}. Now note that under the embedding via $\wh\Phi_{[a,b],t}^{\theta,n}(h,\eta)$, the cells for the vertices on the counterclockwise boundary arc of $\partial\wh\cG^{*,1/n}_{[a,b],t}$ from {the vertex $t^-+\frac1n$ to the vertex $t^+$} intersect $\partial D_{[a,b],t}$ and are visited by $\eta$ in the same order, and the same applies for clockwise boundary arc from  the vertex $t^-+\frac1n$ to the vertex $t^+$. {By claim (ii), the diameter of  $\eta([t^+-\frac1n,t_b^+])$ and the diameter of  $\eta([t_a^-,t^-+\frac1n])$  tend to almost surely.} This gives the second part.
\end{proof}

Let $k>0$,  $h$ be some variant of the 0-quantum cone or $\gamma$-quantum cone, $\eta$ be space-filling $\SLE_\kappa$ independent from $h$ and parameterized by $\gamma$-LQG mass with respect to $h$ such that $\eta(0)=0$. For deterministic $z\in\bbC$, let $\tau_z$ be the time when $\eta$ hits $z$. Let $(a,b,t) = (\tau_z-\frac1k,\tau_z+\frac1k,\tau_z)$,  $\cH^{h,1/n}_{k,z} = \Phi_{[a,b],t}^{\theta,n}(h,\eta)$, $\wh\cH^{h,1/n}_{k,z} = \wh\Phi_{[a,b],t}^{\theta,n}(h,\eta)$, and let $\cG^{h,1/n}_{k,z}$ and $\wh\cG^{h,1/n}_{k,z}$ be the map induced by the cells in $\cH^{h,1/n}_{k,z}$ and $\wh \cH^{h,1/n}_{k,z}$. Let $D_{k,z}$ be the  connected component of the interior of $\eta([\tau_z-\frac1k,\tau_z+\frac1k])$ containing $z$, and $\psi_{k,z}:D_{k,z}\to\bbD$ be a conformal map sending $z$ to 0. Let $t^{n,-}_k=\frt_{[a,b],t}^{\theta,n,-}(h,\eta)$ and $t^{n,+}_k=\frt_{[a,b],t}^{\theta,n,+}(h,\eta)$. Let $F^{h,n,\theta}_{k,z,\e}$ be the following event: there exists a circle packing   $\cP^{h,1/n}_{k,z}$ for $\cG^{h,1/n}_{k,z}$ in $\bbD$ in the sense of Definition~\ref{def:pack-non-simple}, such that the maximal diameter of disks in $\cP^{h,1/n}_{k,z}$ is less than $\e$, and 
$$ \max_{x\in (t_k^{n,-},t_k^{n,+})\cap \frac{1}{n}(\theta+\bbZ)}\dist(\psi_{k,z}(\eta(x)), o_n(x))<\e .$$

\begin{lemma}\label{lem:circle-pack-cell-1}
   Fix $k>0$.    Under the probability measure $\bbP_*$, for $\theta=[n\tau_*+\beta_n]$, we have $\bbP_*[F^{h,n,\theta}_{k,z_*,\e}]\to 1$ as $n\to\infty$.
\end{lemma}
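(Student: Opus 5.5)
The plan is to compare the circle packing of the local disk triangulation $\wh\cG^{*,1/n}_{[a,b],t}$ (with $(a,b,t)=(\tau_*-1/k,\tau_*+1/k,\tau_*)$) to the restriction of the whole-plane packing $\cP^{*,n}$ from Lemma~\ref{lem:0-cone-phase-0}, using Theorem~\ref{thm:He-Schramm-extension-1}. Everything is done on an event of probability tending to one. By Lemma~\ref{lem:circle-pack-cell}(iii), on this event $\wh\cG^{*,1/n}_{[a,b],t}$ is a simple disk triangulation and a submap of $\wh\cG^{*,1/n}$, with all edges between its vertices retained. Also, a.s.\ $z_*\notin\partial\eta([a,b])$, so $D:=D_{k,z_*}$ is a bounded Jordan domain containing $z_*$.

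\textbf{Step 1: the restricted packing is a circle embedding.} Let $\cP^n$ be the collection of disks of $\cP^{*,n}$ for the vertices of $\wh\cG^{*,1/n}_{[a,b],t}$. Since it is a sub-collection of a circle packing of a triangulation in which adjacency is inherited, it is a circle embedding of $\wh\cG^{*,1/n}_{[a,b],t}$. The only extra tangencies are between boundary vertices.

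\textbf{Step 2: hypotheses (i)--(iv) of Theorem~\ref{thm:He-Schramm-extension-1} for $(\cP^n,D)$.}
\begin{itemize}
\item Condition (i) follows from part 1 of Lemma~\ref{lem:0-cone-phase-0}, taking $R$ large so that $\eta([a,b])\subseteq B(0;R)$.
\item Part 2 of that lemma gives $\sup_x|o_n(x)-\eta(x)|\to0$ over the vertices. Hence the loop $\gamma_n$ through centers of boundary disks is uniformly close to $\wh\gamma_n$, the loop through cell points.
\item By Lemma~\ref{lem:circle-pack-cell}(iii), $\wh\gamma_n\to\partial D$ in the metric \eqref{eq:curve-metric-0}. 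Together with the previous point this gives (iv). It also gives (ii) for $(\cP^n,D)$, since boundary disks are small and centered near $\partial D$.
\end{itemize}

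For the target, take $\wt\cP^n$ to be the circle packing in $\bbD$ of $\wh\cG^{*,1/n}_{[a,b],t}$, centered at the vertex whose cell contains $z_*$. Condition (ii) for $(\wt\cP^n,\bbD)$ holds automatically: boundary disks are tangent to $\partial\bbD$, and no disk leaves $\ol\bbD$. Condition (iv) for $\wt\cP^n$ is not a hypothesis we can check directly. I would obtain it after the fact: the theorem first yields (i) for $\wt\cP^n$, and the Wolff-type argument (Proposition~\ref{prop:unif-disk} style) then makes the boundary loop close to $\partial\bbD$. Alternatively, apply Theorem~\ref{thm:He-Schramm-extension-1} in the reverse direction with $\wt\cP^n$ as source, once (i) is known.

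For condition (iii), let $f^n$ be the carrier homeomorphism. The image of the point $p_0=z_*$ lies in the disk centered at $0$. This disk's radius is bounded away from $1$ because the triangulation has many vertices, so (iii) holds. Theorem~\ref{thm:He-Schramm-extension-0} then gives (i) for $\wt\cP^n$.

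\textbf{Step 3: identify the limit.} Along subsequences, $f^n$ converges uniformly on the closure, in the sense of Definition~\ref{def:unif-convergence}, to a conformal map $D\to\bbD$ sending $z_*\mapsto0$. Such a map equals $\psi_{k,z_*}$ up to rotation. Normalize the rotation of $\wt\cP^n$ so that $\wt\cP^n$ converges along the full sequence.

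For each $x$, we have $o^{\wt\cP}_n(x)=f^n(o_n(x))$, and $|o_n(x)-\eta(x)|\to0$. Uniform convergence, combined with uniform continuity of $\psi_{k,z_*}$ on $\ol D$, then gives
\[
\max_x \dist\big(\psi_{k,z_*}(\eta(x)),o^{\wt\cP}_n(x)\big)\to0 .
\]
Finally, extend $\wt\cP^n$ to a packing of $\cG^{*,1/n}_{k,z_*}$ via Definition~\ref{def:pack-non-simple}. Removed vertices sit at tangency points of small disks, and their cells are small by the last claim of Lemma~\ref{lem:0-cone-phase-0}. So the estimate persists, and $F^{h,n,\theta}_{k,z_*,\e}$ holds with probability tending to one.

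\textbf{Main obstacle.} The main obstacle is verifying the boundary hypotheses for the target packing in $\bbD$. One also has to ensure that collapsed boundary vertices, together with the extra tangencies between boundary disks, do not break the crosscut arguments. I would handle the target side via Lemma~\ref{lem:He-Schramm} near $\partial\bbD$, as in the proof of Theorem~\ref{thm:He-Schramm-disk}.
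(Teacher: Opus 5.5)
Your route is the paper's. You restrict the whole-plane packing $\cP^{*,n}$ of Lemma~\ref{lem:0-cone-phase-0} to the local map and use it as the source embedding in $D=D_{k,z_*}$. You take the packing of the local map in $\bbD$ centred at the root vertex as the target, get small target disks from Theorem~\ref{thm:He-Schramm-extension-0}, and upgrade with Theorem~\ref{thm:He-Schramm-extension-1}. You then identify the limit as $\psi_{k,z_*}$ up to rotation and extend to the collapsed vertices. The one step that fails as written is your check of hypothesis (iii).

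\textbf{Hypothesis (iii).} The number of vertices does not keep the root disk away from $\partial\bbD$. In a wheel with $d$ spokes packed in $\bbD$ and centred at the hub, the hub has radius $1-O(1/d)$, so a high-degree root can have radius near $1$ in a map with many vertices. Also, $f^n(z_*)$ lies in the root target disk only if $z_*$ lies in the root source disk. This is why the paper first translates $\cP^{*,n}$ so that the root disk is centred at $z_*$, an $o(1)$ shift by part 2 of Lemma~\ref{lem:0-cone-phase-0}.

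A correct verification uses the separating-chain argument based on Lemma~\ref{lem:He-Schramm}, as in the proofs of Lemma~\ref{lem:pf-He-Schramm-sphere-2} and Theorem~\ref{thm:He-Schramm-disk}.
\begin{enumerate}
\item Apply Lemma~\ref{lem:He-Schramm} at $z_0=z_*$ with $r_1<\dist(z_*,\partial D)/4$ fixed and $r_0\to0$ slowly relative to the maximal source disk diameter.
\item For the resulting $\rho$, the source disks meeting $\partial B(z_*;\rho)$ contain a cycle of vertices, none adjacent to a boundary vertex, that separates the root from the boundary.
\item The union of the corresponding target disks is connected and has diameter $O(\log(r_1/r_0)^{-1/2})\to0$.
\item The union of the target boundary disks is connected and has diameter $2$. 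It therefore lies in the unbounded component of the complement of this small set.
\item Hence the root target disk, which is centred at $0$ and contains $f^n(z_*)$ after the shift, lies in a bounded component of small diameter.
\end{enumerate}
This gives $f^n(z_*)\to0$, which is (iii), and as a byproduct shows the root target disk is small.

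\textbf{Two smaller remarks.} Hypothesis (iv) for the target needs no Wolff-type argument. Once the target disks are small, the boundary disks are internally tangent to $\partial\bbD$ at points in cyclic order. Consecutive tangency points are within twice the maximal diameter, so the loop through their centres is close to $\partial\bbD$ in~\eqref{eq:curve-metric-0}.

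The obstacle you flag at the end is already absorbed. By Lemma~\ref{lem:circle-pack-cell}(iii) the local map is an induced submap of $\wh\cG^{*,1/n}$, so the restricted packing has no extra tangencies at all. In any case, Theorems~\ref{thm:He-Schramm-extension-0} and~\ref{thm:He-Schramm-extension-1} are stated for circle embeddings.
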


\begin{proof}
    Let   $\wh H_*^n\in \wh\cH^{*,1/n}$ be a cell containing $z_*$. Consider a circle packing $\cP^{h,n}_{k,z_*}$ for $\wh\cG^{h,1/n}_{k,z_*}$ in $\bbD$ centered at the vertex $\wh H_*^n$. Now by Lemma~\ref{lem:circle-pack-cell}, under $\bbP_*$, with probability tending to 1 as $n\to\infty$, the map $\wh\cG^{h,1/n}_{k,z_*}$ is a submap of $\wh\cG^{*,1/n}$ where edges of $\wh\cG^{*,1/n}$ with both endpoints in $\wh\cG^{h,1/n}_{k,z_*}$ are also edges of $\wh\cG^{h,1/n}_{k,z_*}$. Therefore we can apply Theorem~\ref{thm:He-Schramm-extension-1} to compare the circle packing $\cP^{h,n}_{k,z_*}$ with the circle embedding $\wt\cP^{h,n}_{k,z_*}$, which is the restriction of the circle packing $\cP^{*,n}$ in Lemma~\ref{lem:0-cone-phase-0}   to the vertices in $\wh\cG^{h,1/n}_{k,z}$ and shifted such that the disk for $\wh H_*^n$ is centered at $z_*$. We let $\cP^n = \wt\cP^{h,n}_{k,z_*}$, $D = D_{k,z_*}$, $\wt\cP^n = \cP^{h ,n}_{k,z_*}$ and $\wt D= \bbD$. Theorem~\ref{thm:He-Schramm-extension-0} implies that the maximal diameter of disks in $\wt\cP^n$ tends to 0 in probability. Then Theorem~\ref{thm:circle-matedCRT-0-cone} and part (iii) of Lemma~\ref{lem:circle-pack-cell} implies that all the conditions in Theorem~\ref{thm:He-Schramm-extension-1} hold for $\cP^n$ and $\wt\cP^n$, and therefore the claim further follows from  Theorem~\ref{thm:He-Schramm-extension-1} and Theorem~\ref{thm:circle-matedCRT-0-cone} where we extend the circle packing $\cP^{h ,n}_{k,z_*}$ from vertices in $\wh\cG^{h,1/n}_{k,z_*}$ to all vertices in $\cG^{h,1/n}_{k,z_*}$ and use the fact that all cells in $\wh \cH^{h,1/n}_{k,z_*}$ and all disks in $\wt\cP^n$ are microscopic as in the proof of Theorem~\ref{thm:circle-matedCRT-0-cone}.
\end{proof}

To prove Proposition~\ref{thm:circle-quantum-cell-0}, to simplify the notations, we will mainly work with $t=0$ and $a=-1,b=1$. For this purpose, we shall assume $\beta_n=0$. The case where $t=0$ and $a,b$ are general follows by the same argument and scaling, while the general $t$ case follows from the translation invariance of $\gamma$-quantum cone as in~\cite[Lemma 8.3]{DMS14} and further setting $\beta_n = [-nt]$. 

\begin{proof}[Proof of Proposition~\ref{thm:circle-quantum-cell-0}]
 Let $(h,\eta,z_*)\sim \bbP_*$. Let $h_*(\cdot)=h(\cdot+z_*)$ and $\eta_*(\cdot)=\eta(s+\tau_*)-z_*$. Then following~\cite[Lemma A.10]{DMS14}, one can show that $h_*|_{B(0;1/2)}$ has the same law as $h^\rmQ|_{B(0;1/2)}$ under $\bbP_\rmQ$, and $\eta_*$ is an independent space-filling $\SLE_\kappa$ curve parameterized by $\mu_{h_*}$. Let $E_k^* = \{\eta([\tau_*-1/k,\tau_*+1/k])\}\subseteq B(z_*;1/4)$ and $E_k^\rmQ = \{\eta([-1/k,1/k])\subseteq B(0;1/4)\}$. Therefore by Lemma~\ref{lem:circle-pack-cell-1}, \eqb\label{eq:pf:thm:circle-quantum-cell-0-122}\bbP_\rmQ[E_k^\rmQ\cap \big(F_{k,0,\e}^{h^\rmQ,n,0}\big)^c] = \bbP_*[E_k^*\cap \big(F_{k,z_*,\e}^{h,n,[n\tau_*]}\big)^c]\to 0 \ \ \text{as }n\to\infty.\eqe
  Now  $\bbP_\rmQ[\eta([-\frac{1}{k},\frac{1}{k}])\subseteq B(0;1/4) ]\to 1$ as $k\to\infty$, and therefore as we replace $n$ with $kn$ in~\eqref{eq:pf:thm:circle-quantum-cell-0-122} and further send $k\to\infty$, the scale invariance in Lemma~\ref{lem:Rb} readily implies that $\bbP_\rmQ[F^{h^\rmQ,n,0}_{1,0,\e}]\to 1$ as $n\to\infty$. This finishes the proof.
\end{proof}

\subsubsection{Convergence of circle packing for mated CRT map on the disk}\label{subsubsec:matedCRT-disk}

Here we prove Theorem~\ref{thm:pack-disk} based on Proposition~\ref{thm:circle-quantum-cell-0}, Theorem~\ref{thm:He-Schramm-disk} and local absolute continuity. Let $(\bbD,h^\disk,1)$ be a quantum disk of unit area and boundary length, and $\eta^\disk:[0,1]\to\ol\bbD$ be a  counterclockwise space-filling  $\SLE_\kappa$ loop such that $\eta^\disk(0)=\eta^\disk(1)=1$ and parameterized via $\gamma$-LQG area with respect to $h^\disk$. We choose the embedding of $h^\disk$ such that $\eta^\disk(1/2)=0$. We continue to write $(h^\rmQ,\eta^\rmQ)$ for the space-filling $\SLE_\kappa$ decorated quantum cone sampled from $\bbP_\rmQ$.

\begin{lemma}\label{lem:QD-AC}
    Fix $\e\in(0,0.01)$. Then the law of the curve decorated quantum surface $(\eta^{\mathrm{disk}}([\e,1-\e]),h^\disk,\eta^{\mathrm{disk}}|_{[\e,1-\e]})/\sim_\gamma$ is absolutely continuous with respect to that of $(\eta^\rmQ([\e,1-\e]),h^\rmQ, \eta^\rmQ|_{[\e,1-\e]})/\sim_\gamma$.
\end{lemma}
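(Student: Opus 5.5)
The plan is to reduce the statement to absolute continuity of the boundary length processes, using the mating-of-trees bijections. By \cite[Theorem 1.1]{AG19}, the curve-decorated quantum surface $(\bbD,h^\disk,\eta^\disk)/\sim_\gamma$ is a.s.\ determined by $Z^\disk=(L^\disk,R^\disk)$, and the quantum cone with its curve is determined by $Z=(L,R)$ \cite[Theorem 1.11]{DMS14}. The key structural input is a locality statement: the curve-decorated quantum surface $(\eta([\e,1-\e]),h,\eta|_{[\e,1-\e]})/\sim_\gamma$ is a.s.\ determined by the increments $(Z_s-Z_\e)_{s\in[\e,1-\e]}$, and this holds with the \emph{same} measurable map $\Psi$ in both the disk and cone settings. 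For the cone this is the standard consequence of the mating-of-trees theorem that the surface parametrized by a time segment is determined by the Brownian increments on that segment (cf.\ \cite[Section 8.2]{DMS14}, \cite{GMS21tutte}). For the disk, I would argue that the restriction of $\eta^\disk$ to $[\e,1-\e]$ is, conditionally on the rest, a sub-surface cut out by the same local gluing of trees: both surfaces are obtained by conformally welding the pieces encoded by the increments (equivalently, via the cells $\eta([x-\frac1n,x])$ and their adjacency, which are read off from \eqref{eq:def-mated-CRT} using only increments), so the same map $\Psi$ applies.

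Granting this, it suffices to show that the law of $(Z^\disk_s-Z^\disk_\e)_{s\in[\e,1-\e]}$ is absolutely continuous with respect to that of $(Z_s-Z_\e)_{s\in[\e,1-\e]}$, the latter being simply a correlated Brownian motion on $[\e,1-\e]$ started from $0$. For this I would use the Markov property of the conditioned process: $Z^\disk$ is the Brownian motion with covariance \eqref{eq:variance} started at $(0,1)$, conditioned to stay in $\bbR_+^2$ and exit at $(0,0)$ at time $1$, so it is an $h$-transform. Conditionally on $Z^\disk_\e=z$ with $z$ in the open quadrant (a.s.), the law of $Z^\disk|_{[\e,1-\e]}$ has density with respect to Brownian motion from $z$ given by $\mathds{1}\{\text{stay in }\bbR_+^2\}\,p(Z_{1-\e})/p_{0}(z)$, where $p$ is the (positive, finite on the interior) density of the remaining excursion to $(0,0)$ over time $\e$. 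This density is a.s.\ finite, so conditionally absolutely continuous, and after integrating over $z$ and subtracting the starting point the increment law is absolutely continuous with respect to Brownian increments. Applying $\Psi$ preserves absolute continuity, giving the lemma.

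The main obstacle is the first step: rigorously justifying that the sub-surface on $[\e,1-\e]$ is a measurable function of the increments in a way that is common to the disk and the cone, since \cite{AG19} and \cite{DMS14} are stated for the whole surfaces. I would try to get it from the cone case plus the fact (from the construction in \cite{AG19}) that the quantum disk with its curve can be realized, via local absolute continuity of the field, as a region cut out by a space-filling SLE segment in a quantum cone, so that the determination map on sub-segments is inherited; the singular $h$-transform near times $0$ and $1$ is avoided precisely because $\e>0$.
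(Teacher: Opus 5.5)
Your proposal is correct and follows essentially the same route as the paper: a single measurable map sending the boundary length process on $[\e,1-\e]$ to the curve-decorated quantum surface in both the disk and cone settings, combined with absolute continuity of $(L^\disk,R^\disk)|_{[\e,1-\e]}$ with respect to $(L,R)|_{[\e,1-\e]}$. The paper does not prove the step you flag as the main obstacle; it takes it directly from \cite[Lemma 2.17]{AG19} and the last paragraph of the proof of \cite[Theorem 1.1]{AG19}, and your $h$-transform argument spells out the Brownian absolute continuity that the paper uses without further comment.
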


\begin{proof}
    Let  $(L_t,R_t)_{t\in\bbR}$ be the boundary length process for $(h^\rmQ,\eta^\rmQ)$. As in~\cite[Lemma 2.17]{AG19} and in the last paragraph of the proof of~\cite[Theorem 1.1]{AG19}, there exists some function $F$ such that $(\eta^\rmQ([\e,1-\e]),h^\rmQ,\eta^\rmQ|_{[\e,1-\e]})/\sim_\gamma = F((L ,R )|_{[\e,1-\e]})$ and $(\eta^{\mathrm{disk}}([\e,1-\e]),h^\disk,\eta^{\mathrm{disk}}|_{[\e,1-\e]})/\sim_\gamma = F((L^{\disk},R^\disk)|_{[\e,1-\e]})$. The claim then follows from the absolute continuity between the law of the processes $(L^{\disk},R^\disk)|_{[\e,1-\e]}$ and $(L ,R )|_{[\e,1-\e]}$ for fixed $\e$.
\end{proof}

Let $\cH^{\disk,1/n} = \{\eta^\disk([x-\frac1n,x]):x\in(0,1]\cap\frac1n\bbZ\}$, and let $\wh\cH^{\disk,1/n}$ be the version of  $\cH^{\disk,1/n}$ with multiple edges collapsed, i.e., for $x_1<x_2-\frac1n$, if $\eta^\disk([x_1-\frac1n,x_1])$ and $\eta^\disk([x_2-\frac1n,x_2])$ intersect along both of their left and right boundaries over a non-trivial boundary arc, then we merge $\eta^\disk([x_1,x_2-\frac1n])$ into $\eta^\disk([x_2-\frac1n,x_2])$. Recall the notations $\frt_{[a,b],t}^{\theta,n,-}$, $\frt_{[a,b],t}^{\theta,n,+}$ and $\Phi_{[a,b],t}^{\theta,n}$ in Section~\ref{subsubsec:matedCRT-cell}.
For $\e\in(0,0.01)$, let $\cH^{\disk,1/n}_\e = \Phi_{[\e,1-\e],1/2}^{0,n}(h^\disk,\eta^\disk)\subseteq \cH^{\disk,1/n}$ and $\cG^{\disk,1/n}_\e$ be the map associated with $\cH^{\disk,1/n}_\e$. Let $s_\e^{n,-} = \frt_{[\e,1-\e],1/2}^{0,n,-}(h^\disk,\eta^\disk)$, $s_\e^{n,+} = \frt_{[\e,1-\e],1/2}^{0,n,+}(h^\disk,\eta^\disk)$.
Let $\wh \cG^{\disk,1/n}_\e = \Phi_{\rm simple}(\cG^{\disk,1/n}_\e)$ and $\wh \cG^{\disk,1/n}=\Phi_{\rm simple}(\cG^{\disk,1/n})$. Let $D_\e^\disk$ be the   connected component of the interior of $\eta^\disk([\e,1-\e])$ containing 0.

\begin{lemma}\label{lem:QD-AC-1}
We have the following.
\begin{enumerate}[(i)]
    \item Under the event where neither $\eta^\disk([s^{n,-}_\e+\frac{1}{n},1/2])$ nor $\eta^\disk([1/2,s_\e^{n,+}-\frac{1}{n}])$ is contained in a single cell in $\wh\cH^{\disk,1/n}$, $\wh \cG^{\disk,1/n}_\e$ is a submap of $\wh \cG^{\disk,1/n}$, where edges of  $\wh \cG^{\disk,1/n}$ with both endpoints in $\wh \cG^{\disk,1/n}_\e$ are also edges of $\wh \cG^{\disk,1/n}_\e$.
    \item  As we send $\e\to 0$ sufficiently slower than $n\to\infty$, $(s_\e^{n,-},s_\e^{n,+})$ converges to $(0,1)$ in probability. 
    \item  The maximal diameter of cells in $\wh\cH^{\disk,1/n}$ tends to 0 in probability as $n\to\infty$. In particular, with probability  tending to 1 as we send $\e\to 0$ sufficiently slower than $n\to\infty$, $\wh \cG^{\disk,1/n}_\e$ is a submap of $\wh \cG^{\disk,1/n}$.
\end{enumerate}
\end{lemma}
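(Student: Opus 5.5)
We follow the argument behind Lemma~\ref{lem:circle-pack-cell}, with $(h^\disk,\eta^\disk)$, the interval $[\e,1-\e]$, time $t=1/2$ and $\theta=0$ in place of the cone setting.

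For (i), the proof of Lemma~\ref{lem:circle-pack-cell}(i) goes through essentially verbatim, since it uses only the non-self-crossing property of $\eta^\disk$ and the definition of $s_\e^{n,\pm}$. Suppose $\eta^\disk(I_1),\eta^\disk(I_2)$ form a double edge of $\cG^{\disk,1/n}$ with $I_1\subseteq[s^{n,-}_\e+\frac1n,s^{n,+}_\e-\frac1n]$. If $I_2$ lies outside $(a_n,b_n)$, then $\eta^\disk(I_1)$ meets both sides of $\partial\eta^\disk([a_n,b_n])$. This contradicts the maximality of $s^{n,-}_\e$ or the minimality of $s^{n,+}_\e$, unless $\eta^\disk([s_\e^{n,-}+\frac1n,1/2])$ or $\eta^\disk([1/2,s_\e^{n,+}-\frac1n])$ is swallowed in a single cell of $\wh\cH^{\disk,1/n}$, which the hypothesis excludes. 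Next, the two cells at $s_\e^{n,\pm}$ separate $\eta^\disk(I_1)$ from the rest of $\eta^\disk([a_n,b_n])$, so $I_2\subseteq[s^{n,-}_\e,s^{n,+}_\e]$. The same swallowing argument shows that the vertices $s^{n,-}_\e+\frac1n$ and $s^{n,+}_\e$ survive the collapse. Finally, $\Phi_{\rm simple}$ creates no edges and keeps all adjacencies between surviving vertices, which gives the statement about edges.

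For (ii), I would first fix $\e$. Exactly as in Lemma~\ref{lem:circle-pack-cell}(ii), uniform continuity of $\eta^\disk$, the Jordan property of $D^\disk_\e$ and the absence of atoms in $\mu_{h^\disk}$ give $s_\e^{n,\pm}\to(t_{\e}^-,t_{\e}^+)$ a.s., where $\eta^\disk([t_\e^-,t_\e^+])=\ol{D_\e^\disk}$. It then remains to show that $t_\e^-\to0$ and $t_\e^+\to1$ in probability as $\e\to0$; a diagonal choice of $\e=\e(n)\to0$ slowly then finishes. Because $\eta^\disk(1/2)=0$ lies a.s.\ in the interior of $\bbD$, and $\eta^\disk([0,\e])$ and $\eta^\disk([1-\e,1])$ shrink to the boundary point $1$, for small $\e$ they should not disconnect $0$ from most of $\eta^\disk([\e,1-\e])$. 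Concretely, I would argue that the set of times in $[\e,1-\e]$ whose cells are cut off from the component containing $0$ has $\mu_{h^\disk}$-mass tending to $0$. In terms of the Brownian excursion, these times should be controlled by the times near $0$ and $1$ at which $L^\disk$ or $R^\disk$ is close to $0$, where the pinching happens. I expect this to be the main obstacle. My first attempt would be to use Lemma~\ref{lem:QD-AC} together with the corresponding statement for the quantum cone piece $\eta^\rmQ([\e,1-\e])$, although a direct argument via the Brownian excursion may be cleaner.

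For (iii), the disk-cell analogue of Lemma~\ref{lem:diam-cell-max-1} should follow from local absolute continuity. Lemma~\ref{lem:QD-AC} on $[\delta,1-\delta]$, combined with Lemma~\ref{lem:diam-cell-max-1} under $\bbP_\rmQ$ applied to the cells with times in $[\delta,1-\delta]$, controls the diameters of uncollapsed cells there. The cells with times in $[0,\delta]\cup[1-\delta,1]$ have diameter at most $\diam\eta^\disk([0,\delta])+\diam\eta^\disk([1-\delta,1])\to0$, and by Lemma~\ref{lem:double-edge-topology} a collapsed cell has diameter at most twice that of its two bounding cells. Collapsed cells straddling time $\delta$ are handled by the same lemma. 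Hence the maximal diameter of cells in $\wh\cH^{\disk,1/n}$ tends to $0$ in probability. This gives the second claim: since $\eta^\disk([s^{n,-}_\e+\frac1n,1/2])$ contains $0$ and a macroscopic piece, it cannot lie in a single cell of $\wh\cH^{\disk,1/n}$, and likewise for $[1/2,s_\e^{n,+}-\frac1n]$. So the hypothesis of (i) holds with high probability.
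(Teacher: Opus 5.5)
Your part (i) is the paper's argument, namely a reuse of Lemma~\ref{lem:circle-pack-cell}(i).

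\textbf{The gap is in (ii).} After the fixed-$\e$ convergence $s^{n,\pm}_\e\to t^\pm_\e$, everything rests on $t^-_\e\to0$ and $t^+_\e\to1$ as $\e\to0$. You leave this open, call it the main obstacle, and suggest excursion estimates or Lemma~\ref{lem:QD-AC}.

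The absolute-continuity route cannot work. Lemma~\ref{lem:QD-AC} is a fixed-$\e$ statement and gives no control as $\e\to0$. Moreover, the would-be cone counterpart fails for $\gamma\in(\sqrt2,2)$, where cone cells have pinch points. That counterpart would say that the component of the interior of $\eta^\rmQ([\e,1-\e])$ containing $\eta^\rmQ(1/2)$ has time span tending to $[0,1]$.

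The step is in fact elementary; it is what the paper means by ``by continuity of $\eta^\disk$'', and your own heuristic is the proof. Let $A_\e=\eta^\disk([0,\e])\cup\eta^\disk([1-\e,1])$ and $r_\e=\diam(A_\e)$. Then $r_\e\to0$ a.s., since $\eta^\disk(0)=\eta^\disk(1)=1$.

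Since $\ol\bbD=\eta^\disk([\e,1-\e])\cup A_\e$ and $A_\e\subseteq\ol{B(1;r_\e)}$, the connected open set $\bbD\setminus\ol{B(1;r_\e)}$ lies in the interior of $\eta^\disk([\e,1-\e])$. It contains $0=\eta^\disk(1/2)$, hence it lies in $D^\disk_\e$.

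Because $\mu_{h^\disk}(\eta^\disk(I))=|I|$ for intervals $I$, the sets $\eta^\disk([0,t^-_\e])$ and $\eta^\disk([t^+_\e,1])$ meet $\eta^\disk([t^-_\e,t^+_\e])=\ol{D^\disk_\e}$ only in null sets. Therefore
\[
t^-_\e+(1-t^+_\e)\le\mu_{h^\disk}\big(\ol\bbD\setminus\ol{D^\disk_\e}\big)\le\mu_{h^\disk}\big(\ol{B(1;r_\e)}\cap\bbD\big)\to0\quad\text{a.s.},
\]
as $\mu_{h^\disk}$ is a finite measure on $\bbD$. A diagonal choice $\e=\e(n)$ then gives (ii).

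\textbf{Part (iii).} Your route differs from the paper's and is correct, indeed shorter.

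The paper shows that the maximal time gap $\Delta_n$ of a double edge tends to $0$. Near the endpoints it uses excursion estimates: $R^\disk$ is near $1$ at the start and near $0$ at the end, and $L^\disk$ is small near both ends. In the bulk it uses Lemma~\ref{lem:QD-AC} and Lemma~\ref{lem:diam-cell-max-1}. It then concludes by uniform continuity. This yields the stronger fact that merged cells have small time length, and it never needs a disk version of Lemma~\ref{lem:double-edge-topology}.

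You instead bound the diameter of a merged cell $\eta^\disk([x_1,x_2])$ by $\diam\eta^\disk([x_1-\frac1n,x_1])+\diam\eta^\disk([x_2-\frac1n,x_2])$. This requires Lemma~\ref{lem:double-edge-topology} for $\eta^\disk$, and you should state that it holds with the same proof. In the disk, $\partial\bbD$ is part of the right boundary recorded by $R^\disk$, so the double-edge conditions still force the whole boundary of the middle piece into the two outer cells. The paper itself uses this in the proof of Theorem~\ref{thm:pack-disk}.

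Given that lemma, your appeal to Lemma~\ref{lem:QD-AC} and Lemma~\ref{lem:diam-cell-max-1} is superfluous. By uniform continuity of $\eta^\disk$ on $[0,1]$, $\max_x\diam\eta^\disk([x-\frac1n,x])\to0$ a.s. Every cell of $\wh\cH^{\disk,1/n}$ then has diameter at most twice this. Your deduction of the ``in particular'' claim from (i), (ii) and this bound is fine.
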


\begin{proof}
     The first   claim follows from the identical proof of part (i) of Lemma~\ref{lem:circle-pack-cell}. For the second claim, suppose $\ol D_\e^\disk=\eta^\disk([t^-_\e,t_\e^+])$, using the same argument as in the proof of part (ii) of Lemma~\ref{lem:circle-pack-cell}, as we send $n\to\infty$, $(s_\e^{n,-},s_\e^{n,+})$ converges in probability to $(t^-_\e,t_\e^+)$. Claim (ii) then follows since again by continuity of $\eta^\disk$, $(t^-_\e,t_\e^+)$ converges to $(0,1)$ in probability as we further send $\e\to0$.

    To prove claim (iii), let $$\Delta_n = \max\{|x_1-x_2|: x_1,x_2\in (0,1]\cap \frac{1}{n}\bbZ: x_1,x_2 \text{ form a double edge in } \cG^{\disk,1/n}\}.$$ We prove that $\Delta_n\to 0$ in probability as $n\to\infty$. Let $\delta_1>0$, and we choose $\e_1>0$ sufficiently small such that with probability at least $1-\delta_1/4$, $R_t^\disk>0.9$ on $[0,\e_1]$ while $R_t^\disk<0.1$ on $[1-\e_1,1]$. Under this event, for $n>10\e_1^{-1}$, any $x_1,x_2\notin [\e_1/2,1-\e_1/2]$ forming double edges must satisfy $|x_1-x_2|\leq \e_1$. Next pick $\e_2<\e_1/8$ sufficiently small, such that with probability at least $1-\delta_1/4$, $L_t^\disk<L_s^\disk$ for any $t\in [0,\e_2]\cup[1-\e_2,1]$ and $s\in [\e_1/4,1-\e_1/4]$. Under this event, for $n>10^4\e_2^{-1}$, there are no $x_1,x_2$ such that one of them is in $[\e_1/2,1-\e_1/2]$ and the other is not in $[\e_2/2,1-\e_2/2]$ and they form a double edge. Finally, note that in the setting of $\wh\cH^{\rmQ,1/n}$ and $(h^\rmQ,\eta)\sim\bbP_\rmQ$, if $x_1,x_2$ form a double edge and $\eta([x_2-\frac1n,x_2])\subseteq \wh H$ with $H\in \wh\cH^{\rmQ,1/n}$, then $|x_2-x_1|-\frac{1}{n}\leq \mu_h(\wh H)$. Therefore
    given $\e_1,\e_2$,  using the absolute continuity from Lemma~\ref{lem:QD-AC} and Lemma~\ref{lem:diam-cell-max-1}, the maximal possible gap $|x_1-x_2|$ over all choices $x_1,x_2\in [\e_2/2,1-\e_2/2]$ which forms double edges tends to 0 in probability. This proves $\Delta_n\to 0$ in probability, which further implies claim (iii) by continuity of $\eta^\disk$.
\end{proof}

\begin{proof}[Proof of Theorem~\ref{thm:pack-disk}]
  We work in the setting of the proof of Lemma~\ref{lem:QD-AC-1}. Let $\psi_\e^\disk:D_\e^\disk\to\bbD$ be a conformal map fixing 0 such that $(\psi_\e^\disk)'(0)>0$. Note that by the continuity of $\eta^\disk$ near its endpoints, $\diam(\bbD\backslash\eta^\disk([\e,1-\e]))\to 0$ almost surely and $\bbD\backslash B(1;2\diam(\bbD\backslash\eta^\disk([\e,1-\e])))\subseteq D_\e^\disk$. Proposition~\ref{prop:boundary-perturb} then gives that $\sup_{z\in\ol{D_\e^\disk}}|\psi_\e^\disk(z)-z|\to 0$ almost surely as $\e\to 0$.  By Proposition~\ref{thm:circle-quantum-cell-0} and the absolute continuity from Lemma~\ref{lem:QD-AC}, there is a circle packing $\cP^{\disk,n}_\e$ for $\cG^{\disk,1/n}_\e$ in $\bbD$ such that  as we send $\e\to0$ sufficiently slower than $n\to\infty$,   the maximal diameter of disks in $\cP^{\disk}_\e$ tends to 0 in probability, and up to a rotation, 
  \eqb\label{eq:pf-thm:pack-disk-1}\max\{|\psi_\e^\disk(\eta^\disk(x)) - o_n(x)|: x\in (s_\e^{n,-},s_\e^{n,+})\cap \frac{1}{n}\bbZ\}\eqe
  tends to 0 in probability. We may further center this packing at the vertex in $\wh\cG^{\disk,1/n}_\e$ with closest distance to 1/2.  On the other hand, let $y_1^{n,\e}$ (resp.\ $y_2^{n,\e}$)   be the last time $s$ before $s_\e^{n,+}$ (resp.\ first time $s$ after $s_\e^{n,-}$)  in $  \frac{1}{n}\bbZ\cap(0,1]$ such that the cell containing $\eta^\disk([s-\frac1n,s])$  shares a nontrivial boundary arc with the connect component of $\partial D_\e^\disk\cap\partial\bbD$ containing $-1$. Note that if a cell $\eta^\disk([s-\frac1n,s])$ intersects $\partial\bbD$ along a nontrivial boundary arc, then $\eta^\disk([s-\frac1n,s])$ cannot be enclosed by any two other cells  $\eta^\disk([s'-\frac1n,s'])$ and   $\eta^\disk([s''-\frac1n,s''])$ as in Lemma~\ref{lem:double-edge-topology} and the vertex $s\in\cV\cG^{\disk,1/n}$ will survive as we collapse all the double edges. Now let  $\ell^{n,\e}$ (resp.\ $\gamma^{n,\e}$) be the clockwise (resp.\ counterclockwise) arc of $\partial\wh\cG^{\disk,1/n}_\e$ from $y_1^{n,\e}$ to $y_2^{n,\e}$. Then the cells in $\wh\cH^{\disk,1/n}$ for vertices on $\gamma^{n,\e}\backslash\{y_1^{n,\e},y_2^{n,\e}\}$ intersect $\bbD\backslash D_\e^\disk$, and it further follows from the no-macroscopic-cell result from Lemma~\ref{lem:QD-AC-1} and~\eqref{eq:pf-thm:pack-disk-1} that the diameter of the union of the disks in $\cP^{\disk,n}_\e$  for vertices on $\gamma^{n,\e}$ tends to 0 in probability as $n\to\infty$ as $\e\to0$ sufficiently slower than $n\to\infty$.
  Consider a circle packing $\cP^{\disk,n}$ of $\wh\cG^{\disk,1/n}$ in $\bbD$ centered at the vertex closest to $1/2$ and extend it to all vertices of $\cG^{\disk,1/n}$ as in Definition~\ref{def:pack-non-simple}. Since with high probability $\wh \cG^{\disk,1/n}_\e$ is a submap of $\wh \cG^{\disk,1/n}$, we can use Theorem~\ref{thm:He-Schramm-disk} to compare the circle packing $\cP^{\disk,n}_\e$ with the circle packing $\wh\cP^{\disk,n}$, and from Theorem~\ref{thm:He-Schramm-disk}, the maximal diameter of disks in $\wh\cP^{\disk,n}$ tends to 0, and up to a rotation, any homeomorphism between the carrier of $\cP^{\disk,n}_\e$ and the carrier of the restriction $\cP^{\disk,n}$ to $\wh \cG^{\disk,1/n}_\e$ that corresponds the disks is approximately the identity. 
  Assertion 1.\ and 2.\ then follow by~\eqref{eq:pf-thm:pack-disk-1} and Theorem~\ref{thm:He-Schramm-disk} where we send $\e\to 0$ sufficiently slower than $n\to\infty$, and further extend to all vertices in $\cG^{\disk,1/n}$ using the no-macroscopic-cell result from Lemma~\ref{lem:QD-AC-1}, no-macroscopic-disks from Theorem~\ref{thm:He-Schramm-disk} as well as continuity of $\eta^\disk$ as in the proof of Theorem~\ref{thm:circle-matedCRT-0-cone}.
  Assertion 4.\ is a direct consequence of Assertion 2.\, and Assertion 3.\ follows from Assertion 1.\ and 2.\ and the fact that $\eta^\disk$ is parameterized by $\gamma$-LQG mass with respect to $h^\disk$.
\end{proof}

\subsubsection{Convergence of circle packing for mated CRT map on the sphere}\label{subsubsec:matedCRT-sph}
We now prove Theorem~\ref{thm:pack-sph} following the same strategy as the proof of Theorem~\ref{thm:pack-disk}, where we use local absolute continuity along with Theorem~\ref{thm:He-Schramm-sphere} and Proposition~\ref{thm:circle-quantum-cell-0}. 

Let $((\wh\bbC,h^\mathrm{sp}, 0,\infty),\eta^{\rm sp})$ be the unit area quantum sphere and the space-filling $\SLE_\kappa$ as in Theorem~\ref{thm:pack-sph}, $(L^\sph_t,R^\sph_t)|_{t\in[0,1]}$ be the left and right boundary length of $\eta^\sph([0,t])$.   Let $(h^\rmQ,\eta^\rmQ)$ be sampled from $\bbP_\rmQ$ as before. We start with the following local absolute continuity.

\begin{lemma}\label{lem:QS-AC}
    Let $\e\in(0,0.1)$. The law of the curve-decorated quantum surface $(\eta^{\rm sp}([\e,1-\e]),h^{\rm sp},\eta^{\rm sp}|_{[\e,1-\e]},0)/\sim_\gamma$  is absolutely continuous with respect to that of $(\eta^{\rmQ}([\e,1-\e]),h^{\rmQ},\eta^{\rm Q}|_{[\e,1-\e]})/\sim_\gamma$.
\end{lemma}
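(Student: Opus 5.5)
The plan is to follow the proof of Lemma~\ref{lem:QD-AC} almost verbatim, with the quantum sphere and its Brownian excursion $(L^\sph,R^\sph)$ taking the place of the quantum disk and $(L^\disk,R^\disk)$. The first step is to write the curve-decorated quantum surface cut out by $\eta^\rmQ$ on $[\e,1-\e]$ as a deterministic measurable function of the increments of the boundary length process on that interval. More precisely, I want a measurable $F$ with
\[
(\eta^{\rmQ}([\e,1-\e]),h^{\rmQ},\eta^{\rmQ}|_{[\e,1-\e]})/\sim_\gamma \;=\; F\big((L_\cdot-L_\e,R_\cdot-R_\e)|_{[\e,1-\e]}\big).
\]
This is the locality statement in mating of trees. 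By \cite[Theorem 1.11]{DMS14} and its local version, used in the last paragraph of the proof of \cite[Theorem 1.1]{AG19} and in \cite[Lemma 2.17]{AG19}, the surface $\eta([s,t])$ decorated by $\eta|_{[s,t]}$ is determined by $Z|_{[s,t]}$ modulo additive constants. It is independent of the rest of the curve-decorated surface given these increments.

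The second step is to obtain the same identity with the same $F$ for the sphere. By \cite[Theorem 1.1]{MS-mot-finite}, $(L^\sph,R^\sph)$ is the boundary length process of $(h^\sph,\eta^\sph)$. The key point is that the map $F$ is constructed intrinsically from the boundary length increments on $[\e,1-\e]$ through the mating-of-trees gluing. That gluing uses only local information, so it applies equally to the sphere. This gives
\[
(\eta^{\sph}([\e,1-\e]),h^{\sph},\eta^{\sph}|_{[\e,1-\e]})/\sim_\gamma=F\big((L^\sph_\cdot-L^\sph_\e,R^\sph_\cdot-R^\sph_\e)|_{[\e,1-\e]}\big).
\]
To justify this, I would invoke \cite{MS-mot-finite}, where the quantum sphere mating of trees is proved by exactly this local reconstruction. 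I would also cross-check it via the Markov property of the sphere: conditioned on $\eta^\sph([0,\e])$ and $\eta^\sph([1-\e,1])$, the middle piece is a quantum surface whose law depends only on boundary lengths. The extra marked point $0$ causes no trouble, because in the embedding convention it equals $\eta^\sph(1/2)$ and is therefore determined by the curve.

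The third step is a Brownian computation. The process $(L^\sph,R^\sph)$ is a correlated Brownian motion started and ended at the origin and conditioned to stay in the cone. Restricted to $[\e,1-\e]$ and recentered at time $\e$, its law is absolutely continuous with respect to that of the unconditioned Brownian increments $(L_\cdot-L_\e,R_\cdot-R_\e)|_{[\e,1-\e]}$. By the Markov property of the cone excursion at times $\e$ and $1-\e$, its density is expressed through the entrance density at time $\e$ and the cone-exit kernel from $1-\e$, evaluated at the positions, times the killing indicator. All of these are finite and positive for fixed $\e$, by \cite{Shi85} and \cite{MS-mot-finite}. The recentering is harmless because the random starting point $Z^\sph_\e$ has a density, and integrating it out preserves absolute continuity. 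Pushing forward by $F$ then gives the lemma.

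The main obstacle I expect is the second step: justifying rigorously that the same measurable $F$ applies to the sphere. That is, the reconstruction of the middle piece uses only the increments and no global data such as the total mass or the conditioning. If citing \cite{MS-mot-finite} directly is insufficient, I would try first to deduce it from the disk case, Lemma~\ref{lem:QD-AC}. Conditioning the sphere on $\eta^\sph([0,\e])$ leaves a quantum disk of given boundary length with a space-filling curve, and the disk result already yields the desired $F$.
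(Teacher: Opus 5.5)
Your proposal is correct and is essentially the paper's argument: the paper proves this lemma by repeating the proof of Lemma~\ref{lem:QD-AC}. That is, it writes both cut-out curve-decorated surfaces as the same measurable function $F$ of the boundary length process on $[\e,1-\e]$, and then invokes the local absolute continuity of $(L^\sph,R^\sph)|_{[\e,1-\e]}$ with respect to Brownian motion, citing \cite[Theorem 3.1]{MS-mot-finite}; your recentering by increments and your remark that the marked point $0=\eta^\sph(1/2)$ is determined by the curve are harmless refinements of that same argument.
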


\begin{proof}
    The claim follows from the same argument as in Lemma~\ref{lem:QD-AC}, where we use the local absolute continuity between the Brownian paths as described in~\cite[Theorem 3.1]{MS-mot-finite}.
\end{proof}

Let $n>200$, and $\cH^{\sph,1/n} = \{\eta^\sph([x-\frac1n,x]):x\in (0,1]\cap\frac{1}{n}\bbZ\}$. Suppose $x_1,x_2\in (0,1]\cap\frac{1}{n}\bbZ$, $x_1<x_2$ such that there are two edges between $x_1$ and $x_2$ on $\cG^{\sph,1/n}$. We work on the event $E^{\sph,n}$ where for all such possible $(x_1,x_2)$, either $x_1<0.05$ and $x_2>0.95$ or $|x_1-x_2|<0.05$. In the former case, we merge $\eta^\sph([x_2,1])\cup \eta^{\sph}([0,x_1-\frac{1}{n}])$ with $\eta^\sph([x_1-\frac{1}{n},x_1])$, and in the latter case, we merge $\eta^\sph([x_1,x_2-\frac{1}{n}])$ into $\eta^\sph([x_2-\frac1n,x_2])$. Let $\wh\cH^{\sph,n}$ be the resulting collection of cells after we do this modification. Under the $E^{\sph,n}$, $\wh\cG^{\sph,1/n}=\Phi_{\rm simple}(\cG^{\sph,1/n})$ is not empty, and $\wh\cH^{\sph,n}$ can be viewed as an embedding of the simple sphere triangulation $\wh\cG^{\disk,1/n}$.
\begin{lemma}\label{lem:QS-AC-1}
As $n\to\infty$, $\bbP[E^{\sph,n}]\to1$, and the maximal spherical diameter of cells in    $\wh\cH^{\sph,n}$ tends to 0 in probability.
\end{lemma}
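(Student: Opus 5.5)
We will follow the proof of part (iii) of Lemma~\ref{lem:QD-AC-1}, with the quadrant-conditioned excursion $(L^\sph,R^\sph)$ in place of $(L^\disk,R^\disk)$, and split pairs $(x_1,x_2)$ forming a double edge according to where they lie relative to the endpoints $0$ and $1$.

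\textbf{Step 1: double edges with both endpoints near the ends.} Fix $\delta_1>0$ and pick $\e_1\in(0,0.05)$ small. Since $(L^\sph,R^\sph)$ starts and ends at $(0,0)$ and is strictly positive on $(0,1)$, with probability at least $1-\delta_1/4$ we have $\max(L^\sph_t,R^\sph_t)<m$ for $t\in[0,\e_1]\cup[1-\e_1,1]$, while $\min(L^\sph_s,R^\sph_s)>m$ for $s\in[\e_1/4,1-\e_1/4]$, for a suitable level $m>0$. On this event, by~\eqref{eq:def-mated-CRT}, a double edge $(x_1,x_2)$ with $x_1<\e_1/2$ and $x_2\le 1-\e_1/2$ cannot have $x_2>\e_1/2$. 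The reason is that the infimum over $[x_1,x_2-\frac1n]$ would then drop below both cell infima only if the path returns near $0$, which is excluded. The symmetric statement holds near $1$. Consequently every double edge falls into one of three cases:
\begin{enumerate}[(a)]
\item both endpoints lie in $[0,\e_1]$;
\item both endpoints lie in $[1-\e_1,1]$;
\item $x_1<\e_1$ and $x_2>1-\e_1$;
\end{enumerate}
or else both endpoints lie in $[\e_1/2,1-\e_1/2]$. Cases (a) and (b) have $|x_1-x_2|<0.05$, and case (c) is allowed by $E^{\sph,n}$. A second choice $\e_2\ll\e_1$, as in Lemma~\ref{lem:QD-AC-1}, rules out mixed pairs with one endpoint in $[\e_1/2,1-\e_1/2]$ and the other outside $[\e_2/2,1-\e_2/2]$.

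\textbf{Step 2: double edges in the bulk.} For pairs with both endpoints in $[\e_2/2,1-\e_2/2]$, we transfer from the $\gamma$-quantum cone. We use the absolute continuity in Lemma~\ref{lem:QS-AC} on $[\e_2/4,1-\e_2/4]$, together with the bound $|x_2-x_1|-\frac1n\le\mu_h(\wh H)$ for the merged cell $\wh H\in\wh\cH^{\rmQ,1/n}$. By Lemma~\ref{lem:diam-cell-max-1} and the local-to-global scaling (Lemma~\ref{lem:Rb}), the maximal quantum area of a merged cell of $\wh\cH^{\rmQ,1/n}$ meeting the relevant compact region tends to $0$ in probability. Hence the maximal gap among bulk double edges tends to $0$ in probability, and in particular is eventually below $0.05$. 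Combining Steps 1 and 2 and letting $\delta_1\to0$ gives $\bbP[E^{\sph,n}]\to1$.

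\textbf{Step 3: diameters of the merged cells.} Let $\Delta_n$ be the maximal gap $|x_1-x_2|$ over all double edges of types (a), (b) and bulk. Steps 1 and 2 show $\Delta_n\to0$ in probability once $\e_1\to0$ is allowed slowly. Merged cells from these pairs are contained in $\eta^\sph([x_1-\frac1n,x_2])$, so their spherical diameter tends to $0$ by uniform continuity of $\eta^\sph$ on $[0,1]$ in the spherical metric.

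For pairs of type (c), the merged set is contained in $\eta^\sph([x_2,1]\cup[0,x_1])$ with $x_1<\e_1$ and $x_2>1-\e_1$. By continuity at the endpoint, $\eta^\sph(0)=\eta^\sph(1)=\infty$, so this set lies in a small spherical neighbourhood of $\infty$ as $\e_1\to0$.

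\textbf{Main obstacle.} The main obstacle is Step 1: the excursion-specific claim that the only long double edges are of type (c). This must be derived carefully from~\eqref{eq:def-mated-CRT}, using both coordinates, since a long double edge with only one endpoint near the ends would break the merging rule. If the level-comparison argument turns out too delicate, I would first try a topological route instead. Lemma~\ref{lem:double-edge-topology} shows that the content of a double edge is enclosed by two small cells. Then the content is either spherically small or, being the complement side, contains neighbourhoods of $\infty$, which forces case (c).
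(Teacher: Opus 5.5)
Your decomposition matches the paper's. Bulk pairs have their gap controlled by Lemma~\ref{lem:QS-AC}, Lemma~\ref{lem:diam-cell-max-1} and $|x_2-x_1|-\frac1n\le\mu_h(\wh H)$. Mixed pairs are impossible. Pairs with both endpoints near the ends are either short or straddle $\infty$.

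The genuine difference is how you exclude mixed pairs. The paper uses the event $F_{\e_1,\e_2}$ that $\eta^\sph([\e_1/2,1-\e_1/2])$ is disjoint from $\eta^\sph([0,2\e_2])\cup\eta^\sph([1-2\e_2,1])$. Two cells joined by an edge share a boundary arc and hence intersect, so no bulk cell is adjacent to a cell near the ends. This needs only continuity of $\eta^\sph$ and $\eta^\sph(0)=\eta^\sph(1)=\infty$. It is a simpler form of your topological fallback, since Lemma~\ref{lem:double-edge-topology} is not needed. Your Brownian level comparison, borrowed from Lemma~\ref{lem:QD-AC-1}, also works. A single coordinate suffices, it uses only $L^\sph>0$ on $(0,1)$ with $L^\sph_0=L^\sph_1=0$, and it rules out even single $L$-edges between bulk and end cells.

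Step~1 as written, however, must be repaired.
\begin{itemize}
\item The event ``$\max(L^\sph,R^\sph)<m$ on $[0,\e_1]\cup[1-\e_1,1]$ and $\min(L^\sph,R^\sph)>m$ on $[\e_1/4,1-\e_1/4]$'' is empty, because the two time sets overlap on $[\e_1/4,\e_1]$.
\item The claim that $x_1<\e_1/2$ forces $x_2\le\e_1/2$ or $x_2>1-\e_1/2$ is false. Short double edges straddling time $\e_1/2$ occur with probability bounded away from $0$.
\item Your stated mechanism is inverted. Condition~\eqref{eq:def-mated-CRT} fails because $[x_1,x_2-\frac1n]$ contains a time near an end, where $L^\sph$ is small, while the infimum of $L^\sph$ over the bulk cell is large.
\end{itemize}

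Keep only the two-scale event $\max_{[0,\e_2]\cup[1-\e_2,1]}L^\sph<\min_{[\e_1/4,1-\e_1/4]}L^\sph$. Its probability tends to $1$ as $\e_2\to0$ with $\e_1$ fixed. The exhaustive case list is then:
\begin{itemize}
\item both endpoints in $[\e_2/2,1-\e_2/2]$, handled by your Step~2;
\item one endpoint in $[\e_1/2,1-\e_1/2]$ and the other outside $[\e_2/2,1-\e_2/2]$, which is excluded;
\item both endpoints outside $[\e_1/2,1-\e_1/2]$, which are your (a), (b), (c) with $\e_1/2$ in place of $\e_1$.
\end{itemize}
Your list misses pairs such as $x_1\in[\e_2/2,\e_1/2)$, $x_2\in(\e_1,1-\e_1]$. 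This is harmless only because your Step~2 is already set at scale $\e_2$. With this repair, Steps~2 and~3 go through as you describe.
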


\begin{proof}
   Let $\e_1,\e_2>0$ be small. Let $F_{\e_1,\e_2}$ be the event that  {$\eta^\sph([\e_1/2,1-\e_1/2])$} is disjoint from $\eta^\sph([0,2\e_2])\cup\eta^\sph([1-2\e_2,1])$. By the continuity of $\eta^\sph$,  $\bbP[F_{\e_1,\e_2}]\to 1$ if we send $\e_1\to 0$ sufficiently slower than $\e_2\to 0$. Below we assume $n>40\max\{\e_1^{-1},\e_2^{-1}\}$.
    
    Suppose $x_1,x_2\in (0,1]\cap\frac{1}{n}\bbZ$, $x_1<x_2$ such that there are two edges between $x_1$ and $x_2$ on $\cG^{\sph,1/n}$. There are now three possibilities: (a) $x_1\in [\e_1,1-\e_1]$ and $x_2\notin[\e_2,1-\e_2]$, or $x_1\notin [\e_2,1-\e_2]$ and $x_2\in [\e_1,1-\e_1]$; (b) $x_1,x_2\in [\e_2,1-\e_2]$ and (c) $x_1,x_2\notin [\e_1,1-\e_1]$. Indeed under the event $F_{\e_1,\e_2}$, case (a) is not possible. For case (b), if we send $\e_2\to 0$ sufficiently slower than $n\to\infty$, using Lemma~\ref{lem:diam-cell-max-1} and the absolute continuity from Lemma~\ref{lem:QS-AC} with  {$\e=\e_2/2$}, we see that the maximal gap $|x_1-x_2|$ over all possible choices of $(x_1,x_2)$ in this case tends to 0 in probability and therefore the maximal spherical diameter of $\eta^\sph([x_1,x_2])$ tends to 0 by continuity of $\eta^\sph$. For case (c), either $|x_1-x_2|\leq \e_1$ or $x_1\leq \e_1$ and $x_2\geq 1-\e_1$. The lemma then follows by combining these cases and using the continuity of $\eta^\sph$. 
\end{proof}

For $\e\in(0,0.01)$, let $\cH^{\sph,1/n}_\e = \Phi_{[\e,1-\e],1/2}^{0,n}(h^\sph,\eta^\sph)\subseteq \cH^{\sph,1/n}$ and $\wh\cH^{\sph,1/n}_\e = \wh\Phi_{[\e,1-\e],1/2}^{0,n}(h^\sph,\eta^\sph)$. Let $\wt s_\e^{n,-} = \frt_{[\e,1-\e],1/2}^{0,n,-}(h^\sph,\eta^\sph)$ and  $\wt s_\e^{n,+} = \frt_{[\e,1-\e],1/2}^{0,n,+}(h^\sph,\eta^\sph)$.  Let $\cG^{\sph,1/n}_\e$ be the  map formed by the cells in $\cH^{\sph,1/n}_\e$, and let $\wh\cG^{\sph,1/n}_\e = \Phi_{\rm simple}(\cG^{\sph,1/n}_\e)$ be the map formed by the cells in $\wh\cH^{\sph,1/n}_\e$.
\begin{lemma}\label{lem:QS-AC-2}
    With probability tending to 1 as $n\to\infty$, $\wh\cG^{\sph,1/n}_\e$ is a submap of $\wh\cG^{\sph,1/n}$, where edges of $\wh\cG^{\sph,1/n}$ with both endpoints in $\wh\cG^{\sph,1/n}_\e$ are also edges of $\wh\cG^{\sph,1/n}_\e$. Furthermore, as we send $\e\to 0$ sufficiently slower than $n\to\infty$, $(\wt s_\e^{n,-},\wt s_\e^{n,+})$ converges to $(0,1)$ in probability.
\end{lemma}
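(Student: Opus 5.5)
The plan is to mirror the proof of Lemma~\ref{lem:QD-AC-1}, with the sphere-specific collapsing of Lemma~\ref{lem:QS-AC-1} in place of the disk one.

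\textbf{Step 1: deterministic submap criterion.} First I will show that $\wh\cG^{\sph,1/n}_\e$ is a submap of $\wh\cG^{\sph,1/n}$ whenever two conditions hold: $E^{\sph,n}$ holds, and neither $\eta^\sph([\wt s^{n,-}_\e+\frac1n,1/2])$ nor $\eta^\sph([1/2,\wt s^{n,+}_\e-\frac1n])$ lies in a single cell of $\wh\cH^{\sph,n}$. The argument repeats part (i) of Lemma~\ref{lem:circle-pack-cell}.
\begin{enumerate}[(a)]
\item Because $\eta^\sph$ is non-self-crossing, any double edge $(\eta^\sph(I_1),\eta^\sph(I_2))$ with $I_1\subseteq[\wt s^{n,-}_\e+\frac1n,\wt s^{n,+}_\e-\frac1n]$ must have $I_2\subseteq[\wt s^{n,-}_\e,\wt s^{n,+}_\e]$. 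Otherwise $\eta^\sph(I_1)$ would touch both sides of $\partial\eta^\sph([\e_n,1-\e_n])$, or the stretch between $1/2$ and an endpoint would be swallowed into one cell.
\item The extra sphere merging, for pairs with $x_1<0.05$ and $x_2>0.95$, only affects cells outside $\eta^\sph([\wt s^{n,-}_\e,\wt s^{n,+}_\e])$. This holds as long as $\e<0.05$ and the times $\wt s^{n,\pm}_\e$ are not too extreme.
\item The endpoint vertices $\wt s^{n,-}_\e+\frac1n$ and $\wt s^{n,+}_\e$ survive the collapsing, by the same enclosure argument as before.
\item Adjacency between surviving vertices is preserved, since $\Phi_{\rm simple}$ creates no new edges.
\end{enumerate}

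\textbf{Step 2: probability of the submap event.} By Lemma~\ref{lem:QS-AC-1}, $\bbP[E^{\sph,n}]\to1$ and the maximal spherical diameter of cells in $\wh\cH^{\sph,n}$ tends to $0$ in probability. Almost surely $\eta^\sph(1/2)=0$ lies in the interior of $\eta^\sph([\e,1-\e])$. The sets $\eta^\sph([\wt s^{n,-}_\e+\frac1n,1/2])$ and $\eta^\sph([1/2,\wt s^{n,+}_\e-\frac1n])$ have LQG mass bounded below with high probability, hence spherical diameter bounded below, so they cannot be single microscopic cells. I will make this precise using the convergence of $\wt s^{n,\pm}_\e$ from Step 3 at fixed $\e$. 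This gives the first claim for each fixed $\e$.

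\textbf{Step 3: convergence of $\wt s^{n,\pm}_\e$.} Let $D^\sph_\e$ be the component of the interior of $\eta^\sph([\e,1-\e])$ containing $0$, with $\ol{D^\sph_\e}=\eta^\sph([t^-_\e,t^+_\e])$. The argument of part (ii) of Lemma~\ref{lem:circle-pack-cell} applies verbatim and gives $(\wt s^{n,-}_\e,\wt s^{n,+}_\e)\to(t^-_\e,t^+_\e)$ almost surely as $n\to\infty$. It then remains to show $(t^-_\e,t^+_\e)\to(0,1)$ in probability as $\e\to0$. The complement $\wh\bbC\setminus\eta^\sph([\e,1-\e])=\eta^\sph([0,\e)\cup(1-\e,1])$ is a small neighbourhood of $\infty$ by continuity of $\eta^\sph$ at $\infty$. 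By Jordan-domain topology, $\wh\bbC\setminus\ol{D^\sph_\e}$ therefore has small spherical diameter, so the missing mass $\mu_{h^\sph}$ tends to $0$. A diagonal choice of $\e\to0$ slowly relative to $n$ then finishes.

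\textbf{Main obstacle.} The key difficulty is Step 3's last point. Unlike the disk case, $D^\sph_\e$ could a priori miss a macroscopic region pinched off by $\eta^\sph([\e,1-\e])$. My first attempt is to use the separating argument above: the complement of $D^\sph_\e$ is enclosed by boundary arcs lying in the shrinking set $\eta^\sph([0,\e]\cup[1-\e,1])$, and since $0$ and $1$ lie on the big side, the other side must be small.
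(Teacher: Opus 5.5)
Your route is the paper's: transplant parts (i) and (ii) of Lemma~\ref{lem:circle-pack-cell}, use Lemma~\ref{lem:QS-AC-1} to exclude macroscopic cells, and use continuity of $\eta^\sph$ at times $0,1$ to get $(t^-_\e,t^+_\e)\to(0,1)$. Step~3 is fine, since $\partial D^\sph_\e\subseteq\partial\eta^\sph([\e,1-\e])\subseteq\eta^\sph([0,\e]\cup[1-\e,1])$. (Only $\wh\bbC\setminus\eta^\sph([\e,1-\e])\subseteq\eta^\sph([0,\e)\cup(1-\e,1])$ holds, not equality, but that is all you need.)

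\textbf{The gap is Step~1(b)}, the one genuinely sphere-specific point. With only your hypotheses ($E^{\sph,n}$ and the two segments not lying in single cells), the deterministic criterion is false. Let $a_n,b_n$ be as in the definition of $\Phi^{0,n}_{[\e,1-\e],1/2}$. Suppose a sphere-type pair $(x_1,x_2)$ has $a_n\le x_1-\frac1n$, $x_1<0.05$ and $0.95<x_2\le b_n$; this is allowed on $E^{\sph,n}$.
\begin{itemize}
\item The bubble $\eta^\sph([x_1,x_2-\frac1n])\ni 0$ is enclosed by the cells of $x_1,x_2$. So none of its cells meets $\partial\eta^\sph([a_n,b_n])$ along a nontrivial arc, whence $\wt s^{n,-}_\e\le x_1-\frac1n$ and $\wt s^{n,+}_\e\ge x_2$.
\item Hence $(x_1,x_2)$ is a double edge of the disk triangulation $\cG^{\sph,1/n}_\e$. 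There $\Phi_{\rm simple}$ collapses the bubble but keeps the earliest vertex $\wt s^{n,-}_\e+\frac1n$, since no double edge encloses it.
\item The sphere merging in $\wh\cH^{\sph,n}$ does the opposite: it keeps the bubble and discards every vertex in $[\frac1n,x_1-\frac1n]$. This includes $\wt s^{n,-}_\e+\frac1n$ whenever $\wt s^{n,-}_\e<x_1-\frac1n$, so the submap property fails.
\item Yet neither $\eta^\sph([\wt s^{n,-}_\e+\frac1n,\frac12])$ nor $\eta^\sph([\frac12,\wt s^{n,+}_\e-\frac1n])$ lies in a single cell. Each contains a macroscopic piece of the bubble, and on $E^{\sph,n}$ no cell of $\wh\cH^{\sph,n}$ contains such a piece.
\end{itemize}
Your proviso that the times $\wt s^{n,\pm}_\e$ are ``not too extreme'' does not repair this. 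The version that works deterministically ($\wt s^{n,-}_\e\ge 0.05$, $\wt s^{n,+}_\e\le 0.95$) fails with non-vanishing probability. Indeed $\wt s^{n,-}_\e\to t^-_\e$, and $t^-_\e$ can lie below $0.05$; the second claim of the lemma even says it tends to $0$. Step~2 never addresses this case.

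\textbf{The fix} is to control where sphere-type pairs sit, which is what the paper does. Add the hypothesis that $\eta^\sph([0,\e])$ and $\eta^\sph([1-\e,1])$ are not contained in a single cell of $\wh\cH^{\sph,n}$. A sphere-type pair produces the merged cell $\eta^\sph([x_2,1]\cup[0,x_1])$, so this forces $x_1<\e<a_n$ and $x_2>1-\e\ge b_n$ for every such pair. The outer merging then only touches vertices outside $(\wt s^{n,-}_\e,\wt s^{n,+}_\e]$, and your items (a), (c), (d) go through as in part (i) of Lemma~\ref{lem:circle-pack-cell}. The extra conditions hold with probability tending to $1$ by Lemma~\ref{lem:QS-AC-1}. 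Indeed, $\eta^\sph([0,\e])$ and $\eta^\sph([1-\e,1])$ have a.s.\ positive spherical diameter, while the maximal spherical diameter of cells in $\wh\cH^{\sph,n}$ tends to $0$ in probability.
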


\begin{proof}
    For the first claim, by Lemma~\ref{lem:QS-AC-1}, with probability tending to 1 as $n\to\infty$, $\eta^\sph([0,\e])$, $\eta^\sph([\e,1/2])$, $\eta^\sph([1/2,1-\e])$ and $\eta^\sph([1-\e,1])$ are all not contained in a single cell of $\wh\cH^{\sph,n}$, and in this case, following identical arguments as in the proof of part (i) of Lemma~\ref{lem:circle-pack-cell}, $\wh\cG^{\sph,1/n}_\e$ can be viewed as a submap of  $\wh\cG^{\sph,1/n}$. The second claim follows similarly by the continuity of $\eta^\sph$ as in the proof of Lemma~\ref{lem:circle-pack-cell} and Lemma~\ref{lem:QD-AC-1}. 
\end{proof}

\begin{proof}[Proof of Theorem~\ref{thm:pack-sph}]
    We continue with the setting of Lemma~\ref{lem:QS-AC-2}. Let $D_\e^\sph$ be the simply connected component of $\eta^\sph([\e,1-\e])$ containing 0, and $\psi_\e:D_\e^\sph\to\bbD$ be a conformal map fixing 0. Note that by continuity of $\eta^\sph$, the spherical diameter of $\bbC\backslash \eta^\sph([\e,1-\e])$ tends to 0 almost surely as $\e\to 0$ and therefore the spherical diameter of $\bbC\backslash D_\e^\sph$ also a.s.\  tends to 0.  By Proposition~\ref{thm:circle-quantum-cell-0} and the absolute continuity from Lemma~\ref{lem:QS-AC}, there is a circle packing $\cP^{\sph,n}_\e$ for $\wh\cG^{\sph,1/n}_\e$ in $\bbD$ centered at the vertex of closest distance to $1/2$, and as we send $\e\to0$ sufficiently slower than $n\to\infty$,   the maximal diameter of disks in $\cP^{\sph,n}_\e$ tends to 0 in probability, and up to a rotation, 
  \eqb\label{eq:pf-thm:pack-sph-1}\max\{|\psi_\e^\sph(\eta^\sph(x)) - o_n(x)|: x\in (\wt s_\e^{n,-},\wt s_\e^{n,+})\cap \frac{1}{n}\bbZ\}\eqe
  tends to 0 in probability. Furthermore, using the endpoint continuity of $\eta^\sph$, $\psi_\e^\sph(1)$ converges to 0 in probability as we send $\e\to0$. Now let $r_\e\in\bbC$ such that in the circle packing $r_\e \cP^{\sph,n}_\e$, the disk for the vertex $\wh x_{1/4}\in \frac{1}{n}\bbZ\cap(0,1]$ in the graph $\wh \cG_\e^{\sph,1/n}$ of closet distance to $1/4$ is centered at 1. Then we may further choose a sequence $\e(n)\to 0$ sufficiently slowly, such that as $n\to\infty$ and $\e = \e(n)$, the maximal spherical diameter of disks as well as bounded interstices in $r_\e \cP^{\sph,n}_\e$ tends to 0 in probability,   $|r_\e| \to\infty$ in probability, and the quantity in~\eqref{eq:pf-thm:pack-sph-1} multiplied with $|r_\e|$ still converges to 0 in probability. Moreover, by our choice of $r_\e$ and~\eqref{eq:pf-thm:pack-sph-1}, by a possible small perturbation and Proposition~\ref{prop:unif-sph}, the map $r_\e\psi_\e^\sph$ converges uniformly to the identity in spherical topology.  Let $\cP^{\sph,n}$ be a circle packing for $\wh\cG^{\sph,1/n}$ in $\wh\bbC$, such that the disks for the vertices in $\wh\cG^{\sph,1/n}$ of closest distance to $(1,1/2,1/4)$ contain $(\infty,0,1)$ in their interiors. Note that by Lemma~\ref{lem:QS-AC-1}, with probability tending to 1 as $n\to\infty$, $\cP^{\sph,n}$ is well-defined. By Lemma~\ref{lem:QS-AC-2}, $\wh\cG^{\sph,1/n}_\e$ can be viewed as a submap of  $\wh\cG^{\sph,1/n}$ with high probability, and therefore we can compare the circle packing $r_\e \cP^{\sph,n}_\e$ with $\cP^{\sph,n}$. 
  Now we compare $r_\e\cP_\e^{\sph,n}$ with the restriction of $\cP^{\sph,n}$ to vertices in $\wh\cG^{\sph,1/n}_\e$, where the support of the latter is within $\bbC$ with probability tending to 1 as $n\to\infty$ by Lemma~\ref{lem:QS-AC-1}.  {We may further choose a compatible homeomorphism $f^n$ between the two carriers that corresponds the disks  such that $f^n(0)=0$ and $f^n(1)=1$.}  By Theorem~\ref{thm:He-Schramm-sphere}, the maximal spherical diameter of disks in $\cP^{\sph,n}$ tends to 0 in probability,  any homeomorphism sending disks in $r_\e \cP^{\sph,n}_\e$ to the corresponding disk in  $\cP^{\sph,n}$ is approximately identity, and the disks in $\cP^{\sph,n}$ for vertices not in $\cG_\e^{\sph,1/n}$ are contained within an $o_\e(1)$-spherical neighborhood of $\infty$.  Assertion 1.\ and 2.\ then follow by applying~\eqref{eq:pf-thm:pack-sph-1} and Theorem~\ref{thm:He-Schramm-sphere} where we send $\e\to 0$ sufficiently slower than $n\to\infty$ and further extend to all all the vertices in $\cG^{\sph,1/n}$ via continuity of $\eta^\sph$ and the no macroscopic disks or cells result as in Theorem~\ref{thm:He-Schramm-sphere} and Lemma~\ref{lem:QS-AC-1}. Assertion 4.\ is a direct consequence of Assertion 2.\, and Assertion 3.\ follows from Assertion 1.\ and 2.\ and the fact that $\eta^\sph$ is parameterized by $\gamma$-LQG mass with respect to $h^\sph$.
\end{proof}

\subsubsection{Convergence of circle packing for mated CRT map on $\gamma$-quantum cone}\label{subsubsec:matedCRT-cone}

Now we prove Theorem~\ref{thm:pack-cone} based on Theorem~\ref{thm:He-Schramm-sphere}, Proposition~\ref{thm:circle-quantum-cell-0} and the scale invariance of the quantum cone in Lemma~\ref{lem:Rb}.

\begin{lemma}\label{lem:parabolicMOT}
    For $n\geq1$, the map $\wh\cG^{1/n} = \Phi_{\rm simple}(\cG^{1/n})$ is a.s.\  not empty and is a.s.\ circle packing parabolic.
\end{lemma}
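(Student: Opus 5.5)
We transfer the statement from the $0$-quantum cone, where it is already known, by a comparison of laws and a scaling argument. Recall that $\cG^{\rmQ,1/n}$ under $\bbP_\rmQ$ has the same law as $\cG^{1/n}$. By the scaling in Lemma~\ref{lem:Rb}, $\cH^{\rmQ,1/n}$ has the law of $\fR_n^{-1}\cH^{\rmQ}$. So it suffices to treat $n=1$, i.e.\ the map $\wh\cG^\rmQ=\Phi_{\rm simple}(\cG^\rmQ)$.

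Non-emptiness is part (ii) of Proposition~\ref{prop:cell-diam2/area} for $\bbP_\rmQ$ with $\theta=0$. That proposition is stated for $\bbP_\rmQ$ as well as $\bbP_\rmE$, and its proof shows that only finitely many double edges enclose the root vertex. It also shows that every cell of $\wh\cH^\rmQ$ is compact. Hence $\wh\cG^\rmQ$ is a one-ended simple infinite plane triangulation: it is obtained from the infinite triangulation $\cG^\rmQ$ by removing finite regions enclosed by double edges.

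For parabolicity, we use that circle packing parabolicity is a deterministic, almost-sure-type property of the map. It is also invariant under shifts of the vertex set and of the root, since it does not depend on the choice of root. We first compare with the $0$-quantum cone. Under $\bbP_\rmE$, Theorem~\ref{thm:circle-matedCRT-0-cone} (or directly Theorem~\ref{thm:CellSystemPack} via Lemma~\ref{lem:config-pack}) shows that $\wh\cG^\rmE$ is a.s.\ circle packing parabolic. This holds for a.e.\ value of $\theta$, hence for a.e.\ fixed phase.

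Next we pass to the $\gamma$-quantum cone, using the weighted measure $\bbP_*$ and Lemma~\ref{lem:0-cone-phase-0}. Under $\bbP_*$, the map $\cG^{*,1}$, recentered at $\tau_*$, is a.s.\ parabolic, since its law is absolutely continuous with respect to that of $\cG^{\rmE}$. Moreover, $(h(\cdot+z_*),\eta(\cdot+\tau_*)-z_*)$ restricted to $B(0;1/2)$ agrees in law with $(h^\rmQ,\eta)$ under $\bbP_\rmQ$ (as in the proof of Proposition~\ref{thm:circle-quantum-cell-0}, following~\cite[Lemma A.10]{DMS14}).

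The key difficulty is that parabolicity is a global property, while this comparison is only local. I would handle it through recurrence. By He--Schramm~\cite{HeSchramm-hy-pa}, recurrence of simple random walk implies circle packing parabolicity, so it suffices to show that simple random walk on $\wh\cG^\rmQ$ is a.s.\ recurrent. Recurrence of $\cG^{1/n}$ itself is known: it follows from the Gurel-Gurevich--Nachmias criterion. That criterion applies because $\cG^{1}$ is a stationary (translation-invariant in the vertex index) unimodular-type random rooted planar map with exponential degree tail (\cite[Lemma 2.5]{GMS19har}), and the Brownian motion increments are stationary. Alternatively, recurrence can be read off from the effective resistance estimates of~\cite{GMS19har}.

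It remains to transfer recurrence from $\cG^\rmQ$ to $\wh\cG^\rmQ$. Collapsing double edges removes finite enclosed regions and identifies parallel edges. Viewed as an electrical network, this only shorts or deletes finite pieces attached along a 2-cut. Effective resistance from the root to infinity can then be compared using Rayleigh monotonicity: deleting the enclosed finite regions does not change resistance to infinity, since those regions are dead ends behind a two-vertex cut, and merging parallel edges only changes conductances by bounded factors, because the multiplicity is at most two. Hence $\wh\cG^\rmQ$ is recurrent as well.

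I expect the main obstacle to be exactly the step of justifying recurrence of $\cG^\rmQ$ and its stability under $\Phi_{\rm simple}$. The cleanest route is to deduce recurrence of the double-edge-collapsed map via the resistance comparison above, and then invoke~\cite{HeSchramm-hy-pa}. Note that the degree-boundedness hypothesis is needed only for the transient direction, so recurrence alone suffices. Finally, the scaling identity extends the result to all $n\geq1$.
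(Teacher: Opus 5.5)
Your proposal is correct. Once you drop the unnecessary detour through the $0$-quantum cone and $\bbP_*$ (which, as you note, cannot give a global property), it is exactly the paper's argument: non-emptiness from Proposition~\ref{prop:cell-diam2/area}, recurrence of $\cG^{1/n}$ from~\cite{GMS19har}, recurrence of the subgraph $\wh\cG^{1/n}$, and then~\cite{HeSchramm-hy-pa}. Two small corrections: the Gurel-Gurevich--Nachmias route would also need the map to be a Benjamini--Schramm limit of finite planar maps, so citing~\cite{GMS19har} is the cleaner choice; and a region enclosed by a double edge between $v_1,v_2$ is not a dead end (it connects $v_1$ and $v_2$ in parallel, so deleting it can increase resistance), but this is harmless, since $\wh\cG^{1/n}$ is a subgraph of $\cG^{1/n}$ and Rayleigh monotonicity gives infinite resistance to infinity directly.
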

\begin{proof}
The claim that $\wh\cG^{1/n}$ is non-empty is a consequence of Proposition~\ref{prop:cell-diam2/area} for $\bbP_\rmQ$ conditioned on $\theta=0$.  By~\cite[Theorem 1.4]{GMS19har}, the simple random walk on $\cH^{\rmQ,n}$ is a.s.\ recurrent. Since the associated map for $\wh\cH^{\rmQ,n}$ can be viewed as a subgraph of the associated map for $\cH^{\rmQ,n}$, we see that the simple random walk on $\wh\cH^{\rmQ,n}$ is also a.s.\ recurrent. The claim then follows from~\cite[Theorem 1.1]{HeSchramm-hy-pa}.
\end{proof}

\begin{proof}[Proof of Theorem~\ref{thm:pack-cone}]
   The claim that $\cG^{1/n}$ is circle packing parabolic is a consequence of Lemma~\ref{lem:parabolicMOT}. Let $(h,\eta)$ be the space-filling $\SLE_\kappa$ decorated $\gamma$-quantum cone as in the statement. Let $j\in\bbN$ and let $(\fR_j,h^j,\eta^j)$ be defined in~\eqref{eq:Rb} and~\eqref{eq:hb-etab} with $\alpha=\gamma$ and $b=j$. Fix $a>2$. Let $D_j$ be the  connected component of the interior of $\eta^j([-1,1])$ containing 0, and $\psi_j:D_j\to \bbD$ be a conformal map fixing 0  {with $\psi_j'(0)>0$}. Using $\eta^j\overset{d}{=}\eta$ for each $j$, we see that \eqb\label{eq:pf:thm:pack-cone-0}\psi_j(\eta^j(x)) = (\psi_j'(0)+\delta^j_x)\eta^j(x),\eqe where  {$\sup_{x\in[-a/j,a/j]}|\delta^j_x|\to 0$} in probability as $j\to\infty$.  

    Assume that $\ol D_j=\eta^j([t^{j,-},t^{j,+}])$.  Set $s^{n,j,-} = \frt_{[-1,1],0}^{0,jn,-}(h^j,\eta^j)$ and $s^{n,j,+} = \frt_{[-1,1],0}^{0,jn,+}(h^j,\eta^j)$. By Lemma~\ref{lem:circle-pack-cell} and the fact that $(h^j,\eta^j)\overset{d}{=}(h,\eta)$, the distance between $(s^{n,j,-},s^{n,j,+})$ and $(t^{j,-},t^{j,+})$ converges to 0 in probability as $n\to\infty$ with $j$ fixed. Let $\cG_j^{1/n}$ be the submap of $\cG^{1/n}$ consisted of vertices in $(js^{n,j,-},js^{n,j,+}]\cap\frac{1}{n}\bbZ$, which is isomorphic to the map $\cG_{[a,b],t}^{\rmQ,1/n,0}$ under   $x\mapsto x/j$ in Proposition~\ref{thm:circle-quantum-cell-0} defined for $(h^j,\eta^j)$ and $(n,[a,b],t)$ there replaced with $(jn,[-1,1],0)$. By Proposition~\ref{thm:circle-quantum-cell-0}, there exists a circle packing $\cP_j^{n}$ for $\cG_j^{1/n}$ in $\bbD$, such that the maximal diameter of disks in $\cP_j^{n}$ tends to 0 in probability, and up to a rotation, 
    \begin{equation}\label{eq:pf:thm:pack-cone-1}
         \max_{x\in \big(js^{n,j,-},js^{n,j,+}\big]\cap \frac{1}{n}\bbZ }\dist(\psi_{j}(\eta^j(\frac{x}{j})), o_n(x))\to 0 \ \ \text{in probability.}
     \end{equation} 
    
   Let $x_*^{0,n},x_*^{1,n}\in \frac{1}{n}\bbZ$ with closest distance to 0 and 1 such that the vertex $x_*^n$ remains in $\Phi_{\rm simple}(\cG^{1/n})$ as we collapse all the double edges, respectively. Lemma~\ref{lem:diam-cell-max-1} implies that $(x_*^{0,n},x_*^{1,n})\to (0,1)$ in probability as $n\to\infty$.  Now we consider the circle packing $ \psi_j'(0)^{-1}\fR_j\cP_j^{n}$ for $ \cG_j^{1/n}$,  and a circle packing $\cP^n$ for $ \cG^{1/n}$ such that the disks for the vertices $x_*^{0,n}$ and $x_*^{1,n}$ are centered at 0 and $\eta(1)$.  As we send $j\to\infty$ sufficiently slower than $n\to\infty$, by~\eqref{eq:pf:thm:pack-cone-0},~\eqref{eq:pf:thm:pack-cone-1} and $\eta^j(x)=\fR_j^{-1}\eta(jx)$, we have the following:
     \begin{enumerate}[(i)]
         \item The maximal diameter of disks in $\psi_j'(0)^{-1}\fR_j\cP_j^{n}$ tends to 0 in probability, and $ \psi_j'(0)^{-1}\fR_j\to\infty$ as $j\to\infty$;
         \item With probability tending to 1, $[-a,a]\subseteq (js^{n,j,-},js^{n,j,+}]$. Furthermore, in the circle packing $\psi_j'(0)^{-1}\fR_j\cP_j^{n}$ for $\cG_j^{1/n}$,
         \begin{equation}\label{eq:pf:thm:pack-cone-2}
         \max_{x\in [-a,a]\cap \frac{1}{n}\bbZ }\dist(\eta(x), o_n(x))\to 0 \ \ \text{in probability.}
     \end{equation} 
     \item The distance between the disk for the vertex 0 and $0$ as well as the distance between the disk for the vertex 1 and $\eta(1)$ tend to 0 in probability.
     \end{enumerate}
     Let $\alpha_{n,j}$ and $\beta_{n,j}$ be such that in the circle packing $\alpha_{n,j}  \psi_j'(0)^{-1}\fR_j\cP_j^{n}+\beta_{n,j}$, the disks for the vertices $(x_*^{0,n}, x_*^{1,n})$ are centered at 0 and $\eta(1)$. Then by property (iii) above, $(\alpha_{n,j},\beta_{n,j})\to (1,0)$ in probability as we send $j\to\infty$ sufficiently slower than $n\to\infty$. Therefore if we let $j = j(n)$ be depend on $n$ such that $j\to\infty$ sufficiently slow, and further multiply both embeddings by $\eta(1)^{-1}$, the conditions in Theorem~\ref{thm:He-Schramm-sphere} hold for $\alpha_{n,j}  \psi_j'(0)^{-1}\fR_j\cP_j^{n}+\beta_{n,j}$ and $\cP^{n}$. Therefore by Theorem~\ref{thm:He-Schramm-sphere}, the circle packing map between $\psi_j'(0)^{-1}\fR_j\cP_j^{n}$ and $\cP^{1/n}$ (restricted to vertices in $\Phi_{\rm simple}(\cG_j^{1/n})$) converges uniformly on compacts, and for each $r>0$, the maximal diameter of disks in $\cP^{n}$ intersecting $B(0;r)$ tends to 0 in probability. Assertions 1.\ , 2.\ and 4.\ then follow by~\eqref{eq:pf:thm:pack-cone-2} by further varying $a$. By Assertions 1.\ and 2., if $B(0;2r)\subseteq\eta([-M,M])$, then the disks for the vertices $|x|\leq M$ whose cell $\eta([x-\frac1n,x])$ intersects $\partial\eta([-M,M])$ are disjoint from $B(0;3r/2)$ with probability tending to 1 as $n\to\infty$. This implies that with probability tending to 1 as $n\to\infty$ then $M\to\infty$,  the support of the restriction of $\cP^n$ to vertices in $\frac{1}{n}\bbZ\cap [-M,M]$ contains $B(0;r)$. Further note that for each deterministic $r>0$, almost surely $\mu_h(\partial B(0;r))=0$.
     Assertion 3.\ is then a consequence of Assertion 2.\ since $\eta$ is parameterized by $\gamma$-LQG mass with respect to $h$.
\end{proof}

\subsection{Convergence of  mated-CRT map under Riemann uniformization embedding}\label{subsec:unif-matedCRT-gamma-cone}

In this section we prove the convergence of the mated-CRT map under the Riemann uniformization embedding, namely Theorem~\ref{thm:unif-cone}-\ref{thm:unif-sph}. Most of the ideas are parallel to those in Section~\ref{subsec:matedCRT-gamma-cone}. The proof will be easier than the circle packing case as we no longer need to collapse the double edges, and we will only list the key steps here. We will use the convention that if $T$ is a submap of $T'$, then we view $M(T)\subseteq M(T')$.

\subsubsection{Convergence of mated-CRT cells under Riemann uniformization embedding}\label{subsubsec:matedCRT-cell-1}

Recall the  operators $\Phi_{[a,b],t}^{\theta,n}$, $\frt_{[a,b],t}^{\theta,n,-}$ and $\frt_{[a,b],t}^{\theta,n,+}$ defined in Section~\ref{subsubsec:matedCRT-cell}. We start with the following analog of Proposition~\ref{thm:circle-quantum-cell-0}.

\begin{proposition}\label{thm:circle-quantum-cell-0-unif}
    Let $a<t<b$ be deterministic, $(h^\rmQ,\eta),\psi_{[a,b],t},\cG^{\rmQ,1/n,0}_{[a,b],t},t_a^{n,-}$ and $t_b^{n,+}$ be the same as in Proposition~\ref{thm:circle-quantum-cell-0}, and let $M(\cG^{\rmQ,1/n,0}_{[a,b],t})$ be the Riemann surface associated with $\cG^{\rmQ,1/n,0}_{[a,b],t}$. There exists a conformal map $\varphi^{\rmQ,1/n,0}_{[a,b],t}:M(\cG^{\rmQ,1/n,0}_{[a,b],t})\to\bbD$ such that the following is true. The maximal diameter of edges $\varphi^{\rmQ,1/n,0}_{[a,b],t}(\fre)$ tends to 0 in probability, and  up to a rotation, 
      \begin{equation}\label{eq:thm:unif-quantum-cell}
         \max_{x\in  (t_a^{n,-},t_b^{n,+})\cap \frac{1}{n}\bbZ }\dist(\psi_{[a,b],t}(\eta(x)), \varphi^{\rmQ,1/n,0}_{[a,b],t}(x))\to 0 \ \ \text{in probability.}
     \end{equation} 
\end{proposition}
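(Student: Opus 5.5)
The plan is to follow the circle packing argument of Section~\ref{subsubsec:matedCRT-cell} step by step. The Riemann uniformization analogues of Theorems~\ref{thm:He-Schramm-extension-0}--\ref{thm:He-Schramm-extension-1} will be Proposition~\ref{prop:boundary-perturb} together with Lemma~\ref{lem:unifconvergence-inv}. Double edges cause no trouble here: $M(\cG)$ is defined for maps with multiple edges, so no collapsing is required. We work with $\Phi_{[a,b],t}^{\theta,n}$ directly, and Lemma~\ref{lem:circle-pack-cell}(i) reduces to the elementary fact that the cells of $\Phi_{[a,b],t}^{\theta,n}(h,\eta)$ induce a disk triangulation $\cG^{h,1/n}_{k,z}$. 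This triangulation is a submap of $\cG^{*,1/n}$, so $M(\cG^{h,1/n}_{k,z})\subseteq M(\cG^{*,1/n})$ is a closed Jordan subsurface.

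\textbf{Step 1 (0-cone with a weighted point).} First prove the analogue of Lemma~\ref{lem:0-cone-phase-0} for Theorem~\ref{thm:unif-matedCRT-0-cone}. The argument is identical: under $\bbP_*$, integrate over $\tau_*\in[-N,N]$ and use that $\theta$ is uniform. This gives a conformal map $\varphi^{*,n}:M(\cG^{*,1/n})\to\bbC$ whose edge images near the origin are microscopic and which satisfies $\sup|\eta(x)-\varphi^{*,n}(x)|\to0$ on compacts in probability.

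\textbf{Step 2 (cutting out the piece).} Let $\Omega_n=\varphi^{*,n}(M(\cG^{h,1/n}_{k,z_*}))$. This is a Jordan domain whose boundary is the image of the boundary edges of $\cG^{h,1/n}_{k,z_*}$. Its boundary is within $o(1)$ of the curve obtained by joining the points $\eta(x)$ for boundary vertices $x$. By Lemma~\ref{lem:circle-pack-cell}(ii)--(iii), where the collapsed embedding $\wh\gamma_n$ is replaced by the uncollapsed boundary vertices, whose cells still touch $\partial D_{k,z_*}$ in order, $\partial\Omega_n\to\partial D_{k,z_*}$ in the metric~\eqref{eq:curve-metric-0} in probability. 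Define $\varphi^n_{k,z_*}=g_n\circ\varphi^{*,n}|_{M(\cG^{h,1/n}_{k,z_*})}$, where $g_n:\Omega_n\to\bbD$ is the Riemann map sending $\varphi^{*,n}(\text{vertex of }z_*)$ to $0$ with positive derivative. Proposition~\ref{prop:boundary-perturb}, applied along subsequences converging almost surely, gives $g_n\to\psi_{k,z_*}$ uniformly in the sense of Definition~\ref{def:unif-convergence}, up to rotation. Combined with Step 1, this yields the event analogous to $F^{h,n,\theta}_{k,z_*,\e}$ with high probability. The edge diameters are controlled by the equicontinuity of $g_n$ together with the microscopic edges of $\varphi^{*,n}$. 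Near $\partial\Omega_n$ we use the fact that the edge images there lie within $o(1)$ of the boundary curve, so equicontinuity still applies.

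\textbf{Step 3 (transfer to the $\gamma$-cone).} The transfer repeats the proof of Proposition~\ref{thm:circle-quantum-cell-0} verbatim. Recentering at $z_*$ and using \cite[Lemma A.10]{DMS14}, the law of $(h_*|_{B(0;1/2)},\eta_*)$ matches $\bbP_\rmQ$ on the event $E_k$. Replacing $n$ by $kn$, scaling via Lemma~\ref{lem:Rb}, and letting $k\to\infty$ then completes the transfer, first for $(a,t,b)=(-1,0,1)$ and then for general $a<t<b$ by translation invariance.

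The main obstacle is Step 2. There we must confirm that the Jordan curve $\partial\Omega_n$, which is a union of images of edges under $\varphi^{*,n}$, converges modulo parametrization and not just in the Hausdorff sense. This needs both the ordering of boundary cells along $\partial D_{k,z_*}$ and the uniform smallness of edge images near the boundary. The latter is only known locally, on $B(0;R)$, which suffices because $\eta([\tau_*-1/k,\tau_*+1/k])$ is bounded.
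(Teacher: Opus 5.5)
Your proposal is correct and follows essentially the same route as the paper. The paper's Lemma~\ref{lem:unif-cell-1} extends Theorem~\ref{thm:unif-matedCRT-0-cone} to $\bbP_*$ exactly as in Lemma~\ref{lem:0-cone-phase-0}, views $M(\cG^{h,1/n}_{k,z_*})$ inside $M(\cG^{*,1/n})$, and applies Proposition~\ref{prop:boundary-perturb} to the composition of the uniformizing map with $(\varphi^{*,n})^{-1}$; it then repeats the transfer argument from the proof of Proposition~\ref{thm:circle-quantum-cell-0}. Two differences are minor: you normalize $g_n$ at the image of the vertex $\tau_*$ where the paper perturbs $\varphi^{*,n}$ so that this vertex sits at $z_*$, and your discussion of why $\partial\Omega_n$ converges modulo parametrization fills in a step the paper leaves implicit.
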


To prove Proposition~\ref{thm:circle-quantum-cell-0-unif}, we first prove a  version of~\eqref{eq:thm:unif-quantum-cell} in the setting of 0-quantum cone, and then transfer to the setting of $\gamma$-quantum cone. We work in the setting of the paragraph before Lemma~\ref{lem:circle-pack-cell-1}. Let $G^{h,n,\theta}_{k,z,\e}$ be the following event: 
there exists a conformal map  $\varphi^{h,1/n}_{k,z}:M(\cG^{h,1/n}_{k,z})\to\bbD$, such that the maximal diameter of edges in $\varphi^{h,1/n}_{k,z}(\fre)$ is less than $\e$, and 
$$ \max_{x\in (t_k^{n,-},t_k^{n,+})\cap \frac{1}{n}(\theta+\bbZ)}\dist(\psi_{k,z}(\eta(x)), \varphi^{h,1/n}_{k,z}(x))<\e .$$

\begin{lemma}\label{lem:unif-cell-1}
Let $\bbP_*$ and $(h,\eta,z_*,\tau_*)$ be as in Lemma~\ref{lem:0-cone-phase-0}.  Under the probability measure $\bbP_*$, we have $\bbP_*[G^{h,n,[n\tau_*]}_{k,z_*,\e}]\to 1$ as $n\to\infty$.
\end{lemma}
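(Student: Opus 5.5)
The plan follows the proof of Lemma~\ref{lem:circle-pack-cell-1}, with circle packings replaced by conformal maps. Since no double edges need collapsing, I work directly with $\cG^{h,1/n}_{k,z_*}$ and $\cG^{*,1/n}$ rather than their simplified versions.

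\textbf{Step 1: the whole-plane input.} I first transfer Theorem~\ref{thm:unif-matedCRT-0-cone} from $\bbP_\rmE$ with phase $\theta$ to $\bbP_*$ with phase $[n\tau_*]$, exactly as in Lemma~\ref{lem:0-cone-phase-0}. Let $E_{n,R,\e}$ be the event that the conclusions of Theorem~\ref{thm:unif-matedCRT-0-cone} hold up to error $\e$. Integrating over $\tau_*\in[-N,N]$ bounds $\bbP_*[E^c_{n,R,\e}]$ by $3C_U^{-1}N\,\bbP_\rmE[E^c_{n,R,\e}]$ plus $\bbP_*[|\tau_*|>N]$; sending $N\to\infty$ slowly gives a conformal map $\varphi^{*,n}:M(\cG^{*,1/n})\to\bbC$. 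This map has edge images of vanishing diameter near the origin, and satisfies $\varphi^{*,n}(x)\approx\eta(x)$ uniformly over $\eta(x)\in B(0;R)$, after translating so that it is aligned at $z_*$.

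\textbf{Step 2: the localized submap.} By the (easier) analog of Lemma~\ref{lem:circle-pack-cell}, the map $\cG^{h,1/n}_{k,z_*}$ is a submap of $\cG^{*,1/n}$, so $M(\cG^{h,1/n}_{k,z_*})\subseteq M(\cG^{*,1/n})$. Let $\Omega_n=\varphi^{*,n}(M(\cG^{h,1/n}_{k,z_*}))$. This is a Jordan domain bounded by the image of the boundary edges of $\cG^{h,1/n}_{k,z_*}$. Those boundary edges join vertices whose cells meet $\partial D_{k,z_*}$ in the order visited by $\eta$, and $(t_k^{n,-},t_k^{n,+})\to(t_a^-,t_b^+)$ by part (ii) of Lemma~\ref{lem:circle-pack-cell}. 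Combining Step 1 with the small edge diameters, I get $\partial\Omega_n\to\partial D_{k,z_*}$ in the metric~\eqref{eq:curve-metric-0}, in probability.

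\textbf{Step 3: uniformization.} Let $\phi_n:\Omega_n\to\bbD$ be the conformal map sending $\varphi^{*,n}(\text{vertex nearest }z_*)$ to $0$, normalized compatibly with $\psi_{k,z_*}$. Define $\varphi^{h,1/n}_{k,z_*}=\phi_n\circ\varphi^{*,n}|_{M(\cG^{h,1/n}_{k,z_*})}$. Proposition~\ref{prop:boundary-perturb}, applied along subsequences with convergence in probability, shows that $\phi_n\to\psi_{k,z_*}$ uniformly in the sense of Definition~\ref{def:unif-convergence} and that $\{\phi_n\}$ is equicontinuous. This reduction via subsequences is legitimate since the normalization fixes the limit.

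\textbf{Step 4: conclusion.} Equicontinuity turns the small edge diameters of $\varphi^{*,n}$ into small edge diameters of $\varphi^{h,1/n}_{k,z_*}$. For the vertex estimate, I combine $\varphi^{*,n}(x)\approx\eta(x)$ with the uniform convergence $\phi_n\to\psi_{k,z_*}$, which gives $\varphi^{h,1/n}_{k,z_*}(x)\approx\psi_{k,z_*}(\eta(x))$. Together these show $\bbP_*[G^{h,n,[n\tau_*]}_{k,z_*,\e}]\to1$.

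\textbf{Main obstacle.} I expect Step 2 to be the hardest: upgrading pointwise closeness of vertices to closeness of the whole boundary curve $\partial\Omega_n$ modulo parametrization. The edges near the two pinch times $t_k^{n,\pm}$ need particular care, and I would handle them with the small edge diameters from Theorem~\ref{thm:unif-matedCRT-0-cone} together with the ordering argument from Lemma~\ref{lem:circle-pack-cell}(iii).
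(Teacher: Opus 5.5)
Your proposal is correct and follows essentially the same route as the paper. Both transfer Theorem~\ref{thm:unif-matedCRT-0-cone} to $\bbP_*$ as in Lemma~\ref{lem:0-cone-phase-0}, view $M(\cG^{h,1/n}_{k,z_*})$ as a subsurface of $M(\cG^{*,1/n})$, and apply Proposition~\ref{prop:boundary-perturb} to the local uniformizing map composed with $(\varphi^{*,n})^{-1}$, after a small shift putting the center vertex at $z_*$; your Step~2 also makes explicit the convergence $\partial\Omega_n\to\partial D_{k,z_*}$ in the metric~\eqref{eq:curve-metric-0}, which the paper uses only implicitly when invoking that proposition.
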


\begin{proof}
By the identical argument as in the proof of Lemma~\ref{lem:0-cone-phase-0}, Theorem~\ref{thm:unif-matedCRT-0-cone} remains true if we replace $\theta$ with $[n\tau_*]$ in the sense of Lemma~\ref{lem:0-cone-phase-0}, and let
    $\varphi^{*,n}:M(\cG^{*,1/n})\to\bbC$ be the corresponding conformal map.  We view $M(\cG^{h,1/n}_{k,z})$ as a subset of $M(\cG^{*,1/n})$, and consider a conformal map $\varphi^{h,n}_{k,z}:M(\cG^{h,1/n}_{k,z})\to\bbD$ such that $\varphi^{h,n}_{k,z}(\tau_*)=0$. By a small linear perturbation with error $o_n(1)$, we may further assume that $\varphi^{*,n}$ embeds the vertex  $\tau_*$ at $z_*$.  Then the conformal map  $\varphi^{h,n}_{k,z}\circ (\varphi^{*,n}-z_*)^{-1}$ fixes 0. The claim then follows by applying Proposition~\ref{prop:boundary-perturb} to $\varphi^{h,n}_{k,z}\circ (\varphi^{*,n}-z_*)^{-1}$ and the domain $D_{k,z_*}-z_*$.
\end{proof}

\begin{proof}[Proof of Proposition~\ref{thm:circle-quantum-cell-0-unif}]
    The proof is identical to that of Proposition~\ref{thm:circle-quantum-cell-0} using Lemma~\ref{lem:unif-cell-1}.
\end{proof}

\subsubsection{Convergence of Riemann uniformization for mated CRT map on the disk}

\begin{proof}[Proof of Theorem~\ref{thm:unif-disk}]
    Let $\cG^{\disk,1/n}_\e$, $D_\e^\disk$, $s_\e^{n,-}$ and $s_\e^{n,+}$ be the same as in the paragraph before Lemma~\ref{lem:QD-AC-1}.  Let $\psi_\e^\disk:D_\e^\disk\to\bbD$ be a conformal map fixing 0. By Proposition~\ref{thm:circle-quantum-cell-0-unif} and the absolute continuity from Lemma~\ref{lem:QD-AC}, there is a conformal map $\varphi_\e^{\disk,n}:M(\cG^{\disk,1/n}_\e)\to\bbD$ such that as we send $\e\to0$ sufficiently slower than $n\to\infty$,   the maximal diameter of edges in $\varphi_\e^{\disk,n}(\fre)$ tends to 0 in probability, and up to a rotation, 
     \eqb\label{eq:pf-thm:unif-disk-1}\max\{|\psi_\e^\disk(\eta^\disk(x)) - \varphi_\e^{\disk,n}(x)|: x\in (s_\e^{n,-},s_\e^{n,+})\cap \frac{1}{n}\bbZ\}\eqe
     tends to 0 in probability. We may further normalize $(\psi_\e^\disk,\psi^\disk)$, such that the vertex with closest distance to $1/2$ is embedded at $\eta^\disk(1/2)$ and the convergence~\eqref{eq:pf-thm:unif-disk-1} continues to hold.  Now let $y_1^{n,\e}, y_2^{n,\e}\in \frac{1}{n}\bbZ\cap (0,1]$ be the vertices as in the proof of Theorem~\ref{thm:pack-disk}, and $\ell^{n,\e}$ (resp.\ $\gamma^{n,\e}$) be the clockwise (resp.\ counterclockwise) arc of  $\partial\cG^{\disk,1/n}_\e$ from $y_1^{n,\e}$ to $y_2^{n,\e}$. As we send $\e\to 0$ sufficiently slower than $n\to\infty$, the diameter of the boundary arc $\gamma^{n,\e}$ under the uniformization map $\varphi_\e^{\disk,n}$ tends to 0 in probability. 
     The assertions can therefore be proved via~\eqref{eq:pf-thm:unif-disk-1}, where we Proposition~\ref{prop:unif-disk} to the conformal map $\varphi^{\disk,n}\circ(\varphi_\e^{\disk,n})^{-1}$ (where we may possibly apply further a M\"{o}bius map sending $\eta^\disk(1/2)$ to 0 and rotate) and extend to all the vertices of $\cG^{\disk,1/n}$ by endpoint continuity of $\eta^\disk$.
\end{proof}

\subsubsection{Convergence of Riemann uniformization for mated CRT map on the sphere}

\begin{proof}[Proof of Theorem~\ref{thm:unif-sph}]
    We work with the setting of Lemma~\ref{lem:QS-AC-2}.  Let $D_\e^\sph$ be the simply connected component of $\eta^\sph([\e,1-\e])$ containing 0, and $\psi_\e:D_\e^\sph\to\bbD$ be a conformal map fixing 0. By Proposition~\ref{thm:circle-quantum-cell-0-unif} and the absolute continuity from Lemma~\ref{lem:QS-AC}, there is a conformal map $\varphi_\e^{\sph,n}: M(\cG^{\sph,1/n}_\e)\to\bbD$  such that the image of the vertex of closest distance to 1/2 is at 0, and as we send $\e\to0$ sufficiently slower than $n\to\infty$,   the maximal diameter of edges in $\varphi_\e^{\sph,n}(\fre)$ tends to 0 in probability, and up to a rotation, 
  \eqb\label{eq:pf-thm:unif-sph-1}\max\{|\psi_\e^\sph(\eta^\sph(x)) - \varphi_\e^{\sph,n}(x)|: x\in (\wt s_\e^{n,-},\wt s_\e^{n,+})\cap \frac{1}{n}\bbZ\}\eqe
  tends to 0 in probability.  Now let $r_\e\in\bbC$ such that the image of the vertex $x_{1/4}\in \frac{1}{n}\bbZ\cap(0,1]$ of closet distance to $1/4$ under the map $r_\e \varphi_\e^{\sph,n} $ is 1.  We may choose a sequence $\e(n)\to 0$ sufficiently slower than $n\to\infty$, such that as $n\to\infty$ and $\e=\e(n)$, the maximal spherical diameter of edges in $r_\e \varphi_\e^{\sph,n}(\fre)$ tends to 0 in probability,   $|r_\e| \to\infty$ in probability, and the quantity in~\eqref{eq:pf-thm:unif-sph-1} multiplied with $|r_\e|$ still converges to 0 in probability. Moreover, by our choice of $r_\e$ and~\eqref{eq:pf-thm:unif-sph-1}, by Proposition~\ref{prop:unif-sph}, the map $r_\e\psi_\e^\sph$ converges uniformly to the identity in spherical topology. Let $\varphi^{\sph,n}:M(\cG^{\sph,1/n})\to\wh\bbC$ be the conformal map sending the vertices of closest distance to $(1,1/2,1/4)$ to $(\infty,0,1)$. By Proposition~\ref{prop:unif-sph},  $\varphi^{\sph,n}\circ (\varphi_\e^{\sph,n})^{-1}(r_\e^{-1}\cdot)$ converges uniformly to the identity  in probability in spherical topology, and for any $R>0$, with probability tending to 1 as $n\to\infty$, $B(0;R)\subseteq \varphi^{\sph,n}(M(\cG^{\sph,1/n}_\e))$.   This along with~\eqref{eq:pf-thm:unif-sph-1} (with the version where we multiply by $|r_\e|$), where we send $\e\to0$ sufficiently slower than $n\to\infty$ and further extend to all the vertices of $\cG^{\sph,1/n}$ by endpoint continuity of $\eta^\sph$, give Assertions 1.\ and 2.. Assertion 3.\ and 4.\ follow from Assertions 1.\ and 2.\ .  
\end{proof}

\subsubsection{Convergence of Riemann uniformization for mated CRT map on the $\gamma$-quantum cone}


\begin{proof}[Proof of Theorem~\ref{thm:unif-cone}]
We consider the same setup as in the proof of Theorem~\ref{thm:pack-cone}. By Proposition~\ref{thm:circle-quantum-cell-0-unif} applied to $(h^j,\eta^j)$ with $jn$ in place of $n$ and using $(h^j,\eta^j)\overset{d}= (h,\eta)$, there exists a conformal map $\varphi_j^{n}:M(\cG_j^{1/n})\to \bbD$, such that  as  $n\to\infty \text{ with $j$ fixed or $j\to\infty$ with $n$ fixed,}$ the maximal diameter of edges $\varphi_j^{n}(\fre)$ tends to 0 in probability, and up to a rotation, 
\begin{equation}\label{eq:pf:thm:unif-cone-1}
         \max_{x\in \big(js^{n,j,-},js^{n,j,+}\big]\cap \frac{1}{n}\bbZ }\dist(\psi_{j}(\eta^j(\frac{x}{j})),  \varphi_j^{n}(x))\to 0 \ \ \text{in probability.}
     \end{equation} 
    By a M\"{o}bius map, we may further assume that $ \varphi_j^{n}$ embeds the vertex 0 at 0 while the above convergence continues to hold.

We first prove that $M(\cG^{1/n})$ is a.s.\ parabolic.  By the scaling in Lemma~\ref{lem:Rb}, it suffices to prove the parabolicity for $n=1$. We temporarily fix $n=1$ and send $j\to\infty$ first. Since the distance between $(s^{1,j,-},s^{1,j,+})$ and $(t^{j,-},t^{j,+})$ converges to 0 in probability, by $\eta^j\overset{d}= \eta$ we see that $(js^{n,j,-},js^{n,j,+})\to(-\infty,+\infty)$ in probability.  Assume on the contrary that the event $E$ that there exists some conformal map $\varphi:M(\cG^1)\to\bbD$ has positive probability $p_0>0$. Assume that $\varphi$ embeds the vertex 0 at 0. We condition on $E$. Since $\varphi^{-1}(\ol{B(0;1/2)})$ is compact  and $(js^{1,j,-},js^{1,j,+})\to(-\infty,+\infty)$ in probability, by~\eqref{eq:pf:thm:unif-cone-1} we see that $\diam(\varphi_j^{1}\circ \varphi^{-1}(\ol{B(0;1/2)}) )\to 0$   in probability. One can see this by first applying ~\eqref{eq:pf:thm:unif-cone-1} to control the $\varphi_j^{1}$-images of the vertices and edges of the triangles on $M(\cG^{1/n})$ whose $\varphi$-image meets $\ol{B(0;1/2)}$, and further applying the maximum principle for the interior.  This implies that conditioned on $E$, $(\varphi_j^{1}\circ \varphi^{-1})'(0)\to 0$ and $(\varphi\circ (\varphi^1_j)^{-1})'(0)\to\infty$ in probability. On the other hand, under the event $E$, $\varphi\circ (\varphi_j^1)^{-1}$ is a conformal map  from $\bbD$ to a subset of $\bbD$. This contradicts with the Koebe quarter theorem. This proves that  $M(\cG^{1/n})$ is a.s.\ parabolic.

 Below we will send $j\to\infty$ sufficiently slower than $n\to\infty$.   Consider the conformal map $\phi_{n,j}:=\psi_j'(0)^{-1}\fR_j\varphi_j^{n}$, and let $\varphi^n:M(\cG^{1/n})\to \bbC$ be a conformal map, and $\varphi^n(0)=0,\varphi^n(1)=\eta(1)$.  
 By~\eqref{eq:pf:thm:pack-cone-0},~\eqref{eq:pf:thm:unif-cone-1} and $\eta^j(x)=\fR_j^{-1}\eta(jx)$, if we let $j$ depend on $n$ and tends to $\infty$ sufficiently slow, then we have the following:
     \begin{enumerate}[(i)]
         \item The maximal diameter of edges in $\phi_{n,j}(\fre)$ tend to 0 in probability, and $ \psi_j'(0)^{-1}\fR_j\to\infty$ as $j\to\infty$;
         \item With probability tending to 1, $[-a,a]\subseteq (js^{n,j,-},js^{n,j,+}]$. Furthermore,  
         \begin{equation}\label{eq:pf:thm:unif-cone-2}
         \max_{x\in [-a,a]\cap \frac{1}{n}\bbZ }\dist(\eta(x), \phi_{n,j}(x))\to 0 \ \ \text{in probability.}
     \end{equation} 
     \item The distance $| \phi_{n,j}(0)-0|$ and $| \phi_{n,j}(1)-\eta(1)|$ tend to 0 in probability.
     \end{enumerate}
     Let $\alpha_{n,j}$ and $\beta_{n,j}$ be such that the conformal map $\wt\phi_{n,j}:=  \alpha_{n,j} \phi_{n,j} +\beta_{n,j}$ sends the vertices 0 and 1 to 0 and $\eta(1)$. By property (iii) above, $(\alpha_{n,j},\beta_{n,j})\to (1,0)$ in probability. Then $\varphi^{n}\circ(\wt\phi_{n,j})^{-1}$ is a conformal map whose domain contains $B(0;\psi_j'(0)^{-1}\fR_j/2)$ with probability tending to 1 as $n\to\infty$, and fixes 0 and $\eta(1)$. Therefore by Proposition~\ref{prop:unif-sph}, $\varphi^{n}\circ(\wt\phi_{n,j})^{-1}$ converges uniformly on compacts to the identity in probability. This along with~\eqref{eq:pf:thm:unif-cone-2} gives Assertions 1.\ and 2., which further imply that for each $r>0$, under the event $B(0;2r)\subseteq\eta([-M,M])$, with probability tending to 1 as $n\to\infty$, the surface formed by the submap of $\cG^{1/n}$ restricted to vertices in $[-M,M]$ under the embedding $\varphi^n$ contains $B(0;r)$ as in the proof of Theorem~\ref{thm:pack-cone}. Assertions 3.\ and 4.\ then follows. 
\end{proof}

\bibliographystyle{alpha}
\bibliography{circle}

\end{document}